\newcommand{\tr}{\textnormal{tr}}
\newcommand{\ric}{\textnormal{Ric}}
\newcommand{\rr}{\textnormal{R}}
\newcommand{\ep}{\varepsilon}
\newcommand{\var}{\textnormal{Var}}
\newcommand{\fs}{\textnormal{FS}}
\newcommand{\cH}{\mathcal{H}}
\newcommand{\dbar}{\overline{\partial}}
\newcommand{\ddt}[1]{\frac{\partial #1}{\partial t}}
\newcommand{\dds}[1]{\frac{\partial #1}{\partial s}}
\newcommand{\ddbar}{\sqrt{-1}\partial\dbar}
\newcommand{\cY}{\mathcal{Y}}
\newcommand{\cK}{\mathcal{K}}
\newcommand{\cL}{\mathcal{L}}
\newcommand{\cS}{\mathcal{S}}
\newcommand{\cN}{\mathcal{N}}
\newcommand{\cW}{\mathcal{W}}
\newcommand{\cX}{\mathcal{X}}
\newcommand{\cR}{\mathcal{R}}
\newcommand{\vol}{\textnormal{Vol}}
\newcommand{\osc}{\textnormal{osc}}
\newcommand{\diam}{\textnormal{Diam}}
\newcommand{\CCC}{\mathfrak{C}}
\newcommand{\bF}{\mathbb{F}}
\newtheorem{theorem}{Theorem}[section]
\newtheorem{lemma}[theorem]{Lemma}
\newtheorem{corollary}[theorem]{Corollary}
\newtheorem{proposition}[theorem]{Proposition}
\newtheorem{conjecture}[theorem]{Conjecture}
\numberwithin{equation}{section}
\theoremstyle{definition}
\newtheorem{remark}[theorem]{Remark}
\theoremstyle{definition}
\newtheorem{definition}[theorem]{Definition}
\begin{document}

\title{Finite time singularities of the K\"ahler-Ricci flow}

\author{Wangjian Jian$^*$, Jian Song$^\dagger$ and Gang Tian$^\ddagger$}

\thanks{Wangjian Jian is supported in part by NSFC No.12201610, NSFC No.12288201, MOST No.2021YFA1003100. Jian Song is supported in part by National Science Foundation grant DMS-2203607. Gang Tian is supported in part by NSFC No.11890660, MOST No.2020YFA0712800.}

\address{$^*$ Institute of Mathematics, Academy of Mathematics and Systems Science, Chinese Academy of Sciences, Beijing, 100190, China}

\email{wangjian@amss.ac.cn}

\address{$^\dagger$ Department of Mathematics, Rutgers University, Piscataway, NJ 08854, USA}

\email{jiansong@math.rutgers.edu}

\address{$^\ddagger$ BICMR and SMS, Peking University, Beijing 100871, China}

\email{gtian@math.pku.edu.cn}

\begin{abstract}  We establish the scalar curvature and distance bounds, extending Perelman's work on the Fano K\"ahler-Ricci flow to general finite time solutions of the K\"ahler-Ricci flow. These bounds are achieved by our Li-Yau type and Harnack estimates for weighted Ricci potential functions of the K\"ahler-Ricci flow. We further prove that the Type I blow-ups of the finite time solution always sub-converge in Gromov-Hausdorff sense to an ancient solution on a family of analytic normal varieties with suitable choices of base points. As a consequence, the Type I diameter bound is proved for almost every fibre of collapsing solutions of the K\"ahler-Ricci flow on a Fano fibre bundle. We also apply our estimates to show that every solution of the K\"ahler-Ricci flow with Calabi symmetry must develop Type I singularities,  including both cases of high codimensional contractions and fibre collapsing.

\end{abstract}

\maketitle

{\footnotesize \tableofcontents}


\section{Introduction}\label{intro}

Ricci flow was introduced by R. Hamilton (cf. \cite{Ham}) in 1980s. In last few decades, it has been used to study both topological and geometric classifications of Riemannian manifolds, for instance, a program initiated by Hamilton aims at solving Thurston's Geometrization Conjecture and was completed by Perelman's groundbreaking work (cf. \cite{Per1, Per2, Per3}), in particular, it led to the settlement of the Poincar\'e conjecture.
The Ricci flow also provides a canonical deformation of K\"ahler metrics toward canonical metrics and gives an alternative proof of the existence of K\"ahler-Einstein metrics on a compact K\"ahler manifold with numerically trivial or ample canonical bundle (cf. \cite{Cao}). However, most projective manifolds, or more generally, compact K\"ahler manifolds, do not have a numerically semi-definite canonical bundle and the Ricci flow will develop finite time singularities.
The analytic minimal model program through Ricci flow was proposed by the second and third named authors \cite{ST4} aiming at classifying compact K\"ahler manifolds.
In particular, it provides an alternative approach to classify projective manifolds birationally. Many exciting progresses have been made (cf. \cite{Ts, TiZha, ST1, ST2, ST3, TiZZL162, W18, GPSS,  FoZhZ, TWY, STZ, JS, FL,CL, SW3, Tos, LiTo, FoZh} etc.), including long time existence and convergence to canonical metrics when the canonical class $K_X$ is nef, uniform bound on scalar curvature, Gromov-Hausdorff compactness, diameter bound as well as regularity results. However, fewer results have been proved for finite time singularities and their geometric properties. The purpose of this paper and its sequels is to develop a set of new techniques and estimates for studying finite time singularity of Ricci flow on general K\"ahler manifolds, that is, K\"ahler-Ricci flow. As applications, we will also give fine analysis for finite time singularity in case of certain projective manifolds. Now let us discuss in more details.

Let $X$ be a compact K\"ahler manifold of $\dim_{\mathbb{C}} X =n$ equipped with a K\"ahler metric $g_0\in H^{1,1}(X, \mathbb{R})\cap H^2(X, \mathbb{Q})$. The (unnormalized) K\"ahler-Ricci flow is given by
\begin{equation}\label{unkrflow1'1}
\left\{
\begin{array}{l}
{ \displaystyle \ddt{g(t)} = -\ric(g(t)) ,}\\
\\
g(0)=g_0.
\end{array} \right.
\end{equation}
By \cite{TiZha}, the flow admits a unique smooth solution $g(t)$ for $t\in [0, T)$ for the maximal time $T>0$  given by
\begin{equation}\label{maxT}
T= \sup\{ t>0~|~ [g_0] + tK_X >0 \}.
\end{equation}
When $K_X$ is not nef, the flow must develop finite time singularities at $t=T$. Since we assume the initial K\"ahler class $[g_0] \in H^2(X, \mathbb{Q})$,  the limiting cohomology class $\vartheta = [g_0] + T [K_X]$ is semi-ample by Kawamata's base point free theorem and it induces a unique surjective holomorphic map
\begin{equation}\label{bpfk}
\Phi: X \rightarrow Y \subset \mathbb{CP}^N,
\end{equation}
where $Y$ is a normal variety and $\vartheta$ is the pullback of a K\"ahler class on $Y$. In particular, $\Phi^{-1}(q)$ is a Fano variety  for generic non-biholomorphic point $q\in Y$ of $\Phi$.

\medskip

We present the following conjectural picture as a refined and extended geometric description of {\it the analytic minimal model program with Ricci flow} proposed in \cite{ST1, ST2, ST4}.

\begin{enumerate}

\item When $Y$ is a point, $X$ is Fano, i.e., $c_1(X)>0$. A striking theorem of Perelman states that the type I blow-up of $g(t)$ has uniformly bounded diameter and scalar curvature (cf. \cite{SeT}).  More precisely, if the initial metric $g_0\in c_1(X)$, then the K\"ahler-Ricci flow (\ref{unkrflow1'1}) develops singularities at $T=1$ and there exists $C>0$ such that for all $t\in [0, 1)$,
\begin{equation}\label{pers}
|\rr(x, t)| \leq \frac{C}{1-t},
\end{equation}
\begin{equation}\label{perd}
\textnormal{Diam}(X, g(t)) \leq C(1-t)^{\frac{1}{2}},
\end{equation}
where $\rr(t)$ is the scalar curvature of $g(t)$ .
This leads to the convergence of the Fano K\"ahler-Ricci flow to a unique (singular) K\"ahler-Ricci soliton on a $\mathbb{Q}$-Fano variety (cf. \cite{TiZXH1, TiZXH2, CSW}).

\medskip

\item When $0<\dim Y<n$, $X$ is a Mori fibration (general fiber is Fano). The flow is conjectured to collapse onto $Y$ and to extend through $t=T$ on $Y$ in the Gromov-Hausdorff topology. Furthermore, for any $q\in Y$, the type I blow-up based at $q$ should converge to a complete non-flat K\"ahler-Ricci soliton on a quasi-projective variety conjecturally. In particular, the limiting variety should be a Mori fibration over a quasi-projective variety and every tangent flow should be a Ricci-flat cone.

\medskip

\item When $\dim Y= n$, it is conjectured in \cite{ST4} that the flow will extend through $t=T$ geometrically associated to a birational transform such as a divisorial contraction or flip. Furthermore, for any critical value $q\in Y$ of $\Phi$, the forward and backward Type I blow-ups based at $q$ and $t=T$ should converge to a complete  shrinking K\"ahler-Ricci soliton on a quasi-projective variety $X_T^-$ as $t\rightarrow T^-$ and a complete expanding K\"ahler-Ricci soliton on a quasi-projective variety $X_T^+$ as $t\rightarrow T^+$ conjecturally. It is conjectured in \cite{So13} that the blow-down limits of such a shrinking and expanding solitons should coincide with the tangent cone at $\Phi^{-1}(q)$ for the original flow at $t=T$. $X_T^-$ and $X^+_T$ are birational equivalent and related by a unique algebraic flip.  We hope such soliton transitions at finite time singularities of the K\"ahler-Ricci flow will give a metric uniformization for the special family non-Gorenstein singularities arising from algebraic flips.

\end{enumerate}
Though we present the above picture in the case of projective manifolds, we believe that this picture, with slight modifications, also works for general compact K\"ahler manifolds.

The Fano K\"ahler-Ricci flow in case (1) have produced many deep results built on Perelman's fundamental estimates on scalar curvature and diameter (cf. \cite{SeT}). The well-known Hamilton-Tian conjecture was first proved in the case of Fano manifolds which admit K\"ahler-Einstein metrics (cf. \cite{TiZXH1, TiZXH2}) and later solved in general cases (cf. \cite{TiZZL16, Bam18, CW3}). For more results in this case, see \cite{TiZZZ, PS, PSSW, DeS, HL, GPS, JShi} and references therein.

It is extremely challenging to extend Perelman's estimates to case (2) and (3). We propose the following conjecture as an extension of Perelman's work on the Fano K\"ahler-Ricci flow to all finite time solutions of the K\"ahler-Ricci flow.

\begin{conjecture}\label{mainconj}
Let $g(t)$ be the smooth solution of the K\"ahler-Ricci flow (\ref{unkrflow1'1}) for $t\in [0, T)$. If $T<\infty$, then there exists $C<\infty$ such that for all $(x, t) \in X\times [0, T)$, we have
$$|\rr(x, t)| \leq \frac{C}{T-t}, $$
and for any  $q\in Y$ as in (\ref{bpfk}),
$$\textnormal{Diam}(\Phi^{-1}(q), g(t)) \leq C(T-t)^{\frac{1}{2}}, $$
where $\rr(t)$ is the scalar curvature of $g(t)$ and $\textnormal{Diam}(\Phi^{-1}(q), g(t))$ is the diameter of the pre-image of $q$ by $\Phi$ in $(X, g(t))$.
\end{conjecture}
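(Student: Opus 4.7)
The plan is to extend Perelman's strategy for the Fano K\"ahler-Ricci flow to the general finite-time setting by introducing a \emph{weighted Ricci potential} that absorbs the class degeneration induced by $\Phi$. Fix a K\"ahler form $\omega_Y$ on $Y$ representing $\vartheta$. Since $[\omega(t)] + (T-t) c_1(X) = \vartheta = [\Phi^*\omega_Y]$ in $H^{1,1}(X,\mathbb{R})$, the $(1,1)$-form $\ric(\omega(t)) + (T-t)^{-1}(\omega(t) - \Phi^*\omega_Y)$ is $\partial\dbar$-exact and defines a function $u(\cdot,t)$ (up to an additive constant) via
\begin{equation*}
\ric(\omega(t)) + \frac{\omega(t) - \Phi^*\omega_Y}{T - t} = \ddbar u.
\end{equation*}
Restricted to any smooth fibre $\Phi^{-1}(q)$ the pulled back term vanishes and $u$ becomes a genuine Fano-type Ricci potential for the rescaled fibre metric $(T-t)^{-1}\omega(t)|_{\Phi^{-1}(q)}$. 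Taking the $\omega(t)$-trace of the defining equation yields the key identity
\begin{equation*}
\rr(t) = -\frac{n}{T-t} + \frac{\tr_{\omega(t)} \Phi^*\omega_Y}{T-t} + \Delta u,
\end{equation*}
so the scalar curvature bound is reduced to uniform control of $\Delta u$ together with the degenerate trace term.

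The first step would be to derive the parabolic evolution equation for $u$ along the K\"ahler-Ricci flow and to produce, via the maximum principle, a universal upper bound on $u$; a matching lower bound on each fibre should be extractable from a Perelman-entropy monotonicity formula adapted to the twisted setting. The second step is a Li-Yau--Hamilton type differential Harnack estimate: by applying the maximum principle to an auxiliary quantity of the form
\begin{equation*}
F = (T-t)|\nabla u|^2 + (T-t)\Delta u + a\, u + b,
\end{equation*}
with constants $a,b$ tuned to the parabolic Type I scaling, one expects uniform control of $(T-t)|\nabla u|^2$ and of $(T-t)\Delta u$. Combining this with Hamilton's classical lower bound $\rr \geq -C/(T-t)$ coming from $\partial_t \rr = \Delta \rr + 2|\ric|^2$, the trace identity above then yields the two-sided bound $|\rr(x,t)| \leq C/(T-t)$.

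The fibre diameter estimate is obtained by Type I parabolic rescaling. Set $\tilde g(s) = (T-t)^{-1} g(t)$ with $s = -\log(T-t)$; under this rescaling the equation (\ref{unkrflow1'1}) transforms into a normalized shrinking flow, and each smooth fibre $(\Phi^{-1}(q),\tilde g(s))$ acquires uniformly bounded scalar curvature while $u|_{\Phi^{-1}(q)}$ plays the role of a Fano Ricci potential. Applying Perelman's segment $L$-distance argument to $u$ restricted to the fibre --- now a genuine Fano-type setting with uniform $\rr$ and $|\nabla u|$ control --- should give $\diam(\Phi^{-1}(q),\tilde g(s)) \leq C$, which unscales to (\ref{perd}).

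The main obstacle I anticipate lies in establishing the $L^\infty$ bound on $u$ and the differential Harnack estimate \emph{uniformly up to the discriminant locus of $\Phi$}. The twist $(T-t)^{-1}\tr_{\omega(t)}\Phi^*\omega_Y$ degenerates along the collapsing directions, and it is unclear a priori how the resulting error terms can be absorbed in the maximum principle computation for $F$, especially as $q$ approaches a critical value of $\Phi$. Overcoming this likely requires coupling the Harnack estimate with the Type I Gromov-Hausdorff compactness for blow-ups indicated in the abstract, so that any potential degeneration of $u$ or $|\nabla u|$ is ruled out by a blow-up contradiction against an ancient soliton limit.
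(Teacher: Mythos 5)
The statement you are attempting to prove is labelled a \emph{Conjecture} in the paper, and it is not proved there in full generality; the paper establishes only partial results toward it (Theorems \ref{main1}, \ref{cor1}, \ref{cor2}, \ref{app3}, \ref{calmain1}), leaving the conjecture open in general. Your proposal therefore cannot be matched against a complete proof, and in fact your own closing paragraph correctly identifies the exact place where no argument is known.

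Your first two steps do align with the paper's partial progress. Setting up a weighted (twisted) Ricci potential $u$ with $\ric(\omega(t)) - (T-t)^{-1}(\omega(t) - \Phi^*\omega_Y) = -\ddbar u$ and running a Li--Yau type maximum-principle argument for a quantity combining $|\nabla u|^2$ and $\Delta u$ is precisely what the paper does in Proposition \ref{gandleou} and Theorem \ref{main1} (in the normalized flow). The parabolic Schwarz lemma absorbs the degenerate trace term $\tr_{\omega(t)}\Phi^*\omega_Y$, and the output is the estimate $(|\Delta v| + |\nabla v|^2)/v \leq C/(1-t)$ where $v = u - \inf u + 1$. This gives a Type I scalar curvature bound, but only locally, in a Type I ball around a \emph{Ricci vertex} $p_t$ (a minimum point of $u(\cdot,t)$); see Theorem \ref{cor1}. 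The constant in your auxiliary quantity $F$ cannot be made to close up to a global $L^\infty$ bound on $u$ precisely because, for a general $\Phi$, the normalization of $u$ involves the global infimum and the location of that infimum floats.

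There is a genuine gap in your third step. You propose to obtain the fibre diameter bound by ``applying Perelman's segment $L$-distance argument to $u$ restricted to the fibre.'' The paper explicitly states that this contraction argument, which works in the Fano case because the whole manifold is a single fibre, does \emph{not} adapt to the general finite-time setting. The authors instead develop a new Harnack estimate (Theorem \ref{main2}) based on Bamler's $H_n$-centers, $\ell$-centers, and volume non-collapsing, and they must \emph{assume} a Type I volume bound on a tubular neighborhood of the fibre through the Ricci vertex (condition (\ref{cor2a})) to conclude the diameter bound. Moreover, restricting $u$ to a fibre $\Phi^{-1}(q)$ does not produce a ``genuine Fano Ricci potential'' for the fibre geometry, because the additive normalization is global and the Ricci vertex need not sit near $\Phi^{-1}(q)$; this is exactly the content of the still-open Conjecture \ref{maincon3}. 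Finally, as a minor point, the lower bound $\rr \geq -C$ along Ricci flow is uniform (not merely $-C/(T-t)$) and is what the paper actually uses; the hard content of the Type I bound is entirely in the upper bound for $\rr$. In short: your strategy captures the philosophy of the paper's partial results, but the missing idea you flag is not an absorbable error term; it is the actual open problem, resolved in the paper only near Ricci vertices, on Fano bundles, and under Calabi symmetry.
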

Conjecture \ref{mainconj} is equivalent to Perelman's estimates (\ref{pers}) and (\ref{perd}) in the case when $X$ is Fano and $g_0\in c_1(X)$ with $Y$ being a single point in (\ref{bpfk}). %
It is well known that for any $q\in Y$, the fibre $\mathcal{F}_q=\Phi^{-1}(q)$  over $q$ is a connected subvariety in $X$. For generic $q$, $\mathcal{F}_q$ is smooth Fano manifold of complex dimension $n-\dim_{\mathbb{C}}Y$. But in general, $\mathcal{F}_q$ can be singular with possibly multiple components. The union of nontrivial $\mathcal{F}_q$, i.e. fibres over non-biholomorphic point $q\in Y$ of $\Phi$, is exactly the singular set of the finite time solution $g(t)$ as $t\rightarrow T$. Conjecture \ref{mainconj} should be the most fundamental step to understand formation of singularities in terms of both geometric and algebraic structures in the Analytic MMP in cases (2) and (3). In this paper, we will develop new techniques and estimates towards Conjecture \ref{mainconj} and prove some new results on this conjecture. Under some symmetry assumptions on $X$, we will be able to solve this conjecture.

\medskip

Let us also review some recent progress in case (2) and (3).  A rough scalar curvature upper bound is obtained in \cite{ZhangZ}. In case (2), the collapsing solutoins on Fano bundle have been studied in \cite{SSW, FZ, TZy}, where various estimates are obtained for the flow. For non-collapsing solutions in case (3), a uniform diameter bound is  proved in \cite{GPSS}.  The Gromov-Hausdorff continuation through finite time singularities are established in \cite{SW0, SW1, SW2, SY, So13} for divisorial contractions and flips for K\"ahler surfaces and  for higher dimensional examples with local Mumford quotients. However, very little is known for Type I blow-ups of finite time solutions with quantitive estimates in the spirit of Conjecture \ref{mainconj} beyond Perelman's fundamental work in the Fano Ricci flow, except for special solutions with large symmetry \cite{FIK, Fong, So1, GS}. We also note that the continuity method to study the minimal model program is proposed in \cite{LaTi}.

The recent developments in the regularity theory of Ricci flow also provide powerful tools for the study of the K\"ahler-Ricci flow as the parabolic analogue of the Cheeger-Colding theory \cite{CC1, CC2}. The Nash entropy along Ricci flow was introduced in \cite{HN}, where the Sobolev inequalities and $\varepsilon$-regularity theorem are obtained. Later in \cite{Bam20a}, \cite{Bam20b} and \cite{Bam20c}, a brand-new regularity theory on the Ricci flow based on the Nash entropy is established, including the heat kernel bounds, volume estimates and $\varepsilon$-regularity theorem, as well as the compactness theory of Ricci flow and the structure theory of non-collapsed limits of Ricci flows. Based on these new theories, the first and second author (cf. \cite{JS}) gave a new proof of the relative volume estimate along the Ricci flow in \cite{TiZZL20}; the first author (cf. \cite{J}) also obtain an improved version of Perelman's volume non-collapsing estimate. These powerful theories continue to play a central role in this paper and in \cite{JST23b, HJST}.

\medskip
\subsection{The scalar curvature estimate and Li-Yau type estimate }\label{be}
~

\smallskip

Suppose the K\"ahler-Ricci flow (\ref{unkrflow1'1}) develops finite time singularities at $T>0$. Without loss of generality, we can assume $T=1$. The limiting cohomology class $\vartheta= [g_0]+ [K_X] $ induces the unique morphism $\Phi: X \rightarrow Y\subset \mathbb{CP}^N$ as in (\ref{bpfk}).   $Y$ is a normal projective variety and $\dim Y \leq n$ is equal to the Kodaira dimension of $\vartheta$. We let $\theta_Y$ be a smooth closed $(1,1)$-form on $Y$ with
\begin{equation}\label{puly}
\Phi^*\theta_Y\in \vartheta.
\end{equation}
That is, $\theta_Y$ is the restriction of a local smooth closed $(1,1)$-form  through a local embedding of $Y$ into some $\mathbb{C}^M$ since $Y$ is normal. For example, we can choose $\theta_Y$ to be the multiple of the Fubini-Study metric $\mathbb{CP}^N$ restricted to $Y \subset \mathbb{CP}^N$.
We abuse the notations by identifying $\theta_Y$ with $\Phi^*\theta_Y$  for conveniences, and say $\theta_Y\in \vartheta$ is a smooth closed $(1,1)$-form on $Y$.

For fixed $\theta_Y$, there exists $u\in C^\infty(X \times [0, 1))$ such that
$$\ric(g(t)) - (1-t)^{-1} g(t) = - (1-t)^{-1} \theta_Y  - \ddbar  u.$$

\begin{definition} \label{defpot}
%
A point $p$ is said to be a {\bf Ricci vertex} at $t\in [0, 1)$ associated to  $\theta_Y$ if
$$ u(p, t) = \inf_X u(\cdot, t). $$
\end{definition}
We have the following local Type I scalar curvature bound near the Ricci vertex as a partial solution to Conjecture \ref{mainconj}.
\begin{theorem}\label{cor1}
Let $g(t)$ be the maximal solution of the K\"ahler-Ricci flow (\ref{unkrflow1'1}) on $X\times [0, 1)$ and $\theta_Y\in \vartheta$ be any smooth closed $(1,1)$-form on $Y$. Then there exists $C=C(n, g_0, \theta_Y)>0$ such that for any Ricci vertex $p$ at $t$ associated to $\theta_Y$, we have
\begin{equation}
(1-t) |\rr(x, t)| \leq  C \left(1+ \frac{d^2_t(x, p)}{1-t} \right),
\end{equation}
for any $(x, t)\in X\times [0, 1)$. In particular, for any $D>0$,
$$\sup_{ B\left( p, t, D(1-t)^{1/2}\right)} |\rr(\cdot, t)| \leq \frac{C(1+D^2)}{1-t},$$
where $B(p, t, r)$ is the geodesic ball centered at $p$ of radius $r$ with respect to $g(t)$.
%
%
\end{theorem}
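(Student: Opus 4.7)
The overall plan is to deduce the scalar curvature bound from a Li-Yau--Harnack estimate for the weighted Ricci potential $u$, exploiting the fact that $p$ is the minimizer of $u(\cdot,t)$. Taking the $g(t)$-trace of the defining equation for $u$ gives
$$\rr(x,t) = \frac{n - \tr_{g(t)}\theta_Y}{1-t} - \Delta u(x,t),$$
so the upper bound $(1-t)\rr \leq C(1 + d_t^2(x,p)/(1-t))$ is equivalent to the lower bound $(1-t)\Delta u \geq -C(1 + d_t^2(x,p)/(1-t))$; the complementary bound $(1-t)\rr \geq -C$ follows from the classical maximum-principle estimate $\rr \geq \inf_X \rr(\cdot,0)$ along the K\"ahler-Ricci flow. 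The term $\tr_{g(t)}\theta_Y$ is controlled by the parabolic Schwarz lemma applied to $\Phi:X\to Y$, so the problem reduces to estimating $\Delta u$ from below in terms of $d_t^2(x,p)/(1-t)$.

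The main work is to establish a localized Li-Yau inequality for $u$. Differentiating the defining equation in $t$ and using $\partial_t g = -\ric$, one checks that, after adjusting $u$ by a suitable time-dependent constant, $u$ satisfies a forward heat-type equation of schematic form
$$\tdelta u = -\rr + \frac{n - \tr_g\theta_Y}{1-t} + c(t).$$
One then runs a Bernstein-type argument on an auxiliary quantity built from $(1-t)(|\nabla u|^2 + \rr)$ and $u - \inf_X u$, localized by a spatial cutoff $\eta(d_t(\cdot,p))$ using Perelman's differential-inequality machinery for the distance function along the flow. The resulting maximum-principle estimate should produce a bound of the form
$$(1-t)\bigl(|\nabla u|^2 + \rr\bigr)(x,t) \leq C\bigl(u(x,t) - u(p,t)\bigr) + C\Bigl(1 + \frac{d_t^2(x,p)}{1-t}\Bigr),$$
together with a Harnack-type bound $u(x,t) - u(p,t) \leq C(1 + d_t^2(x,p)/(1-t))$ obtained by integrating $|\nabla u|$ along a minimizing $g(t)$-geodesic from $p$ to $x$ and using $\nabla u(p,t)=0$. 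Feeding these into the trace equation for $\rr$ gives the required estimate, and the in-particular statement follows by restricting to $d_t(x,p) \leq D(1-t)^{1/2}$.

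The hardest step is the Li-Yau inequality itself. Unlike the Fano case treated by Perelman, here $\vartheta$ is only semi-ample and $\theta_Y$ is pulled back from the normal variety $Y$, so the Bernstein computation produces error terms that are a priori not uniformly controlled near the singular fibres of $\Phi$, and one cannot appeal to global compactness to close the maximum principle. The resolution should rely on combining the parabolic Schwarz estimate for $\Phi$ with the Nash-entropy based heat-kernel and cutoff estimates of Bamler referenced in the introduction, which provide the local Sobolev and volume control needed to localize the argument at the Ricci vertex without using positivity of the limiting class.
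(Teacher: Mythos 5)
Your high-level skeleton agrees with the paper: differentiate to obtain the trace identity $\rr = (n - \tr_{g(t)}\theta_Y)/(1-t) - \Delta u$, prove a Li-Yau type estimate giving both $|\nabla u|^2$ and $|\Delta u|$ bounded by a multiple of the normalized Ricci potential $v$, integrate the gradient bound along a $g(t)$-geodesic from the Ricci vertex $p$ (where $v$ is of order one) to obtain $v(x,t) \leq C\bigl(1 + d_t^2(x,p)/(1-t)\bigr)$, and feed this into the Laplacian bound together with the parabolic Schwarz estimate for $\tr_{g(t)}\theta_Y$. The "adjustment of $u$ by a time-dependent constant" that you mention is also genuinely present in the paper: since $a(s)=\inf_X u(\cdot,s)$ is only continuous, it is replaced by a smooth sub-barrier $b(s)=e^{s-s_0}a(s_0)-B$ in the denominator, so that the maximum principle applies cleanly (Lemma~\ref{a'2}, Lemma~\ref{a'3}, Proposition~\ref{gandleou}).

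Your diagnosis of the hard step and its resolution is incorrect, however. You assert that the Bernstein computation produces error terms that are not uniformly controlled near the singular fibres of $\Phi$, so that "one cannot appeal to global compactness to close the maximum principle," and you propose a localization by a cutoff $\eta(d_t(\cdot,p))$ combined with Bamler's Nash-entropy heat-kernel machinery. Neither is what the paper does, and neither is needed. The paper's maximum-principle argument is genuinely global on the compact manifold $X$, with no cutoff. The point you miss is that the parabolic Schwarz lemma (Lemma~\ref{pschwarz}) already yields $\tr_{\omega(t)}\beta \leq C$ \emph{uniformly on all of $X$} for any fixed K\"ahler metric $\beta$ on $\mathbb{CP}^N$; writing $\theta_Y$ as the difference $\omega_Y - \alpha$ and choosing $\beta = \alpha + A\omega_Y$ for $A$ large, every term in the Bochner computation involving $\theta_Y$ is controlled uniformly, including near the singular fibres. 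The maximum principle then closes on the auxiliary functions $\cH = |\nabla u|^2/(u-b+1) + \tr_{\tilde\omega}(e^s\beta) + \tr_{\tilde\omega}(e^s\omega_Y)$ and the analogous $\cK$ for the Laplacian. Bamler's entropy theory enters the paper only at the Harnack/diameter stage (Theorem~\ref{main2}) and in the blow-up compactness, not in the gradient and Laplacian estimates on which Theorem~\ref{cor1} rests.
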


\begin{remark}\label{scbonPnbhd}
The local Type I scalar curvature bound in Theorem \ref{mainconj} holds not only in the time-slice under consideration, but also in space-time, both on the usual parabolic neighborhood and the $P^*$-parabolic neighborhood. See Corollary \ref{scbonPnbhd2} and the remark after it. More precisely, for any $A, D>0$ and $\delta\in (0,1)$, we have
$$\sup_{ P\left(p,t; D(1-t)^{1/2}, -A(1-t), \delta(1-t)\right) } |\rr| \leq \frac{C}{1-t} , $$
$$\sup_{ P^*\left(p,t; D(1-t)^{1/2}, -A(1-t), \delta(1-t)\right) } |\rr| \leq \frac{C}{1-t} , $$
where $C=C(A, D, \delta)>0$. These estimates allow us to obtain the geometric regularity of the Type-I blow-up $\bF$-limits around the Ricci vertices. See \cite{HJST} for more details.
\end{remark}
Theorem \ref{cor1} shows that for finite time solutions of the K\"ahler-Ricci flow, one would always be able to find a region with uniform Type I scalar curvature bounds with at worst quadratic growth. This will immediately lead to local non-collapsing and pointed Gromov-Hausdorff compactness for each time slice of any Type I blow-up by combining Perelman's $\kappa$-noncollapsing. The quadratic growth of the scalar curvature is also essential in proving effective distance distortion and geometric compactness in \cite{HJST}. Theorem \ref{cor1} is particularly striking in the case when $X$ is a Fano fibration and the flow collapses its Fano fibres. A typical example will be the K\"ahler-Ricci flow on $X= \mathbb{CP}^{n_1}\times \mathbb{CP}^{n_2}$ that collapses either $\mathbb{CP}^{n_1}$ or $\mathbb{CP}^{n_2}$, whenever the initial K\"ahler class is not proportional to $c_1(X)$. A general discussion will be given in Section \ref{Fanobundle}.

There is a lot flexibility in the definition of the Ricci vertex. For given $t\in [0,1)$, let $p$ be the point in the Definition \ref{defpot}, for any radius $r>0$, if we define $q\in B(p,t, r(1-t)^{1/2})$ as the Ricci vertex associated to $\theta_Y$, then Theorem \ref{cor1} (and all the results concerning the Ricci vertex) still hold, with constants further depending on $r$.

In general, the location of the Ricci vertices is extremely difficult to determine as one would hope they can be chosen to be sufficiently close to the singular set. Suitable choices of the $\theta_Y$ will help us locate the Ricci vertices close to a base point in the singular locus of $\Phi: X \rightarrow Y$. This is indeed achieved in Theorem \ref{app3} and Theorem \ref{calmain1} in order to establish strong curvature bounds and effective distance estimates.

%
We define the (weighted) {\bf Ricci potential associated to $\theta_Y$}  by
\begin{equation}\label{defrp2}
v(x, t) = u(x, t) -  \inf_{X} u(\cdot, t)   + 1.
\end{equation}
%
The Ricci potential $v$ is uniquely determined by $\theta_Y$. Moreover, $v(\cdot, t)\in C^\infty(X)$ for all $t\in[0, 1)$, and we have
$$\ric(g(t)) -  (1-t)^{-1} g(t)= - (1-t)^{-1} \theta_Y  - \ddbar v  .$$
Theorem \ref{cor1} is in fact the consequence of the following Li-Yau type estimates for the  gradient and Laplacian estimates of the Ricci potential.

\begin{theorem}\label{main1}
Let $g(t)$ be the maximal solution of the K\"ahler-Ricci flow (\ref{unkrflow1'1}) on $X\times [0, 1)$ and $\theta_Y\in \vartheta$ be any smooth closed $(1,1)$-form on $Y$. Let $v$ be the Ricci potential associated to $\theta_Y$. Then  there exists  $C=C(n, g_0, \theta_Y)>0$, such that
\begin{equation} \label{gleou'0}
\frac{ \left| \Delta v \right| }{ v } + \frac{ |\nabla v|^2}{ v } \leq \frac{C}{1-t} ,
\end{equation}
%
%
%
%
%
on $X\times [0, 1 )$, where the gradient and Laplacian are with respect to $g(t)$.
\end{theorem}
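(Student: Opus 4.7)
The natural approach is a parabolic Bernstein-type argument via maximum principle, in the spirit of Hamilton--Li--Yau gradient estimates adapted to finite-time K\"ahler--Ricci flow, combined with the Harnack bound on $v$ established in the preceding section.

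First, I would derive the evolution equation for $v$. Differentiating the defining identity $\ric(g(t)) - (1-t)^{-1}g(t) = -(1-t)^{-1}\theta_Y - \ddbar v$ in $t$, using the standard K\"ahler--Ricci flow identities $\partial_t \ric(g) = \ddbar \rr$ and $\partial_t g = -\ric$, and fixing the purely time-dependent constant via the normalization $\inf_X v \equiv 1$, one finds that $v$ satisfies a nonlinear heat equation
\[
\partial_t v = \Delta v + \frac{v + \tr_g\theta_Y - n}{1-t} + c(t).
\]
Tracing the Ricci identity yields the algebraic relation $(1-t)\Delta v = -(1-t)\rr + n - \tr_g\theta_Y$, so the Laplacian bound $|\Delta v|/v \leq C/(1-t)$ is equivalent (modulo a uniform $C^0$ bound on $\tr_g\theta_Y$) to a two-sided bound $(1-t)|\rr|\leq C(v+1)$, which will itself follow from the gradient estimate together with the standard scalar-curvature lower bound along the K\"ahler--Ricci flow.

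For the gradient estimate, the K\"ahler Bochner formula along the flow takes the clean form
\[
(\partial_t - \Delta)|\nabla v|^2 = 2\re\langle\nabla(\partial_t v - \Delta v),\nabla v\rangle - |\nabla\nabla v|^2 - |\nabla\bar\nabla v|^2,
\]
since the Ricci contributions from $\partial_t g^{-1}$ and from the Riemannian Bochner formula exactly cancel in the K\"ahler setting. Substituting the evolution equation for $v$ produces a drift $2|\nabla v|^2/(1-t)$ plus a lower-order piece involving $\nabla\tr_g\theta_Y$. I would then consider the test quantity $H = (1-t)|\nabla v|^2/v$ and compute $(\partial_t - \Delta)H$. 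The $v$-linear part of the drift cancels the leading gradient contribution after division by $v$; at a space-time maximum of $H$, the relation $\nabla H = 0$ forces the quartic cross term $\re\langle\nabla|\nabla v|^2,\nabla v\rangle/v^2$ to cancel against the competing $|\nabla v|^4/v^3$ term, and a Kato-type inequality at the maximum bounds the remaining negative Hessian contribution from below by a quadratic expression in $H$. Combined with $v \geq 1$ and the Harnack bound on $v$, the resulting differential inequality forces $H \leq C$. The Laplacian estimate then follows either by the equivalence in the first step or by a parallel maximum-principle argument applied to $(1-t)\Delta v / v$.

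The main obstacle is uniform control on $\tr_g\theta_Y$ and $|\nabla\tr_g\theta_Y|$ along the flow, since $g(t)$ degenerates along the collapsing fibres of $\Phi\colon X \to Y$ while $\theta_Y$ is pulled back from $Y$. A parabolic Schwarz-lemma argument---applied after replacing $\theta_Y$ by $\theta_Y + \ddbar f$ for a suitable $f \in C^\infty(Y)$ to arrange semipositivity---should furnish the required $C^0$ bound on $\tr_g\theta_Y$, with a similar but more delicate argument for its gradient. Orchestrating these bounds together with the singular $(1-t)^{-1}$ coefficients so that the negative Hessian terms dominate all the remaining error contributions is the delicate part of the argument; in the special case where $Y$ is a point and $\theta_Y \equiv 0$, this scheme reduces to Perelman's original Bernstein computation for the Fano K\"ahler--Ricci flow.
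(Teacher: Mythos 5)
Your high-level strategy---a Bernstein/maximum-principle computation on $(1-t)|\nabla v|^2/v$ (and a companion argument for the Laplacian), with a parabolic Schwarz lemma to control the $\theta_Y$-contributions---is indeed the route the paper takes, and you correctly identify where the difficulties lie. But there are two places where the sketch as written would not go through.

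The central gap concerns the non-smoothness of $v$ in the time variable. By definition $v(\cdot,t)=u(\cdot,t)-\inf_X u(\cdot,t)+1$, and the normalization constant $a(t)=\inf_X u(\cdot,t)$ is in general only continuous, not differentiable, in $t$. Your evolution equation for $v$ carries a term $c(t)$ coming from $-a'(t)$, and the subsequent maximum-principle computation on $(1-t)|\nabla v|^2/v$ tacitly treats $c(t)$ as a well-behaved quantity. It is not: $a'(t)$ may fail to exist, and even where it does one has no a priori control of its size. The paper circumvents this by first establishing two-sided monotonicity of $e^{-s}(a(s)\mp B_0)$ in the normalized flow (Lemma \ref{a'2}), which in effect bounds the Dini derivatives of $a$, and then, for each fixed terminal time $s_0$, replacing $a(s)$ by the \emph{smooth} lower barrier $b(s)=e^{s-s_0}a(s_0)-B$ with $b(s)\leq a(s)$ and $b'=b+B$. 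The maximum principle is applied to the genuinely smooth quantity $|\nabla u|^2/(u-b+1)$ over $[0,s_0]$, and only at $s=s_0$, where $b(s_0)=a(s_0)-B$, does one translate back into the estimate for $v=u-a+1$. This barrier device is not a cosmetic refinement; without it, or some equivalent regularization, the parabolic maximum principle cannot be invoked.

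The second issue is your treatment of the $\nabla\tr_g\theta_Y$ cross terms. You suggest a ``more delicate'' Schwarz-lemma argument to obtain a pointwise bound on $|\nabla\tr_g\theta_Y|$. No such pointwise bound is proved (or needed) in the paper, and along a collapsing flow it is not clear one should expect it. The actual mechanism is to augment the test function: one sets $\cH=\frac{|\nabla u|^2}{u-b+1}+\tr_{\tilde\omega}(e^s\beta)+\tr_{\tilde\omega}(e^s\omega_Y)$, where $\beta=\alpha+A\omega_Y$ is made K\"ahler by choosing $A$ large, and then the \emph{good negative quadratic} $-C_0^{-1}|\nabla\tr_{\tilde\omega}(e^s\beta)|^2$ produced by the parabolic Schwarz lemma's differential inequality absorbs the dangerous cross terms $\nabla\tr\cdot\overline{\nabla}u$ after a Young inequality. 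Your idea of twisting $\theta_Y$ by $\ddbar f$ to gain semipositivity is in the right spirit of the decomposition $\alpha=\beta-A\omega_Y$, but the absorption must happen at the level of the test function, not through an a priori gradient bound. Lastly, your remark that the Laplacian estimate is ``equivalent'' to a two-sided scalar-curvature bound and follows from the gradient estimate is only half right: the scalar curvature lower bound gives one direction for free, but the upper bound requires the separate maximum-principle computation on $K_1+100\cH$ that you correctly list as the fallback option---that fallback is in fact the necessary argument, not an alternative.
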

Theorem \ref{main1} is a natural extension of Perelman's estimates (cf. \cite{SeT}) from finite time extinction to general finite time solutions of the K\"ahler-Ricci flow.
As we mentioned earlier, it is very challenging to locate the Ricci vertex as we would hope that the Ricci vertex can be sufficiently close to the pre-image of a given non-regular value of $\Phi$, where the curvature must blow up as $t\rightarrow 1$. We are able to force the Ricci vertices to stay close to a fix fibre $F_q$ for any $q\in Y$ as below, although it is still far from satisfying.
For any $q\in Y$, we can locally embed an open neighborhood $U$ of $q$ into some Euclidean space $\mathbb{C}^M$ with $0$ being the image of $p$. Let $w_1, ..., w_M$ be the standard complex coordinates of $\mathbb{C}^M$. Let $u$ be a Ricci potential associated with $\theta_Y$. We will choose a smooth function $\rho_q$ on $Y$ such that near $q$,
$$ \rho_q = A |w|^2$$
for some suitable $A>0$. Then for any small $\epsilon>0$ such that for all $t$ sufficiently close to $1$,   there exists a Ricci vertex  $p$ at $t$ associated with $\theta'_Y =\theta_Y + \ddbar \rho_q$ such that $p\in \Phi^{-1}(B_{\theta_Y'}(q, \epsilon))$, see Lemma \ref{a'1} for more details. This is certainly not optimal, see Conjecture \ref{maincon3}.

We would also like to conjecture that the Ricci vertex and the $H_{2n}$-center defined by Bamler (cf. \cite{Bam20a}) will be close to each other at the scale of $(1-t)^{1/2}$.
%


\medskip
\subsection{The distance estimate and Harnack estimate}
~
\smallskip

In Perelman's estimates for the scalar curvature and diameter bounds for the Fano K\"ahler-Ricci flow, a contraction argument is employed to prove the diameter bound based on his gradient and Laplacian estimates similar to Theorem \ref{main1}. Unfortunately, this approach does not seem likely to be adapted for general finite time solutions of the K\"ahler-Ricci flow. We have to develop a new scheme in the spirit of the Harnack estimate for the Ricci potential.

Consider the K\"ahler-Ricci flow on a projective manifold $X$ of $\dim_{\mathbb{C}}=n$ as in the previous section. Recall $\Phi: X \rightarrow Y \subset \mathbb{CP}^N$ is the unique surjective holomorphic map induced by the limiting class $\vartheta$. Our goal is to obtain diameter and scalar curvature estimates in a suitable (Type I) neighborhood of $\Phi^{-1}(q)$ for $q\in Y$. For conveniences, we fix a smooth K\"ahler metric $\mathrm{g}_Y$ on the base space $Y$. For example, we can choose $\mathrm{g}_Y$ as a multiple of the Fubini-Study metric of $\mathbb{CP}^N$.

We let
$$\mathcal{B}_{\mathrm{g}_Y}(p, r) = \Phi^{-1} \left( B_{\mathrm{g}_Y}(\Phi(p), r)    \right)$$
be the tubular neighborhood of the fibre containing $p$ with radius with respect to $\mathrm{g}_Y$.

\begin{theorem} \label{cor2}
Let $g(t)$ be a maximal solution of the K\"ahler-Ricci flow (\ref{unkrflow1'1}) on $X\times [0, 1)$ and let $p_t$ be a Ricci vertex associated to a smooth closed $(1,1)$-form $\theta_Y\in \vartheta$ on $Y$. Then for any $A>0$, there exist $C=C(n, g_0, \theta_Y, \mathrm{g}_Y, A)>0$ such that for any $t\in [0, 1)$, if
\begin{equation}\label{cor2a}
\vol\left( \mathcal{B}_{\mathrm{g}_Y}\left( p_t, 10(1-t)^{\frac{1}{2}} \right) , g(t) \right)\leq A (1-t)^{n},
\end{equation}
then
\begin{equation} \label{cor2'1}
\diam \left( \mathcal{B}_{\mathrm{g}_Y}\left( p_{t}, (1-t)^{\frac{1}{2}} \right), g(t) \right)  \leq C(1-t)^{\frac{1}{2}} ,
\end{equation}
\begin{equation} \label{cor2'2}
\sup_{\mathcal{B}_{\mathrm{g}_Y}\left( p_{t}, (1-t)^{\frac{1}{2}} \right) } | \rr( \cdot , t ) | \leq \frac{C}{1-t} ,
\end{equation}
where $\rr(\cdot, t)$ denotes the scalar curvature of $g(t)$. In particular, Conjecture \ref{mainconj} holds on $\mathcal{F}_{\Phi(p)}=\Phi^{-1}(\Phi(p))$, i.e.,
$$
\diam \left( \mathcal{F}_{\Phi(p_t)}, g(t) \right)  \leq C(1-t)^{\frac{1}{2}}, ~\sup_{\mathcal{F}_{\Phi(p_t)} } | \rr( \cdot , t ) | \leq \frac{C}{1-t}.
$$
\end{theorem}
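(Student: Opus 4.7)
The plan is to couple Theorem \ref{main1} and Theorem \ref{cor1} with a parabolic Harnack-type estimate for the Ricci potential $v$, using the volume hypothesis (\ref{cor2a}) to upgrade integral control of $v$ into pointwise bounds. First I rescale at the Type I scale by setting $\tilde g(s) = (1-t)^{-1} g(t + (1-t)s)$, so that $\tilde g(0) = (1-t)^{-1}g(t)$. Since $v(p_t, t) = 1$ and $v \geq 1$ on $X$, Theorem \ref{main1} becomes $|\tilde\nabla \sqrt{v}|_{\tilde g(0)} \leq C$ and $|\Delta_{\tilde g(0)} v| \leq C v$; Theorem \ref{cor1} becomes $|\rr_{\tilde g(0)}| \leq C(1 + D^2)$ on $B_{\tilde g(0)}(p_t, D)$; and the volume hypothesis becomes $\vol_{\tilde g(0)}(\mathcal{B}_{\mathrm{g}_Y}(p_t, 10)) \leq A$.

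The key step is to establish an $L^\infty$ bound
\[
\sup_{\mathcal{B}_{\mathrm{g}_Y}(p_t, 5(1-t)^{1/2})} v(\cdot, t) \leq M = M(A, n, g_0, \theta_Y, \mathrm{g}_Y).
\]
This uses a parabolic Harnack or mean-value inequality for $v$, an integral companion to the differential inequalities of Theorem \ref{main1}: the volume bound (\ref{cor2a}) together with the gradient estimate yields integral control of $v$ on the large tube, and the parabolic Harnack promotes this to the uniform pointwise bound on the smaller tube.

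Once $v$ is bounded on the intermediate tube, the defining equation $\ric(\tilde g(0)) - \tilde g(0) = -\theta_Y - \ddbar v$, combined with the Laplacian bound from Theorem \ref{main1}, yields two-sided Ricci bounds in $\tilde g(0)$ on $\mathcal{B}_{\mathrm{g}_Y}(p_t, 2(1-t)^{1/2})$. Perelman's $\kappa$-noncollapsing (valid by the scalar curvature bound in Theorem \ref{cor1}) together with the volume upper bound (\ref{cor2a}) then forces the $\tilde g(0)$-diameter of the connected tube $\mathcal{B}_{\mathrm{g}_Y}(p_t, (1-t)^{1/2})$ to be bounded, via a standard packing argument along a minimizing geodesic that exploits the connectedness of the tube (inherited from the connectedness of $\Phi$-fibres). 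This gives (\ref{cor2'1}). Estimate (\ref{cor2'2}) then follows from Theorem \ref{cor1} applied on the resulting Type I $g(t)$-geodesic ball around $p_t$ which now contains the smaller tube.

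The hard part is the $L^\infty$ bound on $v$ from the volume hypothesis, which requires a parabolic Harnack companion to the differential inequalities of Theorem \ref{main1}. A secondary technical point is that Theorem \ref{cor1} provides curvature control on intrinsic $g(t)$-geodesic balls rather than on the geometrically more natural tubes $\mathcal{B}_{\mathrm{g}_Y}(p_t, r(1-t)^{1/2})$, so the passage from the $L^\infty$ bound on $v$ to a diameter bound on the tube must carefully convert extrinsic fibrewise information into an intrinsic bound.
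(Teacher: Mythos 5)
Your outline matches the paper at the level of broad structure: the crux is to bound the Ricci potential $v$ on the tube $\mathcal{B}_{\mathrm{g}_Y}(p_t, r(1-t)^{1/2})$ using the volume hypothesis (\ref{cor2a}), after which the diameter bound (\ref{cor2'1}) follows by exactly the packing argument you describe (Perelman $\kappa$-noncollapsing against the volume upper bound), and (\ref{cor2'2}) follows from Theorem \ref{main1}. The paper's Theorem \ref{scanddearv}, of which Theorem \ref{cor2} is stated as an immediate corollary, runs precisely this packing argument once $v$ is pointwise bounded.

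The gap is in your mechanism for getting the $L^\infty$ bound on $v$. You propose a mean-value / Moser-iteration route: ``the volume bound together with the gradient estimate yields integral control of $v$ on the large tube, and the parabolic Harnack promotes this to the uniform pointwise bound.'' This doesn't close. The gradient estimate $|\nabla\sqrt v|\leq C(1-t)^{-1/2}$ gives pointwise control of $v$ only in terms of the intrinsic $g(t)$-distance to $p_t$ (that is exactly Theorem \ref{cor1}), and the intrinsic diameter of the tube is unknown --- indeed it is the very quantity (\ref{cor2'1}) is asserting a bound for. So ``integral control on the tube'' is not available a priori. Moreover, a local mean-value inequality for $\Delta v$ on the tube would require either a Sobolev inequality localized to the tube or curvature control there, and the scalar curvature bound near $p_t$ is only with quadratic growth in intrinsic distance --- circular again. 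The paper avoids this circularity with a genuinely different Harnack argument (Theorem \ref{main2}), a counting estimate requiring \emph{no} curvature assumption: partition the range of $\ln v$ into $O(1)$-sized intervals, pick for each interval a point in the connected set $U$ achieving it, pass to the associated $\ell_n$-centers and $H_n$-centers a short time back, show via the gradient/time-derivative bound and $W_1$-distance estimates that these centers are mutually $\gtrsim(1-t)^{1/2}$ separated, and invoke Bamler's volume noncollapsing around $H_n$-centers (Lemma \ref{hcv}, which needs only a scalar curvature lower bound and the entropy bound) to show each contributes definite volume; the volume cap (\ref{cor2a}) then bounds the number of intervals. The auxiliary Lemmas \ref{zball'3} and \ref{zball'6} use the parabolic Schwarz lemma and a cutoff supported on the tube to confirm these center-balls and $\mathcal{L}$-geodesics stay inside the tube, which is the precise resolution of the extrinsic-to-intrinsic difficulty you flag at the end. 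You correctly identified where the difficulty lies, but a new idea --- the $H_n$-center counting argument --- is needed to resolve it, and this is absent from your sketch.
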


Theorem \ref{cor2} says that a Type I volume bound around the Ricci vertex will imply a Type I diameter bound. Similarly to Remark \ref{scbonPnbhd},  the diameter estimate (\ref{cor2'1}) and scalar curvature estimate (\ref{cor2'2}) hold not only in the time-slice under consideration, but in both the usual parabolic neighborhood and the $P^*$-parabolic neighborhood based at the Ricci vertex. We refer the readers for more details in Theorem \ref{scanddearv}. In this paper, the diameter for a subset $U$ of $(X, g)$ is calculated in the ambient space $(X, g)$ as the extrinsic diameter.

When $X$ is a Fano manifold and $g_0\in c_1(X)$, the volume assumption (\ref{cor2a}) holds automatically because the base space $Y$ is a single point and the total volume of $(X, g(t))$ is the topological intersection number $[g(t)]^n$ with $(1-t)^n$ decay. Therefore Theorem \ref{cor2} provides a new proof for Perelman's diameter and scalar curvature estimates.  We will also verify the condition (\ref{cor2a}) in the case of Fano bundles. As a consequence, we obtain the Type-I diameter and scalar curvature estimates around the fibers of the Ricci vertex, see Section \ref{Fanobundle}.

Theorem \ref{cor2} is a consequence of the following general Harnack estimate on Ricci flow background.
\begin{theorem}\label{main2}
Let $(M,(g(t))_{t\in [0, 1)})$ be a Ricci flow on compact Riemannian manifold of $\dim_{\mathbb{R}} M=n$. Let $$v\in M\times [0, 1)\to \mathbb{R}^+$$ be a positive $C^1$-function.
Given any $\delta>0$, $B>0$, there exists  $\bar{\delta} = \bar{\delta}(n, B)>0$ such that whenever $\delta\in (0, \bar{\delta})$,  there exists  $C=C(n, B, \delta)>0$ such that the following holds. Suppose
\begin{enumerate}
\item $\nu(g(0), 2) \geq -B$;
\medskip

\item $  |\partial_t \ln v | + |\nabla \ln v|^2  \leq B/(1-t) $ on $M\times [0, 1)$ ;
\medskip

\item  $t_0\in [1/2, 1)$ and  $V$ is an open set  of $M$ with  $\vol_{g(t_0)}(V)\leq B (1-t_0)^{\frac{n}{2}}$ ;

\medskip

\item $U$ is a connected open subset  of $M$ and  for any $x\in U$, there exists an $H_n$-center $(z, t_0)$ of $(x, t_0 + \delta^2(1-t_0))$ with $B_{g(t_0)} \left( z, \sqrt{2H_n}\delta(1-t_0)^{1/2} \right) \subset V .$
\end{enumerate}
Then we have
\begin{equation}\label{harin}
\frac{\sup_{U} v ( \cdot, t_0 )}{ \inf_{U} v ( \cdot, t_0 ) }  \leq C.
\end{equation}
\end{theorem}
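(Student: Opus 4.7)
The plan is to reduce the Harnack estimate (\ref{harin}) to a uniform $g(t_0)$-diameter bound on $U$ of order $(1-t_0)^{1/2}$, at which point the gradient part of hypothesis (2) yields (\ref{harin}) at once. Throughout, set $t^*:=t_0+\delta^2(1-t_0)$ and write $\mu_x:=\nu_{x,t^*}$ for the conjugate heat kernel probability measure on the $t_0$-slice based at $(x,t^*)$. For each $x\in U$, Markov's inequality applied to the variance bound $\int d^2_{g(t_0)}(z_x,\cdot)\,d\mu_x\leq H_n(t^*-t_0)$ for an $H_n$-center $z_x$ of $(x,t^*)$ gives $\mu_x(B_{g(t_0)}(z_x,\sqrt{2H_n(t^*-t_0)}))\geq 1/2$, so by hypothesis (4), $\mu_x(V)\geq 1/2$. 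On the other hand, the entropy bound (1) combined with Bamler's Gaussian upper bound on the conjugate heat kernel yields the exterior tail estimate
\[
\mu_x\bigl(\{y: d_{g(t_0)}(x,y)\geq L(t^*-t_0)^{1/2}\}\bigr)\leq C_1(B)\,e^{-L^2/C_1(B)},\qquad L\geq 1,
\]
and these two are compatible only when $d_{g(t_0)}(x,V)\leq L_0(B)\delta(1-t_0)^{1/2}$ for a fixed $L_0$ with $C_1e^{-L_0^2/C_1}<1/2$; in particular $z_x\in V$ can be chosen with $d_{g(t_0)}(x,z_x)\leq L_0(B)\delta(1-t_0)^{1/2}$.

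Next, I would bound the geometric size of $V$. Perelman's $\kappa$-noncollapsing at scales $\leq(1-t_0)^{1/2}$ (which follows from (1)) forces $\vol_{g(t_0)}(B_{g(t_0)}(p,\delta(1-t_0)^{1/2}))\geq \kappa(B)\delta^n(1-t_0)^{n/2}$ for every $p\in V$. Combined with $\vol_{g(t_0)}(V)\leq B(1-t_0)^{n/2}$ from (3), a Vitali covering argument yields that $V$ is contained in a union of at most $N=N(n,B,\delta)$ balls of radius $\delta(1-t_0)^{1/2}$, so each connected component of $V$ has $g(t_0)$-diameter at most $C_2(n,B,\delta)(1-t_0)^{1/2}$. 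Moreover, Bamler's $W_1$-continuity of the map $x\mapsto\mu_x$ together with the connectedness of $U$ forces all the concentration balls $B_{g(t_0)}(z_x,\sqrt{2H_n(t^*-t_0)})$, $x\in U$, to lie in one and the same connected component $V_0\subset V$: were this to fail, one would obtain by continuity a point $x\in U$ at which $\mu_x$ assigns at least $1/2$ the mass to each of two distinct components, contradicting $\mu_x$ being a probability measure.

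Combining the two steps, for every $x\in U$ we have $d_{g(t_0)}(x,V_0)\leq L_0(B)\delta(1-t_0)^{1/2}$, so
\[
\diam_{g(t_0)}(U)\leq \diam_{g(t_0)}(V_0)+2L_0(B)\delta(1-t_0)^{1/2}\leq C_3(n,B,\delta)(1-t_0)^{1/2}.
\]
The gradient bound in (2) then gives $\osc_U\ln v(\cdot,t_0)\leq \sqrt{B/(1-t_0)}\cdot\diam_{g(t_0)}(U)\leq C(n,B,\delta)$, which is exactly (\ref{harin}). The main obstacle is establishing Bamler's Gaussian concentration of $\mu_x$ around $x$ in the $g(t_0)$-metric (rather than only around the $H_n$-center $z_x$), since this is what converts the small-volume hypothesis on $V$ into the geometric localization $d_{g(t_0)}(x,V)\lesssim\delta(1-t_0)^{1/2}$; it relies on the sharp Gaussian heat kernel upper bound under the entropy assumption, together with a comparison between $g(t_0)$ and $g(t^*)$ on the scale $(t^*-t_0)^{1/2}$. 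The smallness threshold $\bar\delta(n,B)$ enters precisely here, to make the Gaussian tail dominate the first-step mass bound and to close up both the covering count $N(n,B,\delta)$ and the resulting oscillation constant.
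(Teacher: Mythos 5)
There is a genuine gap, and you actually flag it yourself at the end: the Gaussian concentration of $\mu_x=\nu_{x,t^*;t_0}$ around $x$ in the $g(t_0)$-metric is not available under the theorem's hypotheses, and the argument cannot be repaired in that direction. Bamler's Gaussian upper bound (Lemma 7.2 of \cite{Bam20a}, and Lemma 3.5 in this paper) is centered at the $H_n$-center $z_x$, not at $x$; concentration of $\mu_x$ around $x$ would require a bound of the form $d_{W_1}^{g(t_0)}(\nu_{x,t^*;t_0},\delta_x)\lesssim\delta(1-t_0)^{1/2}$, which by Lemma \ref{W1balongwl} needs a scalar curvature upper bound at $x$ over the time interval --- and the paper emphasizes that Theorem \ref{main2} makes \emph{no} curvature assumption. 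For the same reason, the claim that $\vol_{g(t_0)}(B_{g(t_0)}(p,\delta(1-t_0)^{1/2}))\geq\kappa\delta^n(1-t_0)^{n/2}$ for \emph{every} $p\in V$ is not justified: Perelman's $\kappa$-noncollapsing needs local curvature control, and Bamler's non-collapsing (Lemma \ref{hcv}) is available only at $H_n$-centers, not at arbitrary points of $V$. As a consequence, your reduction to $\diam_{g(t_0)}(U)\leq C(1-t_0)^{1/2}$ is attempting to prove something strictly stronger than what the hypotheses can give; under only (1)--(4) the $g(t_0)$-diameter of $U$ is genuinely uncontrolled, and the theorem's conclusion --- a Harnack bound on $v$, not a diameter bound on $U$ --- reflects precisely this.

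The paper's proof avoids this trap by never controlling $d_{g(t_0)}(x,z_x)$ at all. It is a pigeonhole argument: using connectedness of $U$ and the intermediate value theorem it produces $N$ points $x_\ell\in U$ at levels $A_\ell=\inf_U\ln v+10\ell$; assumption (2) transports the level information forward to time $t^*=t_0+\delta^2(1-t_0)$; Perelman's $\ell_n$-centers $y_\ell$ at time $t_0$ are reached from $(x_\ell,t^*)$ along short $\mathcal{L}$-geodesics, along which (2) again controls the change in $\ln v$ (Lemma \ref{xy}); the $y_\ell$ are then pairwise $B^{-1/2}r_0$-separated by the gradient bound (Lemma \ref{yball}), the $H_n$-centers $z_\ell$ lie within $C_0\delta r_0$ of $y_\ell$ by a $W_1$-triangle inequality (Lemma \ref{zball'1}), and Bamler's non-collapsing at $H_n$-centers (Lemma \ref{hcv}, which needs only a scalar curvature lower bound, automatic for $t_0\geq 1/2$) forces a volume lower bound that contradicts $\vol(V)\leq B r_0^n$ unless $N\leq C$. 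The essential point you should take away is that the only distances ever compared at time $t_0$ are between $y_\ell$ and $z_\ell$ (both measured against the same conjugate heat measure $\nu_{x_\ell,t^*;t_0}$), never between $x_\ell$ and any time-$t_0$ point; the link to $x_\ell$ is made through the space-time $\mathcal{L}$-geodesic, not through the time-$t_0$ metric. This is exactly what keeps the argument free of curvature assumptions on the background flow.
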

We shall highlight that in this theorem, we do not  make any assumption on the curvature of the Ricci flow, hence we can apply this theorem when we a priori do not have any control on the curvature of the background Ricci flow.

Condition (4)  of Theorem \ref{main2} is in fact a natural assumption in order to obtain the Harnack estimate as it holds under the assumption of Theorem \ref{cor2} for finite time solutions of the K\"ahler-Ricci flow. One of the key ingredients in the proof of Theorem \ref{main2} is  Bamler's volume non-collapsing estimate near the $H_n$-center \cite{Bam20a} by only assuming the scalar curvature lower bound and the entropy bound.
%
%


\medskip

\section{Applications}\label{app}

We will apply our estimates in Section \ref{be} to both general and special finite time solutions of the K\"ahler-Ricci flow.  Let $g(t)$ be the maximal solution of the K\"ahler-Ricci flow (\ref{unkrflow1'1}) on  $X\times [0, 1)$, where $X$ is a projective manifold of $\dim X =n$. We will use the same notations as in Section \ref{intro}. Let $\vartheta = \lim_{t\rightarrow 1} [ g(t)] $ be the limiting cohomology class at the singular time $T=1$. The unique surjective holomorphic map induced by $\vartheta$ is given by
$$\Phi: X \rightarrow Y, $$
where $Y$ is a projective normal variety of $0\leq \dim Y \leq n$.


\medskip
\subsection{Type I blow-up limits}\label{Analyticstr}

~

\smallskip
In this section, we will prove a compactness theorem for Type I blow-up limits of finite time solutions of the K\"ahler-Ricci flow based at the Ricci vertices.

Consider any sequence of times $t_i\nearrow 1$. Let $(M_i, (g_{i,t})_{t\in [-T_i , 0]})$ be the flows arise from $(X, (g(t))_{t\in [0, 1) })$ by setting
$$ M_i:=X,~~~ g_{i,t}:=(1-t_i)^{-1}g((1-t_i)t+t_i),~~~ t\in [-T_i , 0], $$
where $T_i=t_i/(1-t_i)\to \infty$ as $i\to\infty$.  The $\bF$-convergence of $(M_i, (g_{i,t})_{t\in [-T_i , 0]})$ is established in \cite{Bam20b} based at the $H_{2n}$-centers. By \cite{Bam20b}, let $p_i\in X$ be any base-point, after possibly passing to a subsequence, we can obtain the $\bF$-convergence
\begin{equation}\label{FcorKRF'1}
(M_i, (g_{i,t})_{t\in [-T_i , 0]}, (\nu_{p_i,0; t})_{t\in [-T_i , 0]}) \xrightarrow[i\to\infty]{\bF,\CCC}  (\cX, (\nu_{p_\infty; t})_{t\in (-\infty , 0]})
\end{equation}
within some correspondence $\CCC$, where $\cX$ is a future continuous and $H_{2n}$-concentrated metric flow of full support over $(-\infty , 0]$.

According to \cite{Bam20c}, we can decompose $\cX$ into it's regular and singular part
\begin{equation}\label{rsd'2}
\cX= \cR \sqcup \cS,
\end{equation}
where $\cR$ is dense open subset of $\cX$. Also, $\cR$ carries the structure of a Ricci flow spacetime $(\cR, \mathfrak{t}, \partial_{\mathfrak{t}}, g)$. For any $t\in (-\infty, 0)$, $\cR_t=\cX_t\cap\cR$, we have $(\cX_t, d_t)$ is the metric completion of $(\cR_t, g_t)$.

Our main result is that, based at the Ricci vertices of a given $\theta_Y\in \vartheta$ on $Y$, we establish the Gromov-Hausdorff convergence for $(X_i, (g_{i,t})_{t\in [-T_i , 0]})$, and identify each limiting time-slice with an analytic normal variety instead of a metric space.

\begin{theorem} \label{2main1}
Let $\theta_Y\in \vartheta$ be any smooth closed $(1,1)$-form on $Y$, and let $p_i\in X$ be a Ricci vertex associated with $\theta_Y$ at $t_i$. Then $\cX$ satisfying the following properties.

\begin{enumerate}


 \item Passing to a subsequence, for each $t\in (-\infty, 0]$, $(X, d_{g_{i, t}}, p_i)$ converge in pointed Gromov-Hausdorff sense to $(\cX_t, d_t, q_t)$ for some $q_t\in \cX_{t}$, where $d_t$ is metric induced by $g_t$ on $\cR_t$.

 \smallskip

 \item For each $t\in (-\infty, 0]$, $(\cX_t, d_t)$ is an analytic normal variety of complex dimension $n$,  whose analytic singular set coincides with $\cS_t$.

\end{enumerate}
\end{theorem}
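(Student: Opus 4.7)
The plan is to combine the Type I scalar curvature bounds of Theorem \ref{cor1} and Remark \ref{scbonPnbhd} with Bamler's compactness and regularity theory for Ricci flows, and a partial $C^0$ estimate for the rescaled K\"ahler-Ricci flows based at the Ricci vertices.

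First, I would use Remark \ref{scbonPnbhd} to obtain, on each rescaled flow $(M_i,(g_{i,t}))$, a uniform bound on $\rr$ over parabolic and $P^*$-parabolic neighborhoods of $(p_i,0)$ of any fixed size. Combined with Perelman's $\kappa$-non-collapsing and a uniform lower bound on the Nash entropy, Bamler's heat-kernel bounds force the $H_{2n}$-centers used in the $\bF$-convergence (\ref{FcorKRF'1}) to remain at uniformly bounded $g_{i,t}$-distance from $p_i$ on every compact subinterval of $(-\infty,0]$. Thus the base-points of the $\bF$-convergence may be effectively identified with the Ricci vertices $p_i$, so that the $\bF$-limit $\cX$ and its regular/singular decomposition $\cR\sqcup\cS$ are realized from the viewpoint of $p_i$.

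Second, I would upgrade the $\bF$-convergence to pointed Gromov-Hausdorff convergence of each time slice $(X,d_{g_{i,t}},p_i)\to(\cX_t,d_t,q_t)$. The general mechanism in Bamler's framework asserts that, on a time-slice where the rescaled flow is non-collapsed and has uniformly bounded scalar curvature on bounded neighborhoods of the base-point, $\bF$-convergence implies pointed Gromov-Hausdorff convergence of that slice, with $d_t$ identified as the metric completion of $(\cR_t,g_t)$. The hypotheses at every $t\in(-\infty,0]$ are furnished by the first step, which yields part (1) with $q_t$ the limit of the identified base-points in the time-$t$ slice of $\cX$.

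Third, for the analytic normal variety structure of $(\cX_t,d_t)$, I would apply a partial $C^0$ estimate to the rescaled flows. Consider the line bundle $L_i=K_{M_i}\otimes\Phi^*H$ with $H$ an ample bundle on $Y$, endowed with the evolving Hermitian metric determined by $g_{i,t}$ and our choice of $\theta_Y$; the curvature of this metric is controlled on parabolic neighborhoods of $(p_i,0)$ by the Li-Yau type bounds of Theorem \ref{main1}. H\"ormander $L^2$-estimates then produce holomorphic sections of $L_i^{\otimes k}$, for some fixed $k$ independent of $i$, with uniform $C^0$ bounds giving equidimensional holomorphic embeddings $\Psi_{i,k}:M_i\to\mathbb{CP}^{N_k}$. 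Passing to the limit on compact subsets of $\cR_t$, and invoking Remmert's proper mapping theorem together with the complex codimension-two bound on the singular stratum of a non-collapsed K\"ahler Ricci flow limit, one obtains a normal projective analytic subvariety of complex dimension $n$ canonically homeomorphic to $(\cX_t,d_t)$. To identify $\cS_t$ with the analytic singular set, note that points of $\cR_t$ are smooth in the Ricci-flow-spacetime sense, hence $\Psi_{\infty,k}$ is smooth immersive there and they land in the analytic smooth locus; conversely, near any analytic smooth point the partial $C^0$ estimate gives smooth local convergence of the rescaled metrics, so Bamler's $\varepsilon$-regularity places the point in $\cR_t$. The main obstacle is this third step: establishing a uniform partial $C^0$ estimate along the rescaled flows based at Ricci vertices without a fixed polarization, which forces one to exploit the Ricci-vertex condition and the Li-Yau bounds of Theorem \ref{main1} to produce enough uniformly $L^2$-bounded holomorphic sections in the collapsing regime $\dim Y<n$.
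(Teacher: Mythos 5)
Your overall architecture matches the paper's: Part (1) is obtained by showing that locally bounded scalar curvature (via a Ricci-potential barrier controlled by the Li--Yau estimates) plus non-collapsing upgrades Bamler's $\bF$-convergence to pointed Gromov--Hausdorff convergence of each time slice based at $p_i$; Part (2) is obtained by a partial $C^0$ estimate for the rescaled flows following Tian and Donaldson--Sun. However, there are two genuine gaps.

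First, the line bundle in your Part (2) has the wrong sign on the canonical contribution. You propose $L_i = K_{M_i}\otimes\Phi^*H$. Since the rescaled K\"ahler class is $[g_{i,t}] = (1-t)\bigl[-K_X + (1-t)^{-1}(1-t_i)^{-1}L_Y\bigr]$ (the factor $-K_X$ survives the normalization because the limiting class is $\vartheta = [g_0]+[K_X]$, semi-ample but not big), the correct approximately polarizing bundle is $L_i = -K_X + m_i L_Y$ with $m_i \approx (1-t)^{-1}(1-t_i)^{-1}$, as in the paper. With your choice, the restriction of $L_i$ to any Fano fibre $\mathcal{F}_q$ is $K_{\mathcal{F}_q}\otimes\mathcal{O}$, which is negative on the collapsing directions, so there is no positively curved Hermitian metric on $L_i^{\otimes k}$ and H\"ormander's $L^2$-estimate degenerates exactly where you need sections. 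The $-K_X$ factor is what makes $\ric(h_i)$ proportional to $g_{i,t}$ as a current, and it is also what the twist $\tilde h = \hat h e^{-v_i}$ by the Ricci potential is calibrated against, so that $\ric(\tilde h)$ compensates the missing Ricci lower bound in the Bochner term; you allude to needing the Ricci potential but the present setup cannot produce the required positivity.

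Second, the step ``the general mechanism in Bamler's framework asserts that $\bF$-convergence on a slice with bounded scalar curvature near the base-point implies pointed Gromov--Hausdorff convergence of that slice'' is not an off-the-shelf theorem. Bamler's $\bF$-limits are only known to be Gromov--$W_1$ limits, and to upgrade to Gromov--Hausdorff one needs a uniform lower density bound of the conjugate heat measure $\nu_{p_i,0;t_0}$ on small balls around all nearby points $y_0$, which is then fed into Hallgren's criterion. That density bound does not follow from controlling the distance of $(p_i,0)$ to $H_{2n}$-centers as you propose; it requires a local heat kernel lower bound (Proposition \ref{hklb}) together with a short-time distance distortion estimate (Proposition \ref{stdde}), both of which must themselves be derived from the barrier framework (propagation through $P^*$-parabolic neighborhoods, $\mathcal{L}$-geodesic control, Perelman Harnack, and the gradient bound on the heat kernel logarithm). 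Your proposal treats this as known; it is precisely the technical core of the paper's proof of Part (1) via Theorem \ref{mainforRF2}. Relatedly, your claim that the partial $C^0$ estimate ``gives smooth local convergence of the rescaled metrics'' near an analytically smooth point overstates what the partial $C^0$ estimate yields; the paper passes from local holomorphic charts to $C^0$ and then $C^\alpha$ equivalence of the metrics by using the complex Monge--Amp\`ere equation together with the uniform $C^1$-bound on $v_{i,t}$ and Schauder theory, which is the substance of Lemma \ref{recoin}.
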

In particular, each time-slice is a singular space (in the sense of \cite{Bam18}) with the singular set of Minkowski co-dimension $\geq 4$ and the corresponding complex singularities are at worst log terminal. It is further proved in \cite{HJST} that $(\cX_t, d_t)$ is equi-continuous in  $t$ with respect to Gromov-Hausdorff distance and Gromov-$W_1$-distance.

\begin{conjecture}
In Theorem \ref{2main1}, $(\cX_t, d_t)$ is a quasi-projective normal variety for each $t\in (-\infty, 0]$. Furthermore, by choosing suitable Ricci vertices, $\cX$ is a complete K\"ahler-Ricci soliton.
%
%
\end{conjecture}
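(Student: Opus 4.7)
The plan is to transfer the local Type I scalar curvature bound of Theorem~\ref{cor1} (extended in Remark~\ref{scbonPnbhd} to parabolic and $P^*$-parabolic neighborhoods) to the rescaled flows $(X, \tilde g_{i,t})$ with $\tilde g_{i,t} = (1-t_i)^{-1} g((1-t_i)t + t_i)$, and then combine Bamler's $\bF$-convergence machinery with Donaldson-Sun style analytic techniques to produce the complex analytic structure on each time-slice.

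First I would set up non-collapsing on the rescaled flows. After the parabolic rescaling, Theorem~\ref{cor1} together with Remark~\ref{scbonPnbhd} yield $|R_{\tilde g_{i,t}}| \leq C(D,A)$ uniformly on every parabolic neighborhood $P(p_i,0; D, -A)$ and on the corresponding $P^*$-parabolic neighborhood. The initial entropy bound $\nu(g_0,2) \geq -B$ transfers by Perelman's monotonicity to a uniform entropy lower bound at bounded scales for the rescaled flows. Combining the scalar curvature bound with the entropy bound, Bamler's non-collapsing and non-inflating estimates from \cite{Bam20a} imply uniform volume non-collapsing and non-inflating on every fixed parabolic region based at $(p_i, 0)$.

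For part (1), the $\bF$-convergence (\ref{FcorKRF'1}) is already supplied by \cite{Bam20b}. Using the uniform bound on $|R|$ together with the volume non-collapsing, I would invoke Bamler's $\varepsilon$-regularity theorem from \cite{Bam20a} to upgrade the convergence to smooth Cheeger-Gromov convergence on every compact subset of the regular set $\cR$. Since by \cite{Bam20c} each $(\cX_t, d_t)$ is the metric completion of $(\cR_t, g_t)$ and its singular set $\cS_t$ has Minkowski codimension at least four, the smooth convergence on $\cR$ combined with the $\bF$-convergence upgrades to pointed Gromov-Hausdorff convergence of every time-slice, with the base point $q_t\in\cX_t$ obtained as the $\bF$-limit of $p_i$ under the correspondence $\CCC$.

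For part (2), I would adapt the Donaldson-Sun construction to the K\"ahler-Ricci flow following the lines of \cite{TiZXH1, TiZXH2, Bam18}. Using the scalar curvature bound, the volume non-collapsing, and the Li-Yau type bounds on the Ricci potential in Theorem~\ref{main1}, one should establish a partial $C^0$-estimate along the rescaled flows with respect to the twisted line bundle $(1-t_i)K_X + [g_0]$ pulled back by the rescaling. This would produce uniform holomorphic embeddings $X \hookrightarrow \mathbb{CP}^{N_k}$ for a single sufficiently large $k$, whose images converge to a normal projective variety $\cV_t$ homeomorphic to $(\cX_t, d_t)$; the Minkowski codimension four condition then forces the analytic singular locus of $\cV_t$ to coincide with $\cS_t$ and to consist of log terminal singularities. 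The main obstacle is this last step: running the Donaldson-Sun scheme in the drifting class $\vartheta$ rather than in a fixed polarization. One must show that a uniformly high power of $(1-t_i)K_X + [g_0]$ admits enough peak sections at every $H_{2n}$-center to separate points and tangent vectors uniformly in $i$, which requires H\"ormander $L^2$-estimates that depend crucially on the Type I scalar curvature bound around the Ricci vertex furnished by Theorem~\ref{cor1}.
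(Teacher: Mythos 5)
This statement is labeled a \emph{conjecture} in the paper, and the paper does \emph{not} supply a proof: it is explicitly left open as a strengthening of Theorem~\ref{2main1}. So there is no ``paper's proof'' against which to check your argument, and the first thing to say is that your proposal should not be expected to succeed by the methods currently available.

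Beyond that, your ``part (1)'' and ``part (2)'' in fact reconstruct the two conclusions of Theorem~\ref{2main1} (pointed Gromov--Hausdorff convergence, and analytic normality of each time-slice), which is proved in the paper in Sections~\ref{hkanddde} and~\ref{cgc}. Your proposal does not actually address either of the two strictly stronger claims of the conjecture. (i)~\emph{Quasi-projectivity}. The paper's Donaldson--Sun argument in Section~\ref{cgc} is local: using the weighted hermitian metric $\tilde h = \hat h\,e^{-v_i}$ and the Li--Yau bounds it constructs local peak sections and local embeddings, which yield Proposition~\ref{normana} (analytic normal variety) but not a global embedding of the non-compact limit $\cX_t$ into projective space. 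Your step ``whose images converge to a normal projective variety $\cV_t$'' cannot be right as stated, since $\cX_t$ is non-compact whenever $\dim Y > 0$ (e.g. for Fano bundles, where Corollary~\ref{mainbund} identifies the base limit as $\mathbb{C}^m$). Quasi-projectivity would require controlling sections at infinity and proving finite-dimensionality and global generation of the relevant section spaces on a complete non-compact limit, none of which follows from the local partial $C^0$-estimate. (ii)~\emph{Soliton structure}. Your proposal says nothing about why $\cX$, for suitable Ricci vertices, should be a complete K\"ahler--Ricci soliton. This is the hardest part: it would essentially require showing that Perelman-type entropy (or Nash entropy) attains its limit along the chosen blow-up sequence and that monotonicity forces rigidity, none of which is set up by the scalar curvature, non-collapsing, or Donaldson--Sun arguments you invoke. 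As such, your proposal does not close either of the two genuine gaps that make this a conjecture rather than a theorem.
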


\begin{remark}
For a given point $q\in Y$, if we can verify Conjecture \ref{maincon3} and Conjecture \ref{maincon2} for $q$, then for any sequence of basepoints $p_i\in \Phi^{-1}(q)$, the limiting metric flow $\cX$ in Theorem \ref{2main1} is a shrinking metric soliton on $\cX=\cX_{-1}\times (-\infty, 0]$.
\end{remark}
%


\medskip
\subsection{Collapsing K\"ahler-Ricci flow on Fano bundles}\label{Fanobundle}

~

\smallskip
In this section, we will consider the finite time collapsing solution of the K\"ahler-Ricci flow. Let $g(t)$ be the maximal solution of the K\"ahler-Ricci flow (\ref{unkrflow1'1}) on $X \times [0, 1)$, where $X$ is a projective manifold of $\dim_{\mathbb{C}}=n$.  Let $$\Phi: X \rightarrow Y$$ be the surjective holomorphic map induced by the limiting class $\vartheta=\lim_{t\rightarrow 1} [g(t)]$.
We will consider the special case satisfying the following.

\begin{enumerate}

\item $0\leq m=\dim_{\mathbb{C}} Y<n$.

\medskip

\item The general fibers of $\Phi: X\rightarrow Y$ are biholomorphic.

\end{enumerate}

Condition (1) implies that $X$ is a Fano fibration over $Y$, where the general fibre is a Fano manifold of complex dimension $n-m$. Condition (2) implies that $X$ is a Fano fibre bundle over a Zariski open dense subset of $Y$. When $Y$ is a point, $X$ is a Fano manifold itself. We let $Y^\circ$ be the set of regular values of $\Phi$. Then $Y^\circ$ is a  Zariski open dense subset of $Y$ and
$$X^\circ=\Phi^{-1}(Y^\circ)$$
is a smooth Fano fiber bundle over $Y^\circ$.

The following diameter and scalar curvature estimates for the collapsing solution $g(t)$ are established around the Ricci vertex.

\begin{theorem} \label{app3}
Let $g(t)$ be the maximal solution of the K\"ahler-Ricci flow (\ref{unkrflow1'1}) on $X\times [0, 1)$ as described above. For any open subset $U\subset\subset Y^\circ$, there exists a smooth closed $(1,1)$-form $\theta_Y\in \vartheta$ on $Y$, such that the followings hold for all $t\in [0, 1)$ .

\smallskip

\begin{enumerate}

\item There exists a Ricci vertex $p_t \in \Phi^{-1}(U) $ associated to $\theta_Y$ at time $t$.

\medskip

\item There exists $C=C(n, g_0, \theta_Y, U)>0$ such that
\begin{equation} \label{app3'1}
\diam \left( \mathcal{F}_{\Phi(p_t)}   , g(t) \right)  \leq C(1-t)^{1/2} ,
\end{equation}
\begin{equation} \label{app3'2}
\sup_{ \mathcal{F}_{\Phi(p_t)} } |\rr ( \cdot , t) |\leq \frac{C}{1-t},
\end{equation}
where $\mathcal{F}_{\Phi(p_t)}= \Phi^{-1}(\Phi(p_t))$ is the fibre of $\Phi: X \rightarrow Y$ that contains $p_t$.
\end{enumerate}
\end{theorem}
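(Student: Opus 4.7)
The plan is to verify the hypotheses of Theorem \ref{cor2} for a well-chosen $\theta_Y$; the theorem then supplies the diameter and scalar curvature bounds at once. I must (i) produce a Ricci vertex $p_t\in\Phi^{-1}(U)$ for every $t\in[0,1)$ and (ii) verify the Type I volume bound (\ref{cor2a}) at this Ricci vertex.

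\emph{Step 1: localizing the Ricci vertex.} Fix any $q_0\in U$ and use Lemma \ref{a'1} to construct a smooth function $\rho_{q_0}$ on $Y$ (locally quadratic near $q_0$ in an embedding of $Y$ into $\mathbb{C}^M$) such that the Ricci vertex associated to $\theta_Y+\ddbar\rho_{q_0}$ lies in $\Phi^{-1}(B_{\theta_Y+\ddbar\rho_{q_0}}(q_0,\varepsilon))\subset\Phi^{-1}(U)$ for all $t$ sufficiently close to $1$, provided $\varepsilon$ is small. For $t\in[0,t_0]$ bounded away from $1$, the smooth flow has uniform oscillation bounds on the unperturbed Ricci potential, so an additional auxiliary potential $\sigma\in C^\infty(Y)$ concentrated inside $U$ of sufficiently large amplitude can be added to $\rho_{q_0}$ to force the Ricci vertex into $\Phi^{-1}(U)$ on this time range as well. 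Relabeling $\theta_Y:=\theta_Y+\ddbar(\rho_{q_0}+\sigma)$ guarantees condition (1) for all $t\in[0,1)$.

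\emph{Step 2: the Type I volume bound.} Pick $U\subset\subset V\subset\subset Y^\circ$. On $\Phi^{-1}(V)$, the smooth Fano fibre bundle collapsing estimates of \cite{SSW, FZ, TZy} yield a pointwise upper bound $g(t)\leq C((1-t)\omega_0+\chi)$, where $\chi=\Phi^*\chi_Y$ is a fixed smooth semi-positive representative of $\vartheta$ pulled back from $Y$. Expanding $((1-t)\omega_0+\chi)^n$ binomially, all terms with $k<n-m$ vanish because $\chi_Y^{n-k}=0$ on $Y$ when $n-k>m=\dim_{\mathbb{C}}Y$, so the leading surviving order is $(1-t)^{n-m}$. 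Integrating over $\Phi^{-1}(B)$ and pushing down to $Y$, using that $\Phi_*(\omega_0^{n-m})$ is a bounded smooth $(m,m)$-form on $V$ by the biholomorphic-fibre assumption (2), we obtain $\int_{\Phi^{-1}(B)}g(t)^n\leq C(1-t)^{n-m}\vol(B,\mathrm{g}_Y)$. Choosing $B=B_{\mathrm{g}_Y}(\Phi(p_t),10(1-t)^{1/2})$ gives $\vol(B,\mathrm{g}_Y)\leq C(1-t)^m$ and hence $\vol(\mathcal{B}_{\mathrm{g}_Y}(p_t,10(1-t)^{1/2}),g(t))\leq C(1-t)^n$, which is precisely (\ref{cor2a}).

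\emph{Step 3: conclusion and main obstacle.} Both hypotheses of Theorem \ref{cor2} now hold, so its conclusions (\ref{cor2'1})--(\ref{cor2'2}) yield $\diam(\mathcal{B}_{\mathrm{g}_Y}(p_t,(1-t)^{1/2}),g(t))\leq C(1-t)^{1/2}$ and the scalar curvature bound $C/(1-t)$ on the same neighborhood; since $\mathcal{F}_{\Phi(p_t)}\subset\mathcal{B}_{\mathrm{g}_Y}(p_t,(1-t)^{1/2})$, this gives (\ref{app3'1}) and (\ref{app3'2}). The most delicate point is Step 1: Lemma \ref{a'1} only localizes the Ricci vertex for $t$ near the singular time, while on the bounded interval $[0,t_0]$ we need quantitative oscillation bounds on the Ricci potential of the smooth flow in order to design the global amplitude of $\sigma$. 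Step 2 is a routine cohomological computation, but it hinges on the smoothness of the Fano fibre bundle structure over $V$ (whence the assumption $U\subset\subset Y^\circ$) together with the biholomorphic-fibre hypothesis (2), without which the pushforward form $\Phi_*(\omega_0^{n-m})$ could fail to be smooth and uniformly bounded.
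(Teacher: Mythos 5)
Your overall strategy --- locate the Ricci vertex inside $\Phi^{-1}(U)$ with Lemma \ref{a'1}, verify the Type I volume hypothesis (\ref{cor2a}), and then invoke Theorem \ref{cor2} --- matches the paper's proof (which proceeds through Theorem \ref{scanddeonFanobundle} and Lemma \ref{vbonFanobundle}). Step 3 is fine and Step 1 works, but Step 2 contains a genuine gap.

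The pointwise metric bound $g(t)\leq C\bigl((1-t)\omega_0+\chi\bigr)$ that you attribute to \cite{SSW, FZ, TZy} is not supplied by those references, and it is in fact strictly stronger than the theorem you are trying to prove: restricting it to a fibre $\mathcal{F}_q$ (where $\chi=\Phi^*\chi_Y$ vanishes) yields $g(t)|_{\mathcal{F}_q}\leq C(1-t)\,\omega_0|_{\mathcal{F}_q}$, which immediately implies the fibre diameter bound (\ref{app3'1}). Assuming such a fibrewise $C^2$-estimate is essentially assuming the conclusion. What \cite{FZ} actually gives (recorded in the paper as Lemma \ref{hor}) is only the \emph{horizontal} bound $\omega(t)|_{F_q^{-1}(z)}\leq C\,|\sigma_q|_{h_q}^{-2k}\,\omega_Y$, controlling the restriction of $\omega(t)$ to the horizontal slices, with no control on the fibre directions. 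The paper closes the gap without any fibrewise pointwise estimate: it combines Lemma \ref{hor} with the algebraic inequality
$\det\bigl(\begin{smallmatrix}A & B\\ \bar B^{T} & D\end{smallmatrix}\bigr)\leq \det A\cdot\det D$
to obtain the \emph{volume-form} bound $\omega(t)^n\leq C\,|\sigma_q|^{-2mk}\,\omega_Y^m\wedge\omega(t)^{n-m}$ (Lemma \ref{vbonFanobundle}), and then Fubini together with the cohomological identity $\int_{\mathcal{F}_q}\bigl(\omega(t)|_{\mathcal{F}_q}\bigr)^{n-m}=(1-t)^{n-m}\int_{\mathcal{F}_q}\omega_0^{n-m}$ gives $\vol_{g(t)}\bigl(\Phi^{-1}(B(\Phi(p),r))\bigr)\leq Cr^{2m}(1-t)^{n-m}$, hence (\ref{cor2a}) when $r\sim(1-t)^{1/2}$. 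Your binomial expansion is the right cohomological calculation, but its input should be the horizontal bound plus the determinant inequality, not an unavailable pointwise bound.

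On Step 1, your worry that Lemma \ref{a'1} only localizes the vertex for $t$ near $1$ is unfounded: as stated it holds for all $s\in[0,\infty)$, because the proof uses the global bound $|u_0|\leq C_0e^s$ together with a sufficiently large amplitude $A$, with no restriction on the time range. Your auxiliary potential $\sigma$ is harmless but unnecessary.
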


Theorem \ref{app3} extends Perelman's estimates for K\"ahler-Ricci flow on Fano manifolds as a special case when $\dim_{\mathbb{C}}Y=0$. In fact, the diameter and scalar curvature estimates hold not only for the fibre $\mathcal{F}_{\Phi(p_t)}$ but also for the Type I tubular neighborhood of $\mathcal{F}_{\Phi(p_t)}$ (see Theorem \ref{scanddeonFanobundle}). Theorem \ref{app3} also confirms Conjecture \ref{mainconj} near Ricci vertices in the case of the Fano bundles.

In section \ref{FDonRV}, we propose two conjectures about the Ricci vertices as intermediate steps toward Conjecture \ref{mainconj}. In the second one, say Conjecture \ref{maincon2}, we propose that for any point $q$ at bounded Type-I distance to the Ricci vertex, Conjecture \ref{mainconj} hold for $\Phi^{-1}(q)$.  When $X$ is Fano bundle over $Y$, then we have the following corollary as  $Y^\circ =Y$.

\begin{corollary} \label{mainbund}
Let $g(t)$ be the maximal solution of the K\"ahler-Ricci flow (\ref{unkrflow1'1}) on $X\times [0, 1)$ as described above. If $X$ is a fiber bundle over $Y$, then the following hold.

\begin{enumerate}

\item  The conclusions of Theorem \ref{app3} hold for every open subset of $Y$ since $Y=Y^\circ$.

\medskip

\item  Let  $(X, (g_{i,t})_{t\in [-T_i , 0]})$ be the Type I blow-up
$$  g_{i,t}:=(1-t_i)^{-1}g((1-t_i)t+t_i),~~~ t\in [-T_i , 0], ~T_i \rightarrow \infty $$
and let $p_i$ be a Ricci vertex associated to $\theta_Y\in \vartheta$ on $Y$. After passing to a subsequence, $(X, (g_{i, t})_{ t\in [-T_i , 0]}, p_i)$ converges to ancient solution of the  K\"ahler-Ricci flow on $\mathcal{X}_{t\in (-\infty, 0]}$ as in Theorem \ref{2main1}. For each $t\in (-\infty, 0]$, there exists a surjective holomorphic map
$$\Phi_t: \cX_t \rightarrow \mathbb{C}^m $$
as the pointed limit of $\Phi: (X, g_{i, t}, p_i) \rightarrow (Y, (1-t_i)^{-1}\mathrm{g}_Y, \Phi(p_i))$,
where $m=\dim_{\mathbb{C}} Y$ and $\mathrm{g}_Y$ is a fixed smooth K\"ahler metric on $Y$.
\end{enumerate}
\end{corollary}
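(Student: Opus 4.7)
For part (1), the fibre-bundle hypothesis forces $Y^\circ = Y$, so Theorem \ref{app3} applies with $U$ taken to be any open subset of $Y$, yielding the stated conclusions verbatim. For part (2), the $\bF$-convergence of the parabolic rescalings to an ancient metric flow $\cX$ on $(-\infty,0]$ with the analytic properties of $\cX_t$ is immediate from Theorem \ref{2main1}. What requires additional work is the construction of the surjective holomorphic map $\Phi_t : \cX_t \to \mathbb{C}^m$ for each $t \in (-\infty, 0]$.

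The plan is to pass to the limit in the holomorphic maps $\Phi : (X, g_{i,t}, p_i) \to (Y, (1-t_i)^{-1} \mathrm{g}_Y, \Phi(p_i))$. After extracting a subsequence, $\Phi(p_i)$ converges to some $q_\infty \in Y$; under the fibre-bundle hypothesis $Y$ is smooth at $q_\infty$, and since $\mathrm{g}_Y$ is a fixed smooth K\"ahler metric, the rescaled targets $(Y, (1-t_i)^{-1} \mathrm{g}_Y, \Phi(p_i))$ converge smoothly in the pointed Cheeger--Gromov sense to $(\mathbb{C}^m, g_{\mathbb{C}^m}, 0)$, identifying holomorphic coordinates at $q_\infty$ with complex linear coordinates on $\mathbb{C}^m$. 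To pass the maps themselves to a limit, a uniform Lipschitz bound
$$\|d\Phi\|_{g_{i,t},\,(1-t_i)^{-1} \mathrm{g}_Y}^2 \,=\, \tr_{g(s)} \Phi^\ast \mathrm{g}_Y \,\leq\, C, \qquad s = (1-t_i) t + t_i,$$
on every fixed Type-I ball around $p_i$ is needed. Such a bound should follow by combining Theorem \ref{app3} with Schwarz-type estimates for collapsing K\"ahler-Ricci flow on Fano fibrations developed in \cite{SSW, FZ, TZy}.

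Granting this Lipschitz bound, an Arzel\`a-Ascoli argument adapted to the pointed Gromov-Hausdorff convergence of the domain (Theorem \ref{2main1}(1)) and the smooth pointed Cheeger-Gromov convergence of the target produces, after possibly extracting a further subsequence that may depend on $t$, a Lipschitz limit $\Phi_t : (\cX_t, q_t) \to (\mathbb{C}^m, 0)$. On the regular part $\cR_t$ the convergence of $(X, g_{i,t})$ to $\cX$ is smooth by Bamler's structure theory in \cite{Bam20c}, so $\Phi_t|_{\cR_t}$ is a smooth limit of holomorphic maps and hence itself holomorphic. Since $(\cX_t, d_t)$ is an analytic normal variety whose analytic singular set coincides with $\cS_t$ of Minkowski codimension $\geq 4$ (Theorem \ref{2main1}(2)), the bounded holomorphic map $\Phi_t|_{\cR_t}$ extends uniquely across $\cS_t$ by Riemann extension on normal varieties, giving a holomorphic $\Phi_t$ on all of $\cX_t$. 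Surjectivity follows from surjectivity of $\Phi$: for any $w \in \mathbb{C}^m$ choose $w_i \to w$ in the pointed limit of bases, select $x_i \in \Phi^{-1}(w_i)$ at bounded $g_{i,t}$-distance from $p_i$ (possible because horizontal distances scale correctly and nearby fibres have Type-I bounded diameter via Theorem \ref{app3}); any Gromov-Hausdorff subsequential limit $x_\infty \in \cX_t$ of $\{x_i\}$ satisfies $\Phi_t(x_\infty) = w$. A standard diagonal argument gives a single subsequence that works for all rational $t$.

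The main obstacle is the uniform Lipschitz estimate $\tr_{g(s)} \Phi^\ast \mathrm{g}_Y \leq C$ on Type-I tubular neighborhoods of the evolving Ricci vertex fibre. Schwarz-type bounds of this flavor are known on fixed compact subsets of $Y^\circ$ in the non-rescaled sense, but the Type-I version around a moving base point requires crucial use of the fine geometric estimates of Theorem \ref{app3} together with a refined parabolic Schwarz lemma. Once this bound is in hand, the remaining steps (Arzel\`a-Ascoli extraction, holomorphy of the limit on $\cR_t$, Riemann extension across $\cS_t$, and surjectivity) are essentially routine.
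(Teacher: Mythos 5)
Your overall plan matches the paper's proof, but you misdiagnose the difficulty of the key estimate. You correctly compute that
$$\|d\Phi\|^2_{g_{i,t},\,(1-t_i)^{-1}\mathrm{g}_Y}=\tr_{g(s)}\Phi^*\mathrm{g}_Y,\qquad s=(1-t_i)t+t_i,$$
i.e.\ the Lipschitz bound for $\Phi$ between the \emph{simultaneously rescaled} domain and target is scale-invariant and reduces to the same trace for the unrescaled flow. But you then call this ``the main obstacle'' and say it ``should follow by combining Theorem \ref{app3} with Schwarz-type estimates... developed in \cite{SSW, FZ, TZy}.'' That is not needed: since $\mathrm{g}_Y$ is (a multiple of) the Fubini--Study metric restricted to $Y\subset\mathbb{CP}^N$, the paper's own parabolic Schwarz lemma, Lemma \ref{pschwarz} (specifically (\ref{pschwarz'1})), already gives $\tr_{g(t)}\Phi^*\mathrm{g}_Y\leq C$ \emph{globally on $X$ and uniformly in $t\in[0,1)$}. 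No Type-I localization, no estimate from \cite{SSW,FZ,TZy}, and no input from Theorem \ref{app3} is required for this bound. This is exactly the one-line observation in the paper's proof: $\sup_X|\nabla\Phi|_{g_{i,t},\mathrm{g}_{Y,i}}\leq C$ by the parabolic Schwarz lemma.

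Once that is corrected, the rest of your argument runs in parallel to the paper's: pointed Cheeger--Gromov convergence of the target $(Y,\mathrm{g}_{Y,i},\Phi(p_i))\to(\mathbb{C}^m,g_{\mathbb{C}^m},O)$ (using $Y=Y^\circ$ smooth), Arzel\`a--Ascoli for the uniformly Lipschitz maps, holomorphy of the limit on the regular part by smooth local convergence, and extension across the singular set. For surjectivity, the paper argues by contradiction — if $\Phi_\infty$ is not surjective its image is a proper closed analytic subvariety $V\subset\mathbb{C}^m$, so one can pick $y$ at distance $1$ from $O$ and a definite distance $\delta>0$ from $V$ — and then uses Lemma \ref{hor} together with the fibre diameter estimate to find preimages $z_i\in\Phi^{-1}_i(y_i)$ with $d_{g_{i,t}}(z_i,p_i)\leq C$, whose limit lands in $V$ and gives a contradiction. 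Your direct surjectivity argument (pick preimages at bounded Type-I distance via Theorem \ref{app3} and pass to the limit) is morally the same, though you should make explicit that a bounded preimage exists: the uniform fibre diameter bound together with the fibrewise trace bound of Lemma \ref{hor} gives $d_{g_{i,t}}(z_i,p_i)\leq C$ once $y_i$ stays at bounded $\mathrm{g}_{Y,i}$-distance from $\Phi(p_i)$. With the Schwarz bound correctly attributed, your proof is essentially the paper's.
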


In particular,  Corollary \ref{mainbund} confirms Conjecture \ref{maincon2} in the case of Fano bundles. The convergence in Corollary \ref{mainbund} is further smooth on K\"ahler surfaces.

\begin{corollary} \label{rules}
If $X$ is a ruled surface over a Riemann surface $Y$, the K\"ahler-Ricci flow will develop finite time singularities at some $t=T>0$. If the limiting class $\lim_{t\rightarrow T} [g(t)]$ is not big or trivial, then  the limiting space $\cX$ of the Type I blow-ups   in Corollary \ref{mainbund} must be
$$\cX=\mathbb{CP}^1\times \mathbb{C}.$$
In particular, the convergence is smooth.
\end{corollary}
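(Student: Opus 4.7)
The plan is to apply Corollary \ref{mainbund} and then rigidify the complex-analytic structure of the limit using the low-dimensional setting. Since $\vartheta$ is semi-ample and neither big (which would require $\vartheta^{2} > 0$) nor trivial, while $\dim_{\mathbb{C}} X = 2$, one must have $\vartheta^{2} = 0$ and $\vartheta \neq 0$; hence $\dim_{\mathbb{C}} Y = 1$ and $Y$ is a Riemann surface. Under the standard convention that a ruled surface is itself a $\mathbb{CP}^{1}$-bundle, $\Phi: X \to Y$ is a bundle, so Corollary \ref{mainbund}(2) yields the ancient limit $\cX$ together with a surjective holomorphic map $\Phi_{t}: \cX_{t} \to \mathbb{C}$ for each $t \in (-\infty, 0]$.

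Next I would identify each $\cX_{t}$ as a holomorphic $\mathbb{CP}^{1}$-bundle over $\mathbb{C}$. By Theorem \ref{2main1}(2), $\cX_{t}$ is a normal analytic variety of complex dimension $2$, hence smooth outside a discrete singular set $\cS_{t}$. The fibers of $\Phi_{t}$ are compact by the Type-I diameter bound in Theorem \ref{app3}, with fixed area $2(1-t)$ in the Type-I rescaled metric; this comes from $[g(s)] \cdot F = 2(1-s)$ on $X$, where $F$ is the fiber class and $K_{X} \cdot F = -2$ by adjunction, together with the rescaling $g_{i,t} = (1-t_{i})^{-1} g((1-t_{i})t + t_{i})$. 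The fiber through $p_{\infty} = \lim p_{i}$ arises as a limit of smooth $\mathbb{CP}^{1}$ fibers $\mathcal{F}_{\Phi(p_{i})}$ satisfying the uniform Type-I scalar curvature and diameter bounds (\ref{app3'1})--(\ref{app3'2}); feeding these into Bamler's $\varepsilon$-regularity in real dimension $4$ (with the area and $\kappa$-noncollapsing in hand) yields smooth convergence to a single smooth $\mathbb{CP}^{1}$ of area $2(1-t)$. Flatness of $\Phi_{t}$ (automatic by miracle flatness since $\cX_{t}$ is normal of dimension $2$ with equidimensional fibers) together with constancy of the arithmetic genus in flat families then forces every fiber to be a smooth $\mathbb{CP}^{1}$, which in turn implies $\cS_{t} = \emptyset$.

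Since $\mathbb{C}$ is Stein and contractible, every holomorphic rank-two vector bundle on $\mathbb{C}$ is trivial, hence every holomorphic $\mathbb{CP}^{1}$-bundle over $\mathbb{C}$ is biholomorphic to $\mathbb{CP}^{1} \times \mathbb{C}$. This gives $\cX_{t} \cong \mathbb{CP}^{1} \times \mathbb{C}$ for every $t$. With $\cS = \emptyset$, the regularity theory underlying Theorem \ref{2main1}, based on Bamler's analysis of non-collapsed limits, upgrades the pointed Gromov-Hausdorff convergence to smooth Cheeger-Gromov convergence, yielding the final assertion.

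The hardest step is the second paragraph: ruling out degenerate limit fibers (multi-component, non-reduced, or bubbled chains of rational curves) and simultaneously any singular points of $\cX_{t}$. The fixed cohomological fiber area $2(1-t)$ together with the Type-I scalar curvature and diameter bounds of Theorem \ref{app3} supply precisely the area and curvature control on which the real-dimension-$4$ $\varepsilon$-regularity relies; verifying that these inputs are strong enough to exclude bubbling of secondary rational components on each fiber (a phenomenon which would be possible in higher dimension) is the delicate point on which the whole argument rests.
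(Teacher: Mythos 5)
Your approach is genuinely different from the paper's, and it contains a gap that you yourself identify but do not close. The paper's argument runs in one short step: by Bamler's structure theory in real dimension $4$, the singular set of $\cX_t$ is $0$-dimensional and the tangent cones are flat Kähler cones $\mathbb{C}^2/\Gamma$ with $\Gamma \subset U(2)$ finite, so every singularity of $\cX_t$ is an isolated orbifold point. The key ingredient, which your proposal does not use at all, is the \emph{lower} bound $|\nabla \Phi_i| \geq C^{-1}$ on a fixed tubular neighborhood of the Ricci vertex, supplied by Lemma \ref{hor} together with the parabolic Schwarz lemma; passing this to the limit gives $|\nabla \Phi_\infty| \geq C^{-1} > 0$ near any putative singular point $p$. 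But lifting $\Phi_\infty$ to a $\Gamma$-invariant holomorphic map $\tilde\Phi$ on a ball in $\mathbb{C}^2$ forces $\partial\tilde\Phi(0)=0$ whenever $\Gamma$ is nontrivial and $\tilde\Phi(0)=0$, so $|\nabla \Phi_\infty|(p) = 0$ — a contradiction. This kills every orbifold singularity, gives smooth Cheeger–Gromov convergence, and the bundle structure of $X$ passes to $\cX_t$ directly.

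Your route instead tries to establish the fiber structure of $\cX_t$ first via $\varepsilon$-regularity, and then invokes miracle flatness, constancy of arithmetic genus in flat families, and triviality of $\mathbb{CP}^1$-bundles over the Stein base $\mathbb{C}$. The first step is where the gap lies, and it is a real one: Theorem \ref{app3} controls the scalar curvature and the extrinsic diameter of the fibers $\mathcal{F}_{\Phi(p_i)}$, but neither of these implies pointwise Riemann curvature control along the flow. Bamler's $\varepsilon$-regularity is triggered by near-Euclidean behavior of the pointed Nash entropy (equivalently, of heat-kernel-weighted volume ratios), not by a scalar curvature bound plus $\kappa$-noncollapsing; in real dimension $4$ a Ricci flow region with bounded scalar curvature and bounded volume can still develop Riemann curvature concentration. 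So the assertion that the fibers converge smoothly to a single $\mathbb{CP}^1$ — rather than bubbling into a chain of rational curves, acquiring multiplicity, or collapsing onto a singular total-space point — is precisely the hard content, and the algebraic-geometric machinery in your second paragraph only kicks in after this analytic step is secured. Moreover, the arithmetic-genus argument requires the degenerate fiber to already be known to be reduced and irreducible, which is not supplied by the metric estimates you cite. The paper sidesteps all of these fiber-degeneration issues by arguing directly on the total space, and the ingredient that makes its argument work — the two-sided gradient estimate from Lemma \ref{hor} — is exactly the input your proposal omits.
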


A typical example of $X$ will be $\mathbb{CP}^1\times\mathbb{CP}^1$ as long as the initial K\"ahler class is not proportional to $c_1(X)$. If $X$ is a Hirzebruch surface $F_n$ with $n\geq 2$ or a ruled surface over a Riemann surface of genus greater than $0$, the limiting class of the solution cannot be big or trivial, and so  the result in Corollary \ref{rules} will always hold.

We conjecture that the limiting metric flow $\cX$ in Corollary \ref{mainbund} and Corollary \ref{rules} split in the base direction. Due to \cite{HJ}, in Corollary \ref{rules}, if we can prove the limiting metric split off a real line $\mathbb{R}$, then it split off a complex line $\mathbb{C}$, which is exactly the base space.


\medskip
\subsection{K\"ahler-Ricci flow with Calabi symmetry}\label{Calabisymmetry}

~
\smallskip

In this section, we will study the K\"ahler-Ricci flow on a family of K\"ahler manifolds with large symmetry and classify formation of finite time singularities of the flow.  Calabi \cite{Cal} introduced the family of $U(m+1)$-invariant K\"ahler metrics on $X$  in each K\"ahler class of $X$. These special K\"ahler metrics are said to satisfy the Calabi symmetry.  More precisely, we consider the case when $X$ is the following projective bundle
$$X= \mathbb{P}(\mathcal{O}_{Z} \oplus L^{\oplus (m+1)}) $$
over an $n$-dimensional K\"ahler-Einstein manifold $Z$, where $\mathcal{O}_Z$ is a trivial line bundle of $Z$ and $L$ a negative line bundle over $Z$. We assume that $L$ admits a negatively curved hermitian metric $h$ such that $\omega_Z=-{\rm Ric}(h)$ is a K\"ahler-Einstein metric over $Z$.
The K\"ahler-Ricci flow on $X$ must develop finite time singularities.  Our first main result is to establish the Type I curvature bounds for all solutions of the K\"ahler-Ricci flow with Calabi symmetry.

\begin{theorem} \label{calmain1}
Let $X= \mathbb{P}(\mathcal{O}_{Z} \oplus L^{\oplus (m+1)})$ be the K\"ahler manifold defined above. For any $U(m+1)$-invariant K\"ahler metric $g_0$ on $X$,  the K\"ahler-Ricci flow on $X$ starting with $g_0$  must develop Type I singularities.
\end{theorem}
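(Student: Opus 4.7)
The strategy is to use the Calabi reduction to turn the K\"ahler-Ricci flow into a parabolic equation in one real variable, then invoke the Ricci-vertex estimates (Theorem \ref{cor1}, Theorem \ref{app3}) together with the $U(m+1)$-symmetry to upgrade localized Type I bounds to global ones.

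First, I would recall Calabi's ansatz: any $U(m+1)$-invariant K\"ahler metric on $X=\mathbb{P}(\mathcal{O}_Z\oplus L^{\oplus(m+1)})$ has the form $\omega=\pi^*\omega_Z+\ddbar P(\xi)$ where $\xi=\log|\zeta|^2_h$ is the fibre log-modulus and $P$ is a smooth strictly convex function with prescribed asymptotics at $\xi=\pm\infty$ matching the divisors $D_0=\mathbb{P}(\mathcal{O}_Z)$ and $D_\infty=\mathbb{P}(L^{\oplus(m+1)})$. The K\"ahler-Ricci flow preserves this symmetry, so $g(t)$ is determined by a single function $P(\xi,t)$ satisfying a parabolic ODE, and every geometric quantity (volume form, scalar curvature, distance along the $\xi$-direction) is an explicit function of $(\xi,t)$. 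Tracking the cohomology class $[g_0]+t[K_X]$, the singularity at $T=1$ falls into one of three geometric types: (a) a Fano bundle collapse where $\Phi\colon X\to Y=Z$ contracts every $\mathbb{CP}^{m+1}$-fibre; (b) a divisorial or high-codimensional contraction collapsing $D_0$ or $D_\infty$ to a subvariety of $Y$; (c) the extinction case where $X$ itself is Fano and $Y$ is a point.

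Next, in each case I would choose $\theta_Y\in\vartheta$ whose pullback $\Phi^*\theta_Y$ is $U(m+1)$-invariant --- such a representative always exists by averaging over the compact group. Then the weighted Ricci potential $v$ defined in (\ref{defrp2}) is automatically $U(m+1)$-invariant, hence a function $v(\xi,t)$ of one variable. The Ricci vertex $p_t$ thus lies on a full $U(m+1)$-orbit, which corresponds to a single value $\xi_t$ of the reduced variable, and by the invariance the scalar curvature $\sR(\cdot,t)$ is also a function of $\xi$ alone. Applying Theorem \ref{main1} with this invariant $\theta_Y$ gives
\begin{equation*}
\frac{|\nabla v|^2}{v}+\frac{|\Delta v|}{v}\leq\frac{C}{1-t}
\end{equation*}
throughout $X\times[0,1)$, which in the reduced picture becomes explicit ODE control on $v(\xi,t)$ and hence on $P(\xi,t)$.

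In case (a), Theorem \ref{app3} applied to the Fano bundle $\Phi\colon X\to Y=Z$ delivers $(1-t)^{1/2}$-diameter and Type I scalar curvature bounds on the fibre $\mathcal{F}_{\Phi(p_t)}$, and $U(m+1)$-invariance propagates these bounds across every fibre, giving the desired global $|\sR|\leq C/(1-t)$. In cases (b) and (c), I would deploy Theorem \ref{cor2} after verifying the volume hypothesis (\ref{cor2a}) near the Ricci vertex, which is a direct explicit computation from the Calabi volume form $e^{n\xi}(P')^n P''\,d\xi\wedge\omega_Z^n\wedge d\theta$ and the convexity of $P$. Once the Type I scalar curvature bound is established on the orbit through $p_t$, the $U(m+1)$-invariance of $\sR(\cdot,t)$ forces the bound on every orbit at the same value of $\xi$; extension across the full range of $\xi$ then follows by the ODE description of $P(\xi,t)$ and the fact that in the Calabi setting the maximum of $|\sR|$ is always attained on a $U(m+1)$-orbit.

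The main obstacle I anticipate is case (b) with high-codimension contraction, where $D_\infty$ (a $\mathbb{CP}^m$-bundle over $Z$) collapses to $Z$ inside $Y$: here the singular fibre of $\Phi$ is positive-dimensional, the volume estimate (\ref{cor2a}) is delicate, and one must carefully locate $\xi_t$ near the collapsing end of the Calabi interval so that the Type I scalar curvature bound propagates all the way to $D_\infty$. Handling this localization uniformly in $t\to 1$, and combining it with the global ODE control on $P(\xi,t)$ to rule out curvature blow-up faster than $(1-t)^{-1}$ at the opposite end $D_0$, is the technical heart of the argument.
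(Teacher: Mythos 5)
Your high-level strategy --- reduce to the Calabi ODE in the single variable $\rho$, exploit $U(m+1)$-invariance of the weighted Ricci potential, and deploy the Ricci-vertex estimates of Theorem \ref{cor1} --- matches the paper's starting point, and your observation that in the collapsing case (a) the scalar curvature is a function of $\rho$ alone which can then be read off along a single fibre is essentially Corollary \ref{gscb} in the paper. However, as written the proposal has several genuine gaps.

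First, Theorem \ref{calmain1} asserts Type I singularities, which means $|\mathrm{Rm}(g(t))|\leq C/(1-t)$, not merely a Type I scalar curvature bound. Your plan stops at $|\sR|\leq C/(1-t)$. Passing from scalar curvature to the full curvature tensor is nontrivial in the Calabi setting: the curvature formula (\ref{R1bar1})--(\ref{Rijkl}) contains $\phi^{(4)}$, and one must show $\phi^{(4)}/(\phi'')^2$ is bounded. In the paper this occupies Lemma \ref{RmboundP0} and the final theorem of each case, each carried out by rescaling, pseudolocality, and a three-way blow-up contradiction; none of that is present in your outline.

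Second, the claim that ``extension across the full range of $\xi$ follows by the ODE description of $P(\xi,t)$'' hides the real engine of the argument. The paper's Proposition \ref{2ndder1} --- the uniform two-sided bound on $b\phi''/(\phi'(b-\phi'))$ and the bound on $\phi'''/\phi''$ --- is what lets the Ricci vertex information at one value of $\rho$ propagate to all $\rho$, and its proof is a delicate maximum-principle computation on the quantity $H=\log\phi''-\log\phi'-\log(b-\phi')+\log b$. Without identifying this estimate, the extension step has no substance.

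Third, in the contraction case (b), your plan to ``deploy Theorem \ref{cor2} after verifying the volume hypothesis (\ref{cor2a})'' is not how the paper proceeds and it is not clear it would work directly. In the contraction case the generic fibre of $\Phi$ is a point, so the tubular neighborhood $\mathcal{B}_{\mathrm{g}_Y}(p_t, 10(1-t)^{1/2})$ is small and the needed volume bound hinges on knowing that the Ricci vertex sits at uniformly bounded normalized distance from $P_0$ --- which is not automatic from the symmetry. The paper instead establishes this directly: Lemma \ref{uup} shows $\tilde\phi'(\rho_s,s)\leq C$ by a blow-up contradiction (if not, one would get a flat limit splitting off a $\mathbb{C}^m$ factor on which the Ricci potential equation forces the metric to vanish), and Corollary \ref{dist} then gives $d_{\tilde g(s)}(p_s,P_0)\leq C$. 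This is exactly the resolution of Conjecture \ref{maincon3} in this setting, and it is a substantive step that your proposal acknowledges as ``the technical heart'' but does not actually address. Finally, the case description ``$D_0$ or $D_\infty$ collapsing to a subvariety of $Y$'' is off: in the Calabi picture it is $P_0$ that is contracted to a point in $Y$, while $D_\infty$ is never contracted; when $b(T)=0$ it is the whole $\mathbb{CP}^{m+1}$ fibre that collapses, not $D_\infty$.
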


The results of Theorem \ref{calmain1} are proved in \cite{Fong, So1, GS} in the special case when $X$ is $\mathbb{CP}^{n+1}$, i.e., $Z=\mathbb{CP}^n$ and $m=0$, where either the contraction is of complex co-dimension $1$ or the collapsing fibres are of complex dimension $1$. The case of $m\geq 1$ is much more complicated and the techniques of \cite{Fong, So1} cannot be applied to obtain Type I curvature bounds. In the case of non-collapsing solutions, formation of singularities corresponds to a high codimensional contraction as part of the algebraic flip surgery. It is proved in \cite{SY, So13}, the flow will converge Gromov-Hausdorff topology to the contraction model of $X$, replacing the $0$-section by an isolated singularity, however, without any control on the blow-up limits and curvatures. A crucial step in the proof of Theorem \ref{calmain1} is to verify Condjecture \ref{maincon3} using Theorem \ref{main1} when $q$ is a non-regular value of $\Phi$.

For any $p\in X$ and $t_i \nearrow 1$, the Type I blow-up of $g(t)$ is given by $g_i(t)= (1-t_i)^{-1}g((1-t_i)t+t_i)$ on $X\times [-t_i/( 1-t_i), 0)$. By the work of \cite{So1, GS} and Theorem \ref{calmain1}, the Type I blow-up of $g(t)$ based at $p$ always converge smoothly to a limiting shrinking gradient K\"ahler-Ricci soliton, after possibly passing to a subsequence. We now have the following classification of the Type I blow-up limits of $g(t)$ in Theorem \ref{calmain1}.

The projective bundle $X$ has a special $0$-section $P_0$ whose normal bundle is negative. If the K\"ahler-Ricci flow with Calabi symmetry develop singularities at $t=1$, the limiting cohomology class of the solution $g(t)$ will induce a unique surjective holomorphic map
$$\Phi: X \rightarrow Y$$
as discussed in the introduction. One and only one of the following cases will take place.

\begin{enumerate}

\item $Y$ is a point. In this case, both $X$ and $Z$ must be Fano.

\medskip

\item $Y = Z$, then $X$ is a Fano bundle.

\medskip

\item $Y$ is birational to $X$. $\Phi$ is contraction map such that $\Phi|_{X\setminus P_0}$ is the identity map and $\Phi(P_0)$ is a point of $Y$.   When $m>0$, $\Phi(P_0)$ is an isolated non-orbifold singularity and $\Phi$ is a small contraction.

\end{enumerate}

We are ready to obtain the classification of Type I blow-up limits for the finite time solutions of the K\"ahler-Ricci flow with Calabi symmetry.

\begin{corollary} \label{calmain2}
Suppose $g(t)$ is a solution of the K\"ahler-Ricci flow on $X= \mathbb{P}(\mathcal{O}_{Z} \oplus L^{\oplus (m+1)})$ with an initial $U(m+1)$-K\"ahler metric $g_0$. Let $[0, 1)$ be the maximal time for the flow. Then we have the following classification for the smooth limits of the Type I blow-up $(X, g_j(t), p)$  for any base point $p\in X$.

\begin{enumerate}

\item If $$\liminf_{t\rightarrow 1}{\rm Vol}_{g(t)}(X)>0, $$
the Type I blow-up limit based at any point $p \in P_0$ is the unique shrinking gradient K\"ahler-Ricci soliton metric with Calabi symmetry on the total space of the vector bundle $$ L^{\oplus (m+1)}$$
over $Z$.
The Type I blow-up limit based at any point $p\in X\setminus P_0$ is a flat Euclidean space $\mathbb{C}^{m+n+1}.$

\medskip

\item If
$$\liminf_{t\rightarrow 1}{\rm Vol}_{g(t)}(X)=0, ~\liminf_{t\rightarrow 1} (1-t)^{-(n+m+1)}{\rm Vol}_{g(t)} (X) =\infty,$$
the Type I blow-up limit based at any point $p \in X$ is the product of the flat $\mathbb{C}^n$ and $\mathbb{CP}^{m+1}$ equipped with a Fubini-Study metric.

\medskip

\item If $$\liminf_{t\rightarrow 1} (1-t)^{-(n+m+1)}{\rm Vol}_{g(t)}(X) <\infty,$$ $X$ is Fano and the Type I blow-up limit based at any point $x\in X$ is the unique K\"ahler-Ricci soliton on $X$.

\end{enumerate}

\end{corollary}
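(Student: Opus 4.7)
Granted Theorem~\ref{calmain1} and the smooth soliton convergence of the Type~I blow-ups (as stated in the passage preceding the corollary, following \cite{So1, GS, FIK}), what remains is to identify the limiting shrinking Kähler-Ricci soliton as a function of the base-point $p$ and the case. The plan is to first read off the algebro-geometric structure of $\Phi:X\to Y$ from the three volume asymptotics, then exploit the $U(m+1)$ Calabi symmetry, which descends to any smooth limit of the equivariant Type~I rescaling, to reduce the soliton equation to the Calabi ODE and apply known uniqueness statements inside this ansatz.

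\emph{Step 1: Identify $\Phi$.} The total volume evolves as
\[
\vol_{g(t)}(X) \;=\; \tfrac{1}{(n+m+1)!}\bigl([g_0] + t\,[K_X]\bigr)^{n+m+1},
\]
a polynomial in $t$ whose order of vanishing at $t=1$ equals $n+m+1-\dim Y$. Case~(3) forces $\dim Y=0$, so $X$ is Fano with $c_1(X)\in \mathbb{R}_{>0}\,\vartheta$. Case~(2) corresponds to $0<\dim Y<n+m+1$; the classification of $U(m+1)$-equivariant morphisms from $X$ shows that the only possibility is the bundle projection $\Phi:X\to Z$, which collapses the $\mathbb{CP}^{m+1}$-fibres. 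Case~(1) corresponds to $\dim Y=n+m+1$, and the only equivariant birational contraction of $X$ is the one that sends $P_0$ to a point and restricts to the identity on $X\setminus P_0$.

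\emph{Step 2: Identify the soliton limit in each case.} In case~(1), away from $P_0$ the Kähler class $[g(t)]$ stays strictly positive, so the flow extends smoothly across $t=1$ there with uniformly bounded curvature; Type~I rescaling based at $p\in X\setminus P_0$ must therefore converge to flat $\mathbb{C}^{m+n+1}$. For $p\in P_0$, the equivariant limit is a complete noncompact shrinking gradient Kähler-Ricci soliton with Calabi symmetry whose zero section is biholomorphic to $Z$ with normal bundle $L^{\oplus(m+1)}$, hence lives on the total space of $L^{\oplus(m+1)}\to Z$; the Calabi profile ODE admits a unique solution with the required asymptotic data, which is the Feldman-Ilmanen-Knopf type soliton of \cite{FIK}. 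In case~(2), the Type~I rescaling blows up the bounded base metric on $Z$ into flat $\mathbb{C}^n$, while the fibre direction is rescaled to a shrinking soliton on $\mathbb{CP}^{m+1}$; uniqueness of $U(m+1)$-invariant shrinking Kähler-Ricci solitons on $\mathbb{CP}^{m+1}$ forces this factor to be Fubini-Study, giving the product $\mathbb{C}^n\times\mathbb{CP}^{m+1}$. In case~(3), $X$ is Fano and Tian-Zhu uniqueness of $U(m+1)$-invariant Kähler-Ricci solitons on $X$ pins down the limit as the unique Kähler-Ricci soliton on $X$.

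\emph{Main obstacle.} The most delicate point is case~(1) with $p\in P_0$: one must transport the smooth equivariant Cheeger-Gromov convergence into convergence of the Calabi profile functions, verify that the limiting profile solves the shrinking soliton ODE with the correct asymptotic data at both the zero section and at infinity, and then invoke the classification of Calabi-symmetric shrinking solitons. Ruling out degenerate limits such as a flat Kähler cone over a link relies on Theorem~\ref{calmain1}, whose Type~I scalar bound near $P_0$ prevents the rescaled soliton from being Ricci-flat. A secondary issue is the equivariance of $\Phi$ in Step~1, which we expect to follow from the uniqueness of the Stein factorization together with the $U(m+1)$-action being induced by a one-parameter family of biholomorphisms of $X$.
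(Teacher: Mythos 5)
Your outline matches the paper's strategy in Section~\ref{calsym}: read off $\dim Y$ from the volume asymptotics (equivalently, from which of $a(T)$ and $b(T)$ vanish), use the Calabi ansatz to reduce to the profile ODE, invoke the Type~I curvature bounds of Theorem~\ref{calmain1} to extract a smooth soliton limit, and then apply uniqueness of Calabi-symmetric shrinking solitons on the relevant model space. One refinement is worth noting: in case~(1) with $p\in P_0$, the mechanism that rules out a degenerate (flat) limit is not the Type~I scalar bound of Theorem~\ref{calmain1} per se. The paper (cf.\ Lemma~\ref{uup} and the proof of the concluding theorem of Section~9.3) instead argues by contradiction from the soliton equation: if $\tilde\phi'$ were unbounded at the Ricci vertex, the rescaled limit would be flat $\mathbb{C}^{m+n+1}$; but the weighted Ricci potential $\tilde v$ is constant along each $\mathbb{C}^m$-fibre (by $U(m+1)$-invariance), so restricting $\mathrm{Ric}(\tilde g_\infty) = \tilde g_\infty - \ddbar\tilde v_\infty$ to a fibre forces $\tilde g_\infty|_{\mathbb{C}^m}=0$, a contradiction. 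The global curvature bound from Theorem~\ref{calmain1} is what justifies the smooth Cheeger--Gromov convergence, and Lemma~\ref{caldifin} is what pins the limit down to the entire total space of $L^{\oplus(m+1)}$ rather than a proper subset, after which uniqueness (\cite{Li}) completes the identification. With that mechanism clarified, your sketch is a faithful account of the paper's argument.
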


Theorem \ref{calmain1} and Corollary \ref{calmain2} generalize the results in \cite{So1, GS}, where $Z=\mathbb{CP}^n$ and $m=0$. When $Z=\mathbb{CP}^n$ and $m=0$, $X$ is simply $\mathbb{CP}^{n+1}$ blow-up at one point. The contracted variety $Y$ is $\mathbb{CP}^{n+1}$. The case of $m>0$ is much more challenging and delicate than the case of $m=0$ as the contracted $0$-section $P_0$ has complex codimension greater $1$.  This poses difficulties in both algebraic and differential geometry as the isolated singularity is not $\mathbb{Q}$-Gorenstein and thus not an orbifold singularity. Our Li-Yau type estimates in Theorem \ref{main1} and Harnack estimates Theorem \ref{main2} play the essential role to achieve the Type I curvature bounds.

This paper is organized as follows.

In section \ref{BandehkRF}, we recall some conventions and notations, and recall some known results that will be used.

In section \ref{setupofKRF}, we set up the corresponding complex Monge-Amp\`ere flows and build up basic estimates.

In section \ref{gandle}, we first establish the gradient and Laplacian estimate of perturbed Ricci potentail. Then we prove the Li-Yau type estimate (say Theorem \ref{main1}) and the local scalar curvature estimate (say Theorem \ref{cor1}).

In section \ref{sechar}, we first prove the Harnack estimate on Ricci flow, say Theorem \ref{main2}. Then we verify some conditions of this theorem in our K\"ahler-Ricci flow set-up, hence prove the distance estimate provided the volume bound, say Theorem \ref{cor2}.

In section \ref{hkanddde}, we prove the short time distance distortion estimate and heat kernel lower bound estimate locally around the Ricci vertices. Then we prove the Gromov-Hausdorff compactness of the Type-I blow-up $\bF$-limits. This proves item $(1)$ of Theorem \ref{2main1}.

In section \ref{cgc}, we prove the complex geometric compactness  of the Type-I blow-up $\bF$-limits, which proves item $(2)$ of Theorem \ref{2main1}.

In section \ref{dandseonbundle}, we verify the volume assumption in Theorem \ref{cor2} on Fano bundles, hence establish the results in section \ref{Fanobundle}.

In section \ref{calsym}, we consider the K\"ahler-Ricci flow on Calabi symmetry manifolds, and establish the results in section \ref{Calabisymmetry}.

In section \ref{FDonRV}, we propose some further conjectures on the Ricci vertices, which are intermediate steps towards Conjecture \ref{mainconj}.


\medskip
\section{ Background on entropy and heat kernels of the Ricci flow }\label{BandehkRF}


\medskip
\subsection{Notations and conventions}


%
Let $(M, g(t))_{t\in I}$ be a smooth Ricci flow on a compact $n$-dimensional manifold with the interval $I\subset \mathbb{R}$. For $(x_0, t_0)\in M\times I$, $A, T^-, T^+\geq 0$, the parabolic neighborhood is defined by
\begin{equation} \label{pnbd}
P(x_0, t_0; A, -T^-, T^+) = B(x_0, t_0, A) \times \left( [t_0- T^-, t_0+ T^+]\cap I\right),
\end{equation}
where we may omit $-T^-$ or $T^+$ if it is zero. The heat operator associated to $(M, g(t))$ is defined by
$$\Box = \ddt{} - \Delta$$
and the conjugate heat operator is defined by
$$\Box^* = - \ddt{} - \Delta + R, $$
where $\Delta$ is the Laplacian associated to $g(t)$ and $R$ is the scalar curvature of $g(t)$.

For any $(x, t)$, $(y, s) \in M \times I$ with $s\leq t$, we denote by $K(x,t; y,s)$ the heat kernel of the Ricci flow based at $(y,s)$ satisfying
\begin{equation}
\Box K(\cdot, \cdot; y,s)=0, ~~ \lim_{t\rightarrow s^+} K(\cdot, t, ; y, s) = \delta_y,
\end{equation}
where $\delta_y$ is the Dirac measure at $y$. Similarly, $K(x, t; \cdot,\cdot)$ is the conjugate heat kernel based at $(x,t)$ satisfying
\begin{equation}
\Box^* K(x, t; \cdot, \cdot)=0, ~ \lim_{s\rightarrow t^-} K(x, t; \cdot, s) =\delta_x.
\end{equation}

Using the conjugate heat kernel, we can define the conjugate heat measure $\nu_{x, t; s}$ based at $(x,t)$ by
\begin{equation}
d\nu_{x,t;s} = K(x,t; \cdot, s) dg(t) = (4\pi\tau)^{-n/2} e^{-f} dg(t),
\end{equation}
where $\tau=t-s$ and $f\in C^\infty(M \times (-\infty, t))$ is called the potential of the conjugate heat measure $\nu_{x,t;s}$.

For two probability measures $\mu_1$ and $\mu_2$ on a Riemannian manifold $(M, g)$, the Wasserstein $W_1$-distance between $\mu_1$ and $\mu_2$ is defined by
\begin{equation}
d^g_{W_1}(\mu_1, \mu_2) = \sup_f \left( \int_M f d\mu_1 - \int_M f d\mu_2 \right),
\end{equation}
where the supremum is taken over all bounded $1$-Lipschitz function on $(M, g)$. The variance between $\mu_1$ and $\mu_2$ is defined by
\begin{equation}
\var(\mu_1, \mu_2) = \int_{(x_1, x_2) \in M \times M}  d_g^2(x_1, x_2) d\mu_1(x) d\mu_2(x_2).
\end{equation}
We have the following basic relation between the Wasserstein $W_1$-distance and the variance
\begin{equation}
d^g_{W_1} (\mu_1, \mu_2) \leq \sqrt{\var(\mu_1, \mu_1)}.
\end{equation}

For any $(x_0, t_0)\in M\times I$, $A, T^-, T^+\geq 0$, we define the $P^*$-parabolic neighborhood
\begin{equation} \label{p*nbd}
P^*(x_0, t_0; A, -T^-, T^+) \subset M\times I ,
\end{equation}
as the set of $(x,t)\in M\times I$ with $t\in [t_0-T^-, t_0+ T^+]$ and
\begin{equation} \label{p*nbd2}
d^{g_{t_0-T^-}}_{W_1}( \nu_{x_0, t_0; t_0-T^-} , \nu_{x, t; t_0-T^-} ) < A .
\end{equation}
As before, we may omit $-T^-$ or $T^+$ if it is zero.

We now define the $H_n$-center at a base point along the Ricci flow.

\begin{definition}\label{hnc}
A point $(z, t)\in M\times I$ is called an $H_n$-center of a point $(x_0, t_0)\in M \times I$ if $t< t_0$ and
\begin{equation}
\var_t(\delta_z, \nu_{x_0, t_0; t}) \leq H_n (t_0 - t),
\end{equation}
where $\var_t$ is the variance with respect to the metric $g(t)$.

\end{definition}
Due to \cite[Proposition 3.12]{Bam20a}, given any $(x_0, t_0)$ and $t<t_0$, there exists at least one $H_n$-center of $(x_0, t_0)$. Immediately, if $(z, t)$ is an $H_n$-center of $(x_0, t_0)$, then we have
\begin{equation}\label{dvh}
d^{g_t}_{W_1}(\delta_z, \nu_{x_0, t_0; t} ) \leq \sqrt{\var(\delta_z, \nu_{x_0, t_0; t} )} \leq \sqrt{H_n(t_0-t)}.
\end{equation}
The following lemma if proved in \cite{Bam20a}, which asserts that the mass of the conjugate heat kernel measure will concentrate around the $H_n$-centers.
\begin{lemma} \label{hnc'2}
If the point $(z, t)$ is an $H_n$-center of $(x_0, t_0)$ with $t< t_0$, then for any $A>0$, we have
$$\nu_{x_0, t_0; t} \left( B\left( z, t, \sqrt{AH_n (t_0-t)} \right) \right) \geq 1- \frac{1}{A} .$$
\end{lemma}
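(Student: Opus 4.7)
The plan is to deduce this concentration bound directly from Chebyshev's inequality applied to the variance estimate encoded in Definition \ref{hnc}. First I would unpack the variance: since $\delta_z$ is a point mass, the definition of $\var$ given in the excerpt specializes to
$$\var_t(\delta_z, \nu_{x_0, t_0; t}) = \int_M d_{g_t}^2(z, y)\, d\nu_{x_0, t_0; t}(y),$$
and by the $H_n$-center hypothesis this integral is bounded above by $H_n(t_0 - t)$.

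Next, set $r := \sqrt{A H_n(t_0 - t)}$. The function $d_{g_t}^2(z, \cdot)$ is non-negative and satisfies $d_{g_t}^2(z, y) \geq r^2$ precisely on $M \setminus B(z, t, r)$, so Chebyshev's (Markov's) inequality applied to the probability measure $\nu_{x_0, t_0; t}$ yields
$$\nu_{x_0, t_0; t}\bigl(M \setminus B(z, t, r)\bigr) \leq \frac{1}{r^2}\int_M d_{g_t}^2(z, y)\, d\nu_{x_0, t_0; t}(y) \leq \frac{H_n(t_0 - t)}{r^2} = \frac{1}{A}.$$
Since $\nu_{x_0, t_0; t}(M) = 1$, passing to the complement gives the claimed lower bound on $\nu_{x_0, t_0; t}(B(z, t, r))$.

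The lemma presents no real obstacle; its entire content is the probabilistic restatement of the variance bound via Chebyshev. The only small item to keep straight is the convention on open versus closed balls, but since $\{y : d_{g_t}(z, y) \geq r\} \subset M \setminus B(z, t, r)$ the inequality direction needed is automatic, and no further analysis is required.
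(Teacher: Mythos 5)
Your proof is correct, and it is exactly the standard argument: the $H_n$-center hypothesis specializes the variance to a single integral against $\nu_{x_0,t_0;t}$ (since $\delta_z$ is a point mass), and Chebyshev/Markov then gives the concentration bound. The paper does not spell out a proof here — it cites this as a result from Bamler — but the argument you give is precisely the intended one, so there is nothing to compare beyond noting your reasoning matches the expected route.
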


We now define the Nash entropy introduced by Hein-Naber \cite{HN}. Let $d\nu= (4\pi \tau)^{-n/2} e^{-f} dg$ be a probability measure on a closed $n$-dimensional Riemannian manifold $(M, g)$ with $\tau>0$ and $f\in C^\infty(M)$. The Nash entropy is defined by
\begin{equation}
\cN[g, f, \tau] = \int_M f d\nu - \frac{n}{2}.
\end{equation}

We can rewrite the conjugate heat measure based at $(x_0, t_0)\in M\times I$ by
$$d\nu_{x_0, t_0; t} = K(x_0, t_0; \cdot, t) dg(t) = (4\pi \tau)^{-n/2} e^{-f(t)} dg(t),$$
where $\tau = t_0 - t\geq 0$. Then we define the pointed Nash entropy along Ricci flow based at $(x_0, t_0)$ by
\begin{equation}
\cN_{x_0, t_0}(\tau) = \cN [ g(t_0-\tau), f(t_0-\tau), \tau].
\end{equation}
Then we set $\cN_{x_0, t_0}(0) =0$, which makes $\cN_{x_0, t_0}(\tau)$ being continuous at $\tau=0$. We also define
\begin{equation}
\cN_s^*(x_0, t_0)= \cN_{x_0, t_0}(t_0-s),
\end{equation}
for $s< t_0$ and $s\in I$. The pointed Nash entropy $\cN_{x_0, t_0}(\tau)$ is non-increasing when $\tau\geq 0$ is increasing.

For any compact, n-dimensional manifold $(M, g)$, Perelman's $\cW$-functional is defined by, for any $\tau > 0$,
$$\cW[g, f, \tau] = (4\pi \tau)^{-n/2} \int_M \left(\tau ( |\nabla f|^2 + R) + f \right) e^{-f} dg,$$
with $f\in C^\infty(M)$ so that $\int_M (4\pi \tau)^{-n/2} e^{-f} dg =1$, and Perelman's $\mu$-functional and $\nu$-functional are defined by
$$\mu[g, \tau]=\underset{\int_M (4\pi \tau)^{-n/2} e^{-f} dg =1}{\inf} \cW[g, f, \tau],$$
and
$$\nu[g, \tau]=\underset{0<\tau'<\tau}{\inf} \mu[g, \tau'].$$
If $(M, (g_t)_{t\in [0, T)})$ is a Ricci flow, then the functions $t\to \mu[g_t, T-t]$ and $t\to \nu[g_t, T-t]$ are non-decreasing. It is proved in \cite{Bam20a} that
\begin{equation}\label{NM}
\cN^*_t (x_0, t_0) \geq \mu[g(t), t_0-t],
\end{equation}
for any $t<t_0$.
%


\medskip
\subsection{Preliminary results on entropy and heat kernel bounds}
In \cite{Bam20a}, Bamler established systematic results on the Nash entropy and heat kernel bounds on Ricci flow background. Let us recall some results that will be used in our theory.

The following quantitative volume estimates are established in \cite{Bam20a}, which extends Perelman's volume non-collapsing estimates.

\begin{lemma} \label{lvnc}
Let $(M, g(t))_{t\in [-r^2, 0]}$ be a solution of the Ricci flow. If $R\leq r^{-2}, ~ on~ B_{g(0)}(x, r)$, then we have
\begin{equation}
\vol_{g(0)} (B_{g(0)}(x, r)) \geq c \exp\left(\cN^*_{-r^2}(x, 0)\right) r^n.
\end{equation}

\end{lemma}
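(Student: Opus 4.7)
Plan: The strategy is Perelman's volume non-collapsing via the $\cW$-functional, combined with the on-diagonal heat-kernel comparison between Perelman's $\mu$-functional and the pointed Nash entropy.

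First, the scalar-curvature bound $R\le r^{-2}$ on $B_{g(0)}(x,r)$ allows a direct $\cW$-functional computation at $(g(0),\tau=r^2)$. Choose a cutoff $\phi\in C_c^\infty(B_{g(0)}(x,r))$ with $\phi\equiv 1$ on $B_{g(0)}(x,r/2)$ and $|\nabla\phi|_{g(0)}\le C/r$, and set the probability density to be $(4\pi r^2)^{-n/2}c_\phi\phi^2$ with $c_\phi$ the normalization. The gradient contribution $|\nabla \tilde f|^2$ is $O(1)$ by the cutoff scale, the scalar-curvature integral is $O(1)$ by the hypothesis, and the entropy piece produces the log-volume, yielding
\[
\mu[g(0),r^2]\ \le\ \cW[g(0),\tilde f,r^2]\ \le\ C(n)-\ln\!\left(\frac{\vol_{g(0)}(B_{g(0)}(x,r))}{r^n}\right).
\]

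Second, the on-diagonal heat kernel admits matching bounds: on the one hand
\[
K(x,0;x,-r^2)\ \le\ \frac{C(n)}{r^n}\exp\!\bigl(-\cN^*_{-r^2}(x,0)\bigr)
\]
from Hein-Naber's entropy-monotonicity argument applied to the conjugate heat measure $d\nu_{x,0;-r^2}=(4\pi r^2)^{-n/2}e^{-f}\,dg(-r^2)$, with Jensen's inequality supplying the final step; on the other hand
\[
K(x,0;x,-r^2)\ \ge\ \frac{c(n)}{r^n}\exp\!\bigl(-\mu[g(0),r^2]\bigr)
\]
from the logarithmic Sobolev inequality associated with the $\mu$-functional. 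Dividing these two estimates delivers the comparison $\cN^*_{-r^2}(x,0)\le \mu[g(0),r^2]+C(n)$, and chaining with the $\cW$-estimate produces $\cN^*_{-r^2}(x,0)\le C(n)-\ln(\vol_{g(0)}(B_{g(0)}(x,r))/r^n)$, which exponentiates to the claim.

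The step I expect to be the main obstacle is the heat-kernel lower bound $K(x,0;x,-r^2)\ge c\, r^{-n}e^{-\mu[g(0),r^2]}$, because we have no a priori control on the full curvature tensor and the two quantities are defined at different times along the flow. The resolution is the $H_n$-concentration of the conjugate heat measure around its $H_n$-center (Lemma~\ref{hnc'2}) combined with Bamler's effective Gaussian bounds for the heat kernel: the hypothesis $R\le r^{-2}$ on $B_{g(0)}(x,r)$ is precisely what is needed to prevent the heat mass from escaping a ball of comparable size during the backward time interval of length $r^2$, which is in turn what keeps the on-diagonal kernel above the required threshold and closes the argument.
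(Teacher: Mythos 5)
The paper does not prove this lemma; it cites it directly from Bamler's work \cite{Bam20a}, so there is no in-paper proof to compare against. Judged on its own terms, your argument has two substantive problems, one of which inverts the conclusion.

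\textbf{Sign error in Step 1.} Your $\cW$-functional estimate should read $\mu[g(0),r^2]\le \cW[g(0),\tilde f,r^2]\le C(n)+\ln\bigl(\vol_{g(0)}(B_{g(0)}(x,r))/r^n\bigr)$, with a \emph{plus} sign on the logarithm. With $e^{-\tilde f}=c_\phi\phi^2$ and the normalization $(4\pi r^2)^{-n/2}\int e^{-\tilde f}\,dg=1$, the entropy piece contributes $-\ln c_\phi=\ln\int\phi^2\,dg-\tfrac n2\ln(4\pi r^2)\le \ln(\vol(B)/r^n)+C(n)$, so the leading term carries a $+\ln$, not a $-\ln$. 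With your sign, the chain ends at $\cN^*_{-r^2}(x,0)\le C-\ln(\vol/r^n)$, which exponentiates to $\vol\le C e^{-\cN^*}r^n$ — a volume \emph{upper} bound, the opposite of the lemma's lower bound. (There is also a suppressed issue: the $O(1)$ bound on the gradient term needs $\vol(B(x,r))/\vol(B(x,r/2))$ to be controlled, which requires the standard Perelman iteration over scales, not just the cutoff computation at one radius.)

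\textbf{The heat-kernel lower bound in Step 2 is the real gap, and I do not think it closes.} You need $K(x,0;x,-r^2)\ge c\,r^{-n}\exp\bigl(-\mu[g(0),r^2]\bigr)$, but the tools you cite all run in the other direction. The log-Sobolev inequality associated with $\mu$ yields ultracontractivity, i.e. heat-kernel \emph{upper} bounds; Bamler's Gaussian estimate and the $H_n$-concentration give upper bounds on $K$ and concentration of the conjugate heat measure around an $H_n$-center, again not a pointwise lower bound on the on-diagonal kernel. The only general lower bound available is Perelman's $K(x,0;y,-r^2)\ge (4\pi r^2)^{-n/2}e^{-\ell_{(x,0)}(y,r^2)}$, and to make it effective at $y=x$ you must bound the reduced distance $\ell_{(x,0)}(x,r^2)$, which requires scalar-curvature control along the constant path for all $t\in[-r^2,0]$ — but the hypothesis $R\le r^{-2}$ is posed only on $B_{g(0)}(x,r)$ at the single time slice $t=0$ and says nothing about the backward evolution. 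There is also a slice mismatch your argument glosses over: the relation $\cN^*_{s}(x_0,t_0)\ge\mu[g(s),t_0-s]$ from the paper's equation (3.13) compares $\cN^*$ with $\mu$ at the \emph{initial} time $s=-r^2$, while your Steps 2–3 need a comparison with $\mu[g(0),r^2]$ at the \emph{final} time $0$; there is no monotonicity that transports one to the other in the direction your chain requires. So the argument does not establish the lemma.
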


The assumption on the scalar curvature upper bound can be replaced using the $H_n$-center as proved in \cite{Bam20a}.

\begin{lemma} \label{hcv}
Let $(M, g(t))_{t\in [-r^2, 0]}$ be a solution of the Ricci flow. Suppose $(z, -r^2)$ is an $H_n$-center of $(x_0, 0)$ and
$$R(\cdot, -r^2) \geq R_{min},$$
for some fixed $R_{min} \in \mathbb{R}$. Then there exists $c= c(R_{min} r^2)>0$ such that
\begin{equation}
\vol_{g(-r^2)}(B_{g(-r^2)}(z, (2H_n)^{1/2} r)) \geq c \exp\left(\cN^*_{-r^2}(x, 0)\right) r^n.
\end{equation}

\end{lemma}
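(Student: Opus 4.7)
The plan is to combine two complementary ingredients: concentration of the conjugate heat measure near the $H_n$-center, and a pointwise upper bound on the conjugate heat kernel in terms of the pointed Nash entropy. The first converts the Wasserstein-type control in the definition of an $H_n$-center into a lower bound on $\nu_{x_0,0;-r^2}(B)$ for a specific small ball $B$; the second turns the resulting integral inequality into a volume lower bound by bounding the integrand.

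First, I would apply Lemma \ref{hnc'2} with $A = 2$. Setting $B := B_{g(-r^2)}(z, \sqrt{2H_n}\, r)$, this gives
\[
\frac{1}{2} \;\leq\; \nu_{x_0, 0; -r^2}(B) \;=\; \int_B K(x_0, 0; y, -r^2)\, dg_{-r^2}(y).
\]
Next, I would invoke Bamler's Gaussian-type pointwise upper bound for the conjugate heat kernel: under the scalar curvature lower bound $R(\cdot, -r^2) \geq R_{\min}$, one has
\[
K(x_0, 0; y, -r^2) \;\leq\; \frac{C_1(n, R_{\min} r^2)}{r^n}\, \exp\!\left(-\cN^*_{-r^2}(x_0, 0) \;-\; \frac{d_{-r^2}^2(z, y)}{C_2\, r^2}\right).
\]
The scalar curvature lower bound is used precisely to absorb a factor of the form $e^{|R_{\min}| r^2}$ into $C_1$. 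Since $d_{-r^2}(z, y) \leq \sqrt{2H_n}\, r$ for $y \in B$, the Gaussian factor is bounded below by a dimensional constant, and we obtain on $B$ the cleaner bound $K(x_0, 0; y, -r^2) \leq C_3(n, R_{\min} r^2) r^{-n} e^{-\cN^*_{-r^2}(x_0, 0)}$.

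Combining the two estimates,
\[
\frac{1}{2} \;\leq\; \frac{C_3}{r^n}\, e^{-\cN^*_{-r^2}(x_0, 0)}\, \vol_{g(-r^2)}(B),
\]
and rearranging yields the desired inequality with $c(R_{\min} r^2) = 1/(2 C_3)$. The main obstacle is locating and correctly applying the pointwise heat kernel upper bound in terms of the pointed Nash entropy, which is the nontrivial input from Bamler's theory; everything else is a one-line integration. The role of the $H_n$-center is to make this localization efficient, since without concentration of mass near $z$, the raw bound $\int K\, dg = 1$ would not yield any useful volume lower bound at the scale $r$.
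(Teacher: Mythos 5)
Your proof is correct and is essentially the argument in Bamler's paper (to which the present paper simply defers for this lemma): concentration from Lemma \ref{hnc'2} with $A=2$ gives $\nu_{x_0,0;-r^2}(B)\geq 1/2$ for $B=B_{g(-r^2)}(z,\sqrt{2H_n}\,r)$, and the pointwise conjugate-heat-kernel upper bound turns $\tfrac12\leq\int_B K\,dg_{-r^2}$ into a volume lower bound after noting the Gaussian factor is bounded below by a dimensional constant on $B$. One small point you should make explicit: the heat kernel upper bound, as stated in the paper, requires $R\geq R_{\min}$ on all of $M\times[-r^2,0]$, while the lemma only hypothesizes $R(\cdot,-r^2)\geq R_{\min}$; the gap is closed by the maximum principle applied to the evolution $\partial_t R=\Delta R+2|\mathrm{Ric}|^2\geq\Delta R$, which propagates the lower bound at $t=-r^2$ forward to the whole time interval.
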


Next, we have the following heat kernel upper bound estimate, which is proved in \cite[Theorem 7.2]{Bam20a}.
\begin{lemma}
Let $(M, g(t))_{t\in I}$ be a solution of the Ricci flow. Suppose that on $M \times [s, t]$,
$$[s, t]\subset I, ~ R\geq R_{min}.$$
Let $(z, s) \in M\times I$ be an $H_n$-center of $(x, t)\in M\times I$. Then there exist $C=C(R_{min}(t-s))<\infty$, such that for any $y\in M$, we have
\begin{equation}
K(x, t; y,s) \leq C (t-s)^{-n/2} \exp\left( -\cN^*_{s}(x, t) \right) \exp\left( - \frac{ d^2_s(z, y)}{C (t-s)} \right) .
\end{equation}

\end{lemma}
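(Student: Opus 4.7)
The plan is to prove this Gaussian upper bound in two main stages: first establish an unweighted on-diagonal bound controlled by the Nash entropy, then upgrade it to Gaussian decay in the distance from the $H_n$-center by a semigroup argument.

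\textbf{Stage 1 (on-diagonal bound).} I would first prove the crude bound
$$K(x, t; y, s) \leq C_1 (t-s)^{-n/2} \exp\bigl(-\cN^*_{s}(x, t)\bigr),$$
valid for every $y \in M$, where $C_1 = C_1(R_{\min}(t-s))$. Write $K(x,t;\cdot,s) = (4\pi\tau)^{-n/2} e^{-f}$ with $\tau = t-s$. The idea is to apply Perelman's monotonicity of $\mu[g(\cdot),\tau]$ together with the log-Sobolev inequality it implies on the time slice $g(s)$. Indeed, the minimum value $f_{\min}$ controls the $L^\infty$ norm of $K$, while Jensen-type or log-Sobolev estimates relate $f_{\min}$ to $\int f\, d\nu_{x,t;s} = \cN^*_s(x,t) + n/2$, modulo a correction coming from the scalar curvature lower bound $R_{\min}$ (this is where the dependence $C_1 = C_1(R_{\min}(t-s))$ enters).

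\textbf{Stage 2 (Gaussian decay via $H_n$-concentration).} Let $\sigma = (t+s)/2$ and apply Chapman--Kolmogorov:
$$K(x,t;y,s) = \int_M K(x,t;w,\sigma)\, K(w,\sigma;y,s)\, dg_\sigma(w).$$
The first factor, as a conjugate heat kernel based at $(x,t)$, has its mass concentrated near an $H_n$-center of $(x,t)$ at time $\sigma$; Stage 1 gives an $L^\infty$ control on it of order $(\tau/2)^{-n/2}\exp(-\cN^*_\sigma(x,t))$, and by monotonicity of Nash entropy $\cN^*_\sigma(x,t) \geq \cN^*_s(x,t)$. The second factor is a forward heat kernel, for which a Gaussian upper bound of the form $C(\tau/2)^{-n/2}\exp(-d^2_\sigma(w,y)/C(t-s))$ follows from the scalar curvature lower bound and the standard Li--Yau/Davies heat kernel estimates. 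Splitting the integral into the region near the $H_n$-center of $(x,t)$ at time $\sigma$ (using Lemma \ref{hnc'2}) and its complement, and substituting the two bounds, yields the desired Gaussian factor in $d_\sigma(z',y)$ for an $H_n$-center $z'$ of $(x,t)$ at $\sigma$. A final step converts $d_\sigma(z',y)$ into $d_s(z,y)$ using the fact that both $z$ and $z'$ lie (in the $W_1$-sense) within $O(\sqrt{t-s})$ of the conjugate heat kernel measure of $(x,t)$, and using the Ricci flow distance distortion bounds controlled via $R_{\min}$.

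\textbf{Main obstacle.} The technically hardest point is the careful propagation argument in Stage 2: one must keep track of three different time slices ($s$, $\sigma$, $t$) for the distance function appearing in the Gaussian factor, and exchange between $H_n$-centers at different times. Concretely, comparing $d_\sigma(z',y)$ with $d_s(z,y)$ requires using both the $H_n$-concentration of $\nu_{x,t;\sigma}$ and of $\nu_{x,t;s}$, and bounding distance distortion in a way that does not destroy the Gaussian exponent. The constant $C$ must absorb the accumulated losses from these comparisons, which is why the final constant depends on $R_{\min}(t-s)$. Stage 1 is more standard and essentially follows from Perelman's entropy formula combined with a log-Sobolev extraction.
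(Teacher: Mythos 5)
This lemma is quoted directly from Bamler \cite[Theorem 7.2]{Bam20a}; the paper does not re-prove it, so there is no internal proof to compare against. Judged on its own merits, your Stage~1 is a reasonable account of the on-diagonal bound (Bamler's Theorem~7.1), but Stage~2 has a gap that the splitting argument cannot close. The only concentration information you invoke about $\nu_{x,t;\sigma}$ around the intermediate $H_n$-center $z'$ is Lemma~\ref{hnc'2}, which is just Chebyshev's inequality applied to the variance bound $\var(\delta_{z'},\nu_{x,t;\sigma})\leq H_n(t-\sigma)$: it gives only the polynomial tail $\nu_{x,t;\sigma}\bigl(M\setminus B(z',\sigma,r)\bigr)\lesssim H_n\tau/r^2$. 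In your Chapman--Kolmogorov split, the region where $w$ is far from $z'$ then contributes at least of order $\tau^{-n/2}e^{-\cN^*}\cdot H_n\tau/r^2$ (estimating the second factor by the on-diagonal bound), which is polynomial in $r=d_\sigma(z',y)$ and dominates the Gaussian contribution of the near region for large $r$. Upgrading quadratic-variance concentration to sub-Gaussian concentration of the conjugate heat kernel measure is precisely the nontrivial content of the statement, and it cannot be extracted from the variance bound alone.

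There is also a circularity in the treatment of the second factor. Under Ricci flow with only a scalar-curvature lower bound and a $\nu$-entropy bound, the Gaussian upper bound you invoke for the forward heat kernel $K(w,\sigma;y,s)$ is not a standard Li--Yau/Davies estimate: Li--Yau requires a Ricci lower bound, and Davies-type arguments require a Nash or log-Sobolev inequality with explicit constants, and establishing such Gaussian control from these weak hypotheses is exactly what the cited theorem accomplishes. Bamler's proof avoids the Chapman--Kolmogorov split entirely: after the on-diagonal bound he derives a log-Sobolev inequality for $\nu_{x,t;s}$ from Perelman's $\mathcal{W}$/$\mu$-monotonicity, obtains genuine sub-Gaussian concentration of $\nu_{x,t;s}$ around the $H_n$-center by a Herbst-type argument, and then combines that concentration with the on-diagonal bound and the volume non-inflating estimate to extract the pointwise Gaussian decay. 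The log-Sobolev/concentration step is the idea missing from your proposal, and it is what replaces the polynomial Chebyshev tail by an exponential one.
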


Using this heat kernel upper bound estimate and Perelman's Harnack inequality, we have the following estimate which relates the $W_1$-distance to the $\mathcal{L}$-length, see \cite[Lemma 21.2]{Bam20c}.
\begin{lemma} \label{wdn}
Let $(M, g(t))_{t\in (-T, 0)}$ be a solution of the Ricci flow for some $T>0$. Suppose $(s, t) \subset (-T, 0)$ with $s>-T+\epsilon>0$ for some $\epsilon>0$.  Let  $\gamma: [0, t-s] \rightarrow M \times (-T, 0)$ be a $C^1$ spacetime curve with
$$\gamma(\tau) \in M \times \{ t-\tau\}, ~~\gamma(0)=x, ~\gamma(t-s) = y. $$
Then there exists $C=C(\epsilon)>0$ such that
\begin{equation}
d^{g(s)}_{W_1} (\delta_{y, s}, \nu_{x,t; s}) \leq C \left(1+ \frac{\cL(\gamma)}{2(t-s)^{1/2}} -\cN^*_s(x, t)  \right) ^{1/2} (t-s)^{1/2}.
\end{equation}
\end{lemma}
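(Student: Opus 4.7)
The plan is to reduce the $W_1$-estimate to a single distance bound via Kantorovich duality together with an $H_n$-center, and then to sandwich the heat kernel potential between its two available estimates in order to extract that bound. Since $\delta_{y,s}$ is a Dirac mass, Kantorovich duality gives
\[
d^{g(s)}_{W_1}(\delta_{y,s}, \nu_{x,t;s}) = \int_M d_s(y,z)\, d\nu_{x,t;s}(z) \leq \left(\int_M d_s(y,z)^2\, d\nu_{x,t;s}(z)\right)^{1/2}.
\]
Picking an $H_n$-center $(z_0, s)$ of $(x,t)$, Definition \ref{hnc} yields $\int d_s(z_0, z)^2\, d\nu_{x,t;s} \leq H_n(t-s)$, and the triangle inequality upgrades this to
\[
\int_M d_s(y,z)^2\, d\nu_{x,t;s} \leq 2\, d_s(y, z_0)^2 + 2 H_n(t-s).
\]
The problem therefore reduces to showing $d_s(y, z_0)^2 \leq C(\epsilon)(t-s)\bigl(1 + \cL(\gamma)/(2\sqrt{t-s}) - \cN^*_s(x,t)\bigr)$.

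Write $K(x,t;y,s) = (4\pi(t-s))^{-n/2} e^{-f(y,s)}$. The maximum principle for scalar curvature on the Ricci flow over $(-T, 0)$ gives $R(\cdot, t') \geq -n/(2(t'+T)) \geq -n/(2\epsilon)$ for $t' \in [s,t]$, so the heat kernel upper bound recalled above applies with constants depending only on $\epsilon$, and taking $-\log$ of the bound produces
\[
f(y,s) \geq \cN^*_s(x,t) + \frac{d_s(z_0,y)^2}{C_1(t-s)} - C_2,
\]
with $C_j = C_j(\epsilon)$. In the opposite direction, Perelman's differential Harnack $2\Delta f - |\nabla f|^2 + R + (f-n)/\tau \leq 0$ (pointwise with $\tau = t-s'$ at the evaluation point), combined with the conjugate heat equation rewritten as $\partial_\tau f = \Delta f - |\nabla f|^2 + R - n/(2\tau)$, yields $\partial_\tau f \leq -f/(2\tau) - |\nabla f|^2/2 + R/2$. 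Evaluating along the curve $\gamma$ and completing the square in $-|\nabla f|^2/2 + \langle\nabla f, \dot\gamma\rangle \leq |\dot\gamma|^2/2$ produces
\[
\frac{d}{d\tau}\bigl(\sqrt\tau\, f(\gamma(\tau), t-\tau)\bigr) \leq \frac{\sqrt\tau}{2}\bigl(R(\gamma(\tau), t-\tau) + |\dot\gamma(\tau)|^2\bigr),
\]
and integrating from $0$ to $t-s$ (the boundary term at $0^+$ vanishes because $|\gamma(\tau)-x|^2 = O(\tau^2)$ for $C^1$ curves combined with the Gaussian behaviour of $f$) yields $f(y,s) \leq \cL(\gamma)/(2\sqrt{t-s})$.

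Subtracting the two bounds on $f(y,s)$ delivers $d_s(z_0, y)^2 \leq C_1(t-s)\bigl(\cL(\gamma)/(2\sqrt{t-s}) - \cN^*_s(x,t) + C_2\bigr)$. Averaging the Harnack upper bound against $\nu_{x,t;s}$ moreover gives $\cN^*_s(x,t) = \int f\, d\nu - n/2 \leq \cL(\gamma)/(2\sqrt{t-s}) - n/2$, so the parenthetical quantity is always at least $n/2$, and the additive constant $C_2$ can be absorbed into a multiplicative constant in front of $1 + \cL(\gamma)/(2\sqrt{t-s}) - \cN^*_s(x,t)$. Substituting into the second-moment bound from the first paragraph and extracting a square root gives the claim. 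The main obstacle is ensuring that all constants in the heat kernel upper bound remain uniformly controlled along the flow, which is precisely where the hypothesis $s > -T + \epsilon$ enters and pins down the $\epsilon$-dependence of the final constant; the Harnack integration along $\gamma$ is a classical computation that must be executed carefully so that the correct factor $\cL(\gamma)/(2\sqrt{t-s})$ appears on the right-hand side.
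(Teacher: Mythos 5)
The paper does not give its own proof of this lemma; it cites \cite[Lemma 21.2]{Bam20c} and remarks only that the proof uses the heat kernel upper bound together with Perelman's Harnack inequality. Your proposal reconstructs exactly this argument, and the main pipeline is sound: the Kantorovich identity $d^{g(s)}_{W_1}(\delta_y,\nu)=\int d_s(y,\cdot)\,d\nu$, Cauchy--Schwarz, the $H_n$-center plus triangle inequality to reduce to $d_s(y,z_0)$, then sandwiching $f(y,s)$ between the heat kernel upper bound (lower bound on $f$) and the integrated Perelman Harnack inequality along $\gamma$ (upper bound $f(y,s)\le \cL(\gamma)/(2\sqrt{t-s})$). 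The scalar curvature lower bound $R\ge -n/(2\epsilon)$ via the maximum principle, the verification of Perelman's inequality $\frac{d}{d\tau}\bigl(\sqrt\tau\,f(\gamma(\tau),t-\tau)\bigr)\le \frac{\sqrt\tau}{2}(R+|\dot\gamma|^2)$, and the vanishing of the boundary term at $\tau=0^+$ are all correctly handled.

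However, the step you use to absorb the additive constant $C_2$ into a multiplicative one contains a genuine error. You write that ``averaging the Harnack upper bound against $\nu_{x,t;s}$ gives $\cN^*_s(x,t)=\int f\,d\nu-n/2\le \cL(\gamma)/(2\sqrt{t-s})-n/2$.'' But the Harnack inequality you derived reads $f(y,s)\le \cL(\gamma)/(2\sqrt{t-s})$ and holds \emph{only at the specific endpoint} $y=\gamma(t-s)$ of the fixed spacetime curve $\gamma$, not as a pointwise bound on $f(\cdot,s)$; so integrating against $\nu_{x,t;s}$ is not legitimate. What Perelman does give pointwise is $f(\cdot,s)\le \ell_{(x,t)}(\cdot,t-s)$ with $\ell$ the reduced distance, but $\int\ell\,d\nu$ has no reason to be dominated by $\cL(\gamma)/(2\sqrt{t-s})$ for a single $\gamma$. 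The estimates you actually have yield only $\cL(\gamma)/(2\sqrt{t-s})-\cN^*_s(x,t)\ge f(y,s)-\cN^*_s(x,t)\ge -C_2(\epsilon)$, from the heat kernel upper bound after dropping the nonnegative $d_s(z_0,y)^2$ term, together with $\cN^*_s\le 0$. This lower bound is what you should invoke to control the bracketed quantity; since the stated conclusion requires $1+\cL(\gamma)/(2\sqrt{t-s})-\cN^*_s(x,t)\ge 0$ to be meaningful at all, the absorption then proceeds by enlarging $C$ (noting that for $A:=\cL(\gamma)/(2\sqrt{t-s})-\cN^*_s(x,t)$ bounded below by $-C_2$, one has $2C_1(A+C_2)+2H_n\le C(\epsilon)(1+A)$ whenever $1+A\ge 0$). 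Replace the averaging claim by this reasoning and the proof is complete.
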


Next, we have the following Lemma.
\begin{lemma}\label{W1balongwl}
Let $(M, g(t))_{t\in I}$ be a smooth Ricci flow on a compact $n$-dimensional manifold with the interval $I\subset \mathbb{R}$. Assume that $(x_0, t_0)\in M\times I$, $r_0\leq 1$ satisfy that $[t_0-2r_0^2, t_0]\subset I$, $R(x_0, t)\leq Yr_0^{-2}$ for all $t\in [t_0-r_0^2, t_0]$, and $\cN^*_{t_0-r_0^2}(x_0, t_0)\geq -Y$, then we have
$$d^{g(t_0-r_0^2)}_{W_1} (\nu_{x_0, t_0; t_0-r_0^2} , \delta_{x_0} )\leq Cr_0$$
for some constant $C=C(n, Y)<\infty$.
\end{lemma}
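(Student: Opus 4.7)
The plan is to apply Lemma \ref{wdn} to the constant spacetime curve $\gamma(\tau)=(x_0, t_0-\tau)$, $\tau\in[0,r_0^2]$, taking $x=y=x_0$, $t=t_0$, $s=t_0-r_0^2$. Since Lemma \ref{wdn} has a constant $C(\epsilon)$ depending on how far $s$ sits from the initial time of the flow, I first parabolically rescale to eliminate the dependence on $r_0$ in that constant.

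Set $\tilde g(\tilde t)=r_0^{-2}g(t_0+r_0^2\tilde t)$ on $\tilde t\in[-2,0]$. Under this rescaling, the hypothesis $R(x_0,t)\leq Yr_0^{-2}$ on $[t_0-r_0^2,t_0]$ becomes $\tilde R(x_0,\tilde t)\leq Y$ on $[-1,0]$, and the Nash entropy is scale-invariant so $\tilde\cN^*_{-1}(x_0,0)\geq -Y$. One checks that the dimensionless ratio $\mathcal L(\gamma)/(2(t-s)^{1/2})$ is scale-invariant, while both $(t-s)^{1/2}$ and the $W_1$-distance scale by $r_0$, so the estimate in Lemma \ref{wdn} is consistent with the rescaling. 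Applying it to the rescaled flow with $s=-1$, $t=0$ and $\epsilon=1$, the constant $C$ depends only on $n$.

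For the constant curve $\tilde\gamma(\tau)=(x_0,-\tau)$ on $[0,1]$, the spatial velocity vanishes, so
$$\mathcal L(\tilde\gamma)=\int_0^1\sqrt{\tau}\,\tilde R(x_0,-\tau)\,d\tau\leq Y\int_0^1\sqrt{\tau}\,d\tau=\tfrac{2Y}{3}.$$
Combined with $-\tilde\cN^*_{-1}(x_0,0)\leq Y$, Lemma \ref{wdn} gives
$$\tilde d^{\tilde g(-1)}_{W_1}(\delta_{x_0},\tilde\nu_{x_0,0;-1})\leq C(n)\bigl(1+\tfrac{Y}{3}+Y\bigr)^{1/2}\cdot 1=C(n,Y),$$
and undoing the rescaling yields the bound $C(n,Y)\,r_0$ for $d^{g(t_0-r_0^2)}_{W_1}(\nu_{x_0,t_0;t_0-r_0^2},\delta_{x_0})$.

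The argument is essentially a bookkeeping exercise once Lemma \ref{wdn} is available; there is no serious analytic obstacle. The only real care needed is the scale-invariance verification that lets us replace the potentially $r_0$-dependent constant $C(\epsilon)$ by one depending only on $n$ and $Y$; the scalar curvature hypothesis was custom-tailored precisely to make $\mathcal L(\gamma)$ of the constant curve comparable to $r_0$, which is exactly what the right-hand side of Lemma \ref{wdn} needs.
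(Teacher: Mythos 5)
Your proof is correct and uses exactly the technique the paper itself employs for the same kind of bound elsewhere: in the proof of Proposition \ref{hklb}, the estimate (\ref{dw1x0}) is obtained precisely by applying Lemma \ref{wdn} to the constant-in-space spacetime curve $\gamma(\tau)=(x_0,t_0-\tau)$ and bounding $\cL(\gamma)$ via the scalar curvature hypothesis along $x_0$. The paper states Lemma \ref{W1balongwl} without proof as a preliminary result, so your derivation fills in what is left implicit, and the parabolic rescaling step correctly explains why the hypothesis $[t_0-2r_0^2,t_0]\subset I$ (giving $r_0^2$ of flow below the time $t_0-r_0^2$) is what makes the constant $C(\epsilon)$ in Lemma \ref{wdn} uniform. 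The only cosmetic slip is taking $\epsilon=1$ in the rescaled flow on $[-2,0]$ when evaluating at $s=-1$: the strict inequality $s>-T+\epsilon$ forces $\epsilon<1$, so you should take, say, $\epsilon=1/2$, which still yields a universal constant; this does not affect the conclusion.
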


Finally, let us recall the following result, which was proved by Perelman in \cite{Per1}.

\begin{lemma} \label{lcenter}
Let $(M, g(t))_{t\in (-T, 0)}$ be a solution of the Ricci flow for some $T>0$. Suppose $[s, t] \subset (-T, 0)$. Then for any $x\in M$, there exists a point $y\in M$, such that
$$ \ell_{(x, t)} (y, s) \leq \frac{n}{2}.$$
\end{lemma}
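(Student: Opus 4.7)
The plan is to reproduce Perelman's original barrier maximum-principle argument for the reduced distance. Fix $x \in M$ and consider $\ell = \ell_{(x,t)}(\cdot,\cdot)$ as a function on $M \times (-T, t)$; we parametrize by backward time $\tau = t - s$. The key analytic input is Perelman's differential inequality
\begin{equation*}
2\Delta \ell - |\nabla \ell|^2 + R + \frac{\ell - n}{\tau} \leq 0,
\end{equation*}
which holds pointwise on the smooth locus of $\ell$ and in the barrier sense on all of $M$. This inequality is equivalent to the sub-solution property $\Box^*\bigl((4\pi\tau)^{-n/2} e^{-\ell}\bigr) \leq 0$ for the conjugate heat operator.

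First I would establish the correct initial behaviour as $\tau \to 0^+$. Using the variational definition of $\ell$ and choosing the $\mathcal{L}$-geodesic emanating from $x$ with vanishing limit velocity $\lim_{\tau' \to 0^+}\sqrt{\tau'}\,\dot\gamma(\tau') = 0$, a direct Taylor expansion of $\int_0^\tau \sqrt{\tau'}(R + |\dot\gamma|^2)\,d\tau'$ near $\tau = 0$ yields $\ell(\gamma(\tau),\tau) = O(\tau)$. Consequently $\ell_{\min}(\tau) := \min_{y \in M} \ell(y, t-\tau) \to 0$ as $\tau \to 0^+$. Next, I would run the minimum principle on $\ell_{\min}(\tau)$: at any spatial minimum $y_\tau$ of $\ell(\cdot, t-\tau)$ one has $\nabla \ell(y_\tau) = 0$ and $\Delta \ell(y_\tau) \geq 0$ in the barrier sense, so Perelman's inequality forces
\begin{equation*}
\frac{\ell_{\min}(\tau) - n}{\tau} \leq -R(y_\tau, t-\tau).
\end{equation*}
Combining this with the evolution identity $\partial_\tau \ell = -\ell/\tau + \tfrac12(R + |\dot\gamma|^2) - \tfrac12 K/\tau^{3/2}$ along the minimizing $\mathcal{L}$-geodesic through $y_\tau$ (so that the contributions of $R$ cancel appropriately) yields $\partial_\tau(\tau \ell_{\min}) \leq n/2$ in the barrier sense. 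Integrating from $0$ and using $\ell_{\min}(\tau) \to 0$ gives $\ell_{\min}(\tau) \leq n/2$ for all admissible $\tau$.

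The main obstacle is the formal minimum-principle step, since $\ell(\cdot,\tau)$ is only locally Lipschitz and can fail to be smooth at $\mathcal{L}$-cut points of the $\mathcal{L}$-exponential map based at $(x,t)$. This is handled by Perelman's standard trick of replacing $\ell$ with $\ell + \varepsilon \eta$ for a smooth penalty $\eta$ that pushes the minimum into the smooth locus, applying the differential inequality at that smooth minimum in the usual sense, and then letting $\varepsilon \to 0^+$; this technique is by now standard in the literature (e.g.\ Kleiner--Lott or Ye's exposition) and is precisely the step that justifies the barrier formulation used above.
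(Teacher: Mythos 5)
The paper does not prove this lemma; it is stated with a citation to Perelman \cite{Per1}, so there is no in-paper proof to compare against. Your reconstruction follows Perelman's argument from Section 7.1 of \cite{Per1}, and the architecture is right: Perelman's two reduced-distance differential inequalities, the initial behaviour $\ell_{\min}(\tau)\to 0$ as $\tau\to 0^+$, a minimum principle on $\tau\ell_{\min}$, and a Calabi-type barrier perturbation to handle the $\mathcal{L}$-cut locus. As a sketch it identifies the right ingredients.

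However, the step that is the actual content of the lemma is dispatched in one sentence: ``combining this with the evolution identity \ldots\ (so that the contributions of $R$ cancel appropriately) yields $\partial_\tau(\tau\ell_{\min})\le n/2$.'' This is not a bookkeeping step; it is the whole proof, and as written it is not a proof. What is needed is: at a spatial minimum $y_\tau$ of $\ell(\cdot,\tau)$ one has $\nabla\ell=0$ and $\Delta\ell\ge 0$ in the barrier sense, so Perelman's inequality $2\Delta\ell-|\nabla\ell|^2+R+(\ell-n)/\tau\le 0$ gives $R(y_\tau,\tau)\le (n-\ell_{\min})/\tau$; then the exact first-variation identity $2\,\partial_\tau\ell+|\nabla\ell|^2-R+\ell/\tau=0$, evaluated at $y_\tau$ where $\nabla\ell=0$, gives $\partial_\tau\ell_{\min}\le \tfrac12\bigl((n-\ell_{\min})/\tau-\ell_{\min}/\tau\bigr)$, hence $\partial_\tau(\tau\ell_{\min})\le n/2$, and integration from $\tau=0$ finishes. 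Moreover the evolution identity you quote, $\partial_\tau\ell=-\ell/\tau+\tfrac12(R+|\dot\gamma|^2)-\tfrac12 K/\tau^{3/2}$, is wrong: substituting the gradient identity $|\dot\gamma|^2=|\nabla\ell|^2=-R+\ell/\tau-K/\tau^{3/2}$ into it gives $\partial_\tau\ell=-\ell/(2\tau)-K/\tau^{3/2}$, which is inconsistent with the correct $\partial_\tau\ell=\tfrac12(R-|\nabla\ell|^2)-\ell/(2\tau)=R-\ell/\tau+K/(2\tau^{3/2})$; so even if you had displayed the combination, the input formula would have led you astray. Perelman's own route is slightly cleaner and worth preferring: set $\bar L=4\tau\ell$, derive $(\partial_\tau-\Delta)\bar L\le 2n$ from the same two inequalities, and apply the scalar minimum principle to $\bar L-2n\tau$, using $\bar L(\cdot,\tau)\to d_{g(t)}^2(x,\cdot)$ as $\tau\to 0^+$ so that the initial minimum is $0$.
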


We will call the point $(y,s)$ is an $\ell_n$-center of $(x,t)$ in Lemma \ref{lcenter}.


\medskip
\subsection{Metric flows and the $\bF$-convergence}
Let $(X, d)$ be a complete, separable metric space and denote by $\mathcal{B}(X)$ the Borel algebra generated by the open subsets of $X$. A probability measure on X is a measure $\mu$ on $\mathcal{B}(X)$ with $\mu(X)=1$. We denote by $\mathcal{P}(X)$ the set of probability measures on $X$. Denote by $\Phi:\mathbb{R}\to (0,1)$ the antiderivative satisfies that $\Phi'(x)=(4\pi)^{-1/2}e^{-x^2/4}$, $\lim_{x\to-\infty}\Phi(x)=0$, $\lim_{x\to\infty}\Phi(x)=1$.

\begin{definition}[Metric Flow Pairs, Definitions 3.2, 5.1 in \cite{Bam20b}]\label{mfpairs}
A metric flow over $I\subseteq \mathbb{R}$ is a tuple
$$(\mathcal{X},\mathfrak{t},(d_t)_{t\in I},(\nu_{x;s})_{x\in \mathcal{X},s \in I\cap (-\infty,\mathfrak{t}(x)]}),$$
where $\mathcal{X}$ is a set, $\mathfrak{t}:\mathcal{X}\to I$ is a function, $d_t$ are metrics on the level sets $\mathcal{X}_t:=\mathfrak{t}^{-1}(t)$, such that $(\mathcal{X}_t, d_t)$ is a complete and separable metric space for all $t$, and $\nu_{x;s}\in \mathcal{P}(\mathcal{X}_s)$, $s\leq \mathfrak{t}(x)$ are such that $\nu_{x;\mathfrak{t}(x)}=\delta_x$ and the following hold:
\begin{enumerate}
    \item (Gradient estimate for heat flows) For $s,t\in I$, $s<t$, $T\geq 0$, if $u_s:\mathcal{X}_s\to [0,1]$ is such that $\Phi^{-1}\circ u_s$ is $T^{-\frac{1}{2}}$-Lipschitz (or just measurable if $T=0$), then either $u_t:\mathcal{X}_t\to [0,1]$, $x\mapsto \int_{\mathcal{X}_s}u_s d\nu_{x;s}$, is constant or $\Phi^{-1}\circ u_t$ is $(T+t-s)^{-\frac{1}{2}}$-Lipschitz,
    \item (Reproduction formula) For $t_1 \leq t_2 \leq t_3$ in $I$, $\nu_{x;t_1}(E)=\int_{\mathcal{X}_{t_2}}\nu_{y;t_1}(E)d\nu_{x;t_2}(y)$ for $x\in \mathcal{X}_{t_3}$ and all Borel sets $E\subseteq \mathcal{X}_{t_1}$.
\end{enumerate}

A conjugate heat flow on $\mathcal{X}$ is a family $\mu_t \in \mathcal{P}(\mathcal{X}_t)$, $t\in I'$, such that for $s\leq t$ in $I'$, we have $\mu_s(E)=\int_{\mathcal{X}_t} \nu_{x;s}(E) d\mu_t(x)$ for any Borel subset $E\subseteq \mathcal{X}_s$. A metric flow pair $(\mathcal{X},(\mu_t)_{t\in I'})$ consists of a metric flow $\mathcal{X}$, along with a conjugate heat flow $(\mu_t)_{t\in I'}$ such that $\text{supp}(\mu_t)=\mathcal{X}_t$ and $|I\setminus I'|=0$.
\end{definition}
Next, we have the following definitions.
\begin{definition}[Correspondences and $\mathbb{F}$-Distance, Definitions 5.4, 5.6 in \cite{Bam20b}]\label{corrspandFcon}
Given metric flows $(\mathcal{X}^i)_{i\in \mathcal{I}}$ defined over $I'^{,i}$, a correspondence over $I''\subseteq \mathbb{R}$ is a pair
$$
\mathfrak{C}=\left( (Z_t,d_t)_{t\in I''},(\varphi_t^i)_{t\in I''^{,i},i\in \mathcal{I}}\right)
$$
where $(Z_t,d_t^Z)$ are metric spaces, $I''^{,i}\subseteq I'^{,i}\cap I''$, and $\varphi_t^i:(\mathcal{X}_t^i,d_t^i)\to (Z_t,d_t^Z)$ are isometric embeddings.

The $\mathbb{F}$-distance between metric flow pairs $(\mathcal{X}^j,(\mu_t^j)_{t\in I'^{,j}})$, $j=1,2$, within $\mathfrak{C}$ is the infimum of $r>0$ such that there exists a measurable set $E\subseteq I''$ such that $I''\setminus E\subseteq I''^{,1}\cap I''^{,2}$, $|E|\leq r^2$, and there exist couplings $q_t$ of $(\mu_t^1,\mu_t^2)$, $t\in I''\setminus E$, such that for all $s,t\in I''\setminus E$ with $s\leq t$, we have
$$\int_{\mathcal{X}_t^1 \times \mathcal{X}_t^2}d_{W_1}^{Z_s}\left( (\varphi_s^1)_{\ast}\nu_{x^1;s}^1 , (\varphi_s^2)_{\ast}\nu_{x^2;s}^2 \right) dq_t(x^1,x^2) \leq r.$$
The $\mathbb{F}$-distance between metric flow pairs is the infimum of $\mathbb{F}$-distances within a correspondence $\mathfrak{C}$, where $\mathfrak{C}$ is varied among all correspondences.
\end{definition}
%


\bigskip
\section{Complex Monge-Amp\`ere flows and basic estimates}\label{setupofKRF}
In this section, we will first build the basic set-ups, and recall the well-known parabolic Schwarz Lemma, which is of basic importance in our theory. Then we will define the so-called localized Ricci potentials, and prove their basic estimates.


\medskip
\subsection{Basic set-ups and the parabolic Schwarz Lemma}\label{bsuandpsl}

We use the set-up from Section \ref{be}, that is, we start with the unnormalized K\"aher-Ricci flow
\begin{equation}\label{unkrflow2}
\left\{
\begin{array}{l}
{ \displaystyle \ddt{\omega(t)} = -\ric(\omega(t)) ,}\\
\\
\omega(0)=\omega_0,
\end{array} \right.
\end{equation}
on a projective manifold $X$ of complex dimension $n\geq 2$ for some initial K\"ahler metric $\omega_0 \in H^{1,1}(X, \mathbb{R})\cap H^2(X, \mathbb{Q})$, which develops singularities at time $T=1$. The limiting cohomology class $\vartheta= [\omega_0]+ [K_X] \in H^{1,1}(X, \mathbb{R})\cap H^2(X, \mathbb{Q})$ is a semi-ample $\mathbb{Q}$-line bundle, induces a unique surjective holomorphic map
\begin{equation}
\Phi: X \rightarrow Y \subset \mathbb{CP}^N ,
\end{equation}
where $Y$ is a normal projective variety and $\dim Y$ is equal to the Kodaira dimension of $\vartheta$. We define $Y_{\textnormal{sing}}$ to be the critical values of $\Phi$ and let
\begin{equation}\label{singset'2}
Y^{\circ} = Y \setminus Y_{\textnormal{sing}}, ~~ X^\circ = \Phi^{-1}(Y^{\circ}), ~~ X_{\textnormal{sing}} = \Phi^{-1}(Y_{\textnormal{sing}}).
\end{equation}

We then consider the following normalized K\"ahler-Ricci flow
\begin{equation}\label{nkrflow2}
\left\{
\begin{array}{l}
{ \displaystyle \dds{\tilde{\omega}(s)} = -\ric(\tilde{\omega}(s)) + \tilde{\omega}(s),}\\
\\
\tilde{\omega}(0) =\omega_0 ,
\end{array} \right.
\end{equation}
which has a long-time solution with $s\in [0, \infty)$. The relations between the unnormalized K\"ahler-Ricci flow (\ref{unkrflow2}) and normalized K\"ahler-Ricci flow (\ref{nkrflow2}) are given by
\begin{equation}\label{rb unkrf and nkrf}
s=-\ln (1-t),~~ t=1-e^{-s},~~ \tilde{\omega}(s)=(1-t)^{-1}\omega(t),~~t\in [0,1).
\end{equation}

The K\"ahler class for the normalized flow is given by
$$[\tilde{\omega}(s)] = e^{s} [\omega_0] + (e^{s} - 1) [K_X] = - [K_X] + e^{s} ([\omega_0] + [K_X]) $$
and $[\omega_0] + [K_X]$ is exactly the limiting K\"ahler class of the unnomarlized Kahler-Ricci flow (\ref{unkrflow2}) as well as the pullback of an ample divisor on $Y$.
%


%
We can always find a smooth closed $(1,1)$-form $\chi \in -[K_X]$ such that
\begin{equation}\label{omegaY}
\omega_Y= \omega_0 - \chi     ,
\end{equation}
is the restriction of the Fubini-Study metric $\omega_{\fs}$ on $\mathbb{CP}^N$ to $Y$. We can also choose a smooth volume form $\Omega$ such that
$$-\ddbar \log \Omega = \chi$$
since $-\chi \in [K_X]$.

Now the unnormalized flow (\ref{unkrflow2}) can be reduced to the complex Monge-Amp\`ere flow as below
\begin{equation}\label{mauflow2}
\left\{
\begin{array}{l}
{ \displaystyle \ddt{\phi} = \log \frac{ ((1-t)\omega_0 + t\omega_Y + \ddbar \phi )^n }{\Omega}  ,~~ t\in [0, 1), }\\
\\
\phi|_{ t=0} =0
\end{array} \right.
\end{equation}
and the nomarlized flow (\ref{nkrflow2}) can be reduced the complex Monge-Amp\`ere flow as below
\begin{equation}\label{maflow2}
\left\{
\begin{array}{l}
{ \displaystyle \dds{\varphi} = \log \frac{ (\omega_0 + (e^{s}-1) \omega_Y + \ddbar \varphi )^n }{\Omega} + \varphi, ~~ s\in [0, \infty), }\\
\\
\varphi|_{ s=0} = 0 .
\end{array} \right.
\end{equation}
The relation between $\phi$ and $\varphi$ is given by
\begin{equation}\label{rbphivarphi}
\varphi(s) = e^s\phi(t(s)) + n ( e^s -s -1), ~~ t(s) = 1 - e^{-s}.
\end{equation}
We have the following well-known parabolic Schwarz lemma.
\begin{lemma}[\bf{Parabolic Schwarz Lemma}] \label{pschwarz}
Let $\beta$ be any K\"ahler metric on $\mathbb{CP}^N$. For the solution to the unnormalized flow $\omega(t)$, we have
\begin{equation} \label{pschwarz'1}
\tr_{\omega(t)}\beta \leq C,
\end{equation}
and
\begin{equation} \label{pschwarz'2}
\left( \ddt{} - \Delta_{\omega(t)} \right) \tr_{\omega(t)}\beta \leq -C^{-1}|\nabla \tr_{\omega(t)}\beta|_{\omega(t)}^2+C,
\end{equation}
on $X\times [0, 1)$.

For the solution to the normalized flow $\tilde{\omega}(s)$, we have
\begin{equation} \label{pschwarz'3}
\tr_{\tilde{\omega}(s)}(e^s\beta) \leq C,
\end{equation}
and
\begin{equation} \label{pschwarz'4}
\left( \dds{} - \Delta_{\tilde{\omega}(s)} \right) \tr_{\tilde{\omega}(s)}(e^s\beta) \leq -C^{-1}|\nabla \tr_{\tilde{\omega}(s)}(e^s\beta)|_{\tilde{\omega}(s)}^2+Ce^{-s},
\end{equation}
on $X\times [0, \infty)$. Here $C<\infty$ is a constant, depends on $n, \omega_0$ and the upper bound for the bisectional curvature of $\beta$.
\end{lemma}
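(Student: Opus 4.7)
The plan is a standard parabolic Chern--Lu (Yau--Schwarz) computation for the holomorphic map $\Phi:X\to\mathbb{CP}^N$, followed by a maximum-principle argument against the K\"ahler potential $\phi$ to promote the resulting parabolic inequality into an $L^\infty$-bound; the normalized statements then follow from the unnormalized ones by the time change (\ref{rb unkrf and nkrf}). Throughout, we interpret $\beta$ as its pullback $\Phi^*\beta$ on $X$, and set $u:=\tr_{\omega(t)}\Phi^*\beta$. The Chern--Lu inequality for $\Phi:(X,\omega(t))\to(\mathbb{CP}^N,\beta)$ gives a lower bound for $\Delta_{\omega(t)}\log u$ whose $\ric(\omega(t))$-contribution cancels exactly against the term arising from $\partial_t g^{i\bar j}=R^{i\bar j}$ in $\partial_t\log u$, leaving
\[
(\partial_t-\Delta_{\omega(t)})\log u \;\leq\; B_\beta\, u \qquad\text{on }X\times[0,1),
\]
where $B_\beta$ is an upper bound for the holomorphic bisectional curvature of $\beta$. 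A mild technical point is that $\Phi^*\beta$ degenerates on the critical locus of $\Phi$; this is harmless since any maximum of the auxiliary function below is attained where $u>0$, or one may regularize via $\log(u+\varepsilon)$ and let $\varepsilon\downarrow 0$.

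To promote this into the $L^\infty$-bound (\ref{pschwarz'1}), apply the maximum principle to $H:=\log u - A\phi$ for $A>0$ to be chosen. Using (\ref{mauflow2}) and $\Delta_{\omega(t)}\phi=n-\tr_{\omega(t)}\chi_t$ with $\chi_t:=(1-t)\omega_0+t\omega_Y$, one has $(\partial_t-\Delta_{\omega(t)})\phi=\dot\phi - n + \tr_{\omega(t)}\chi_t$. Since $\beta$ and $\omega_{\fs}$ are uniformly equivalent on $\mathbb{CP}^N$, there exists $C_0>0$ with $\Phi^*\beta\leq C_0\,\omega_Y$, so $u\leq C_0\tr_{\omega(t)}\omega_Y$; for $t\in[1/2,1)$ this yields $\tr_{\omega(t)}\chi_t\geq t\,\tr_{\omega(t)}\omega_Y\geq c\,u$ for a uniform $c>0$. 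Choosing $A$ large enough that $Ac>B_\beta$, and using the standard $L^\infty$-bound of $\dot\phi$ along a finite-time K\"ahler--Ricci flow, at any interior maximum of $H$ one obtains $(B_\beta - Ac)u + C \geq 0$, forcing $u\leq C'$. On $[0,1/2]$ the estimate is trivial from smoothness of the flow.

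Finally, the differential inequality (\ref{pschwarz'2}) follows by rewriting $\Delta\log u=u^{-1}\Delta u-u^{-2}|\nabla u|^2$ to get
\[
(\partial_t-\Delta_{\omega(t)})u \;=\; u(\partial_t-\Delta_{\omega(t)})\log u-\frac{|\nabla u|_{\omega(t)}^2}{u} \;\leq\; B_\beta u^2-\frac{|\nabla u|_{\omega(t)}^2}{u},
\]
and then plugging in $u\leq C$ bounds the first term by a constant and the second by $-C^{-1}|\nabla u|_{\omega(t)}^2$. The normalized statements (\ref{pschwarz'3})--(\ref{pschwarz'4}) follow directly from the unnormalized ones by the rescaling (\ref{rb unkrf and nkrf}): under $\omega(t)=(1-t)\tomega(s)$ and $t=1-e^{-s}$, one computes $\partial_t=e^s\partial_s$, $\Delta_{\omega(t)}=e^s\Delta_{\tomega(s)}$, $|\nabla\cdot|_{\omega(t)}^2=e^s|\nabla\cdot|_{\tomega(s)}^2$, and $\tr_{\omega(t)}\beta=\tr_{\tomega(s)}(e^s\beta)$; substituting these into (\ref{pschwarz'1})--(\ref{pschwarz'2}) and dividing by $e^s$ reproduces (\ref{pschwarz'3})--(\ref{pschwarz'4}), with the prefactor $Ce^{-s}$ arising naturally from the rescaling of the constant. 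The only mildly subtle step is the exact cancellation of Ricci terms in the Chern--Lu computation, which is precisely the feature that makes this estimate unconditional on any a priori control of $\ric(\omega(t))$.
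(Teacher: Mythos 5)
Your Chern--Lu computation (including the exact Ricci cancellation between $\partial_t\log u$ and the Chern--Lu lower bound on $\Delta\log u$), the $\log(u+\varepsilon)$ regularization at the critical locus of $\Phi$, and the rescaling from the unnormalized to the normalized statement are all correct. The rescaling step is essentially the same calculation the paper writes out; for the unnormalized inequalities themselves the paper simply cites them as well-known, whereas you attempt a proof.

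That attempt has a genuine gap in the maximum-principle step. In $(\ddt{}-\Delta_{\omega(t)})(\log u-A\phi)$ you are left with the term $-A\dot\phi$, and you dispatch it by invoking ``the standard $L^\infty$-bound of $\dot\phi$,'' i.e. a uniform two-sided bound. The upper bound $\dot\phi\leq C$ is standard, but the lower bound fails precisely in the collapsing case $\dim Y<n$, which is central to this paper (Sections~\ref{Fanobundle} and~\ref{calsym}): there $\int_X\omega(t)^n=[\omega(t)]^n\to 0$ as $t\to1$, so $\dot\phi=\log(\omega(t)^n/\Omega)\to-\infty$ somewhere, and the best available estimate is $\dot\phi\geq -C/(1-t)$. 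The correct barrier is not $\phi$ but $\Psi:=(1-t)\dot\phi+\phi+nt$, for which a short computation using $\partial_t\omega(t)=-\omega_0+\omega_Y+\ddbar\dot\phi$ and $\Delta\phi=n-\tr_{\omega(t)}\chi_t$ gives the clean identity $(\ddt{}-\Delta_{\omega(t)})\Psi=\tr_{\omega(t)}\omega_Y$. Writing $v=\tr_{\omega(t)}\omega_Y$ (which controls $u$ since $\beta$ and $\omega_{\fs}$ are uniformly equivalent on $\mathbb{CP}^N$), Chern--Lu gives $(\ddt{}-\Delta_{\omega(t)})(\log v-A\Psi)\leq (B-A)v\leq 0$ once $A>B$, and since $\Psi$ is uniformly bounded above (from $\dot\phi\leq C$ and the elementary $C^0$-bound on $\phi$), the maximum principle closes with no reference to a lower bound on $\dot\phi$. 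Note that $\Psi=(1-t)u_0$ with $u_0$ as in (\ref{defofu0}); this is exactly the (unweighted) Ricci potential the paper already tracks, and its bound (which the paper quotes as $|u_0|\leq C_0e^s$ in the normalized time, i.e. $|(1-t)u_0|\leq C_0$) is the ingredient your argument is implicitly reaching for but misattributes to $\dot\phi$ alone.
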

\begin{proof}[\bf{Proof}]
The parabolic Schwarz lemma for the solution to the unnormalized flow $\omega(t)$ is well-known. Then (\ref{pschwarz'3}) and (\ref{pschwarz'4}) comes from the rescaling of (\ref{pschwarz'1}) and (\ref{pschwarz'2}), respectively. Indeed, recall the rescaling relations (\ref{rb unkrf and nkrf}), then we can compute
\begin{eqnarray*}
\left( \dds{} - \Delta_{\tilde\omega(s)} \right) \tr_{\tilde\omega(s)}(e^s\beta) & = & e^{-s}\left( \ddt{} - \Delta_{\omega(t)} \right) \tr_{\omega(t)}\beta \\
&\leq& e^{-s}\left( -C^{-1}|\nabla \tr_{\omega(t)}\beta|_{\omega(t)}^2+C \right)\\
&=&-C^{-1}|\nabla \tr_{\tilde\omega(s)}(e^s\beta)|_{\tilde\omega(s)}^2+Ce^{-s}.
\end{eqnarray*}
This completes the proof.
\end{proof}


\medskip
\subsection{Localized Ricci potentials and their basic estimates}\label{lrpandbe}
In this subsection, we focus on the normalized K\"ahler-Ricci flow (\ref{nkrflow2}).

All the operators $\nabla,\Delta,\left\langle,\right\rangle$ in this subsection are with respect to the solution of the normalized flow $\tilde\omega(s)$. We denote by $\tilde g(s)$ the Riemannian metric associated to $\tilde\omega(s)$, and $ R_{\tilde g}(s)$ the scalar curvature of $\tilde\omega(s)$.

%

We define
\begin{equation} \label{defofu0}
u_0(t) =\ddt{\phi} + \frac{ \phi + nt }{ 1-t },
\end{equation}
on the unnormalized flow $X\times [0 , 1)$. By using the relation between $\phi$ and $\varphi$ in (\ref{rbphivarphi}), we have
$$  u_0 (t(s)) = \dds{\varphi} (s), ~~ t(s) = 1 - e^{-s}. $$
For convenience, we still denote by $u_0(s)$ the function $u_0 (t(s))$, which is a function on the normalized flow $X\times [0, \infty)$, then $u_0$ satisfies the following coupled equations
\begin{equation}\label{ceofu0}
\left\{
\begin{array}{l}
\dds {} u_0 = \Delta u_0 + \tr_{\tilde\omega(s)}(e^s\omega_Y) + u_0=n-R_{\tilde g}(s) + u_0 , \\
\\
\ric (\tilde\omega(s)) = \tilde\omega(s) - e^s\omega_Y - \ddbar u_0 .
\end{array} \right.
\end{equation}
Now, let $\theta_Y$ be a smooth closed $(1,1)$-form on $Y$ with $\Phi^*\theta_Y\in \vartheta$. Then we have
\begin{equation} \label{defofrho}
\omega_Y-\theta_Y=\ddbar \rho ,
\end{equation}
where $\rho$ is a smooth function on $\mathbb{CP}^N$. We still denote by $\rho$ the pullback function $\pi\circ\rho$.
%
%
Then in the normalized flow, we define
\begin{equation} \label{defofu'2}
u = u_0 + e^s\rho,
\end{equation}
on $X \times [0, \infty)$. In our later proofs, we will see that the dependence of our constants on $\theta_Y$ coincides that on $\|\rho \|_{C^4(\omega_{\fs})}$. When we are in the unnormalized flow, we still denote by $u(t)$ the function $u (s(t))$ with $s(t)=-\log(1-t)$, and we can check that
$$\ric(\omega(t)) - (1-t)^{-1} \omega(t) = - (1-t)^{-1} \theta_Y  - \ddbar  u,$$
for all $t\in [0, 1)$. Denote by
\begin{equation} \label{defofalp}
\alpha = \ddbar\rho.
\end{equation}
Of course, we can view $\alpha$ as a smooth form on $\mathbb{CP}^N$. Now $u$ is a smooth function, satisfying the following coupled equations
\begin{equation}\label{ceofu}
\left\{
\begin{array}{l}
\dds {} u = \Delta u  -  \tr_{\tilde\omega(s)}(e^s(\alpha-\omega_Y)) + u = n- R_{\tilde g}(s) + u , \\
\\
\ric (\tilde\omega(s)) = \tilde\omega(s) + e^s(\alpha-\omega_Y) - \ddbar u ,
\end{array} \right.
\end{equation}
on $X \times [0, \infty)$. For $s\in [0, \infty)$, we denote by
$$a(s) = a(t(s)) = \inf_{X} u ( \cdot , s) .$$
%

%

%
We certainly can require that the Ricci vertex $p_s $ associated to $\theta_Y$ at time $s$ varies continuously in $s$, but not smoothly in general. Hence in general $a(s)$ is only continuous, but not smooth. We have the following important monotonicity estimates.

\begin{lemma} \label{a'2}
There exists constant $B_0=B_0(n , \omega_0, \|\rho \|_{C^2(\omega_{\fs})})<\infty$, such that the following statements hold.
\begin{enumerate}
\item The function $e^{-s}\left( a - B_0 \right)$ is monotonicitly increasing in $s\in [0, \infty)$.
\item The function $e^{-s}\left( a + B_0 \right)$ is monotonicitly decreasing in $s\in [0, \infty)$.
\end{enumerate}
Here the norm $\|\rho \|_{C^2(\omega_{\fs})}$ is taken in the space $\mathbb{CP}^N$.
\end{lemma}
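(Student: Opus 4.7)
The plan is to differentiate $a(s)$ at minimisers of $u(\cdot,s)$ and to use the two equivalent forms of the evolution equation in (\ref{ceofu}) to trap $a'(s)$ in an interval of the form $[a(s)-B_0,\,a(s)+B_0]$; integrating this against the factor $e^{-s}$ then immediately gives both monotonicity statements.

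I would fix $s\in[0,\infty)$ and let $p_s\in X$ be any point where $u(\cdot,s)$ attains its minimum, so $a(s)=u(p_s,s)$, $\nabla u(p_s,s)=0$, and $\Delta u(p_s,s)\geq 0$. From the first form of (\ref{ceofu}),
\[
\partial_s u(p_s,s)=\Delta u(p_s,s)-\tr_{\tilde\omega(s)}\bigl(e^s(\alpha-\omega_Y)\bigr)+a(s)\geq a(s)-C',
\]
provided $|\tr_{\tilde\omega(s)}(e^s(\alpha-\omega_Y))|$ can be bounded by a constant $C'$ of the advertised type. The parabolic Schwarz lemma (Lemma \ref{pschwarz}) gives $\tr_{\tilde\omega(s)}(e^s\omega_Y)\leq C(n,\omega_0)$. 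Since $\alpha=\ddbar\rho$ satisfies $-\|\rho\|_{C^2(\omega_{\fs})}\omega_{\fs}\leq\alpha\leq\|\rho\|_{C^2(\omega_{\fs})}\omega_{\fs}$ on $\mathbb{CP}^N$, and $\omega_Y=\omega_{\fs}|_Y$ by (\ref{omegaY}), pulling back through $\Phi$ yields $|\tr_{\tilde\omega(s)}(e^s\alpha)|\leq \|\rho\|_{C^2(\omega_{\fs})}\cdot C(n,\omega_0)$, producing the required $C'$ and hence the lower inequality.

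For the matching upper inequality I would use the second form of (\ref{ceofu}),
\[
\partial_s u(p_s,s)=n-R_{\tilde g}(p_s,s)+a(s),
\]
which reduces matters to a uniform lower bound on the scalar curvature along the normalised flow. This is standard: the maximum principle applied to $\partial_t R\geq\Delta R+R^2/n$ on the unnormalised K\"ahler-Ricci flow over the finite interval $[0,1)$ yields $R\geq -C(n,\omega_0)$, and rescaling via (\ref{rb unkrf and nkrf}) gives $R_{\tilde g}(s)\geq -C$ on $[0,\infty)$. Combining, $\partial_s u(p_s,s)\leq a(s)+n+C$. Setting $B_0=\max(C',n+C)$ produces the sandwich $a(s)-B_0\leq \partial_s u(p_s,s)\leq a(s)+B_0$.

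Finally, I would upgrade this pointwise bound to a bound on $a(s)$ itself. Since $u$ is smooth and $X$ is compact, $a$ is Lipschitz in $s$, hence differentiable almost everywhere, and at every differentiable $s$ the one-sided inequalities $a(s\pm h)\leq u(p_s,s\pm h)$ for any minimiser $p_s$ force $a'(s)=\partial_s u(p_s,s)$. Consequently $a(s)-B_0\leq a'(s)\leq a(s)+B_0$ a.e., so $\tfrac{d}{ds}(e^{-s}(a(s)\mp B_0))=e^{-s}(a'(s)-a(s)\pm B_0)$ is non-negative for the minus sign and non-positive for the plus sign, giving (1) and (2) after integration. The main (mild) obstacle is the non-smooth dependence of $p_s$ on $s$, which is handled by this standard Dini-derivative bookkeeping; all the analytic content sits in the parabolic Schwarz lemma and the scalar curvature lower bound.
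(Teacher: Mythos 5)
Your proposal is correct and follows essentially the same route as the paper: the same two forms of the evolution equation for $u$ (one supplying the lower bound on $\partial_s u$ via the parabolic Schwarz lemma and $\Delta u\ge 0$ at a minimizer, the other supplying the upper bound via the scalar curvature lower bound), combined with the observation that $e^{-s}(a(s)\mp B_0)$ is the infimum of a function with a one-sided sign on its $s$-derivative. The paper packages this as a direct application of the parabolic maximum principle to $e^{-s}(u - C)$ (resp.\ the pointwise monotonicity of $e^{-s}(u+C)$), whereas you unwrap the same content as a Dini-derivative argument for the envelope $a(s)$; the analytic inputs are identical.
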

\begin{proof}[\bf{Proof}]
Throughout the proof, all the constants will depend at most on $n, \omega_0$ and $\|\rho \|_{C^2(\omega_{\fs})}$.
First, by the parabolic Schwarz lemma and (\ref{ceofu}), we have
\begin{eqnarray*}
\dds{} (e^{-s}u) & = & e^{-s}\dds{}u - e^{-s}u \\
&=& e^{-s}\left( \Delta u  -  \tr_{\tilde\omega(s)}(e^s(\alpha-\omega_Y)) + u \right) - e^{-s}u\\
&\geq& e^{-s}\left( \Delta u - C \right) \\
&=& \Delta (e^{-s}u) - Ce^{-s},
\end{eqnarray*}
hence we obtain
\begin{equation}\label{inofa}
\dds{} \left( e^{-s}( u - C ) \right) \geq \Delta \left( e^{-s}( u - C ) \right) .
\end{equation}
Since $a(s) = \inf_{X} u ( \cdot , s)$, we have
$$
e^{-s}( a(s) - C ) = \inf_{X} e^{-s}( u - C ),
$$
and this infimum is still attained at the Ricci vertex $p_s$, we obtain from (\ref{inofa}) and the maximum principle that $e^{-s}\left( a(s) - C \right)$ is monotonicitly increasing in $s$. This proves the item (1).
Next, again by (\ref{ceofu}), we have
\begin{eqnarray*}
\dds{} (e^{-s}u) & = & e^{-s}\dds{}u - e^{-s}u \\
&=& e^{-s}\left( n- R_{\tilde g}(s) + u \right) - e^{-s}u \leq Ce^{-s},
\end{eqnarray*}
since we have uniform lower bound on the scalar curvature. Hence we obtain $\partial_s(e^{-s}( u + C ))\leq 0$, and this proves item (2).
\end{proof}
Next, we need the following lemma. Let $s_0\in (0, \infty)$ be any given time.
\begin{lemma} \label{a'3}
For any constant $T\geq 0$, for $s\in [0, s_0+T]$ we have
\begin{equation} \label{a'4}
e^{s-s_0}a(s_0)-B\leq a(s),
\end{equation}
for some constant $B=B(n , \omega_0, \|\rho \|_{C^2(\omega_{\fs})}, T)<\infty$.
\end{lemma}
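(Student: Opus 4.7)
The plan is to derive the one-sided comparison \(e^{s-s_0}a(s_0)-B\le a(s)\) directly from the two exponential monotonicity properties of \(a\) proved in Lemma \ref{a'2}, splitting into the cases \(s\geq s_0\) and \(s<s_0\) according to the sign of \(s-s_0\).

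First I would handle the forward case \(s\in[s_0,s_0+T]\). Since \(e^{-s}(a-B_0)\) is monotonically increasing by item (1) of Lemma \ref{a'2}, evaluating at \(s_0\) and at \(s\) gives
\begin{equation*}
e^{-s_0}\bigl(a(s_0)-B_0\bigr)\le e^{-s}\bigl(a(s)-B_0\bigr),
\end{equation*}
which rearranges to
\begin{equation*}
a(s)\ge e^{s-s_0}a(s_0)+B_0\bigl(1-e^{s-s_0}\bigr)\ge e^{s-s_0}a(s_0)-B_0(e^T-1),
\end{equation*}
using \(s-s_0\le T\) to bound the unfavorable term. Next, for the backward case \(s\in[0,s_0)\) I would use item (2) of Lemma \ref{a'2}: since \(e^{-s}(a+B_0)\) is monotonically decreasing, we get
\begin{equation*}
e^{-s}\bigl(a(s)+B_0\bigr)\ge e^{-s_0}\bigl(a(s_0)+B_0\bigr),
\end{equation*}
hence
\begin{equation*}
a(s)\ge e^{s-s_0}a(s_0)+B_0\bigl(e^{s-s_0}-1\bigr)\ge e^{s-s_0}a(s_0)-B_0,
\end{equation*}
the last inequality using \(0<e^{s-s_0}<1\).

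Combining the two cases, the claim holds with, for example, \(B:=B_0 e^T\), which depends only on \(n,\omega_0,\|\rho\|_{C^2(\omega_{\fs})}\) and \(T\) as required. There is no real obstacle here; the only subtlety is that the two monotonicities in Lemma \ref{a'2} involve different shifts (\(-B_0\) versus \(+B_0\)), and one has to pick the right one in each regime so that the \(B_0\)-term contributes with a favorable sign — the increasing monotonicity is sharp going forward in time, while the decreasing monotonicity is sharp going backward in time. The exponential prefactor \(e^{s-s_0}\) is therefore recovered exactly, and the additive loss is controlled by \(T\) through \(e^T\).
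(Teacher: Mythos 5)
Your proof is correct and takes essentially the same route as the paper: split into $s\geq s_0$ and $s< s_0$, use item (1) of Lemma \ref{a'2} going forward and item (2) going backward, and absorb the exponential loss into a constant depending on $T$. The only cosmetic difference is that you retain the additive $+B_0$ when rearranging in the forward case, which gives the marginally sharper bound $-B_0(e^T-1)$ in place of the paper's $-B_0e^T$; this does not change the conclusion or the stated dependence of $B$.
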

\begin{proof}[\bf{Proof}]
First, according to Lemma \ref{a'2}, the function $e^{-s}\left( a(s) - B_0 \right)$ is monotonicitly increasing in $s\in [0, \infty)$. Hence for $s\in [s_0, s_0+T]$, we have
$$
e^{-s}\left( a(s) - B_0 \right)\geq e^{-s_0}\left( a(s_0) - B_0 \right),
$$
hence we have
$$
a(s) \geq e^{s-s_0}\left( a(s_0) - B_0 \right)\geq e^{s-s_0}a(s_0) - B_0e^T .
$$
where $B_0$ is the constant from Lemma \ref{a'2}.

Next, again by Lemma \ref{a'2}, the function $e^{-s}\left( a(s) + B_0 \right)$ is monotonicitly decreasing in $s\in [0, \infty)$. Hence for $s\in [0, s_0]$, we have
$$
e^{-s}\left( a(s) + B_0 \right)\geq e^{-s_0}\left( a(s_0) + B_0 \right),
$$
hence we have
$$
a(s) \geq e^{s-s_0}\left( a(s_0) + B_0 \right) - B_0 \geq e^{s-s_0}a(s_0) - B_0,
$$
for $s\in [0, s_0]$. The lemma is proved.
\end{proof}
Finally, we show that with a first choice of $\rho$, we can locate the Ricci vertex in any fixed domain which is a pullback of an open subset on $\mathbb{CP}^N$. For any given $q\in Y$, we let $Z_0, ..., Z_N$ be the homogeneous coordinates of $\mathbb{CP}^N$, such that if we denote by $z_j =Z_j/Z_0$, then $z_j=0$ at $q$ for $j=1, ..., N$. Then we let $\eta:\mathbb{R} \to \mathbb{R}$ be a smooth increasing function, such that $\eta(x)=x$ for $x\in (-\infty, 1]$ and $\eta(x)\equiv 2$ for $x\in [10, \infty)$.

We then choose $\rho$ to be $\rho_q =A \eta(|z|^2)$, that is, $u=u_0+e^s\rho_q $, hence $\theta_Y=\omega_Y-\ddbar\rho$ correspondingly. Then we have

\begin{lemma} \label{a'1}
Given any $r\in (0, 1]$, there exists constant $A_0=A_0(n , \omega_0, r)<\infty$, such that whenever $A\geq A_0$, then for any $s\in [0, \infty)$, $u ( \cdot , s)$ must achieve its minimum on the region $\pi^{-1}(\left\{|z|<r\right\})$.

As a consequence, for any $s\in [0, \infty)$, if $p$ is the Ricci vertex associated to $\theta_Y=\omega_Y-\ddbar\rho$ at time $t=1-e^{-s}$, then we have $p\in \pi^{-1}(\left\{|z|<r\right\})$.
\end{lemma}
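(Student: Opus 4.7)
The plan is to show that for $A$ sufficiently large, the penalty term $e^s\rho_q$ in $u=u_0+e^s\rho_q$ dominates any possible growth of the underlying unperturbed potential $u_0$ outside $\pi^{-1}(\{|z|<r\})$, so the global minimum of $u(\cdot,s)$ must be attained inside $\pi^{-1}(\{|z|<r\})$. The heart of the matter is an a priori growth bound
$$|u_0(\cdot,s)|\leq C_1 e^s,\qquad s\in[0,\infty),$$
with $C_1$ depending only on $n$ and $\omega_0$ (through $\omega_0$ and $\Omega$). The main step where work is needed is this bound; the rest is a direct comparison.

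I would establish the bound by a maximum-principle argument applied to $f:=e^{-s}u_0$. Specializing (\ref{ceofu}) to $\alpha=0$ and computing directly gives
$$\partial_s f-\Delta f=e^{-s}\tr_{\tilde\omega(s)}(e^s\omega_Y),$$
whose right-hand side is non-negative and, by the parabolic Schwarz lemma (Lemma \ref{pschwarz}) applied with $\beta=\omega_{\fs}$, is bounded above by $Ce^{-s}$ for some $C=C(n,\omega_0)$. At a maximum point of $f$ this forces $\partial_s f\leq Ce^{-s}$, which integrates to $\sup_X f(\cdot,s)\leq \sup_X f(\cdot,0)+C$; at a minimum point $\partial_s f\geq 0$ gives $\inf_X f(\cdot,s)\geq \inf_X f(\cdot,0)$. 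Since $u_0(\cdot,0)=\log(\omega_0^n/\Omega)$ is a fixed bounded smooth function, the desired control on $u_0$ follows.

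Once the growth bound on $u_0$ is in hand, the conclusion is essentially quantitative bookkeeping. Picking any point $x_0\in\pi^{-1}(q)$ gives $u(x_0,s)=u_0(x_0,s)\leq C_1 e^s$, since $\rho_q(x_0)=A\eta(0)=0$. For any $x\notin\pi^{-1}(\{|z|<r\})$, the monotonicity of $\eta$ combined with the hypothesis $r\leq 1$ (so that $\eta(r^2)=r^2$) yields $\eta(|z|^2)\geq r^2$, hence
$$u(x,s)\geq -C_1 e^s + Ar^2 e^s = (Ar^2-C_1)e^s.$$
Choosing $A_0:=3C_1/r^2$, any $A\geq A_0$ produces the strict separation $u(x,s)\geq 2C_1 e^s>C_1 e^s\geq u(x_0,s)$, so the global minimum of $u(\cdot,s)$ on the compact manifold $X$ must be attained strictly inside $\pi^{-1}(\{|z|<r\})$. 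The only subtle point to flag is that $C_1$ must be independent of both $s$ and the parameter $A$; this is automatic because $u_0$ is the potential for $\rho=0$, so the bound on $u_0$ is obtained before the penalty $e^s\rho_q$ is introduced. Beyond executing the maximum-principle step carefully, I do not anticipate any substantive obstacle.
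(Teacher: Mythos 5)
Your proposal is correct and follows essentially the same two-step structure as the paper's proof: first the growth bound $|u_0|\leq C_0e^s$ (which the paper asserts via ``(\ref{ceofu0}) and the maximum principle'' and you correctly spell out by applying the maximum principle to $f=e^{-s}u_0$, using the parabolic Schwarz lemma to control the forcing term), then a direct comparison of $u=u_0+e^s\rho_q$ on $\pi^{-1}(q)$ versus $\pi^{-1}(\{|z|\geq r\})$. The only cosmetic difference is your explicit choice $A_0=3C_1/r^2$ where the paper simply chooses $A_0$ large enough to force $u\geq 50C_0e^s$ on the complement.
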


\begin{proof}[\bf{Proof}]
On the one hand, from (\ref{ceofu0}) and the maximum principle, we have
$$|u_0|\leq C_0e^s, ~~ on ~~ X\times [0, \infty) , $$
for some constant $C_0=C_0(n, \omega_0)<\infty$. On $X_q=\pi^{-1}(q)$, we have $\rho_q|_{X_q}=0$, hence
$$u ( \cdot , s)|_{X_q}\leq C_0e^s. $$
On the other hand, on the region $\pi^{-1}(\left\{|z|\geq r\right\})$, we have $\rho_q\geq Ar^2$, hence
$$u ( \cdot , s) = u_0 ( \cdot , s) + e^sAr^2 \geq 50C_0e^s , $$
if we choose $A_0=A_0(n , \omega_0, r)$ large enough. This proves the lemma.
\end{proof}
%


\bigskip
\section{Gradient and Laplacian estimates for the weighted Ricci potentials}\label{gandle}
In this section, we are under the set-up of Section \ref{setupofKRF}. We focus on the normalized K\"ahler-Ricci flow (\ref{nkrflow2}) and it's corresponding complex Monge-Amp\`ere flow (\ref{maflow2}).

All the operators $\nabla,\Delta,\left\langle,\right\rangle, |\cdot|$ in this subsection are with respect to the solution of the normalized flow $\tilde\omega(s)$.


\medskip
\subsection{Perturbed gradient and Laplacian estimates}\label{perganle}
We have defined the Ricci potential $u$ in (\ref{defofu'2}). We now compute the evolution equation of its gradient and Laplacian.

\begin{lemma} \label{eogandlu}

On $X\times [0, \infty) $, we have
\begin{equation} \label{eogu}
\begin{split}
\left( \frac{\partial}{\partial s}-\Delta \right) |\nabla   u|^2 =  & |\nabla  u|^2 - |\nabla\nabla  u|^2 - |\nabla\overline{\nabla}  u|^2 \\
&  - 2Re\left\{ \nabla \tr_{\tilde\omega(s)}(e^s(\alpha-\omega_Y))\cdot\overline{\nabla} u\right\},
\end{split}
\end{equation}
and if we denote by $K_0:= - \Delta u + \tr_{\tilde\omega(s)}(e^s(\alpha-\omega_Y))=  R_{\tilde g}(s) - n $, then
\begin{equation} \label{eolu}
\begin{split}
\left( \frac{\partial}{\partial s}-\Delta \right) K_0 = K_0 + |\nabla\overline{\nabla} u|^2 - 2 \left\langle e^s(\alpha-\omega_Y) , \ddbar u\right\rangle + |e^s(\alpha-\omega_Y)|^2.
%
\end{split}
\end{equation}
\end{lemma}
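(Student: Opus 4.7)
The proof is a direct computation that couples the system \eqref{ceofu} with the evolution of the inverse metric under the normalized flow. My plan is first to record that $\partial_s \tilde g^{i\bar j} = R^{i\bar j} - \tilde g^{i\bar j}$, which follows from differentiating $\tilde g^{i\bar j} \tilde g_{j\bar k} = \delta^i_{\ k}$ using \eqref{nkrflow2}, and then to compare the time derivative of each quantity with its Laplacian via the standard K\"ahler Bochner-type identities. Throughout I abbreviate $\eta := e^s(\alpha - \omega_Y)$ and $F := \tr_{\tilde\omega(s)}\eta$, so that $\partial_s \eta = \eta$, and by \eqref{ceofu} we have $\ric(\tilde\omega(s)) = \tilde\omega(s) + \eta - \ddbar u$, $\partial_s u = -K_0 + u$ and $K_0 = -\Delta u + F$.

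For \eqref{eogu}, I would differentiate $|\nabla u|^2 = \tilde g^{i\bar j} u_i u_{\bar j}$ directly in $s$, obtaining
$$\partial_s |\nabla u|^2 = R^{i\bar j} u_i u_{\bar j} - |\nabla u|^2 + 2\,\mathrm{Re}\langle \nabla \partial_s u, \bar\nabla u\rangle,$$
and then apply the K\"ahler Bochner identity
$$\Delta |\nabla u|^2 = |\nabla\nabla u|^2 + |\nabla\bar\nabla u|^2 + 2\,\mathrm{Re}\langle \nabla \Delta u, \bar\nabla u\rangle + R^{i\bar j} u_i u_{\bar j}.$$
Subtracting, the Ricci terms cancel and the first line of \eqref{ceofu} gives $\partial_s u - \Delta u = -F + u$, so the remaining inner product becomes $-2\,\mathrm{Re}\langle \nabla F, \bar\nabla u\rangle + 2|\nabla u|^2$, which combined with the leftover $-|\nabla u|^2$ yields \eqref{eogu}.

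For \eqref{eolu}, I decompose $K_0 = -\Delta u + F$ and compute each piece separately. Since $\partial_s \eta = \eta$, the $-\tilde g^{i\bar j}$ contribution in $\partial_s\tilde g^{i\bar j}$ exactly cancels $\tilde g^{i\bar j}\partial_s \eta_{i\bar j} = F$, so $\partial_s F = R^{i\bar j}\eta_{i\bar j} = \langle \ric(\tilde\omega(s)), \eta\rangle$, which by the Ricci identity from \eqref{ceofu} becomes $F + |\eta|^2 - \langle \eta, \ddbar u\rangle$. An analogous computation for $\Delta u = \tilde g^{i\bar j} u_{i\bar j}$, together with $\Delta(\partial_s u) = -\Delta K_0 + \Delta u$ and $R^{i\bar j} u_{i\bar j} = \langle\ric, \ddbar u\rangle = \Delta u + \langle \eta, \ddbar u\rangle - |\ddbar u|^2$, yields
$$\partial_s \Delta u = \langle \eta, \ddbar u\rangle - |\ddbar u|^2 - \Delta K_0 + \Delta u.$$
Subtracting, using $F - \Delta u = K_0$ and $|\nabla\bar\nabla u|^2 = |\ddbar u|^2$, produces \eqref{eolu}.

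The calculation is essentially tensor bookkeeping, and the only potential obstacle is algebraic: one must keep the perturbation $\eta$ terms organized and verify that the normalization contribution $-\tilde g^{i\bar j}$ in $\partial_s \tilde g^{i\bar j}$ exactly cancels the time derivatives of $\eta$ and $u$ generated by the factor $e^s$ and the $u$-term in \eqref{ceofu}. This cancellation is what allows the Ricci tensor to be eliminated entirely through the identity $\ric = \tilde\omega + \eta - \ddbar u$, so that no a priori curvature bound on $\tilde g(s)$ is needed for the formulas.
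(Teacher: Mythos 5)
Your proof of \eqref{eogu} follows essentially the same route as the paper. The paper opens with the identity
$$\Box |\nabla u|^2 = -|\nabla u|^2 - |\nabla\nabla u|^2 - |\nabla\bar\nabla u|^2 + 2\mathrm{Re}\{\nabla(\partial_s - \Delta)u\cdot\bar\nabla u\}$$
as a known fact, and your derivation (computing $\partial_s|\nabla u|^2$ via $\partial_s\tilde g^{i\bar j} = R^{i\bar j} - \tilde g^{i\bar j}$, subtracting the K\"ahler Bochner identity so that the Ricci terms cancel) simply re-derives that identity before plugging in the first equation of \eqref{ceofu}. Both proofs then substitute $(\partial_s - \Delta)u = u - F$ with $F = \tr_{\tilde\omega(s)}(e^s(\alpha-\omega_Y))$ and finish identically. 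Your computations here are correct, and you have correctly verified that the Ricci terms disappear.

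For \eqref{eolu} you take a genuinely different route. The paper exploits $K_0 = R_{\tilde g}(s) - n$ and quotes the evolution equation $\left(\partial_s - \Delta\right) R_{\tilde g} = |\ric(\tilde\omega(s))|^2 - R_{\tilde g}$ for the normalized flow, then uses the pointwise identity $\ric = \tilde\omega + \eta - \ddbar u$ (with $\eta = e^s(\alpha-\omega_Y)$) to expand $|\ric|^2 - R = \langle\ric, \eta - \ddbar u\rangle$ into the stated form. Instead, you bypass the scalar-curvature evolution equation entirely: you split $K_0 = -\Delta u + F$, compute $\partial_s F = R^{i\bar j}\eta_{i\bar j}$ (the normalization term in $\partial_s\tilde g^{i\bar j}$ cancelling $\tilde g^{i\bar j}\partial_s\eta_{i\bar j}$) and $\partial_s\Delta u = R^{i\bar j}u_{i\bar j} - \Delta u + \Delta(\partial_s u)$, then feed in $\partial_s u = -K_0 + u$ and the Ricci identity to eliminate all curvature terms. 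I checked your algebra: $\partial_s F - \partial_s\Delta u - \Delta K_0 = F - \Delta u + |\eta|^2 - 2\langle\eta, \ddbar u\rangle + |\nabla\bar\nabla u|^2 + \Delta K_0 - \Delta K_0 = K_0 + |\nabla\bar\nabla u|^2 - 2\langle\eta, \ddbar u\rangle + |\eta|^2$, which agrees with \eqref{eolu}. What the paper's route buys is brevity: one quotes a standard evolution formula and the rest is a two-line algebraic expansion. What your route buys is self-containedness: you never need the evolution of $R$ under the normalized flow and instead rely only on the evolution of $\tilde g^{i\bar j}$ and commuting $\partial_s$ with the Hessian; this makes the cancellation mechanism more transparent, at the cost of a longer tensor computation. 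Both are valid, and your observation that the $-\tilde g^{i\bar j}$ term in $\partial_s\tilde g^{i\bar j}$ precisely absorbs the $e^s$-growth of $\eta$ is exactly the structural reason the lemma holds without curvature hypotheses.
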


\begin{proof}[\bf{Proof}]

From the equation (\ref{ceofu}), we can first compute
\[
\begin{split}
&\left( \frac{\partial}{\partial s}-\Delta \right) |\nabla   u|^2 =  -|\nabla  u|^2 - |\nabla\nabla  u|^2 - |\nabla\overline{\nabla}  u|^2 +2Re\left\{ \nabla (\partial_s-\Delta) u \cdot \overline{\nabla} u \right\} \\
& =   -|\nabla  u|^2 - |\nabla\nabla  u|^2 - |\nabla\overline{\nabla}  u|^2 +2Re\left\{ \nabla \left( u - \tr_{\tilde\omega(s)}(e^s(\alpha-\omega_Y)) \right) \cdot\overline{\nabla} u\right\} \\
& =  |\nabla  u|^2 - |\nabla\nabla  u|^2 - |\nabla\overline{\nabla}  u|^2 - 2Re\left\{ \nabla \tr_{\tilde\omega(s)}(e^s(\alpha-\omega_Y))\cdot\overline{\nabla} u\right\}. \\
\end{split}
\]
This proves (\ref{eogu}). Next, by the second equation in (\ref{ceofu}), we can compute
\[
\begin{split}
&\left( \frac{\partial}{\partial s} - \Delta \right) K_0 = \left( \frac{\partial}{\partial s} - \Delta \right) R_{\tilde g}(s) \\
& = | \ric (\tilde\omega(s)) |^2 - R_{\tilde g}(s)  \\
& = \left\langle \ric (\tilde\omega(s)) ,  e^s(\alpha-\omega_Y) - \ddbar  u \right\rangle  \\
& = \left\langle \tilde\omega(s) + e^s(\alpha-\omega_Y) - \ddbar u , e^s(\alpha-\omega_Y) - \ddbar  u \right\rangle  \\
& = K_0 + |\nabla\overline{\nabla} u|^2 - 2 \left\langle e^s(\alpha-\omega_Y) , \ddbar u\right\rangle + |e^s(\alpha-\omega_Y)|^2,
\end{split}
\]
this completes the proof.
\end{proof}
Then we have the following gradient and Laplacian estimates for the localized Ricci potentials $u$, which are based at any given time $s_0\in (0, \infty)$.

\begin{proposition} \label{gandleou}
For any constants $B_1, B_2<\infty$, there exists constant $C<\infty$ depending on $n, \omega_0, \|\rho \|_{C^4(\omega_{\fs})}, B_1, B_2$, such that the following statement holds.

Given any time $s_0>0$, assume $b: [ 0, s_0 ]\to \mathbb{R}$ is a $C^1$ is a function satisfies
\begin{enumerate}
\item $b(s)\leq a(s)$ for all $s\in [0, s_0]$;
\item $b'(s)\leq b(s) + B_1 $ for all $s\in [0, s_0]$;
\item $|b(s)|\leq B_2e^s $ for all $s\in [0, s_0]$.
\end{enumerate}
Then we have the gradient estimate
\begin{equation} \label{geou}
\frac{ |\nabla u|^2}{ u - b + 1 } \leq C,
\end{equation}
and the Laplacian estimate
\begin{equation} \label{leou}
\frac{ \left| \Delta u \right|}{ u - b + 1 } \leq C,
\end{equation}
on $X\times [0, s_0]$. Here the norm $\|\rho \|_{C^4(\omega_{\fs})}$ is taken in the space $\mathbb{CP}^N$.
\end{proposition}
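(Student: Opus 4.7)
The plan is to run a parabolic maximum principle argument on carefully chosen auxiliary functions built from $|\nabla u|^2$, $K_0 := R_{\tilde g}(s) - n$, and $H := u - b + 1$, with additive trace corrections designed to absorb the unbounded derivatives of $\tr_{\tilde\omega(s)}(e^s(\alpha - \omega_Y))$. Assumption (1) gives $H \geq 1$. Since $\alpha = \ddbar\rho$ is bounded in terms of $\|\rho\|_{C^2(\omega_{\fs})}$, I would choose $C_0$ large enough that $\alpha + C_0\omega_Y$ is a K\"ahler form on $\mathbb{CP}^N$, so that the parabolic Schwarz lemma (\ref{pschwarz'3})--(\ref{pschwarz'4}) applies both to $\omega_Y$ and to $\alpha + C_0\omega_Y$; this yields the uniform bound $|\tr_{\tilde\omega(s)}(e^s(\alpha - \omega_Y))| \leq C$. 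Combining the second identity in (\ref{ceofu}) with assumption (2) then produces $(\partial_s - \Delta) H \geq H - C$.

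For the gradient estimate, I would consider
\begin{equation}
Q_1 \;=\; \frac{|\nabla u|^2}{H} \;+\; A\,\tr_{\tilde\omega(s)}\bigl(e^s(\alpha + C_0\omega_Y)\bigr),
\end{equation}
with $A$ large to be determined. At an interior maximum $\nabla Q_1 = 0$ cancels the gradient cross terms between $|\nabla u|^2/H$ and $H$, and combining the evolution (\ref{eogu}) with (\ref{pschwarz'4}) produces an inequality of the form
\begin{equation}
|\nabla\nabla u|^2 + |\nabla\bar\nabla u|^2 + AC^{-1}\bigl|\nabla\tr_{\tilde\omega(s)}(e^s(\alpha + C_0\omega_Y))\bigr|^2 \;\leq\; C_1(Q_1 + 1) + 2\bigl|\mathrm{Re}\{\nabla\tr_{\tilde\omega(s)}(e^s(\alpha - \omega_Y))\cdot\bar\nabla u\}\bigr|.
\end{equation}
The last term is absorbed by Cauchy--Schwarz into the $AC^{-1}|\nabla\tr|^2$ on the left plus a small multiple of $|\nabla u|^2/H = Q_1$, once $A$ is taken sufficiently large in terms of $\|\rho\|_{C^4(\omega_{\fs})}$. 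This forces $Q_1 \leq C$ at the maximum, hence $|\nabla u|^2/H \leq C$ on $X \times [0, s_0]$.

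For the Laplacian estimate, the identity $\Delta u = -K_0 + \tr_{\tilde\omega(s)}(e^s(\alpha - \omega_Y))$ together with the trace bound reduces matters to $|K_0|/H \leq C$. The lower bound on $K_0$ comes from the standard scalar curvature lower bound along the K\"ahler--Ricci flow. For the upper bound I would apply the maximum principle to $\Phi/H$ where $\Phi = K_0 + |\nabla u|^2$, possibly modified by an additive multiple of $\tr_{\tilde\omega(s)}(e^s(\alpha + C_0\omega_Y))$. Adding (\ref{eogu}) and (\ref{eolu}) causes $|\nabla\bar\nabla u|^2$ to cancel, leaving the favorable $-|\nabla\nabla u|^2$ and cross terms handled by Young's inequality together with the gradient estimate from the previous paragraph and (\ref{pschwarz'4}). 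Comparing with $(\Phi/H)(\partial_s - \Delta) H \geq (\Phi/H)(H - C)$ at the maximum gives $\Phi/H \leq C$, and subtracting the already controlled contribution $|\nabla u|^2/H$ yields $K_0/H \leq C$.

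The main obstacle is the lack of any a priori bound on $\nabla\tr_{\tilde\omega(s)}(e^s(\alpha - \omega_Y))$. The only available tool is the term $-C^{-1}|\nabla\tr|^2$ coming from the parabolic Schwarz inequality (\ref{pschwarz'4}), which is why both auxiliary functions must contain the additive trace correction $A\,\tr_{\tilde\omega(s)}(e^s(\alpha + C_0\omega_Y))$ with carefully tuned coefficient; the K\"ahler decomposition $\alpha - \omega_Y = (\alpha + C_0\omega_Y) - (C_0+1)\omega_Y$ is essential so that (\ref{pschwarz'4}) applies to both pieces separately. Assumption (3) only enters to keep $H$ and $u$ at the natural growth scale $e^s$, ensuring that the final constant depends only on $n, \omega_0, \|\rho\|_{C^4(\omega_{\fs})}, B_1, B_2$ as required, and not on $s_0$.
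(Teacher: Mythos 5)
Your proposal follows essentially the same route as the paper's proof: perturb $|\nabla u|^2/(u-b+1)$ and $(R_{\tilde g}-n)/(u-b+1)$ by additive Schwarz-trace terms so that the parabolic Schwarz inequality supplies a $-C^{-1}|\nabla\tr|^2$ to absorb the otherwise uncontrolled $\nabla\tr$ cross terms, then run the maximum principle and use assumption (3) to close. The decomposition $\alpha-\omega_Y=(\alpha+C_0\omega_Y)-(C_0+1)\omega_Y$ you describe is exactly the paper's $\beta=\alpha+A\omega_Y$.

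There is, however, a genuine gap in your Laplacian step. Taking $\Phi=K_0+|\nabla u|^2$ (weight $1$ on $|\nabla u|^2$) makes the $|\nabla\overline\nabla u|^2$ contributions from (\ref{eogu}) and (\ref{eolu}) cancel \emph{exactly}, leaving only $-|\nabla\nabla u|^2$. But the cross term $-2\langle e^s(\alpha-\omega_Y),\ddbar u\rangle/(u-b+1)$ from (\ref{eolu}) involves the mixed Hessian $\ddbar u=\nabla\overline\nabla u$, not $\nabla\nabla u$, and neither the first-order gradient estimate nor (\ref{pschwarz'4}) bounds $|\nabla\overline\nabla u|$; Young's inequality gives $\varepsilon|\nabla\overline\nabla u|^2+C/\varepsilon$, which you then cannot absorb. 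You must weight the gradient piece strictly more heavily — the paper takes $\cK=K_1+100\cH$ — so that a negative multiple of $|\nabla\overline\nabla u|^2/(u-b+1)$ survives the combination and eats this cross term. Two smaller points: your $Q_1$ carries only $\tr_{\tilde\omega(s)}(e^s(\alpha+C_0\omega_Y))$, but after the decomposition you also pick up a $\nabla\tr_{\tilde\omega(s)}(e^s\omega_Y)$ cross term that needs its own Schwarz absorber, so $Q_1$ should also contain $\tr_{\tilde\omega(s)}(e^s\omega_Y)$ (as $\cH$ does). And to conclude $Q_1\leq C$ at the maximum you need to retain a negative $|\nabla u|^4/(u-b+1)^3$ term (the paper's $\varepsilon$-split) and invoke $u-b+1\leq Ce^s$ from assumption (3); the schematic inequality you wrote is only linear in $Q_1$ on both sides and would not by itself force an upper bound.
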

\begin{proof}[\bf{Proof}]
Throughout the proof, all the constants will depend at most on $n, \omega_0$ and $\|\rho \|_{C^4(\omega_{\fs})}$, $B_1$, $B_2$. All the computations in this proof are on $X\times [0, s_0] $.
Since $b(s)\leq a(s)$ for all $s\in [0, s_0]$, we have
$$
u - b + 1 \geq u - a + 1 \geq 1,
$$
on $X\times [0, s_0] $. Hence the LHS terms of (\ref{geou}) and (\ref{leou}) are well-defined.
\medskip

\noindent {\bf Gradient estimate:} First, we prove (\ref{geou}). By equations (\ref{ceofu}) and (\ref{eogu}), we can compute
\begin{equation} \label{eogu'2}
\begin{split}
& \left( \dds{} - \Delta \right)  \frac{ |\nabla u|^2 }{ u - b + 1 } \\
& = \frac{\left( \partial_s - \Delta \right)|\nabla u|^2}{ u - b + 1 }  - \frac{ 2 |\nabla u|^4 }{( u - b + 1)^3} + \frac{ 2 Re\left\{ \nabla |\nabla u|^2\cdot\overline{\nabla} u\right\}}{( u - b + 1 )^2} \\
&\qquad\qquad- \frac{ |\nabla u|^2 }{ ( u - b + 1 )^2 } \left( \partial_s - \Delta \right) ( u - b ) \\
& = \frac{|\nabla  u|^2 - |\nabla\nabla  u|^2 - |\nabla\overline{\nabla}  u|^2 - 2Re\left\{ \nabla \tr_{\tilde\omega(s)}(e^s(\alpha-\omega_Y))\cdot\overline{\nabla} u\right\}}{ u - b + 1 } - \frac{ 2 |\nabla u|^4 }{( u - b + 1)^3} \\
&\quad + \frac{ 2 Re\left\{ \nabla |\nabla u|^2\cdot\overline{\nabla} u\right\}}{( u - b + 1 )^2} - \frac{ |\nabla u|^2 }{ ( u -b + 1 )^2 } \left(  -  \tr_{\tilde\omega(s)}(e^s(\alpha-\omega_Y)) + u - b'  \right) \\
& = - \frac{|\nabla\nabla  u|^2 + |\nabla\overline{\nabla}  u|^2 }{ u - b + 1 } - \frac{ 2 |\nabla u|^4 }{( u - b + 1)^3} - \frac{2Re\left\{ \nabla \tr_{\tilde\omega(s)}(e^s(\alpha-\omega_Y))\cdot\overline{\nabla} u\right\}}{ u - b + 1 } \\
& \quad + \frac{ 2 Re\left\{ \nabla |\nabla u|^2\cdot\overline{\nabla} u\right\}}{( u - b + 1 )^2} + \frac{ |\nabla u|^2 }{ ( u - b + 1 )^2 } \left( b' -b + 1 +  \tr_{\tilde\omega(s)}(e^s(\alpha-\omega_Y)) \right) .
\end{split}
\end{equation}
In order to apply the parabolic Schwarz lemma, say Lemma \ref{pschwarz}, we find a large constant $A$, such that $\alpha + A\omega_Y\geq 100\omega_Y$, that is, if we define
$$
\beta = \alpha + A\omega_Y,
$$
then $\beta$ is a K\"ahler metric on $\mathbb{CP}^N$, and it's upper bound of the bisectional curvature depends on $\|\rho \|_{C^4(\omega_{\fs})}$. Now we rewrite (\ref{eogu'2}) as
\begin{equation} \label{eogu'3}
\begin{split}
& \left( \dds{} - \Delta \right)  \frac{ |\nabla u|^2 }{ u - b + 1 } \\
& = - \frac{|\nabla\nabla  u|^2 + |\nabla\overline{\nabla}  u|^2 }{ u - b + 1 } - \frac{ 2 |\nabla u|^4 }{( u - b + 1)^3} + \frac{ 2 Re\left\{ \nabla |\nabla u|^2\cdot\overline{\nabla} u\right\}}{( u - b + 1 )^2}  \\
& \quad - \frac{2Re\left\{ \nabla \tr_{\tilde\omega(s)}(e^s\beta)\cdot\overline{\nabla} u\right\}}{ u - b + 1 } + \frac{2(A+1)Re\left\{ \nabla \tr_{\tilde\omega(s)}(e^s\omega_Y)\cdot\overline{\nabla} u\right\}}{ u - b + 1 } \\
& \quad + \frac{ |\nabla u|^2 }{ ( u - b + 1 )^2 } \left( b' -b + 1 +  \tr_{\tilde\omega(s)}(e^s(\alpha-\omega_Y)) \right) .
\end{split}
\end{equation}

Now we set
$$ \cH = \frac{ |\nabla u|^2 }{ u - b + 1 } + \tr_{\tilde\omega(s)}( e^s \beta) + \tr_{\tilde\omega(s)}( e^s \omega_Y).$$
Then we can compute, for a small constant $\ep >0$ to be determined in the course of the proof, we have
\begin{equation} \label{eogu'4}
\begin{split}
& \left( \dds{} - \Delta \right)  \cH \\
&= - \frac{|\nabla \nabla u|^2 + |\nabla\overline{\nabla} u|^2}{ u - b + 1 }  - \frac{ 2\ep  |\nabla u|^4 }{( u - b + 1)^3} + \frac{ 2(1-\ep ) }{ u - b + 1 } Re\left\{ \nabla \cH \cdot \overline{\nabla} u\right\}  \\
&\quad + \frac{ 2\ep  Re\left\{ \nabla |\nabla u|^2\cdot\overline{\nabla} u\right\}}{( u - b + 1)^2} - \frac{ 2 ( 2 - \ep  ) Re\left\{ \nabla \tr_{\tilde\omega(s)}(e^s\beta)\cdot\overline{\nabla}u\right\} }{ u - b + 1 }  \\
&\quad + \frac{ 2 ( A + \ep ) Re\left\{ \nabla \tr_{\tilde\omega(s)}(e^s\omega_Y)\cdot\overline{\nabla}u\right\} }{ u - b + 1 } + \left( \partial_s -\Delta \right) \tr_{\tilde\omega(s)} ( e^s \beta ) \\
&\quad + \left( \partial_s -\Delta \right) \tr_{\tilde\omega(s)} ( e^s \omega_Y ) + \frac{ |\nabla u|^2 }{ ( u - b + 1 )^2 } \left( b' - b + 1 + \tr_{\tilde\omega(s)}(e^s(\alpha-\omega_Y)) \right) .
\end{split}
\end{equation}
The parabolic Schwarz lemma, say Lemma \ref{pschwarz}, implies that there exists constant $C_0<\infty$ such that
\begin{equation} \label{pschwarz'5}
\left( \dds{} - \Delta \right) \tr_{\tilde\omega(s)}(e^s\beta) \leq -C_0^{-1}|\nabla \tr_{\tilde\omega(s)}(e^s\beta)|^2+C_0e^{-s},
\end{equation}
and
\begin{equation} \label{pschwarz'6}
\left( \dds{} - \Delta \right) \tr_{\tilde\omega(s)}(e^s\omega_Y) \leq -C_0^{-1}|\nabla \tr_{\tilde\omega(s)}(e^s\omega_Y)|^2+C_0e^{-s}.
\end{equation}
Then we can estimate
$$
\frac{ 2\ep  Re\left\{ \nabla |\nabla u|^2\cdot\overline{\nabla} u\right\}}{( u - b + 1)^2}\leq  \frac{ \ep  |\nabla u|^4 }{( u - b + 1)^3} + \ep \frac{|\nabla \nabla u|^2 + |\nabla\overline{\nabla} u|^2}{ u - b + 1},
$$
$$
- \frac{ 2 ( 2 - \ep  ) Re\left\{ \nabla \tr_{\tilde\omega(s)}(e^s\beta)\cdot\overline{\nabla}u\right\} }{ u - b + 1 } \leq  (100C_0)^{-1}|\nabla \tr_{\tilde\omega(s)}(e^s\beta)|^2 + \frac{C|\nabla u|^2}{( u - b + 1 )^2},
$$
$$
\frac{ 2 ( A + \ep  ) Re\left\{ \nabla \tr_{\tilde\omega(s)}(e^s\omega_Y)\cdot\overline{\nabla}u\right\} }{ u - b + 1 } \leq  (100C_0)^{-1}|\nabla \tr_{\tilde\omega(s)}(e^s\omega_Y)|^2 + \frac{C|\nabla u|^2}{( u - b + 1 )^2},
$$
and using the assumption that $b'(s)\leq b(s) + B_1 $ for all $s\in [0, s_0]$, we have
$$
\frac{ |\nabla u|^2 }{ ( u - b + 1 )^2 } \left( b' -b + 1 + \tr_{\tilde\omega(s)}(e^s(\alpha-\omega_Y)) \right) \leq \frac{ C |\nabla u|^2}{( u -b + 1 )^2}.
$$
Inserting these estimates and (\ref{pschwarz'5}) and (\ref{pschwarz'6}) into (\ref{eogu'4}), we can conclude, if we choose $\ep $ small enough, then
\begin{equation} \label{eogu'5}
\begin{split}
\left( \dds{} - \Delta \right)  \cH \leq - & \frac{ \ep  |\nabla u|^4 }{( u - b + 1)^3} + \frac{ 2(1-\ep ) }{ u - b + 1 } Re\left\{ \nabla \cH \cdot \overline{\nabla} u\right\} \\
&+  \frac{ C|\nabla u|^2}{( u - b + 1 )^2} + C_0e^{-s} .
\end{split}
\end{equation}
Now assume that $\cH$ achieves its maximum at some point $(\tilde x, \tilde s)$ with $\tilde s \in (0, s_0]$, then at the point $(\tilde x, \tilde s)$, we have $\nabla \cH=0$, hence we obtain from (\ref{eogu'5}) that
$$ 0 \leq \left( \dds {} - \Delta \right) \cH   \leq - \frac{\ep }{4} \frac{ |\nabla u|^4 }{( u - b + 1)^3} + C\frac{|\nabla u|^2}{( u - b + 1 )^2} + Ce^{-s} , $$
at $(\tilde x, \tilde s)$. But we have $|u_0|\leq C_0e^s$ globally on $X\times[0, \infty]$, which implies that
\begin{equation} \label{C0boundonu}
|u|\leq |u_0| + e^s|\rho| \leq Ce^s,
\end{equation}
globally on $X\times[0, \infty]$. Hence from our assumption, we have
$$
| u - b + 1 |\leq |u| + |b| + 1 \leq Ce^s + B_2e^s \leq Ce^s ,
$$
on $X\times[0, s_0]$. Hence we obtain
$$
\ep   \frac{|\nabla u|^4}{ ( u - b + 1 )^2} \leq  C \frac{|\nabla  u|^2}{ u - b + 1 } + Ce^{-s}( u - b + 1) \leq  C \frac{|\nabla u|^2}{ u - b + 1 } + C ,
$$
at $(\tilde x, \tilde s)$, which implies
$$ \frac{|\nabla u|^2}{ u - b + 1 }( \tilde x, \tilde s ) \leq C . $$
Again by applying the parabolic Schwarz lemma, we conclude that $\cH (\tilde x, \tilde s)\leq C$, hence $\cH \leq C$ on $X\times[0, s_0]$, which proves (\ref{geou}).
\medskip

\noindent {\bf Laplacian estimate:} Next, we prove estimate (\ref{leou}). Recall that we denote by $K_0=  - \Delta u + \tr_{\tilde\omega(s)}(e^s(\alpha-\omega_Y)) = R_{\tilde g}(s) - n $, we then define
$$K_1 = \frac{K_0 + C_0}{u-b+1}, $$
where $C_0=C_0(n, \omega_0)<\infty$ is a constant such that $K_0 + C_0\geq 0$ globally. By applying (\ref{eolu}) in Lemma \ref{eogandlu}, we can compute
\begin{equation} \label{eolu'2}
\begin{split}
& \left( \dds{} - \Delta \right)  K_1 \\
& = \frac{  \left( \partial_s - \Delta \right) K_0  }{ u - b + 1 } - \frac{ K_0 + C_0 }{( u - b + 1 )^2} \left( \partial_s - \Delta \right) ( u - b ) + \frac{ 2  Re\left\{ \nabla K_1 \cdot \overline{\nabla} u\right\} }{ u - b + 1 } \\
& = \frac{ |\nabla\overline{\nabla} u|^2 - 2 \left\langle e^s(\alpha-\omega_Y) , \ddbar u\right\rangle + |e^s(\alpha-\omega_Y)|^2 }{ u - b + 1 } + \frac{ 2  Re\left\{ \nabla K_1 \cdot \overline{\nabla} u\right\}  }{ u - b + 1 } \\
&\quad+ \frac{ K_0 + C_0 }{ ( u - b + 1 )^2 } \left( b' - b + 1 + \tr_{\tilde\omega(s)}(e^s(\alpha-\omega_Y)) \right) - \frac{ C_0 }{ u - b + 1 } .
\end{split}
\end{equation}
Now we set
$$\cK = K_1 + 100 \cH ,  $$
then we can combine (\ref{eogu'4}) (with $\epsilon = 0$ there) and (\ref{eolu'2}) to obtain that
\begin{equation} \label{eolu'3}
\begin{split}
& \left( \dds{} - \Delta \right)  \cK \\
& = - 100 \frac{|\nabla \nabla u|^2 + |\nabla\overline{\nabla} u|^2}{ u - b + 1 } + \frac{ 2Re\left\{ \nabla \cK \cdot \overline{\nabla} u\right\} }{ u - b + 1 } - \frac{ C_0 }{ u - b + 1 }  \\
&\quad + \frac{ | \nabla \overline{\nabla} u|^2 - 2 \left\langle e^s(\alpha-\omega_Y) , \ddbar u\right\rangle + |e^s(\alpha-\omega_Y)|^2 }{ u - b + 1 } \\
&\quad - \frac{ 400 Re\left\{ \nabla \tr_{\tilde\omega(s)} (e^s\beta)\cdot\overline{\nabla}u\right\} }{ u - b + 1 } + \frac{ 200 A  Re\left\{ \nabla \tr_{\tilde\omega(s)}(e^s\omega_Y)\cdot\overline{\nabla}u\right\} }{ u - b + 1 }  \\
&\quad + \left( \partial_s -\Delta \right) \tr_{\tilde\omega(s)} ( e^s \beta ) + \left( \partial_s -\Delta \right) \tr_{\tilde\omega(s)} ( e^s \omega_Y ) \\
&\quad + \frac{  K_0 + C_0 + 100 |\nabla u|^2 }{ ( u - b + 1 )^2 } \left( b' - b + 1 + (A-1) \tr_{\tilde\omega(s)}(e^s\omega_Y) \right) .
\end{split}
\end{equation}
First, by the gradient estimate, we can compute
$$
- \frac{ 400 Re\left\{ \nabla \tr_{\tilde\omega(s)}(e^s\beta)\cdot\overline{\nabla}u\right\} }{ u - b + 1 } \leq  (100C_0)^{-1}|\nabla \tr_{\tilde\omega(s)}(e^s\beta)|^2 + C ,
$$
$$
\frac{ 200 A  Re\left\{ \nabla \tr_{\tilde\omega(s)}(e^s\omega_Y)\cdot\overline{\nabla}u\right\} }{ u - b + 1 } \leq  (100C_0)^{-1}|\nabla \tr_{\tilde\omega(s)}(e^s\omega_Y)|^2 + C .
$$
Next, by the parabolic Schwarz lemma, we can estimate
$$
\frac{ - 2 \left\langle e^s(\alpha-\omega_Y) , \ddbar u\right\rangle }{ u - b + 1 }  \leq \frac{|\nabla\overline{\nabla} u|^2}{ u - b + 1 } + C .
$$
Finally, using the assumption that $b'(s)\leq b(s) + B_1 $ for all $s\in [0, s_0]$ and the gradient estimate, the last term in (\ref{eolu'3}) is bounded above by (recall that $K_0 + C_0\geq 0$ globally)
$$
\frac{|\nabla\overline{\nabla} u|^2}{ u - b + 1 } + C.
$$
Inserting these estimates into (\ref{eolu'3}), and using the parabolic Schwarz Lemma (\ref{pschwarz'5}) and (\ref{pschwarz'6}) again, we conclude
\begin{equation} \label{eolu'4}
\begin{split}
& \left( \dds{} - \Delta \right)  \cK \leq - 10 \frac{ |\nabla\overline{\nabla} u|^2}{ u - b + 1 } + \frac{ 2 Re\left\{ \nabla \cK \cdot \overline{\nabla} u\right\} }{ u - b + 1 } + C. \\
\end{split}
\end{equation}
Now, assume that $\cK$ achieves it's maximum at some point $(\tilde x, \tilde s)$ with $\tilde s\in (0, s_0]$, then at the point $(\tilde x, \tilde s)$, we have $\nabla \cK=0$, hence we obtain from (\ref{eolu'4}) that
$$ 0 \leq \left( \dds {} - \Delta \right) \cK  \leq - \frac{ |\nabla\overline{\nabla} u|^2}{ u - b + 1 } + C  , $$
at $(\tilde x, \tilde s)$, which implies that
$$
\frac{ |\Delta u|^2}{ ( u - b + 1 )^2 } \leq C \frac{ |\nabla\overline{\nabla} u|^2}{ u - b + 1 } \leq C ,
$$
at $(\tilde x, \tilde s)$. Again by applying the parabolic Schwarz lemma and , we conclude that $\cK (\tilde x, \tilde s)\leq C$, hence $\cK \leq C$ on $X\times[0, s_0]$.

The upper bound of $\frac{ \Delta u}{ u - b + 1}$ follows from the expression
$$ \frac{ \Delta u}{ u - b + 1} = \frac{ n - R_{\tilde g}( s ) + \tr_{\tilde\omega(s)}(e^s(\alpha-\omega_Y)) }{ u - b + 1 },
$$
combining with the parabolic Schwarz lemma and the uniform lower bound of the scalar curvature, which proves (\ref{leou}).
Therefore Proposition \ref{gandleou} is proved.
\end{proof}


\medskip
\subsection{Proof of Theorem \ref{cor1} and Theorem \ref{main1}}\label{proofofmain1}
We now come to prove Theorem \ref{main1} and Corollary \ref{cor1}.


%

%
%

%

%
First we prove Theorem \ref{main1}.
\begin{proof}[\bf{Proof of Theorem \ref{main1}}]
Throughout the proof, all the constants will depend at most on $n, \omega_0, \|\rho \|_{C^4(\omega_{\fs})}$. The dependence of the constants on $\theta_Y$ is the dependence on $\|\rho \|_{C^4(\omega_{\fs})}$ here.

Let $s_0\in (0, \infty)$ be any given time. Then we set
$$
b(s):=e^{s-s_0}a(s_0)-B, ~~ s\in [0, s_0].
$$
where $B=B(n , \omega_0, \|\rho \|_{C^2(\omega_{\fs})}, 0)<\infty$ is the constant from Lemma \ref{a'3} (with $T=0$ there). Then by Lemma \ref{a'3}, we have $b(s)\leq a(s)$ for $s\in [0, s_0]$. Next, we can compute
$$
b'(s)=e^{s-s_0} a(s_0) = b(s)+ B.
$$
Finally, by (\ref{C0boundonu}), we have $|a(s_0)|\leq Ce^{s_0}$, hence we can compute
$$
|b(s)| \leq e^{s-s_0}|a(s_0)| + B \leq Ce^{s}.
$$
Hence we conclude that $b(s)$ is a smooth function, satisfying the three assumptions in Proposition \ref{gandleou}. Hence we can apply Proposition \ref{gandleou} to obtain that
\begin{equation} \label{geou'2}
\frac{ \left| \Delta u \right|}{ u - b + 1 } + \frac{ |\nabla u|^2}{ u - b + 1 } \leq C,
\end{equation}
on $X\times [0, s_0]$. Evaluate (\ref{geou'2}) at time $s_0$, and note that $b(s_0)=a(s_0)-B$, we conclude
\begin{equation} \label{gleou'3}
\left| \Delta u \right| + |\nabla u|^2 \leq C(u-a+B+1)\leq C(u-a+1) ,
\end{equation}
on $X\times \left\{s_0\right\}$. Translating (\ref{gleou'3}) into the unnormalized flow, we obtain (\ref{gleou'0}), since $v=u-a+1$. Since the time $s_0\in (0, \infty)$ is arbitrarily chosen, this completes the proof of Theorem \ref{main1}.
\end{proof}
We have the following immediate corollary.

\begin{corollary}\label{scbonPnbhd2}
Assume as in Theorem \ref{main1}. For any $D<\infty$, there exists constant $C=C(n , \omega_0, \theta_Y, D)<\infty$, such that
\begin{equation} \label{main2'1}
\sup_{ B\left( p_t , t , D (1-t)^{1/2} \right) \times [t-D(1-t), 1-D^{-1}(1-t) ] }~ R_g \leq \frac{C}{ 1 - t } .
\end{equation}
\end{corollary}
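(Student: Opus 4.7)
The plan is to reduce the scalar curvature bound to a pointwise bound on the Ricci potential $v$, then combine the gradient estimate of Theorem \ref{main1} at time $t$ with an ODE propagation in time along fixed spatial trajectories. It is convenient to pass to the normalized flow \eqref{nkrflow2} via $s = -\log(1-t)$; the given time interval $[t - D(1-t),\, 1 - D^{-1}(1-t)]$ has normalized image of length $\log(D(D+1))$, which depends only on $D$.

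Taking the trace of the Ricci potential identity $\ric(g(s)) - (1-s)^{-1} g(s) = -(1-s)^{-1} \theta_Y - \ddbar v(\cdot, s)$ yields
\[
(1-s)\, R(x,s) \;=\; n \,-\, \tr_{g(s)}\theta_Y \,-\, (1-s)\,\Delta_{g(s)} v(x,s).
\]
Theorem \ref{main1} together with the parabolic Schwarz lemma (Lemma \ref{pschwarz}) then gives the pointwise bound $(1-s)|R(x,s)| \leq C(1 + v(x,s))$ at every space-time point. At $s = t$, the gradient estimate of Theorem \ref{main1} forces $|\nabla \sqrt{v(\cdot, t)}| \leq C/\sqrt{1-t}$, so $v(p_t, t) = 1$ implies $v(x, t) \leq C(1+D^2)$ on $B(p_t, t, D(1-t)^{1/2})$; this recovers Theorem \ref{cor1} on the time-$t$ slice.

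To propagate the bound on $v$ in time, fix $x \in B(p_t, t, D(1-t)^{1/2})$ and differentiate $v = u - a(s) + 1$ along the trajectory $\{x\} \times I$. Using $\partial_s u = n - R_{\tilde g} + u$ from \eqref{ceofu} together with Lemma \ref{a'2}, which forces $|a(s) - a'(s)| \leq B_0$ whenever $a$ is differentiable, we obtain the pointwise evolution
\[
\partial_s v(x, s) \;=\; v(x, s) - R_{\tilde g}(x, s) + O(1).
\]
The standard lower bound $R_{\tilde g} \geq -C$ along the normalized K\"ahler-Ricci flow gives $\partial_s v \leq v + C$, and the pointwise upper bound $R_{\tilde g} \leq C(1+v)$ established in the previous paragraph gives $\partial_s v \geq -C(1+v)$. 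Integrating both inequalities over the normalized-time interval of length $\log(D(D+1))$ starting from $v(x, s_t) \leq C(1+D^2)$ yields $v(x, s) \leq C(D)$ throughout the parabolic neighborhood. Plugging this back into $(1-s)|R(x,s)| \leq C(1+v(x,s))$ and using $1 - s \geq D^{-1}(1-t)$ on the time interval finally gives the claimed bound $R(x, s) \leq C(D)/(1-t)$.

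The one subtle point in this argument is the non-smoothness of $a(s) = \inf_X u(\cdot, s)$, which a priori prevents differentiating $v$ in $s$ directly. This is circumvented exactly as in the proof of Proposition \ref{gandleou}: on a small $s$-window around each base time $s_0$, one replaces $a$ by the smooth barrier $b(s) = e^{s - s_0} a(s_0) - B_0$ from Lemma \ref{a'3}, carries out the ODE argument for the smooth function $u - b + 1 \geq v$, and then uses the two-sided monotonicity of Lemma \ref{a'2} to transfer the resulting estimate back to $v$.
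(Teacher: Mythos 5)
Your proof is correct and follows essentially the same strategy as the paper's: pass to the normalized flow, replace the non-smooth $a(s)$ by the smooth barrier $b(s)=e^{s-s_0}a(s_0)-B_0$ from Lemma \ref{a'3}, use the gradient estimate at the base time together with the time-derivative bound (both consequences of Theorem \ref{main1} and the parabolic Schwarz lemma) to control $u-b+1$ on the parabolic neighborhood, then convert to a scalar curvature bound via the trace identity. One small simplification: the ``transfer back to $v$'' at the end does not actually need the two-sided monotonicity of Lemma \ref{a'2} — it is immediate from $b\le a$, which gives $v=u-a+1\le u-b+1$ pointwise.
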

\begin{proof}
In the proof, we compute in the normalized flow. Let $s_0\in (0, \infty)$ be a given time. Let $p_{s_0}=p_{t_0}\in X$ be a Ricci vertex associated to $\theta_Y$, where $t_0=1-e^{-s_0}$.

For any constant $D<\infty$, let $B=B(n , \omega_0, \|\rho \|_{C^2(\omega_{\fs})}, D)$ be the constant from Lemma \ref{a'3}, then we define
\begin{equation}\label{cordob}
b(s) = e^{s-s_0}a(s_0)-B.
\end{equation}
We then denote by
\begin{equation}\label{cordovs0}
v_{s_0}=u-b(s)+1.
\end{equation}
From Lemma \ref{a'3}, we have $b(s) \leq a(s)$ for $s\in [0, s_0+D]$, hence we have $v_{s_0}\geq 1$ for $s\in [0 , s_0+D]$. Now by Theorem \ref{main1}, we have
\begin{equation} \label{gleou'4}
\left| \Delta u \right| + |\nabla u|^2 \leq Cv = C(u-a+1) ,
\end{equation}
on $X\times [0, \infty)$. Hence on $X\times [0, s_0+D]$, we have
\begin{equation} \label{eofdsv}
\begin{split}
&|\partial_s v_{s_0}| = | \partial_s u - b' | = | \Delta u  -  \tr_{\tilde\omega(s)}(e^s(\alpha-\omega_Y)) + u - e^{s-s_0}a(s_0) | \\
& \leq | \Delta u | + v_{s_0} + C \leq C(u-a+1) + v_{s_0} + C \leq Cv_{s_0} ,
\end{split}
\end{equation}
and
\begin{equation} \label{eoflpv}
| \Delta v_{s_0} | + |\nabla v_{s_0}|^2 = | \Delta u | + |\nabla u |^2 \leq C(u-a+1) \leq Cv_{s_0} .
\end{equation}
In conclusion, on $X\times [0, s_0+D]$, we have
\begin{equation} \label{eofdsvandgradientv}
\frac{|\partial_s v_{s_0}|}{v_{s_0}} + \frac{| \Delta v_{s_0} |}{v_{s_0}} + \frac{|\nabla v_{s_0}|^2}{v_{s_0}} \leq C,
\end{equation}
for some constant $C=C(n , \omega_0, \|\rho \|_{C^4(\omega_{\fs})}, D)<\infty$.

Now, for any $x\in X$, integrate the gradient estimate in (\ref{eofdsvandgradientv}) along any $\tilde g(s_0)$-minimizing geodesic connecting $x$ to $p_{s_0}$, by the fact that $ v_{s_0}(p_{s_0}, s_0) = B+1 $, we have
\begin{equation} \label{C0boundofv1}
v_{s_0}(x, s_0) \leq 2v_{s_0}(p_{s_0}, s_0) + Cd_{\tilde g(s_0)}^2 (x, p_{s_0}) \leq C\left(1+d_{\tilde g(s_0)}^2 (x, p_{s_0})\right).
\end{equation}
Hence for any $x\in B_{\tilde g}\left( p_{s_0} , s_0 , D \right)$, $v_{s_0}(x, s_0)\leq C$. Then for any $s\in [s_0-D, s_0+D]$, we can integrate the time derivative estimate in (\ref{eofdsvandgradientv}) to obtain
\begin{equation} \label{C0boundofv2}
v_{s_0}(x, s) \leq CD v_{s_0}(x, s_0) \leq C.
\end{equation}
Hence we conclude that $v_{s_0}\leq C$ on $B_{\tilde g}\left( p_{s_0} , s_0 , D \right)\times [s_0-D, s_0+D]$. Now using the Laplacian estimate in (\ref{eofdsvandgradientv}), by the parabolic Schwarz lemma, we have
\begin{equation} \label{C0boundofv3}
R_{\tilde g}(s) = n -  \Delta v_{s_0} + \tr_{\tilde\omega(s)}(e^s(\alpha-\omega_Y)) \leq Cv_{s_0} + C \leq C ,
\end{equation}
on $B_{\tilde g}\left( p_{s_0} , s_0 , D \right)\times [s_0-D, s_0+D]$. Translating (\ref{C0boundofv3}) into the unnormalized flow, we complete the proof.
\end{proof}
Corollary \ref{scbonPnbhd2} proves the first estimate in Remark \ref{scbonPnbhd}. The second estimate in Remark \ref{scbonPnbhd} is proved in Proposition \ref{viprop3}. The proof of Theorem \ref{cor1} is immediate.

\begin{proof}[Proof of Theorem \ref{cor1}]
This follows immediately from (\ref{C0boundofv1}).
\end{proof}


\bigskip
\section{Harnack estimate on Ricci flow and K\"ahler-Ricci flow} \label{sechar}
In this section, we first prove our Harnack estimate on Ricci flow background, say Theorem \ref{main2}. Then we extend it to our finite time solution of the K\"ahler-Ricci flow in Section \ref{setupofKRF}.


\medskip
\subsection{Harnack estimate on Ricci flow background}

In this subsection, we consider a general Ricci flow $(M,(g(t))_{t\in [0, 1)})$ on compact, $n$-dimensional manifold. We recall our Harnack estimate in the following.

\begin{theorem}[\bf Theorem \ref{main2}]\label{gosce}
Let $(M,(g(t))_{t\in [0, 1)})$ be a Ricci flow on compact Riemannian manifold of $\dim_{\mathbb{R}} M=n$. Let
$$v\in M\times [0, 1)\to \mathbb{R}^+$$
be a positive $C^1$-function.
Given any $\delta>0$, $B<\infty$, there exists  $\bar{\delta} = \bar{\delta}(n, B)>0$ such that whenever $\delta\in (0, \bar{\delta})$,  there exists  $C=C(n, B, \delta)<\infty$ such that the following holds. Suppose
\begin{enumerate}
\item $\nu(g(0), 2) \geq -B; $
\medskip

\item $  |\partial_t \ln v | + |\nabla \ln v|^2  \leq B/(1-t) $ on $M\times [0, 1)$;
\medskip

\item  $t_0\in [1/2, 1)$ and  $V$ is an open set  of $M$ with  $\vol_{g(t_0)}(V)\leq B (1-t_0)^{\frac{n}{2}}$ ;

\medskip

\item $U$ is a connected open subset  of $M$ and  for any $x\in U$, there exists an $H_n$-center $(z, t_0)$ of $(x, t_0 + \delta^2(1-t_0))$ with $B_{g(t_0)} \left( z, \sqrt{2H_n}\delta(1-t_0)^{1/2} \right) \subset V .$
\end{enumerate}
Then we have
\begin{equation}\label{harin2}
\frac{\sup_{U} v ( \cdot, t_0 )}{ \inf_{U} v ( \cdot, t_0 ) }  \leq C.
\end{equation}
\end{theorem}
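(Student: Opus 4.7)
The plan is to use the conjugate heat kernel from $(x,t_1)$ back to time $t_0$, where $t_1 := t_0 + \delta^2(1-t_0)$, and exploit the concentration of this measure in $V$ supplied by hypothesis (4). The time-derivative hypothesis $|\partial_t \ln v| \leq B/(1-t)$ integrated over $[t_0,t_1]$ gives $|\ln v(x,t_1) - \ln v(x,t_0)| \leq -B\ln(1-\delta^2) \leq 2B\delta^2$, so it suffices to control the oscillation of $v(\cdot, t_1)$ over $U$.

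For each $x \in U$, I would compute the discrepancy between $\ln v(x,t_1)$ and the $\nu_{x,t_1;t_0}$-averaged value of $\ln v(\cdot,t_0)$ via the identity
\[
\ln v(x,t_1) - \int_M \ln v(\cdot, t_0)\, d\nu_{x,t_1;t_0} \;=\; \int_{t_0}^{t_1}\!\int_M (\partial_s - \Delta)\ln v \, d\nu_{x,t_1;s}\, ds.
\]
The $\partial_s$ contribution is bounded by hypothesis (2). For the Laplacian term, integration by parts rewrites it as $-\int \nabla \ln v \cdot \nabla f\, d\nu$ with $f$ the Perelman potential of $\nu_{x,t_1;s}$; AM-GM combined with $|\nabla \ln v|^2 \leq B/(1-s)$ reduces the estimate to controlling $\int_{t_0}^{t_1}\int |\nabla f|^2\, d\nu\, ds$, which is bounded in terms of Perelman's $\mathcal W$-functional and the change in Nash entropy, ultimately controlled by $\nu(g(0), 2) \geq -B$ through the monotonicity formula and (\ref{NM}). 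Next, because $\int d_{t_0}(\cdot, z(x))\, d\nu_{x,t_1;t_0} \leq \sqrt{H_n(t_1-t_0)}$ and $\ln v(\cdot,t_0)$ is $\sqrt{B/(1-t_0)}$-Lipschitz, the averaged integral differs from $\ln v(z(x),t_0)$ by at most $\sqrt{B H_n}\,\delta$. Together these steps yield the key pointwise estimate $|\ln v(x,t_0) - \ln v(z(x),t_0)| \leq C(n,B,\delta)$ for every $x \in U$.

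To close the argument I would produce uniform upper and lower bounds on $v(z(x),t_0)$ in terms of the $x$-independent quantity $M_V := \int_V v\, dg_{t_0}$. The upper bound uses Bamler's pointwise heat kernel estimate $K(x,t_1;\cdot,t_0) \leq C\tau^{-n/2}\exp(-d_{t_0}(z(x),\cdot)^2/C\tau)$ coming from the entropy bound: the contribution from $V$ is $\leq C\delta^{-n}(1-t_0)^{-n/2} M_V$, while the Gaussian tail outside $V$ is controlled against the exponential growth of $v$ allowed by the gradient bound (since $\delta$ is small). This yields $v(z(x),t_0) \leq C(n,B,\delta)\, M_V/(1-t_0)^{n/2}$ uniformly in $x$. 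For the matching lower bound I would apply the non-collapsing Lemma \ref{hcv} at each $H_n$-center to conclude $\mathrm{Vol}(B(z(x),r)) \geq c(1-t_0)^{n/2}$; combined with $\mathrm{Vol}(V) \leq B(1-t_0)^{n/2}$, a packing argument exhibits $N = N(n,B)$ canonical points $q_1,\dots,q_N \in V$ such that every $H_n$-center $z(x)$ lies in some $B(q_i, 2r)$. Since any two $H_n$-centers of the same point $(x,t_1)$ must lie within $O(\sqrt{\tau})$ in $g(t_0)$-distance, the sets $U_i := \{x \in U : z(x) \in B(q_i, 2r)\}$ form an open cover of $U$ whose overlaps force $d_{t_0}(q_i, q_j) = O(\delta(1-t_0)^{1/2})$; connectedness of $U$ then lets me chain together a sequence of comparable $q_i$'s, and the gradient bound on $\ln v$ at scale $(1-t_0)^{1/2}$ gives uniform comparability of $v$ at all $q_i$'s, hence the matching lower bound.

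The main obstacle is the final chaining argument. The conjugate-heat-kernel step naturally produces the uniform upper bound in terms of the $L^1$ mass $M_V$ but gives no direct lower bound, since each $\nu_{x,t_1;t_0}$ only "sees" $v$ near its own $H_n$-center. Turning this into a two-sided Harnack requires the interplay between the volume cap on $V$ (forcing the centers to cluster into finitely many groups), the $H_n$-center uniqueness up to $\sqrt{\tau}$ (making overlapping $U_i$'s correspond to nearby $q_i$'s), and the connectedness of $U$ (making the overlap graph connected). The constraint $\delta \in (0, \bar\delta(n,B))$ enters precisely here: both the tail integration in the upper bound and the clustering chain require $\delta$ small enough in terms of $n$ and $B$.
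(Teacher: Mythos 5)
Your proposal takes a genuinely different route from the paper. The paper never invokes the conjugate-heat-kernel identity for $\ln v$: instead it fixes a discrete family of values $A_\ell = \inf_U \ln v(\cdot,t_0) + 10\ell$, uses connectedness to find $x_\ell \in U$ with $\ln v(x_\ell,t_0)=A_\ell$, transports each $x_\ell$ to its Perelman $\ell_n$-center $y_\ell$ at time $t_0$ along a short $\mathcal{L}$-geodesic (Lemma \ref{xy}, where the integrand is controlled pointwise by the gradient bound combined with $\mathcal{L}(\gamma_\ell) \le n\delta r_0$), deduces that the $y_\ell$ are $B^{-1/2}r_0$-separated, packs the associated balls $B(z_\ell,\sqrt{2H_n}\delta r_0)$ disjointly into $V$, and bounds $N$ by volume. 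Your plan replaces the $\ell_n$-center/$\mathcal{L}$-geodesic step with an averaged identity against $\nu_{x,t_1;s}$ and replaces the level-set packing with a clustering-plus-chaining argument. Both strategies rest on the same three pillars --- the two-sided bound on $\ln v$, the volume cap on $V$, and volume non-collapsing at $H_n$-centers --- but the paper's version avoids touching the Fisher information of the conjugate heat kernel entirely.

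The concrete gap is in your Laplacian-term estimate. You propose ``AM-GM combined with $|\nabla\ln v|^2\le B/(1-t)$ reduces the estimate to controlling $\int_{t_0}^{t_1}\!\int|\nabla f|^2\,d\nu\,ds$.'' This last integral diverges: from the $\mathcal{W}$-functional identity one only gets $\int|\nabla f|^2\,d\nu_{x,t_1;t_1-\tau}\le C(n,B)/\tau$ (and this order is sharp, since $f\sim d^2/4\tau$ near $\tau=0$), so $\int_0^{\delta^2r_0^2}\!\int|\nabla f|^2\,d\nu\,d\tau$ blows up logarithmically at $\tau=0$. The straight AM-GM splitting therefore fails. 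The fix is to replace it by Cauchy-Schwarz (or equivalently a $\tau$-weighted AM-GM): $\bigl|\int\nabla\ln v\cdot\nabla f\,d\nu\bigr|\le\bigl(\tfrac{B}{1-s}\bigr)^{1/2}\bigl(\tfrac{C(n,B)}{\tau}\bigr)^{1/2}$, whose $s$-integral over $[t_0,t_1]$ is $O(\sqrt{B}\,\delta)$ because $\int_0^{T}\tau^{-1/2}d\tau=2T^{1/2}$. With that correction your key pointwise estimate $|\ln v(x,t_0)-\ln v(z(x),t_0)|\le C(n,B)\delta$ goes through.

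Two secondary points. First, your $M_V$-based upper bound is both logically incomplete (the Gaussian heat-kernel upper bound controls $\int Kv$, not $v(z(x),t_0)$ itself, unless you also invoke a matching heat-kernel lower bound near $z(x)$) and unnecessary: once you know $v(x,t_0)\approx v(z(x),t_0)$ for every $x\in U$, the clustering-and-chaining argument by itself yields the two-sided Harnack bound, with no need to introduce $M_V$. Second, the packing count cannot be $N(n,B)$: the non-collapsing Lemma \ref{hcv} only gives $\vol(B(z(x),\sqrt{2H_n}\delta r_0))\ge c\,\delta^n r_0^n$, so dividing by $\vol(V)\le Br_0^n$ gives $N\le B/(c\delta^n)$, which depends on $\delta$. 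This does not damage the conclusion since $C$ is allowed to depend on $\delta$, but you should not claim $\delta$-independence for $N$.
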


\begin{proof}[\bf{Proof}]
We will determine $\bar{\delta} = \bar{\delta}(n, B)\in (0, \frac{1}{100})$ in the course of the proof. We assume that $\delta\in (0, \bar{\delta})$.

We denote by
$$ f = \ln v : M\times [0, 1)\to \mathbb{R}, $$
which is a $C^1$-function.
Let $N\geq 0$ be the positive integer such that
\begin{equation} \label{loov}
10N\leq  \underset{U}{\osc}~ f (\cdot, t_0) \leq 10 ( N + 1).
\end{equation}
We only need to obtain an upper bound of $N$.

Denote by $r_0:=(1-t_0)^{1/2}$. Then we set, for each $\ell=1, 2, \dots, N$,
\begin{equation} \label{doAl}
A_{\ell}:=\underset{U}{\inf}~ f (\cdot, t_0) + 10\ell.
\end{equation}
Since we have assumed that $U$ is an open connected subset of $M$, by the continuity of $v$ and (\ref{loov}), for each $\ell=1, 2, ..., N$, there exists $x_{\ell} \in U$ such that
$$ f (x_\ell, t_0) = A_\ell.$$
Then we have, by applying the assumption (2), for each $\ell=1, 2, \dots, N$,
\begin{equation} \label{xl1}
\left|f(x_\ell, t_0 + \delta^2r_0^2) - f(x_\ell, t_0)\right| \leq \delta^2r_0^2\cdot B\left(1-(t_0 + \delta^2r_0^2)\right)^{-1} \leq 2\delta^2 B.
\end{equation}
Hence if we choose $\bar{\delta} = \bar{\delta}(B)$ small enough such that $\bar{\delta}\leq \frac{1}{100A}$, we then have
\begin{equation} \label{vxl'1}
f(x_\ell, t_0 + \delta^2r_0^2)\in [A_\ell-1 , A_\ell + 1].
\end{equation}
for each $\ell=1, 2, \dots, N$.

Now for each $(x_\ell, t_0 + \delta^2r_0^2)$, by Lemma \ref{lcenter}, there exists $y_\ell \in M$ such that
\begin{equation} \label{lcenter'1}
\ell_{(x_\ell , t_0 + \delta^2r_0^2)}( y_\ell , t_0) \leq  \frac{n}{2}.
\end{equation}
Hence by definition, we can find a smooth space-time curve
$$\gamma_\ell: [0, \delta^2r_0^2] \rightarrow M \times [0, 1),$$
which is parametrized by backward time (i.e. $\gamma_\ell(\tau) \in M \times \{ t_0 + \delta^2r_0^2 - \tau \}$), joining $(x_\ell , t_0 + \delta^2r_0^2)$ to $( y_\ell , t_0)$, and satisfies that
\begin{equation} \label{lcenter'2}
\frac{ \mathcal{L}(\gamma_\ell)}{2 \delta r_0} \leq \frac{n}{2}.
\end{equation}
Then we have the following distortion estimates.
\begin{lemma} \label{xy}
If we choose $\bar{\delta}\leq 1$, then there exists constant $C_0=C_0(n)<\infty$ such that
\begin{equation}\label{xy1}
| f (x_\ell , t_0 + \delta^2r_0^2) - f ( y_\ell , t_0 )| \leq C_0\delta B,
\end{equation}
for each $\ell =1, 2, ..., N$.
\end{lemma}
\begin{proof}[\bf{Proof}]
For each $\ell =1, 2, ..., N$, we can compute
\[
\begin{split}
\frac{d}{d\tau}&  f(\gamma_\ell(\tau), t_0 + \delta^2r_0^2-\tau)\\
&= \langle \nabla  f(\gamma_\ell(\tau), t_0 + \delta^2r_0^2-\tau), \dot{\gamma}_\ell(\tau)\rangle_{ g(t_0 + \delta^2r_0^2 - \tau) } - \ddt{}  f (\gamma_\ell(\tau), t_0 + \delta^2r_0^2-\tau),
\end{split}
\]
hence we have
\begin{equation}\label{xy2}
\begin{split}
&\left | f (x_\ell , t_0 + \delta^2r_0^2) - f ( y_\ell , t_0 ) \right|=\left| \int_0^{\delta^2r_0^2} \frac{d}{d\tau} f (\gamma_\ell(\tau), t_0 + \delta^2r_0^2-\tau) d\tau \right| \\
\leq& \left| \int_0^{\delta^2r_0^2} \langle \nabla f (\gamma_\ell(\tau), t_0 + \delta^2r_0^2-\tau), \dot{\gamma}_\ell(\tau)\rangle_{g(t_0 + \delta^2r_0^2 - \tau)} d\tau\right| \\
&\qquad\qquad\qquad\qquad\qquad + \left| \int_0^{\delta^2r_0^2}  \ddt{} f (\gamma_\ell(\tau), t_0 + \delta^2r_0^2-\tau) d\tau\right|.
\end{split}
\end{equation}
First, using the assumption (2), we can compute
\[
\begin{split}
\left| \int_0^{\delta^2r_0^2}  \ddt{} f (\gamma_\ell(\tau), t_0 + \delta^2r_0^2-\tau) d\tau\right| \leq& \int_0^{\delta^2r_0^2} B\left(1-(t_0 + \delta^2r_0^2)\right)^{-1} d\tau \\
\leq& \int_0^{\delta^2r_0^2} B2r_0^{-2} d\tau \leq 2\delta^2B.
\end{split}
\]
Next, since the scalar curvature is uniformly bounded from below, say $R\geq -2n$ on $M\times [1/2, 1)$ , using the assumption (2) and (\ref{lcenter'2}), we have
\begin{eqnarray*}
&& \left| \int_0^{\delta^2r_0^2} \langle \nabla  f (\gamma_\ell(\tau), t_0 + \delta^2r_0^2-\tau), \dot{\gamma}_\ell(\tau)\rangle_{g(t_0 + \delta^2r_0^2 - \tau)} d\tau\right| \\
&\leq& \int_0^{\delta^2r_0^2} |\nabla f|_{g(t_0 + \delta^2r_0^2 - \tau)} (\gamma_\ell(\tau), t_0 + \delta^2r_0^2-\tau)\cdot |\dot\gamma_\ell (\tau)|_{g(t_0 + \delta^2r_0^2 - \tau)} d \tau \\
&\leq& 2B^{\frac{1}{2}} r_0^{-1} \int_0^{\delta^2r_0^2} |\dot\gamma_\ell (\tau)|_{g(t_0 + \delta^2r_0^2 - \tau)} d \tau \\
&\leq& 2B^{\frac{1}{2}} r_0^{-1} \left(  \int_0^{\delta^2r_0^2} \tau^{-1/2} d\tau \right)^{1/2} \left(  \int_0^{\delta^2r_0^2} \tau^{1/2} |\dot\gamma_\ell (\tau)|^2_{g(t_0 + \delta^2r_0^2 - \tau)} d \tau \right)^{1/2}\\
&\leq& 4B^{\frac{1}{2}} \delta^{\frac{1}{2}} r_0^{-\frac{1}{2}} \left( \int_0^{\delta^2r_0^2} \tau^{1/2} \left( |\dot\gamma_\ell (\tau)|^2_{g(t_0 + \delta^2r_0^2 - \tau)} + R(\gamma_\ell(\tau)) + 2n \right) d \tau \right)^{1/2}\\
&\leq & 4B^{\frac{1}{2}} \delta^{\frac{1}{2}} r_0^{-\frac{1}{2}} \left( \mathcal{L}(\gamma_\ell) + \int_0^{\delta^2r_0^2} 2n d\tau \right)^{1/2}\\
&\leq & 4B^{\frac{1}{2}} \delta^{\frac{1}{2}} r_0^{-\frac{1}{2}} \left( 2n\delta r_0 + 2n\delta^2 r_0^2 \right)^{1/2}\leq C_0\delta B^{\frac{1}{2}},
\end{eqnarray*}
for some constant $C_0=C_0(n)<\infty$. Plugging these two inequalities into (\ref{xy2}), we obtain
$$\left | f (x_\ell , t_0 + \delta^2r_0^2) - f ( y_\ell , t_0 ) \right|\leq 2\delta^2B + C_0\delta B^{\frac{1}{2}} \leq C_0\delta B.$$
for some constant $C_0=C_0(n)<\infty$. This completes the proof of Lemma \ref{xy}.
\end{proof}

Combining Lemma \ref{xy} and (\ref{vxl'1}), we obtain that if we choose $\bar{\delta} = \bar{\delta}(n, B)>0$ small enough such that $\bar{\delta}\leq \frac{1}{100C_0B}$, where $C_0$ is the constant in Lemma \ref{xy}, we then have
\begin{equation} \label{vyl'1}
f (y_\ell , t_0)\in [A_\ell- 2, A_\ell + 2].
\end{equation}
for each $\ell=1, 2, \dots, N$. From this, we have the following distortion estimates.

\begin{lemma} \label{yball}
For any $\ell_1, \ell_2 \in \{1, 2, ..., N\}$ with $\ell_1 \ne \ell_2$, we have
$$d_{g(t_0)}(y_{\ell_1}, y_{\ell_2}) \geq B^{-\frac{1}{2}} r_0 . $$
\end{lemma}

\begin{proof}[\bf{Proof}]
We will prove this by contradiction. Suppose there exist $1 \leq \ell_1 \ne \ell_2 \leq N$ such that
$$d_{g(t_0)}(y_{\ell_1}, y_{\ell_2}) \leq B^{-\frac{1}{2}} r_0,$$
then from assumption (2), we have
\begin{eqnarray*}
\left| f (y_{\ell_1} , t_0) - f (y_{\ell_2}, t_0) \right| \leq B^{1/2} r_0^{-1} d_{g(t_0)}(y_{\ell_1}, y_{\ell_2}) \leq 1.
\end{eqnarray*}
On the other hand, by the definition of $A_\ell$, say (\ref{doAl}), we have
$$| A_{\ell_1} - A_{\ell_2} | \geq 10,$$
hence we obtain from (\ref{vyl'1}) that
\begin{eqnarray*}
\left| f (y_{\ell_1} , t_0) - f (y_{\ell_2}, t_0) \right| \geq 6.
\end{eqnarray*}
This is a contradiction, which proves Lemma \ref{yball}.
\end{proof}

We now use the assumption (3) to find an $H_{n}$-center $(z_\ell, t_0)$ of $(x_\ell, t_0 + \delta^2r_0^2)$ for each $ \ell =1, ..., N$, such that
$$B_{g(t_0)} \left( z_\ell, \sqrt{2H_n}\delta r_0 \right) \subset V(t_0). $$
Then we have the following lemma.

\begin{lemma} \label{zball'1}
There exists $C_0=C_0(n)<\infty$, such that for each $ \ell =1, ..., N$, we have
\begin{equation} \label{zball'2}
B_{g(t_0)}\left(z_\ell, \sqrt{2H_{n}}\delta r_0\right) \subset B_{g(t_0) }\left(y_\ell, C_0 \delta r_0\right).
\end{equation}
In particular, if we choose $\bar{\delta} = \bar{\delta}(n, B)>0$ small enough, then the balls
$$\left\{ B_{g(t_0)}\left(z_\ell, \sqrt{2H_{n}}\delta r_0\right) \right\}_{\ell=1}^N$$
are mutually disjoint, all contained in $V$.
\end{lemma}

\begin{proof}[\bf{Proof}]
First by (\ref{dvh}) and the definition of $H_{n}$-center, we have
\begin{equation} \label{Wxlzl}
d^{g(t_0)}_{W_1} (\nu_{x_\ell, t_0 + \delta^2r_0^2; s_0}, \delta_{z_\ell}) \leq \left( \var(\nu_{x_\ell, t_0 + \delta^2r_0^2; t_0}, \delta_{z_\ell})\right)^{1/2} \leq \sqrt{H_{n}} \delta r_0.
\end{equation}
On the other hand, from Lemma \ref{wdn} and (\ref{lcenter'1}), we have
\begin{equation} \label{Wxlyl}
\begin{split}
d^{g(t_0)}_{W_1} (\nu_{x_\ell, t_0 + \delta^2r_0^2; t_0}, \delta_{y_\ell}) &\leq C_0 \left( 1+ \frac{\mathcal{L}(\gamma_\ell)}{2 \delta r_0} - \cN^*_{t_0} (x_\ell, t_0 + \delta^2r_0^2) \right)^{1/2} \delta r_0\\
&\leq C_0 \left( 1+ n + C_0 \right)^{1/2} \delta r_0\leq C_0 \delta r_0,
\end{split}
\end{equation}
since we have the uniform lower bound of the Nash entropy
\begin{equation} \label{Nashbxl}
\cN^*_{t_0} (x_\ell, t_0 + \delta^2 r_0^2) \geq \mu [g(t_0), \delta^2 r_0^2] \geq -B,
\end{equation}
from (\ref{NM}), where we have used the assumption (1). Combining (\ref{Wxlzl}) and (\ref{Wxlyl}), and the triangle inequality of the $W_1$-Wasserstein distance, we have
\begin{eqnarray*}
d_{g(t_0)}(y_\ell, z_\ell)& = & d^{g(t_0)}_{W_1} ( \delta_{y_\ell} , \delta_{z_\ell} )  \\
& \leq & d^{g(t_0)}_{W_1} (\delta_{y_\ell} , \nu_{x_\ell, t_0 + \delta^2r_0^2; t_0} ) + d^{g(t_0)}_{W_1} ( \nu_{x_\ell, t_0 + \delta^2r_0^2; t_0} , \delta_{z_\ell} ) \\
&\leq& C_0 \delta r_0,
\end{eqnarray*}
for each $ \ell =1, ..., N$. This proves (\ref{zball'2}).

Finally, Lemma \ref{yball} implies that the balls $\left\{ B_{g(t_0)}\left(y_\ell, \frac{1}{3}B^{-\frac{1}{2}} r_0\right)  \right\}_{\ell=1}^N$ are mutually disjoint. Hence if we choose $\bar{\delta} = \bar{\delta}(n, B)>0$ small enough such that $\bar{\delta}\leq \frac{1}{100C_0B}$, then the balls $\left\{ B_{g(t_0)}(y_\ell, C_0\delta r_0)  \right\}_{\ell=1}^N$ are mutually disjoint, which implies by using (\ref{zball'2}) that the balls $\left\{ B_{g(t_0)}\left(z_\ell, \sqrt{2H_{n}}\delta r_0\right) \right\}_{\ell=1}^N$ are mutually disjoint. This finishes the proof of Lemma \ref{zball'1}.
\end{proof}

Now we can fix the constant $\bar{\delta} = \bar{\delta}(n, B)>0$ such that the above argument holds. By Lemma \ref{hcv}, since $(z_\ell, t_0)$ is an $H_{n}$-center of $(x_\ell, t_0 + \delta^2r_0^2)$ for each $ \ell =1, ..., N$, using the Nash entropy bound (\ref{Nashbxl}), we have the volume non-collapsing estimates
$$\vol_{g(t_0)}\left( B_{g(t_0)}\left(z_\ell, \sqrt{2H_{n}}\delta r_0\right) \right) \geq C^{-1} (\delta r_0)^{n},  $$
for some constant $C=C(n, B)<\infty$. Now, since the balls
$$\left\{ B_{g(t_0)}\left(z_\ell, \sqrt{2H_{n}}\delta r_0\right) \right\}_{\ell=1}^N$$
are mutually disjoint by Lemma \ref{zball'1}, all contained in $V$, using the assumption (3), we have
\begin{equation} \label{zball'5}
B r_0^{n} \geq \vol_{g(t_0)}\left(V\right)\geq \sum_{\ell=1}^{N} \vol_{g(t_0)}\left( B_{g(t_0)}\left(z_\ell, \sqrt{2H_{n}}\delta r_0\right) \right) \geq N C^{-1} \delta^{n}r_0^{n},
\end{equation}
which gives the estimate $N \leq C(n, \delta, B)$.

This completes the proof of Theorem \ref{gosce}.
\end{proof}


\medskip
\subsection{Harnack estimate for the K\"ahler-Ricci flow}

In this subsection,  we are under the set-up of Section \ref{setupofKRF}, that is, we have the solution to the unnormalized K\"ahler-Ricci flow, which we denote by $\omega(t), t\in [0, 1)$; and we have the solution to the normalized K\"ahler-Ricci flow, which we denote by $\tilde\omega(s),~~ s\in [0, \infty)$. These two flows are related by
\begin{equation}\label{rb unkrf and nkrf'3}
s=-\ln (1-t),~~ t=1-e^{-s},~~ \tilde{\omega}(s)=(1-t)^{-1}\omega(t).
\end{equation}
We use the notations and conventions from Section \ref{setupofKRF}. Our final goal is to prove Corollary \ref{cor2}.

Let $\Phi: X \rightarrow Y \subset \mathbb{CP}^N$ be the unique surjective holomorphic map induced by the limiting class $\vartheta$. We fix a smooth K\"ahler metric $\omega_B$ on the base space $Y$, where the subscript $B$ denotes the base space. For example, we can choose $\omega_B=\lambda \omega_\fs$ for any $\lambda>0$. The reason we use the notation $\omega_B$ is that we already used $\omega_Y$ in (\ref{omegaY}).

For any $p\in X$, we denote by $\mathcal{B}_{\omega_B}(p, r) = \Phi^{-1}\left(B_{\omega_B} ( \Phi(p) , r) \right)$ the pre-image of the geodesic ball $B_{\omega_B}(\Phi(p), r)$.

First, we have the following proposition for the unnormalized flow $\omega(t), t\in [0, 1)$.
%

\begin{proposition} \label{loe'1}
For any $ A\in ( 1 , \infty ) $, $\theta\in (0, 1)$, there exists constant $C<\infty$, which depends on $n, \omega_0, \omega_B, A, \theta$, such that the following statement holds.

Let $t_0\in [0, 1)$ and $p\in X$ be given. Suppose $B_{\omega_B} \left( \Phi(p) , (1-t_0)^{1/2} \right)\cap Y$ has only one component and we have the following volume bound
\begin{equation}\label{loeassump1}
\vol_{g(t_0)}\left(\mathcal{B}_{\omega_B}\left(p, 5(1-t_0)^{1/2} \right)\right)\leq A (1-t_0)^{n}.
\end{equation}
Let $w: \mathcal{B}_{\omega_B}\left(p, 5(1-t_0)^{1/2} \right)\times [t_0, t_0 + \theta (1-t_0) ]\to \mathbb{R}^+$ be a positive $C^1$-function satisfying that
\begin{equation}\label{loeassump2}
\frac{|\partial_t w|}{w} + \frac{|\nabla w|^2}{w^2} \leq \frac{A}{1-t} .
\end{equation}
Then we have
\begin{equation} \label{loe'2}
\frac{\sup_{ \mathcal{B}_{\omega_B}\left(p, (1-t_0)^{1/2} \right) } w ( \cdot, t_0 )}{ \inf_{ \mathcal{B}_{\omega_B}\left(p, (1-t_0)^{1/2} \right) } w ( \cdot, t_0 ) }  \leq C.
\end{equation}
\end{proposition}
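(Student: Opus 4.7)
The strategy is to apply Theorem \ref{main2} to the K\"ahler-Ricci flow $(X, g(t))_{t\in[0,1)}$ viewed as a $2n$-dimensional real Ricci flow, taking $v := w$. After a standard reduction we may assume $t_0 \in [1/2, 1)$; the range $t_0 \leq 1/2$ is a uniformly parabolic region on a fixed time interval and is handled by the classical parabolic Harnack inequality. We set $U := \mathcal{B}_{\omega_B}(p, (1-t_0)^{1/2})$, $V := \mathcal{B}_{\omega_B}(p, 5(1-t_0)^{1/2})$, and choose $\delta$ small in terms of $n, \omega_0, \omega_B, A, \theta$ (in particular $\delta^2 \leq \theta$).

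Hypotheses (1)--(3) of Theorem \ref{main2} are essentially immediate: (1) is a fixed entropy lower bound for $\omega_0$; (2) is precisely (\ref{loeassump2}); and (3) is (\ref{loeassump1}) (noting $n$ in Theorem \ref{main2} is the real dimension, so equals $2n$ here). The connectedness of $U$ in hypothesis (4) follows from our one-component assumption on $B_{\omega_B}(\Phi(p), (1-t_0)^{1/2}) \cap Y$ combined with the connectedness of the fibers of the semi-ample morphism $\Phi$.

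The main geometric step is verifying the $H_{2n}$-center condition in hypothesis (4). Fix $x \in U$ and let $(z, t_0)$ be any $H_{2n}$-center of $(x, t_0 + \delta^2(1-t_0))$. Combining the variance estimate for $H_{2n}$-centers with Lemma \ref{W1balongwl} (applied via the Nash entropy bound from (1) and the uniform scalar curvature lower bound along the K\"ahler-Ricci flow), one obtains
$$ d_{g(t_0)}(x, z) \leq C\delta (1-t_0)^{1/2}. $$
The parabolic Schwarz Lemma \ref{pschwarz} gives $\Phi^*\omega_B \leq C\, \omega(t)$, so $\Phi : (X, g(t_0)) \to (Y, \omega_B)$ is uniformly Lipschitz; applying this to $z$ and to every point of $B_{g(t_0)}(z, \sqrt{2H_{2n}}\,\delta(1-t_0)^{1/2})$ forces that ball into $\mathcal{B}_{\omega_B}(p, 5(1-t_0)^{1/2}) = V$ once $\delta$ is chosen small enough.

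The main obstacle is that Theorem \ref{main2} is phrased with $v$ defined globally, while $w$ is only available on the tubular neighborhood. I expect to resolve this by inspecting the proof of Theorem \ref{main2} and verifying that the $\ell_{2n}$-centers and $\mathcal{L}$-geodesics invoked there have spatial length $O(\delta(1-t_0)^{1/2})$, hence remain inside $\mathcal{B}_{\omega_B}(p, 5(1-t_0)^{1/2})$ once $\delta$ is small (again using the parabolic Schwarz Lipschitz bound together with the scalar curvature lower bound controlling the $\mathcal{L}$-length integrand). With this localization in hand, Theorem \ref{main2} directly yields (\ref{loe'2}) with the asserted dependencies.
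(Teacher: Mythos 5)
Your overall architecture matches the paper's: you correctly identify Theorem \ref{main2} as the engine, make the same choices $U = \mathcal{B}_{\omega_B}(p,(1-t_0)^{1/2})$ and $V = \mathcal{B}_{\omega_B}(p, 5(1-t_0)^{1/2})$, and correctly locate the two real issues — verifying the $H_{2n}$-center condition of hypothesis (4) and justifying that the local bound \eqref{loeassump2} can replace the global hypothesis (2) by tracking where the $\ell_{2n}$-centers and $\mathcal{L}$-geodesics go. However, the way you propose to verify hypothesis (4) has a genuine gap. You want to deduce $d_{g(t_0)}(x,z) \leq C\delta(1-t_0)^{1/2}$ by applying Lemma \ref{W1balongwl} together with the variance estimate for $H_{2n}$-centers. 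Lemma \ref{W1balongwl} requires a pointwise scalar curvature \emph{upper} bound $R(x, t) \leq Y r_0^{-2}$ along the relevant time interval. That is not available here: $w$ is just a positive $C^1$-function satisfying \eqref{loeassump2}; nothing ties it to the scalar curvature, so there is no a priori upper bound on $R$ in $V$. (Even if $w$ were the Ricci potential, $R \leq Cw$ would only give a useful bound at points where $w$ is small, which is not guaranteed at a general $x\in U$.) The Nash entropy lower bound and the scalar curvature \emph{lower} bound you invoke do not substitute for this.

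The paper instead avoids any scalar curvature upper bound: it builds a cut-off $\eta$ supported in $B_{\omega_B}(\Phi(p), 2r_0)$, defines $\tilde\eta(\cdot,t) = e^{-B_0(1-t_0)^{-1}(t-t_0)}\eta$, uses the parabolic Schwarz lemma to bound $|\Delta_{\omega(t)}\eta|$ so that $\Box\tilde\eta \leq 0$, and integrates against the conjugate heat measure to get $\nu_{x, t_0+\delta^2(1-t_0); t_1}(\mathcal{B}_{\omega_B}(p,2r_0)) \geq e^{-B_0}$. Combined with Lemma \ref{hnc'2} (the $H_{2n}$-center ball carries most of the mass), this forces the $H_{2n}$-center ball into $\mathcal{B}_{\omega_B}(p, 3r_0)$. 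This barrier argument is precisely the step your proposal is missing. Note also that the paper's localization of the $\mathcal{L}$-geodesics (Lemma \ref{zball'6}) is not achieved purely from the $\mathcal{L}$-length bound and the Schwarz Lipschitz estimate as you suggest — it uses Lemma \ref{wdn} and the $H_{2n}$-center estimate \eqref{hcd} from the barrier argument to place $\gamma(\tau_0)$ close to an $H_{2n}$-center at time $t_1$, and only then invokes the Schwarz map. So the barrier step is load-bearing for both of the localization statements your proof requires.
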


\begin{proof}[\bf{Proof}]
Throughout the proof, all the constants will depend at most on $n, \omega_0, \omega_B$ and $A, \theta$.
Let $\delta\in (0, \theta)$ be a small constant, whose value will be determined in the course of the proof.

Denote by $r_0:=(1-t_0)^{1/2}$. In order to apply Theorem \ref{gosce}, we define
$$
U:=\mathcal{B}_{\omega_B}\left(p, r_0 \right),
$$
and
$$
V:=\mathcal{B}_{\omega_B}\left(p, 5r_0 \right).
$$
First, we have $U$ is connected, since $B_{\omega_B}\left(\Phi(p), r_0 \right)\cap Y$ has only one component and all the regular fibres are connected.

Next, the assumption (1) of Theorem \ref{gosce} holds trivially, and the assumption (3) of Theorem \ref{gosce} holds due to the assumption (\ref{loeassump1}).

It remains to verify the assumption (4) of Theorem \ref{gosce}, and justify that (\ref{loeassump2}) is enough to replace the assumption (2) in Theorem \ref{gosce}, since here we only assume that (\ref{loeassump2}) holds on $V \times [t_0, t_0 + \theta (1-t_0) ]$.
%

%

%
First, we have the following lemma, which verifies the assumption (4) of Theorem \ref{gosce} on our K\"ahler-Ricci flow background.
%

%
\begin{lemma} \label{zball'3}
If we choose $\bar{\delta}$ small enough, then the following statement holds.

Let $x\in U$ and $ \delta \in ( 0 , \bar{\delta}) $. For any $t \in [ t_0, t_0 + \delta^2 (1-t_0) )$, if $(z, t )$ is an $H_{2n}$-center of $(x, t_0 + \delta^2(1-t_0) )$, then we have
\begin{equation} \label{zball'4}
B_{g( t )}\left(z, \sqrt{2H_{2n}}\delta r_0 \right) \subset \mathcal{B}_{\omega_B}\left(p, 3r_0 \right) .
\end{equation}
\end{lemma}

\begin{proof}[\bf{Proof}]
We will determine $\bar{\delta}>0$ in the course of the proof. Let $ \delta \in ( 0 , \bar{\delta}) $. Fix a time $t_1 \in [ t_0, t_0 + \delta^2 (1-t_0) )$.

We first choose a smooth cut-off function $\eta$ on $\mathbb{CP}^N$ satisfying
\begin{enumerate}

\medskip
\item $\eta \equiv 1$ in $B_{\omega_B}( \Phi(p) ,  r_0 )$ and $0\leq \eta \leq 1$,

\medskip
\item $\textnormal{supp} ~\eta \subset\subset B_{\omega_B}\left(\Phi(p) , 2 r_0 \right)$,

\medskip
\item $|\nabla \eta|^2_{\omega_B} \leq C_0 (1-t_0)^{-1}$,

\medskip
\item $ |\nabla^2 \eta|_{\omega_B} \leq C_0 (1-t_0)^{-1},$

\end{enumerate}
\medskip
for some fixed constant $C_0 < \infty$. We still use $\eta$ to denote $\eta\circ\Phi$, which is a smooth cut-off function on $X$. We now define
$$\tilde \eta (\cdot , t) = e^{-B_0 (1-t_0)^{-1}(t-t_0)}\eta (\cdot) , $$
for some large constant $B_0$ to be determined. By the parabolic Schwarz Lemma, say Lemma \ref{pschwarz}, for all $t\in [0, 1)$, we have
$$\left| \Delta_{\omega(t)} \eta \right| \leq CC_0 (1-t_0)^{-1}.$$
Hence if we fix $B_0=2CC_0$, then we can compute
\begin{equation} \label{eta'1}
\Box_{\omega(t)} \tilde \eta = e^{-B_0 (1-t_0)^{-1}(t-t_0)} \left(  - B_0 (1-t_0)^{-1} - \Delta_{\omega(t)} \eta \right) \leq  0,
\end{equation}
for all $t\in [0, 1)$. Furthermore, for all $t\in [t_0, t_0 + \delta^2(1-t_0)]$, we have
\begin{enumerate}

\medskip
\item $0\leq \tilde \eta (\cdot , t) \leq 1; $

\medskip
\item $\tilde \eta (\cdot , t) \geq e^{-B_0}$ on $\mathcal{B}_{\omega_B}\left(p, r_0 \right)$;

\medskip
\item  $ \textnormal{supp}~\tilde \eta \subset\subset \mathcal{B}_{\omega_B}\left(p, 2r_0 \right)$.

\end{enumerate}
\medskip
Now for any $x\in U$, we let $(z, t_1)$ be any $H_{2n}$-center of $(x, t_0+\delta^2(1-t_0))$. Combining the above three properties for $\tilde \eta$ with (\ref{eta'1}), we can compute
\begin{eqnarray*}
&&\nu_{x, t_0 + \delta^2(1-t_0) ; t_1} \left( \mathcal{B}_{\omega_B}\left(p, 2r_0 \right) \right)\\
&\geq & \int_X \tilde \eta ~d\nu_{x, t_0 + \delta^2(1-t_0) ; t_1} \\
& =& \int_X \tilde \eta ~d\nu_{x, t_0 + \delta^2(1-t_0) ; t_0 + \delta^2(1-t_0)}  - \int_{ t_0 }^{ t_0 + \delta^2(1-t_0) } \int_X  \Box_{\omega(t)} \tilde \eta ~  d\nu_{x, t; t_0} dt\\
&\geq& \tilde\eta(x, t_0 + \delta^2(1-t_0))\\
&\geq & e^{-B_0}.
\end{eqnarray*}
In the last inequality, we have used $x \in U = \mathcal{B}_{\omega_B}\left(p, r_0 \right)$. Since $(z, t_1)$ is an $H_{2n}$-center of $(x, t_0 + \delta^2(1-t_0))$, by using Lemma \ref{hnc'2}, we have
\begin{equation}\label{hcd}
d_{g(t_1)} \left(z, \mathcal{B}_{\omega_B}\left(p, 2r_0 \right) \right) \leq C \sqrt{ t_0 + \delta^2(1-t_0) - t_1 } \leq C \delta r_0.
\end{equation}
Hence, for any $\tilde{x}\in B_{g(t_1)}\left(z, \sqrt{2H_{2n}}\delta r_0 \right)$, we can find a point $y\in \mathcal{B}_{\omega_B}\left(p, 2r_0 \right))$, and a smooth curve $\gamma: [0, 1]\to X$ with $\gamma(0)=\tilde{x}$ and $\gamma(1)=y$, such that
$$
\int_{0}^{1} |\dot\gamma (a)|_{g(t_1)} da < \sqrt{2H_{2n}}\delta r_0 + C \delta r_0 \leq C \delta r_0.
$$
Then we define $\tilde \gamma: [0, 1]\to \mathbb{CP}^N$ by $\tilde \gamma:= \pi \circ \gamma$, then by applying the Schwarz Lemma, say Lemma \ref{pschwarz}, for all $a\in [0, 1]$, we have
$$|\dot {\tilde \gamma} (a)|^2_{\omega_B} = \Phi^* \omega_B \left( \dot {\gamma} (a) , \dot {\gamma} (a)  \right) \leq C g(t_1) \left( \dot {\gamma} (a) , \dot {\gamma} (a)  \right) = C |\dot\gamma (a)|^2_{g(t_1)},$$
hence we have
$$ d_{\omega_B}( \Phi(\tilde{x}) , \Phi(y) ) \leq \int_{0}^{1} |\dot{\tilde \gamma} (a)|_{\omega_B} da \leq \int_{0}^{1} C|\dot\gamma (a)|_{g(t_1)} da \leq C \delta r_0, $$
which implies that
\[
\begin{split}
d_{\omega_B}( \Phi(\tilde{x}) , \Phi(p) ) &\leq d_{\omega_B}( \Phi(\tilde{x}) , \Phi(y) ) + d_{\omega_B}( \Phi(y) , \Phi(p) )\\
&\leq C \delta r_0 + 2 r_0 < 3 r_0,
\end{split}
\]
provided that we choose $\bar{\delta} \leq \frac{1}{100C}$, from which we conclude that
$$ B_{g(t_1)}\left(z, \sqrt{2H_{2n}}\delta r_0 \right) \subset \mathcal{B}_{\omega_B}\left(p, 3r_0 \right) .$$

This completes the proof of Lemma \ref{zball'3}.
\end{proof}

Next, we have the following lemma, which will justify that (\ref{loeassump2}) is enough to replace the assumption (2) in Theorem \ref{gosce}.
\begin{lemma} \label{zball'6}
If we choose $\bar{\delta}$ small enough, then the following statement holds.

Let $x\in U$ and $ \delta \in ( 0 , \bar{\delta}) $. Let $(y, t_0)$ be an $\ell_{2n}$-center of $(x, t_0+\delta^2(1-t_0))$, let $\gamma: [0, \delta^2 (1-t_0) ] \rightarrow X \times [0, 1)$ be any smooth minimizing $\mathcal{L}$-geodesic connecting $(x, t_0+\delta^2(1-t_0))$ to $(y, t_0)$. Then we have
\begin{equation} \label{zball'7}
\gamma ( \tau ) \in \mathcal{B}_{\omega_B}\left(p, 3r_0 \right) \times \left\{ t_0 + \delta^2(1-t_0) - \tau \right\} ,
\end{equation}
for all $\tau \in [0, \delta^2 (1-t_0) ]$.
\end{lemma}
\begin{proof}
According to our assumption, we have
$$ \frac{ \mathcal{L}(\gamma) }{2 \delta r_0} \leq n.  $$
Fix $\tau_0 \in ( 0, \delta^2 (1-t_0) ]$. Since we have uniform lower bound on scalar curvature $R_g \geq -C$, we can then compute
\begin{eqnarray*}
2 n \delta r_0&\geq& \mathcal{L}(\gamma) = \int_0^{\delta^2r_0^2} \tau^{1/2} \left( |\dot\gamma (\tau)|^2_{g(t_0 + \delta^2r_0^2 - \tau)} + R_g(\gamma(\tau)) \right) d \tau  \\
&\geq& \int_0^{ \tau_0 } \tau^{1/2} \left( |\dot\gamma (\tau)|^2_{g(t_0 + \delta^2r_0^2 - \tau)} + R_g(\gamma (\tau)) \right) d \tau + \int_{ \tau_0 }^{ \delta^2r_0^2 } \tau^{1/2} \left( - C \right) d \tau  \\
&\geq& \mathcal{L}(\gamma|_{ [ 0 , \tau_0 ] }) - C \delta^3 r_0^3 ,\\
\end{eqnarray*}
which we can rewrite as
\begin{equation} \label{zball'8}
\mathcal{L}( \gamma|_{ [ 0 , \tau_0 ] } ) \leq C \delta r_0.
\end{equation}
Denote by $ t_1 = t_0 + \delta^2(1-t_0) - \tau_0 $.  Hence, we can apply Lemma \ref{wdn} to compute
\begin{equation}\label{zball'9}
\begin{split}
&d^{g( t_1 )}_{W_1} \left (\nu_{x_0, t_0 + \delta^2(1-t_0) ; t_1 } , \delta_{ \gamma ( \tau_0 ) } \right )  \\
\leq& C \left(1+ \frac{ \mathcal{L}( \gamma|_{ [ 0 , \tau_0 ] } ) }{2 \tau_0 ^{1/2}} - \cN^*_{ t_1 }(x_0, t_0 + \delta^2(1-t_0) ) \right) ^{1/2} \tau_0 ^{1/2}  \\
\leq& C \left(1+ \frac{ C \delta r_0 }{2 \tau_0 ^{1/2}} + C \right) ^{1/2} \tau_0 ^{1/2} \leq C \delta^{1/2} r_0^{1/2} \tau_0^{1/4} + C \tau_0 ^{1/2} \leq C \delta r_0.
\end{split}
\end{equation}
On the other hand, let $(z, t_1 )$ be an $H_{2n}$-center of $(x, t_0 + \delta^2(1-t_0) )$, then we can apply (\ref{dvh}) to compute
\begin{equation}\label{zball'10}
\begin{split}
d^{g( t_1 )}_{W_1} \left (\nu_{x_0, t_0 + \delta^2(1-t_0) ; t_1 } , \delta_{ z } \right ) \leq C \tau_0^{1/2} \leq C \delta r_0 .
\end{split}
\end{equation}
Combining (\ref{zball'9}) with (\ref{zball'10}), we can apply the triangle inequality to compute
\begin{equation}\label{zball'11}
\begin{split}
&d_{g( t_1 )} \left ( \gamma ( \tau_0 ) , z \right ) = d^{g( t_1 )}_{W_1} \left ( \delta_{ \gamma ( \tau_0 )} , \delta_{ z }  \right) \\
\leq & d^{g( t_1 )}_{W_1} \left ( \delta_{ \gamma ( \tau_0 ) } , \nu_{x_0, t_0 + \delta^2(1-t_0) ; t_1 } \right ) + d^{g( t_1 )}_{W_1} \left (\nu_{x_0, t_0 + \delta^2(1-t_0) ; t_1 } , \delta_{ z } \right ) \leq C \delta r_0.  \\
\end{split}
\end{equation}
But according to (\ref{hcd}) in Lemma \ref{zball'3}, we have
\begin{equation}\label{zball'12}
d_{g(t_1)} \left(z, \mathcal{B}_{\omega_B}\left(p, 2r_0 \right) \right) \leq C \delta r_0 ,
\end{equation}
hence we can combine (\ref{zball'11}) with (\ref{zball'12}) to obtain
\begin{equation}\label{zball'13}
d_{g(t_1)} \left( \gamma ( \tau_0 ) , \mathcal{B}_{\omega_B}\left(p, 2r_0 \right) \right) \leq C \delta r_0 .
\end{equation}
%
Now, by applying the Schwarz lemma, totally the same argument as in Lemma \ref{zball'3} gives
$$ d_{\omega_B}( \Phi( \gamma ( \tau_0 ) ) , p ) < 3 r_0 ,  $$
provided that we choose $\bar{\delta}$ small enough, hence we have
$$ \gamma ( \tau_0 ) \in \mathcal{B}_{\omega_B}\left(p, 3r_0 \right) \times \left\{ t_0 + \delta^2(1-t_0) - \tau_0 \right\} . $$
This completes the proof of Lemma \ref{zball'6}.
\end{proof}

Now we can justify that (\ref{loeassump2}) is enough to replace the assumption (2) in Theorem \ref{gosce}. There are three places in the proof of Theorem \ref{gosce} where we have used the assumption (2) in Theorem \ref{gosce}, namely (\ref{xl1}), Lemma \ref{xy} and Lemma \ref{yball}.

First of all, (\ref{loeassump2}) is enough for (\ref{xl1}) since $x_{\ell} \in U \subset V$.

Next, for the $\cL$-geodesic $\gamma_\ell$ in Theorem \ref{gosce}, we can apply Lemma \ref{zball'6} to obtain that the whole space-time curve $\gamma_\ell$ is contained in $V \times [t_0, t_0 + \theta (1-t_0) ]$, hence (\ref{loeassump2}) is enough for the application in Lemma \ref{xy}.

Finally, according Lemma \ref{zball'6} again, the point $(y_\ell, t_0)$ in Theorem \ref{gosce}, which is an $\ell_{2n}$-center of $(x_\ell, t_0 + \delta^2r_0^2)$ with $x_{\ell} \in U$, satisfies that $ (y_\ell, t_0) \in \mathcal{B}_{\omega_B}\left(p, 3r_0 \right) \times \left\{ t_0 \right\} $. By applying the Schwarz lemma again, totally the same argument as in Lemma \ref{zball'3} gives
\begin{equation}\label{zball'14}
B_{g(t_0)} \left( y_\ell , C_1^{-1}r_0 \right) \subset \mathcal{B}_{\omega_B}\left(p, 4r_0 \right) ,
\end{equation}
for some constant $C_1=C_1(n, \omega_0)<\infty$. We then enlarge the constant $B$ in Theorem \ref{gosce} such that $B\geq 10C_1$ (now $B$ need to depend on $n, \omega_0$). Hence for the points $y_{\ell_1}, y_{\ell_2}$ in Lemma \ref{yball}, if $d_{g(t_0)}(y_{\ell_1}, y_{\ell_2}) \leq B^{-\frac{1}{2}} r_0$, then by using (\ref{zball'14}), any $g(t_0)$-minimizing geodesic connecting $y_{\ell_1}$ to $y_{\ell_2}$ must be contained in $\mathcal{B}_{\omega_B}\left(p, 4r_0 \right) \subset V$, hence (\ref{loeassump2}) is enough for the application in Lemma \ref{yball}.

Now, we can fix the constant $\bar{\delta}$ such that our previous arguments and Theorem \ref{gosce} holds, and fix $\delta = \bar{\delta}/2$. Then we conclude from Theorem \ref{gosce} that
$$
\frac{\sup_{ U } w ( \cdot, t_0 )}{ \inf_{ U } w ( \cdot, t_0 ) }  \leq C.
$$
This completes the proof of Proposition \ref{loe'1}.
\end{proof}
\begin{theorem} \label{scanddearv}
Let $g(t)$ be a maximal solution of the K\"ahler-Ricci flow (\ref{unkrflow1'1}) on $X\times [0, 1)$ as described above. For any $A, T<\infty$ and smooth closed $(1,1)$-form $\theta_Y\in \vartheta$ on $Y$, there exist $\ep>0, C<\infty$, depending on $n, g_0, \theta_Y, g_B, A,  T$, such that the following holds.

Suppose $t_0\in [1/2, 1)$ and $p_{t_0}$ is a Ricci vertex associated to $\theta_Y$. If  $t\in [t_0-T(1-t_0), 1-T^{-1}(1-t_0) ]$ and
\begin{equation}\label{scanddearv1}
\vol_{g(t)}\left( \mathcal{B}_{g_B}\left( p_{t_0}, 10(1-t)^{1/2} \right) \right)\leq A (1-t)^{n} .
\end{equation}
Then we have the estimates
\begin{equation} \label{scanddearv2}
\diam \left( \mathcal{B}_{g_B}\left( p_{t_0}, (1-t)^{1/2} \right), g(t) \right)  \leq C(1-t)^{1/2} ,
\end{equation}
\begin{equation} \label{scanddearv3}
\sup_{\mathcal{B}_{g_B}\left( p_{t_0}, (1-t)^{1/2} \right) } | \rr( \cdot , t ) | \leq \frac{C}{1-t},
\end{equation}
where $\rr(\cdot, t)$ denotes the scalar curvature of $g(t)$.
\end{theorem}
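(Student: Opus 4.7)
The plan is to upgrade Theorem \ref{cor2} from a single time slice to the parabolic window by time-propagating the control of the Ricci potential $v$ associated to $\theta_Y$. Fix $t$ in the given window; then $1-t$ is comparable to $1-t_0$ up to a factor depending only on $T$. Write $V=\mathcal{B}_{g_B}(p_{t_0},(1-t)^{1/2})$. The two main ingredients are the Li--Yau estimate from Theorem \ref{main1} and the Harnack estimate from Proposition \ref{loe'1}, applied with the \emph{fixed} base point $p_{t_0}$ rather than the moving Ricci vertex $p_t$.

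First I would apply Proposition \ref{loe'1} at reference time $t$, with base point $p=p_{t_0}$ and test function $w=v$. Hypothesis (\ref{loeassump2}) follows from Theorem \ref{main1} together with the evolution equation (\ref{ceofu}) and the parabolic Schwarz lemma (these bound both $|\nabla\ln v|^2$ and $|\partial_t\ln v|$ by $C/(1-t)$); hypothesis (\ref{loeassump1}) follows from (\ref{scanddearv1}) after absorbing a $T$-dependent factor into $A$; connectedness of the base ball holds for $t_0\in[1/2,1)$ once we shrink the scale. We obtain
\begin{equation*}
\sup_{V} v(\cdot,t)\ \le\ C\,\inf_{V} v(\cdot,t).
\end{equation*}
To pin down the infimum, I would integrate $|\partial_t\ln v|\le C/(1-t)$ in time from $t_0$ to $t$. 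Since $v(p_{t_0},t_0)=1$ by definition of the Ricci vertex and $|\ln((1-t)/(1-t_0))|$ is bounded in terms of $T$, we obtain $v(p_{t_0},t)\le C$, so $\inf_V v(\cdot,t)\le v(p_{t_0},t)\le C$ and hence $\sup_V v(\cdot,t)\le C$. Feeding this back into Theorem \ref{main1} gives $|\Delta v|\le C/(1-t)$ on $V$; tracing $\ric(g(t))-(1-t)^{-1}g(t)=-(1-t)^{-1}\theta_Y-\ddbar v$ and using $|\tr_{g(t)}\theta_Y|\le C$ (Lemma \ref{pschwarz}) yields the scalar curvature estimate (\ref{scanddearv3}).

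For the diameter bound (\ref{scanddearv2}) I would rerun the $v$-bootstrap on the slightly enlarged tube $V'=\mathcal{B}_{g_B}(p_{t_0},2(1-t)^{1/2})$, which is legal since the $5$-to-$1$ scaling in Proposition \ref{loe'1} is matched by the radius $10$ in (\ref{scanddearv1}). Thus $|R_g|\le C/(1-t)$ on $V'$, and together with the initial bound $\nu(g_0,2)\ge -B$ (so the pointed Nash entropy is bounded below along the flow via (\ref{NM})) Bamler's volume non-collapsing (Lemma \ref{lvnc}) gives every $g(t)$-ball of radius $(1-t)^{1/2}/4$ centered in $V$ volume at least $c(1-t)^n$. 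Parabolic Schwarz keeps each such ball inside $V'$, whose volume is at most $A(1-t)^n$, so at most $C=C(A)$ such balls are pairwise disjoint. Along any path in $V$ connecting two points $x,y\in V$, points spaced by $(1-t)^{1/2}/2$ form a disjoint packing of such balls; hence the length of the path, and therefore $d_{g(t)}(x,y)$, is at most $C(1-t)^{1/2}$. The main obstacle is the last step: one must carefully calibrate the various radii (the tube $V$ where we want the bound, the tube $V'$ where we need scalar curvature control, and the ball in (\ref{scanddearv1})) against the parabolic Schwarz constant, so that every ball produced by the packing genuinely remains in a region with both controlled scalar curvature and controlled ambient volume; this is the technical heart of the argument and the reason a $5$-to-$10$-to-$1$ hierarchy of radii appears in the hypotheses.
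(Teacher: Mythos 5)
Your argument follows the paper's proof of this theorem essentially step for step: apply the Harnack estimate of Proposition \ref{loe'1} at the fixed base point $p_{t_0}$ with the weighted Ricci potential as test function, pin down the scale of $v$ by propagating its normalization at the vertex from time $t_0$ to $t$ via the $|\partial_t\ln v|$ bound, trace the twisted Ricci identity for (\ref{scanddearv3}), and conclude (\ref{scanddearv2}) by a packing-plus-noncollapsing count on the enlarged tube. The one point your sketch glosses over, and which the paper handles by replacing the moving minimum $a(s)=\inf_X u(\cdot,s)$ (which is only Lipschitz in $s$) with the smooth lower barrier $b(s)=e^{s-s_0}a(s_0)-B_0$ from Lemma \ref{a'3}, is that the evolution inequality (\ref{eofdsvandgradientv}) for $v$---needed both for the $|\partial_t\ln v|$ bound and for hypothesis (\ref{loeassump2}) of Proposition \ref{loe'1}---requires this barrier to be stated cleanly; aside from that and your choice of Bamler's Lemma \ref{lvnc} over Perelman's $\kappa$-noncollapsing in the packing step (both apply once the local scalar curvature bound is in hand), the two proofs coincide.
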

\begin{proof}

First, since $Y$ is a normal variety, we can find a constant $\ep_0>0$ (depending on $\omega_0$), such that for any $r\in (0, \ep_0)$ and $p\in Y$, we have $B_{\omega_B} \left( p , r \right)\cap Y$ has only one component.

Then we can choose $\ep>0$ (depending on $T$) such that if $t_0\in (0 ,1)$ satisfies that $1-t_0\leq \ep$, then $t_0-T(1-t_0)\geq 0$. Then for any $t_1\in [t_0-T(1-t_0), 1-T^{-1}(1-t_0) ]$, we have $1-t_1\leq (1+T)(1-t_0)\leq (1+T)\ep$. We then choose $\ep=\ep(\omega_0, T) >0$ small enough, such that $2(1+T)^{1/2}\ep^{1/2}\leq \ep_0$, hence $B_{\omega_B} \left( p , 2(1-t_1)^{1/2} \right)\cap Y$ has only one component for any $p\in Y$. Then we assume that we have the following volume bound
\begin{equation}\label{cor2a2}
\vol_{g(t_1)}\left(\mathcal{B}_{\omega_B}\left(p_{t_0}, 10(1-t_1)^{1/2} \right) \right)\leq A (1-t_1)^{n},
\end{equation}
where $p_{t_0}$ denotes the Ricci vertex associated to $\theta_Y$ at time $t_0$. We need to prove the estimates (\ref{cor2'1}) and (\ref{cor2'2}) at the time $t_1$.

Let $B_0=B(n , \omega_0, \|\rho \|_{C^2(\omega_{\fs})}, \ln (2T))$ be the constant from Lemma \ref{a'3}, then we define
\begin{equation}\label{cordob3}
b(s) = e^{s-s_0}a(s_0)-B_0,
\end{equation}
where $s_0=-\ln (1-t_0)$, and $s$ is the time parameter in the normalized flow $(X, \tilde g(s))$. By Lemma \ref{a'3}, we have $b(s)\leq a(s)$ for all $s\in [0 , s_0+\ln (2T)]$. We then denote by
\begin{equation}\label{cordov0}
v = u - b(s)+1,
\end{equation}
which is a smooth function on the normalized flow $(X, \tilde g(s))$, $s\in [0, \infty)$. Then we have $v\geq 1$ on $X\times [0 , s_0+\ln (2T)]$.

As before, we still denote by $v(t)=v(s(t))$, where $s(t)=-\ln(1-t)$, which makes $v$ a function on the normalized flow $(X, g(t))$, $t\in [0 , 1)$, hence we have $v\geq 1$ on $X\times [0, 1-(2T)^{-1}(1-t_0)]$. By (\ref{eofdsvandgradientv}) (with $s_0$ being replaced by $s_i$ there), we have
\begin{equation} \label{eofdsvandgradientv0}
\frac{|\partial_t v|}{v} + \frac{| \Delta v |}{v} + \frac{|\nabla v|^2}{v^2} \leq \frac{C}{1-t},
\end{equation}
on $(X, g(t))$, $t\in [0, 1-(2T)^{-1}(1-t_0)]$, for some constant $C<\infty$, which depends on $n , \omega_0, \|\rho \|_{C^4(\omega_{\fs})}, T$.

Since $t_1<1-T^{-1}(1-t_0)$, we have, for $\theta=1/2$,
$$
t_1+ \theta (1-t_1)\leq \theta + (1-\theta)[1-T^{-1}(1-t_0)] = 1-(2T)^{-1}(1-t_0),
$$
which implies that $[t_1, t_1+ \theta (1-t_1)]\subset [0, 1-(2T)^{-1}(1-t_0)]$. Hence we can apply Proposition \ref{loe'1} to obtain
\begin{equation} \label{loe3}
\frac{\sup_{ \mathcal{B}_{\omega_B}\left(p_{t_0}, 2(1-t_1)^{1/2} \right) } v ( \cdot, t_1 )}{ \inf_{ \mathcal{B}_{\omega_B}\left(p_{t_0}, 2(1-t_1)^{1/2} \right) } v ( \cdot, t_1 ) }  \leq C ,
\end{equation}
for some constant $C<\infty$ depends on $n, \omega_0, \omega_B, \|\rho \|_{C^4(\omega_{\fs})}, A, T$. On the other hand, we have
$$
v( p_{t_0} , t_0 ) = B_0+1,
$$
hence by the time derivative estimate in (\ref{eofdsvandgradientv0}), we have
\begin{equation} \label{ubofvps0}
v( p_{t_0} , t_1 ) \leq C.
\end{equation}
Combining (\ref{loe3}) and (\ref{ubofvps0}), we obtain that
\begin{equation} \label{ubofvps02}
\sup_{ \mathcal{B}_{\omega_B}\left(p_{t_0}, 2(1-t_1)^{1/2} \right) } v ( \cdot, t_1 ) \leq C.
\end{equation}
But on $X$, by the parabolic Schwarz lemma and (\ref{eofdsvandgradientv0}), we have
$$
R_g(t_1) = (1-t_1^{-1})[n - \Delta v + \tr_{\omega(t_1)}(\alpha-\omega_Y) ]\leq \frac{Cv(t_1)}{1-t_1} ,
$$
hence from (\ref{ubofvps02}), we have
\begin{equation} \label{ubofRt1}
\sup_{ \mathcal{B}_{\omega_B}\left(p_{t_0}, 2(1-t_1)^{1/2} \right) } R_g ( \cdot, t_1 ) \leq \frac{C}{1-t_1}.
\end{equation}
This proves (\ref{scanddearv3}). Next we choose a maximal set of points
$$\left\{ z_\ell \right\}_{\ell=1}^{Q}\subset \mathcal{B}_{\omega_B}\left(p_{t_0}, (1-t_1)^{1/2} \right),$$
such that $\left\{ B_{g(t_1)}\left(z_\ell, \delta(1-t_1)^{1/2} \right) \right\}_{\ell=1}^{Q}$ are mutually disjoint, hence the balls
$$\left\{ B_{g(t_1)}\left(z_\ell, 2\delta(1-t_1)^{1/2} \right) \right\}_{\ell=1}^{Q}$$
cover the domain $\mathcal{B}_{\omega_B}\left(p_{t_0}, (1-t_1)^{1/2} \right)$. Here $\delta>0$ is a constant, depends on $n, \omega_0, \omega_B$, satisfying that, for any $z\in \mathcal{B}_{\omega_B}\left(p_{t_0}, (1-t_1)^{1/2} \right)$, we have
$$B_{g(t_1)}\left(z, 2\delta(1-t_1)^{1/2} \right)\subset \mathcal{B}_{\omega_B}\left(p_{t_0}, 2(1-t_1)^{1/2} \right).$$
Such $\delta$ exists due to the parabolic Schwarz Lemma, say Lemma \ref{pschwarz}. Then by (\ref{ubofRt1}) and Perelman's volume non-collapsing estimate, we have
\begin{equation}\label{vncpatzl}
\vol_{g(t_1)}\left( B_{g(t_1)}\left(z_\ell, \delta(1-t_1)^{1/2} \right) \right) \geq C^{-1} (1-t_1)^n .
\end{equation}
Hence by (\ref{cor2a2}), we have
\begin{equation} \label{zlballQ}
\begin{split}
A (1-t_1)^{n}&\geq \vol_{g(t_1)}\left(\mathcal{B}_{\omega_B}\left(p_{t_0}, 10(1-t_1)^{1/2} \right)\right) \\
&\geq \sum_{\ell=1}^{Q} \vol_{g(t_1)}\left( B_{g(t_1)}\left(z_\ell, \delta(1-t_1)^{1/2} \right) \right)\geq Q C^{-1} (1-t_1)^n , \\
\end{split}
\end{equation}
which implies $Q\leq C$ for some constant $C<\infty$ depends on $n, \omega_0, \omega_B, \|\rho \|_{C^4(\omega_{\fs})}$, $A, T$. Hence any two points in $\mathcal{B}_{\omega_B}\left(p_{t_0}, (1-t_1)^{1/2} \right)$ can be connected by a smooth curve in $\mathcal{B}_{\omega_B}\left(p_{t_0}, 2(1-t_1)^{1/2} \right)$ of $d_{g(t_1)}$-length no greater than $ Q (1-t_1)^{1/2}\leq C (1-t_1)^{1/2} $. This proves (\ref{scanddearv2}).
The above arguments hold for $t_0\in [1-\ep, 1)$, where $\ep>0$ is a fixed constant. When $t_0\in [0, 1-\ep)$, for any $t\leq 1-T^{-1}(1-t_0)$, we have $1-t\geq \ep T^{-1}$, hence the result hold trivially. This completes the proof.
\end{proof}
Now we can prove Theorem \ref{cor2}.
\begin{proof}[Proof of Theorem \ref{cor2}]
This is a direct corollary of Theorem \ref{scanddearv}.
\end{proof}
%


\bigskip
\section{Gromov-Hausdorff convergence of Ricci flows with locally bounded scalar curvature}\label{hkanddde}

In this section, we consider the more general set-up of Ricci flows with locally bounded scalar curvature. First, we need to define in what sense our scalar curvature is locally bounded. Let $(M, g(t))_{t\in I}$ be a smooth Ricci flow on a compact $n$-dimensional manifold with the interval $I\subset \mathbb{R}$.

\begin{definition}[Based barrier of the scalar curvature] \label{bbofsc}
Let $v:M\times I\to \mathbb{R}$ be a $C^1$-function and $C<\infty$ be a constant.  We call $v$ a \textbf{$C$-barrier of $R_g$} if the following hold on $M\times I$:
\begin{enumerate}
\item $v\geq 1$;
\item $|\partial_t \ln v| + |\nabla \ln v|^2 \leq C$;
\item $R_{g} \leq Cv$.
\end{enumerate}
Let $(x_0, t_0)\in M\times I$ and $B<\infty$. Then we say $v$ is \textbf{$B$-based at $(x_0, t_0)$} if
$$
v(x_0, t_0)\leq B.
$$
\end{definition}
\begin{remark}
For finite time solution of K\"ahler-Ricci flow on projective manifolds, such based barrier functions arise naturally from the normalized Ricci potential.
Suppose $I=[-T, 0]$ for some $T\in (0, \infty]$. Let $\lambda>0$ be a rescaling factor. Denote by $\tilde g_t= \lambda^{-2} g_{\lambda^2 t}$ and $\tilde v(t)=v(\lambda^2 t)$, where $t\in [-\lambda^{-2}T, 0]$. If $v$ is $C$-barrier $B$-based at $(x_0, t_0)$ of $R_g$, then $\tilde v$ is $\lambda^{2}C$-barrier $B$-based at $(x_0, \lambda^{-2}t_0)$ of $R_{\tilde g}$.
\end{remark}
Now, let $( M_i, (g_{i,t})_{t\in [-T_i, 0]} , (p_i, 0) )$ be a sequence of pointed Ricci flows on compact manifolds of dimension $n$ and $T_{\infty}:= \lim_{i\to\infty} T_i$. By the results of \cite{Bam20b}, passing to a subsequence, we can obtain $\bF$-convergence (see Definition \ref{mfpairs} and Definition \ref{corrspandFcon}) on compact time-intervals
\begin{equation}\label{FcofRF'1}
(M_i, (g_{i,t})_{t\in [-T_i , 0]}, (\nu_{p_i,0; t})_{t\in [-T_i , 0]}) \xrightarrow[i\to\infty]{\bF,\CCC}  (\cX, (\nu_{p_\infty; t})_{t\in (-T_{\infty} , 0]}),
\end{equation}
within some correspondence $\CCC$, where $\cX$ is a future continuous and $H_{n}$-concentrated metric flow of full support over $(-T_{\infty} , 0]$.
For the non-collapsing assumption, we assume that, for some uniform $Y_0<\infty$, we have
\begin{equation}\label{lbofNashatbasep}
\nu[g_{i, -T_i}, 2T_i]\geq -Y_0.
\end{equation}
According to \cite{Bam20c}, we can decompose $\cX$ into its regular and singular part
\begin{equation}\label{rsd'1}
\cX= \cR \sqcup \cS,
\end{equation}
where $\cR$ is a dense open subset of $\cX$. The singular set $\cS$ has parabolic Minkowski dimension $\leq n-2$. Also, $\cR$ carries the structure of a Ricci flow spacetime $(\cR, \mathfrak{t}, \partial_{\mathfrak{t}}, g)$. For any $t\in (-T_{\infty}, 0)$, $\cR_t=\cX_t\cap\cR$, we have that $(\cX_t, d_t)$ is the metric completion of $(\cR_t, g_t)$.
For the local scalar curvature bound assumption, we suppose there exist a sequence of constants $C_i<\infty$ and a sequence of functions $v_i$, such that $v_i$ is a $C_i$-barrier of $R_{g_i}$ and $Y_0$-based at $(p_i, 0)$ for each $i$.

The $\bF$-convergence is a rather technical and analytical notion of convergence, our main result of this section is the following improvement on the convergence.

\begin{theorem}\label{mainforRF2}
Suppose we have $C_i\leq Y_0$ for all $i$. Then for every $t\in (-T_\infty, 0)$ where the $\bF$-convergence (\ref{FcofRF'1}) is time-wise, passing to a subsequence, we have that $(M_{i}, d_{g_{i,t}}, p_i)$ converge to $(\cX_{t}, d_{t}, q_t)$ in the Gromov-Hausdorff topology for some $q_t\in \cX_{t}$.
\end{theorem}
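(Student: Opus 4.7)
The plan is to convert the measure-theoretic $\bF$-convergence into metric pointed Gromov-Hausdorff convergence of time-slices, using the barrier function $v_i$ to produce local scalar curvature bounds and then leveraging Bamler's heat kernel machinery. First, I would propagate the barrier bound to a parabolic region anchored at the basepoint: since $v_i(p_i,0)\leq Y_0$ and $|\partial_t\ln v_i|+|\nabla\ln v_i|^2\leq Y_0$, integrating along $t$-lines and along $g_{i,s}$-geodesics gives
\begin{equation*}
v_i(x,s)\leq Y_0\exp\bigl(Y_0|s|+Y_0^{1/2}d_{g_{i,s}}(x,p_i)\bigr),
\end{equation*}
and then $R_{g_i}\leq Y_0 v_i$ yields a uniform scalar curvature bound on $B_{g_{i,s}}(p_i,D)\times\{s\}$ for any fixed $D<\infty$ and $s$ in a bounded interval. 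Hypothesis (\ref{lbofNashatbasep}), together with monotonicity of Perelman's $\nu$-functional, produces a uniform lower bound on the pointed Nash entropy $\cN^*_s(p_i,0)$ at every such $s$.

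Next, I would set up the geometric preliminaries for pointed Gromov-Hausdorff precompactness. Under the parabolic scalar curvature bound from the previous step, a Perelman-type distance distortion estimate gives comparability of $d_{g_{i,t}}(\cdot,p_i)$ and $d_{g_{i,s}}(\cdot,p_i)$ for $s$ in a bounded subinterval of $(-T_\infty,0]$, so bounded balls around $p_i$ are uniformly controlled across nearby times. Lemma \ref{lvnc}, combined with the Nash entropy lower bound, yields the volume non-collapsing estimate $\vol_{g_{i,t}}(B_{g_{i,t}}(x,r))\geq c\,r^n$ at the scales determined by the scalar curvature upper bound, and together with the scalar upper bound gives local Bishop-Gromov-type control via Perelman's work. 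Passing to a subsequence, Gromov's compactness theorem then produces a pointed Gromov-Hausdorff limit $(M_i,d_{g_{i,t}},p_i)\to(Z,d_Z,z_\infty)$.

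Finally, I would identify $(Z,d_Z,z_\infty)$ with $(\cX_t,d_t,q_t)$. Within the correspondence $\CCC$ realizing the $\bF$-convergence, the time-wise hypothesis at $t$ provides $W_1$-closeness of the pushforwards $(\varphi_t^i)_\ast\nu_{p_i,0;t}$ to $(\varphi_t^\infty)_\ast\nu_{p_\infty;t}$ in $(Z_t,d_t^Z)$. Lemma \ref{W1balongwl} applied at the basepoint shows that the $W_1$-distance between $\delta_{p_i}$ and $\nu_{p_i,0;t}$ is bounded by $C|t|^{1/2}$, so the heat kernel measures concentrate near $p_i$; Lemma \ref{hnc'2} further localizes mass on bounded subsets around any $H_n$-center, which by the first step sits at bounded $g_{i,t}$-distance from $p_i$. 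Combining these facts, the isometric embeddings $\varphi_t^i$, restricted to bounded balls around $p_i$, become Gromov-Hausdorff $\epsilon_i$-approximations onto balls around $q_t\in\cX_t$ with $\epsilon_i\to 0$, forcing $Z$ to be isometric to $(\cX_t,d_t)$. The hardest part of the argument will be precisely this last identification: $\bF$-convergence is purely measure-theoretic while pointed Gromov-Hausdorff convergence is metric, and bridging them requires the simultaneous use of scalar curvature control (to prevent heat kernel oscillation), Nash entropy bounds (to keep volumes non-collapsed), and $H_n$-concentration (to localize measure-theoretic data onto bounded sets).
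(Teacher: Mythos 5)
Your overall scaffolding---barrier propagation, scalar curvature bounds, volume non-collapsing, and then an attempt to upgrade $\bF$-convergence to pointed Gromov-Hausdorff convergence---is in the right direction, but the final identification step has a genuine gap, and it is precisely the technical heart of the paper's proof.

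The paper's argument does not pass through a separately-constructed GH limit followed by an identification. Instead, it directly establishes a \emph{lower bound on the conjugate heat measure of small balls}: Proposition \ref{hklb} gives $K_i(p_i,0;y,t_0)\geq C^{-1}$ for all $y$ in any $r$-ball $B_{g_i}(y_0,t_0,r)$ with $y_0\in B_{g_i}(p_i,t_0,D)$, and combining this with the Perelman volume lower bound $\vol_{g_{i,t_0}}(B_{g_i}(y_0,t_0,r))\geq C^{-1}r^n$ yields
$\nu_{p_i,0;t_0}(B_{g_i}(y_0,t_0,r))\geq C^{-1}r^n$.
This quantitative mass lower bound is exactly the hypothesis that lets one invoke \cite[Proposition 2.7]{Hal}, which converts Gromov-$W_1$-Wasserstein convergence of the metric measure spaces $(M_i,d_{g_{i,t_0}},\nu_{p_i,0;t_0})$ into pointed Gromov-Hausdorff convergence at the basepoint.

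Your step 4 replaces this with ``combining these facts, the isometric embeddings $\varphi_t^i$ become GH $\epsilon_i$-approximations,'' but the facts you have cited do not actually force this. $W_1$-closeness of $\nu_{p_i,0;t}$ to $\delta_{p_i}$ (Lemma \ref{W1balongwl}) and $H_n$-concentration (Lemma \ref{hnc'2}) only tell you that most of the conjugate heat mass lies in a bounded ball around $p_i$. They do \emph{not} give a lower bound on the mass of a small ball $B(y,r)$ at distance, say, $D/2$ from $p_i$. Without such a lower bound, $W_1$-convergence can be blind to portions of $B_{g_{i,t}}(p_i,D)$ carrying negligible measure, and the Gromov compactness limit you build in step 3 can strictly contain $\cX_t$. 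The paper's Proposition \ref{hklb}---a nontrivial heat kernel lower bound obtained via Perelman's Harnack estimate, Zhang's gradient estimate for $\sqrt{\ln(B/K)}$, the reproduction formula, and careful localization through the barrier function $v_i$---is the missing ingredient; you would need to supply it (or a substitute giving uniform lower bounds on $\nu_{p_i,0;t}(B(y,r))$ for $y$ in bounded balls) before the identification $Z\cong\cX_t$ can go through.
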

Throughout this section, unless otherwise stated, all the constants will depend at most on $n , Y_0$. We will omit this dependence in this section for convenience.


\medskip
\subsection{Propagation of the barrier function in parabolic neighborhoods}

We start with a simple lemma, which will be used repeatedly. This lemma states that the boundedness of the barrier function at a point propagates to give a bound on the barrier function in the usual parabolic neighborhood of that point.
\begin{lemma} \label{viprop2}
For any $A, D<\infty$, there exists a constant $C<\infty$ depending on $A, D$, such that the following statements hold on the Ricci flow $(M_i, (g_{i,t})_{t\in [-T_i , 0]})$.
For any $(x_0, t_0)\in M_i\times [-T_i, 0]$, if
$$
v_i(x_0, t_0)\leq A,
$$
then we have
\begin{enumerate}
\item $v_i(x_0, t)\leq C$ for all $t\in [ t_0-D , t_0+D ]\cap [-T_i, 0]$;
\item $v_i(x, t_0)\leq C$ for all $x\in B_{g_i}( x_0, t_0, D )$.
\end{enumerate}
\end{lemma}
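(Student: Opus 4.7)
The plan is to deduce both claims directly from the defining properties of the barrier function in Definition \ref{bbofsc}, combined with the uniform bound $C_i\leq Y_0$. The argument is entirely elementary: integrate the pointwise bounds $|\partial_t \ln v_i| \leq Y_0$ and $|\nabla \ln v_i|^2 \leq Y_0$ along one-parameter families and exponentiate. No curvature or regularity theory is needed.

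For item (1), I would fix $x_0$ and view $\ln v_i(x_0,\cdot)$ as a function of $t$ on the interval $[t_0-D,t_0+D]\cap[-T_i,0]$. The barrier inequality $|\partial_t \ln v_i| \leq C_i \leq Y_0$ gives
\begin{equation*}
\bigl|\ln v_i(x_0,t) - \ln v_i(x_0,t_0)\bigr| \leq Y_0 |t-t_0| \leq Y_0 D,
\end{equation*}
so $v_i(x_0,t) \leq A\, e^{Y_0 D}$, which yields the claim with $C=A e^{Y_0 D}$.

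For item (2), I would fix $t_0$ and, given any $x\in B_{g_i}(x_0,t_0,D)$, let $\gamma:[0,L]\to M_i$ be a unit-speed $g_i(t_0)$-minimizing geodesic from $x_0$ to $x$, with $L = d_{g_i(t_0)}(x,x_0)<D$. Since $|\nabla \ln v_i|^2 \leq C_i \leq Y_0$ pointwise at time $t_0$, the chain rule and Cauchy--Schwarz give
\begin{equation*}
\bigl|\ln v_i(x,t_0) - \ln v_i(x_0,t_0)\bigr| \leq \int_0^L |\nabla \ln v_i|(\gamma(s),t_0)\, ds \leq \sqrt{Y_0}\, L \leq \sqrt{Y_0}\, D,
\end{equation*}
so $v_i(x,t_0) \leq A\, e^{\sqrt{Y_0} D}$. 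Taking $C:=A\exp\bigl(Y_0 D + \sqrt{Y_0}\, D\bigr)$ handles both statements simultaneously.

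There is essentially no obstacle here; the only thing to be a little careful about is that $\ln v_i$ is well-defined and $C^1$, which follows from $v_i\geq 1$ and $v_i\in C^1$. The lemma is stated because it will be invoked repeatedly in the subsequent Gromov--Hausdorff convergence arguments (for instance in Proposition \ref{viprop3}) to upgrade pointed control of the barrier at a single spacetime point to control on a full parabolic neighborhood.
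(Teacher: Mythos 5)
Your proof is correct, and the paper states this lemma without proof (it is introduced as "a simple lemma"), so there is no paper argument to compare against. Your two integrations—of $|\partial_t \ln v_i|\leq Y_0$ in time and of $|\nabla\ln v_i|\leq\sqrt{Y_0}$ along a $g_{i,t_0}$-minimizing geodesic—are exactly the intended elementary argument, and your remark that $\ln v_i$ is well-defined and $C^1$ because $v_i\geq 1$ correctly addresses the only point that needs checking.
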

The next lemma states that the boundedness of the barrier function propagated in the $P^*$-parabolic neighborhoods.
\begin{proposition} \label{viprop3}
For any $\eta\in (0, 1)$, $A, D, T^{\pm}<\infty$, there exists a constant $C<\infty$ depends on $\eta, A, D, T^{\pm}$, such that the following statements hold on the flow $(M_i, (g_{i,t})_{t\in [-T_i , 0]})$.
Suppose $(x_0, t_0)\in M_i\times [-T_i+ T^- +10\eta , 0]$ satisfies
$$
v_i(x_0, t_0)\leq A .
$$
Then for any $(y_0, s_0)\in P^*(x_0, t_0; D, -T^-, T^+)$, we have
$$
v_i(y_0, s_0)\leq C .
$$
\end{proposition}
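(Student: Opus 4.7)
The plan is to reduce the $P^*$-neighborhood control to the usual parabolic neighborhood control of Lemma \ref{viprop2} via an $\cL$-geodesic bridging argument paralleling the proof of Lemma \ref{xy}. Fix $\tau_0 := t_0 - T^- - \eta/2$, which lies in $[-T_i, 0]$ by the hypothesis $t_0 \geq -T_i + T^- + 10\eta$.

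First I would apply Lemma \ref{viprop2} at $(x_0, t_0)$ to obtain $v_i \leq C_1$, and hence $R_{g_i} \leq Y_0 C_1$, on a usual parabolic cylinder $P(x_0, t_0; R, -(T^-+\eta), T^+)$ of any prescribed radius $R$, then reapply Lemma \ref{viprop2} at $(x_0, \tau_0)$ to extend this bound to a $g_{\tau_0}$-ball $B_{g_{\tau_0}}(x_0, R')$ with $R'$ comparable to $R$. The radius $R$ will only be fixed at the end. The scalar curvature bound, together with the Nash entropy lower bound $\cN^*_{\tau_0}(x_0, t_0) \geq -C$ coming from (\ref{lbofNashatbasep}), (\ref{NM}) and the monotonicity of $\mu$, will be used throughout. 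In particular, Lemma \ref{W1balongwl} with $r_0 = \sqrt{t_0 - \tau_0}$ yields $d^{g_{\tau_0}}_{W_1}(\nu_{x_0, t_0; \tau_0}, \delta_{x_0}) \leq C_2$.

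Next, for any $(y_0, s_0) \in P^*(x_0, t_0; D, -T^-, T^+)$, I would invoke Lemma \ref{lcenter} to choose an $\ell_n$-center $(y_\ell, \tau_0)$ of $(y_0, s_0)$ together with a smooth $\cL$-geodesic $\gamma$ joining them of $\cL$-length at most $n\sqrt{s_0 - \tau_0}$. Lemma \ref{wdn} then supplies $d^{g_{\tau_0}}_{W_1}(\nu_{y_0, s_0; \tau_0}, \delta_{y_\ell}) \leq C_3$. Using the non-increase of the $W_1$-distance between the conjugate heat flows of $(x_0, t_0)$ and $(y_0, s_0)$ as time decreases from $t_0 - T^-$ down to $\tau_0$---which preserves the defining bound $D$ of the $P^*$-neighborhood, via the gradient estimate for heat flows in Definition \ref{mfpairs}(1) in its dual form---the $W_1$-triangle inequality gives
\[
d_{g_{\tau_0}}(y_\ell, x_0) \leq C_3 + D + C_2 =: C_4 .
\]
Choosing $R$ in Step 1 larger than $C_4$, we conclude $v_i(y_\ell, \tau_0) \leq C_1'$. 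Finally, I would transfer this bound to $(y_0, s_0)$ by integrating $\ln v_i$ along $\gamma$: writing
\[
\ln v_i(y_0, s_0) - \ln v_i(y_\ell, \tau_0) = \int_0^{s_0 - \tau_0} \bigl(\langle \nabla \ln v_i, \dot\gamma\rangle - \partial_t \ln v_i\bigr)\, d\tau,
\]
and applying the barrier bounds $|\partial_t \ln v_i|, |\nabla \ln v_i|^2 \leq Y_0$ together with a Cauchy--Schwarz estimate against the $\cL$-length---in exactly the form carried out for Lemma \ref{xy}---the difference is controlled by a constant $C_5$ depending only on $n, Y_0, T^\pm, \eta$. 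Exponentiating gives $v_i(y_0, s_0) \leq e^{C_5} C_1' =: C$, as desired.

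The main obstacle is a potential circularity in the choice of constants: one must fix $R$ in Step 1 large enough to contain $y_\ell$, yet $y_\ell$ depends on $(y_0, s_0)$, which ranges over the entire $P^*$-neighborhood. The resolution is that $C_4$ in Step 2 is a universal quantity depending only on $n, Y_0, D, T^\pm, \eta, A$---through the Nash entropy bound, the $W_1$-monotonicity, and the $\ell_n$-center construction---and in particular is independent of $R$, so $R$ can be fixed \emph{after} $C_4$ is estimated. A secondary point is ensuring the $W_1$-non-increase holds on the full interval $[\tau_0, t_0 - T^-]$ without any assumption on scalar curvature there; this is automatic from the general theory of metric flows and does not require the bounds from Step 1.
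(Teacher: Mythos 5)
Your proposal is correct and follows essentially the same route as the paper's proof: propagate the barrier bound from $(x_0,t_0)$ along time via Lemma~\ref{viprop2}, estimate $d_{W_1}$ from $\delta_{x_0}$ to the conjugate heat measure, pick an $\ell_n$-center $(y_\ell, \cdot)$ of $(y_0,s_0)$ and bound its $W_1$-distance via Lemma~\ref{wdn}, use the $P^*$-condition and the triangle inequality to bound $d(y_\ell, x_0)$, propagate the barrier bound spatially to $y_\ell$, and integrate $\ln v_i$ along the $\cL$-geodesic. The only cosmetic difference is your choice of base time $\tau_0 = t_0 - T^- - \eta/2$ rather than $t_0 - T^-$, which forces the extra (correct, but superfluous) appeal to $W_1$-contraction of conjugate heat flows; the paper works directly at $t_0-T^-$, where the $P^*$-bound is given and the hypothesis already guarantees a $10\eta$ margin from $-T_i$.
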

\begin{proof}
Throughout this proof, all the constants depend at most on $\eta, A, D, T^{\pm}$. All the times we consider in this proof is in $[-T_i+ \eta , 0]$, hence we have $R_{g_i}\geq -C$ when we need the lower scalar curvature bound.

Since $v_i(x_0, t_0)\leq A$, by Lemma \ref{viprop2},
\begin{equation}\label{boundvi1}
v_i(x_0, t)\leq C ,
\end{equation}
for all $t\in [t_0-T^- , t_0+ T^+]\cap [-T_i , 0]$. Hence we have
\begin{equation}
R_{g_i}(x_0, t)\leq C ,
\end{equation}
for all $t\in [t_0-T^- , t_0+ T^+]\cap [-T_i , 0]$. Hence from the uniform lower bound of the $\nu$-entropy, by Lemma \ref{W1balongwl}, we have
\begin{equation}\label{dW1bx0viP}
\begin{split}
d^{g_{i, t_0-T^-}}_{W_1} (\nu_{x_0, t_0; t_0-T^-} , \delta_{x_0} )  \leq C .
\end{split}
\end{equation}
Next, let $(z_0, t_0 - T^-)$ be an $\ell_n$-center of $(y_0, s_0)$, that is, if we denote by $\tau_0=s_0-(t_0 - T^-)$, then we can find smooth spacetime curve $\gamma:[0, \tau_0]\to M_i\times [t_0 - T^-, s_0]$ connecting $(y_0, s_0)$ to $(z_0, t_0 - T^-)$, such that
$$
\cL (\gamma) \leq n\tau_0^{1/2} .
$$
Hence by Lemma \ref{wdn}, we have
\begin{equation}\label{dW1by0z0P}
\begin{split}
d^{g_{i, t_0-T^-}}_{W_1} (\nu_{y_0, s_0; t_0-T^-} , \delta_{z_0} )  \leq C .
\end{split}
\end{equation}
Now, combine (\ref{dW1bx0viP}), (\ref{dW1by0z0P}) with the condition $(y_0, s_0)\in P^*(x_0, t_0; D, -T^-, T^+)$, we can apply the triangle inequality to obtain
\begin{equation}\label{deofz0x0}
\begin{split}
& d_{g_{i, t_0-T^-}} ( z_0 , x_0 ) \leq d^{g_{i, t_0-T^-}}_{W_1} ( \delta_{z_0} , \nu_{y_0, s_0; t_0-T^-} )  \\
& + d^{g_{i, t_0-T^-}}_{W_1} ( \nu_{y_0, s_0; t_0-T^-} , \nu_{x_0, t_0; t_0-T^-} )  + d^{g_{i, t_0-T^-}}_{W_1} (\nu_{x_0, t_0; t_0-T^-} , \delta_{x_0} )  \leq C .
\end{split}
\end{equation}
Hence from (\ref{boundvi1}) and Lemma \ref{viprop2}, we have
$$
v_i( z_0, t_0-T^- ) \leq C .
$$
Finally, along the reduced curve $\gamma$, we can compute
\[
\begin{split}
&\left | \ln v_i ( y_0, s_0 ) - \ln v_i ( z_0, t_0-T^- ) \right| = \left| \int_0^{\tau_0} \frac{d}{d\tau} \ln v_i (\gamma(\tau), s_0 - \tau) d\tau \right| \\
\leq& \left| \int_0^{\tau_0} \langle \nabla \ln v_i (\gamma(\tau), s_0 -\tau), \dot{\gamma}(\tau)\rangle_{g_{i, s_0-\tau}} d\tau \right|  + \left| \int_0^{\tau_0}  \ddt{} \ln v_i (\gamma(\tau), s_0 -\tau) d\tau\right| \\
\leq& C \int_0^{\tau_0} |\dot{\gamma}(\tau)|_{g_{i, s_0-\tau}} d\tau + C \\
\leq& C\left(  \int_0^{\tau_0} \tau^{-\frac{1}{2}} d\tau \right)^{\frac{1}{2}} \left( \int_0^{\tau_0} \tau^{\frac{1}{2}} \left( |\dot\gamma (\tau)|_{g_{i, s_0-\tau}}^2 + R_{g_i}(\gamma(\tau), s_0-\tau) + C \right) d \tau \right)^{\frac{1}{2}} + C \\
\leq& C\left( \cL (\gamma) + C \right)^{\frac{1}{2}} + C \leq C , \\
\end{split}
\]
hence we conclude that $v_i ( y_0, s_0 )\leq C$. This completes the proof.
\end{proof}


\medskip
\subsection{Local short time distance distortion estimate}

Next, we have the following short time distance distortion estimate, whose proof is modeled on \cite[Theorem 1.1]{BZ17}.
\begin{proposition}\label{stdde}
For any $\eta\in (0, 1)$, $A, D<\infty$, there exist constants $\delta\in (0, \eta)$, $C<\infty$, both depending on $\eta, A, D$, such that the following statements hold on the Ricci flow $(M_i, (g_{i,t})_{t\in [-T_i , 0]})$.
Suppose $(x_0, t_0)\in M_i\times [-T_i+10\eta, 0]$ satisfies that $v_i(x_0, t_0)\leq A$ then for any $y_0\in B_{g_i}( x_0, t_0, D )$, we have
$$
d_{g_{i,t}}( y_0 , x_0 ) \leq C,
$$
for all $t\in [ t_0-\delta , \min\left\{t_0+\delta, 0\right\} ]$.
\end{proposition}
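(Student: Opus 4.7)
Plan. The strategy is to propagate the pointwise bound $v_i(x_0,t_0)\le A$ to a uniform bound on the scalar curvature $R_{g_i}$ over a spacetime cylinder centered at $(x_0,t_0)$, and then invoke the Bamler--Zhang short-time distance distortion estimate \cite[Theorem 1.1]{BZ17} on that cylinder.

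First I would apply Lemma \ref{viprop2} at $(x_0,t_0)$ with spatial radius $10(D+1)$ to obtain $v_i(x, t_0) \le C_1(\eta,A,D)$ for all $x \in B_{g_{i,t_0}}(x_0, 10(D+1))$. Applying Lemma \ref{viprop2} once more at each such $x$ with time-radius $\eta$ and the constant $C_1$, one extends this to
\[
v_i(x,t) \le C_2(\eta,A,D), \qquad (x,t) \in B_{g_{i,t_0}}(x_0, 10(D+1)) \times \bigl([t_0-\eta, t_0+\eta]\cap[-T_i,0]\bigr).
\]
Since $v_i$ is a $C_i$-barrier with $C_i \le Y_0$, this yields $R_{g_i} \le K := Y_0 C_2$ on the same spacetime set.

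Next, for $y_0 \in B_{g_{i,t_0}}(x_0, D)$, define $I$ to be the maximal connected subinterval of $[t_0-\eta, \min\{t_0+\eta,0\}]$ containing $t_0$ on which $d_{g_{i,s}}(x_0,y_0) \le D+5$. For $s \in I$, every minimizing $g_{i,s}$-geodesic from $x_0$ to $y_0$ lies in $B_{g_{i,s}}(x_0, D+5)$, which is contained in the control region from the previous step (this containment itself uses the short-time distortion under bounded scalar curvature to pass between the fixed-time ball and the moving-time ball; we absorb the slack by the factor $10$ in the radius). Applying \cite[Theorem 1.1]{BZ17} with the uniform bound $R_{g_i}\le K$ on this region gives
\[
\bigl|d_{g_{i,t}}(x_0,y_0) - d_{g_{i,t_0}}(x_0,y_0)\bigr| \le \Psi(|t-t_0|; \eta,A,D), \qquad t \in I,
\]
where $\Psi(\tau;\eta,A,D) \to 0$ as $\tau \to 0^+$. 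Choosing $\delta \in (0,\eta)$ so that $\Psi(\delta;\eta,A,D) \le 1$ forces $d_{g_{i,t}}(x_0,y_0) \le D+1 < D+5$ for all $t \in [t_0-\delta,\min\{t_0+\delta,0\}] \cap I$, and a standard continuity/bootstrap argument then shows $[t_0-\delta,\min\{t_0+\delta,0\}] \subset I$. Setting $C := D+1$ completes the proof.

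The main obstacle will be the self-referential nature of the middle step: the scalar curvature bound is available only on a fixed-time ball, yet we need to certify that the moving-time minimizing geodesics stay inside this ball, which itself requires short-time distance control. This circular dependence is resolved by taking the control radius strictly larger than $D$ (here $10(D+1)$ versus $D$) and then exploiting the fact that the Bamler--Zhang distortion vanishes with the time interval; the continuity argument along $I$ then closes the loop and forces the desired inclusion for small $\delta$.
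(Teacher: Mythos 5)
Your approach differs fundamentally from the paper's: you cite Bamler--Zhang \cite[Theorem 1.1]{BZ17} as a black box after establishing a local scalar-curvature bound via Lemma \ref{viprop2}, whereas the paper \emph{re-derives} the Bamler--Zhang argument adapted to this setting, and this is not merely stylistic. Two genuine gaps result. First, the proposition asserts control for $t\in[t_0-\delta,\min\{t_0+\delta,0\}]$, so \emph{forward-time} distortion is required; yet \cite[Theorem 1.1]{BZ17} hypothesizes a scalar-curvature bound on a backward parabolic neighborhood of $(x_0,t_0)$ and concludes only for $t\le t_0$. Your reduction has no mechanism for the part $t>t_0$. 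The paper handles both time directions by basing its auxiliary heat kernel at a strictly \emph{earlier} slice $(z_0,t_0-\eta)$, with $(z_0,t_0-\eta)$ an $\ell_n$-center of $(x_0,t_0)$, so the kernel's upper bound, gradient bound, and the resulting lower bound along the geodesic are valid for all $t\in[t_0-\eta/8,\min\{t_0+\eta/8,0\}]$, i.e.\ both before and after $t_0$.

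Second, the circularity you flag at the end is real and is not absorbed by inflating the radius to $10(D+1)$: to invoke any distortion theorem at a moving time $s\ne t_0$ one needs a scalar-curvature bound on a $g_{i,s}$-parabolic ball about $x_0$, while your bound lives on a fixed $g_{i,t_0}$-ball, and converting one into the other is precisely the estimate being sought. The paper avoids this by never chasing moving balls around $x_0$. Instead it fixes the $g_{i,t_0}$-minimizing geodesic $\gamma$ from $x_0$ to $y_0$ (so $\gamma\subset B_{g_{i,t_0}}(x_0,D)$ by construction, giving $v_i(\gamma(s),t_0)\le C$ via Lemma \ref{viprop2}(2)), then propagates the barrier \emph{pointwise along $\gamma$}: Lemma \ref{viprop2}(1) pushes $v_i(\gamma(s),t_0)\le C$ to $v_i(\gamma(s),t)\le C$, and Lemma \ref{viprop2}(2) applied at $(\gamma(a_k),t)$ gives $v_i\le C$, hence $R\le C$, on the unit $g_{i,t}$-balls $B_{g_i}(\gamma(a_k),t,1)$ --- no a priori distance control is needed. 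Combined with the heat-kernel lower bound $K(\cdot,t)\ge c_2$ on these balls (from Zhang's gradient estimate, Perelman's Harnack, and \cite[Lemma 3.1]{BZ17}), Perelman non-collapsing, and the integral bound $\int_{M_i}K(\cdot,t)\,dg_{i,t}\le C$, this bounds the number $Q$ of disjoint unit balls covering $\gamma$, giving $d_{g_{i,t}}(x_0,y_0)\le 4Q\le C$. To make your approach rigorous you would essentially have to reproduce this localization step rather than cite its endpoint.
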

\begin{proof}

%
Throughout this proof, all the constants depend at most on $\eta, A, D$. All the times we consider in this proof is in $[-T_i+ \eta , 0]$, hence we have $R_{g_i}\geq -C$ when we need the lower scalar curvature bound.

Let $(z_0, t_0-\eta)$ be an $\ell_{n}$-center of $(x_0, t_0)$, hence we have
$$\ell_{(x_0, t_0)}(z_0, t_0-\eta)\leq n.$$
We denote by $K_i(x,t; y,s)$, $s<t$ the heat kernel along the flow $g_{i,t}$. Then we consider the function $K(x,t):=K_i(x,t;z_0, t_0-\eta)$, which satisfies that $\partial_t K=\Delta_{g_{i}} K$. Then we have
$$\frac{d}{dt} \int_{M_i} K(\cdot, t)d g_{i,t}=\int_{M_i} (\Delta_{g_{i}} K(\cdot, t) - R_{ g_{i}}(\cdot, t)K(\cdot, t))d g_{i,t}\leq C\int_{M_i} K(\cdot, t)d g_{i,t}. $$
Hence for $t\in [t_0-\frac{\eta}{2}, \min\left\{t_0+\frac{\eta}{2}, 0\right\} ]$, we have
\begin{equation}\label{ibofhk1}
\int_{M_i} K(\cdot, t)d g_{i,t}\leq e^{C(t-(t_0-\eta))}\leq C.
\end{equation}
Also, for all $t\in [t_0-\frac{\eta}{2}, \min\left\{t_0+\frac{\eta}{2}, 0\right\} ]$, by \cite{ZhQ11}, we have
\begin{equation}\label{ubofhk1}
K(\cdot, t)\leq B_1,
\end{equation}
on $M_i$, for some constant $B_1<\infty$. Hence we can apply \cite[Theorem 3.2]{ZhQ06} to obtain that
\begin{equation}\label{gbofhk1}
\left| \nabla \sqrt{\ln  \frac{B_1}{K(\cdot, t)} } \right|_{ g_{i,t}} \leq \sqrt{\frac{1}{ t - (t_0-\frac{\eta}{4}) }}\leq C,
\end{equation}
for all $t\in [t_0-\frac{\eta}{8}, \min\left\{t_0+\frac{\eta}{8}, 0\right\} ]$. Let $\gamma:[0,1]\to M_i$ be a $ g_{i,t_0}$-minimizing geodesic connecting $(x_0, t_0)$ to $(y_0, t_0)$. Then by Perelman's Harnack estimate, we have
$$
K(\gamma(0), t_0)=K(x_0, t_0)\geq \frac{1}{(4\pi(t_0-(t_0-\eta)))^n}e^{-\ell_{(x_0, t_0)}(z_0, t_0-\eta)}\geq c_0,
$$
for some constant $c_0>0$. Hence from $d_{g_{i, t_0}}(x_0, y_0)\leq D$, we can integrate (\ref{gbofhk1}) at $t=t_0$ along $\gamma$ to obtain that
\begin{equation}\label{hllbalonggamma1}
K(\gamma(s), t_0)\geq c_1, ~~for ~~all~~s\in[0,1],
\end{equation}
for some constant $c_1>0$. Next we can apply \cite[Lemma 3.1]{BZ17} to obtain that
\begin{equation}\label{dbohk1}
|\partial_t K(\gamma(s), t)|\leq B_2(R_{ g_{i}}(\gamma(s), t)+C),
\end{equation}
for all $s\in[0,1]$, $t\in [t_0-\frac{\eta}{8}, \min\left\{t_0+\frac{\eta}{8}, 0\right\} ]$, for some constant $B_2<\infty$.

Now, by our assumption, $v_i(x_0, t_0)\leq A$, hence by Lemma \ref{viprop2}, we have $v_i(\gamma(s), t_0)\leq C$ for all $s\in[0,1]$. Hence by Lemma \ref{viprop2} again, we have
\begin{equation}\label{upbohvi}
v_i(\gamma(s), t)\leq C,
\end{equation}
for all $s\in[0,1]$, $t\in [t_0-\eta, 0 ]$. Hence we have $R_{g_{i}}(\gamma(s), t)\leq C$ for all $s\in[0,1]$, $t\in [t_0-\eta, 0 ] $. Inserting this estimate into (\ref{dbohk1}), we obtain
\begin{equation}\label{dbohk2}
|\partial_t K(\gamma(s), t)|\leq C,
\end{equation}
for all $s\in[0,1]$, $t\in [t_0-\frac{\eta}{8}, \min\left\{t_0+\frac{\eta}{8}, 0\right\} ] $. Using (\ref{hllbalonggamma1}), we obtain that, if we choose $\delta>0$ small enough, then we have
\begin{equation}\label{lbohk2}
K(\gamma(s), t) \geq c_1/2,
\end{equation}
for all $s\in[0,1]$, $t\in [t_0-\delta, \min\left\{t_0+\delta, 0\right\} ]$.
Now fix $t\in [t_0-\delta, \min\left\{t_0+\delta, 0\right\}]$. We let $Q\geq 1$ be maximal subject to the fact that there are parameters $0\leq a_1 <a_2<\dots< a_Q\leq 1$ such that the balls $B_{g_{i}}(\gamma(a_1),t,1)$, $\dots$, $B_{g_{i}}(\gamma(a_Q),t,1)$ are mutually disjoint. Then the balls $B_{g_{i}}(\gamma(a_1),t,2)$, ... $B_{g_{i}}(\gamma(a_Q),t,2)$ cover $\gamma([0,1])$. Hence we have $d_{ g_{i,t}}(x_0, y_0)\leq 4Q$. So we only need to bound $Q$.
First, from (\ref{gbofhk1}) and (\ref{lbohk2}), we have
\begin{equation}\label{lbohk3}
K(\cdot, t) \geq c_2, ~~on ~~ B_{ g_{i}}(\gamma(a_k),t,1),
\end{equation}
for all $1\leq k\leq Q$, for some constant $c_2>0$.

Next, from (\ref{upbohvi}), we can apply Lemma \ref{viprop2} again to obtain
\begin{equation}\label{upbohvi2}
v_i(\cdot , t)\leq C, ~~on ~~ B_{ g_{i}}(\gamma(a_k),t,1),
\end{equation}
for all $1\leq k\leq Q$. Hence we have $R_{g_{i}}(\cdot , t)\leq C, ~~on ~~ B_{ g_{i}}(\gamma(a_k),t,1)$ for all $1\leq k\leq Q$. Hence we can apply Perelman's volume non-collapsing estimate to obtain that
\begin{equation}\label{vnclpatak}
\vol_{ g_{i,t}}(B_{ g_{i}}(\gamma(a_k),t,1))\geq c_3,
\end{equation}
for all $1\leq k\leq Q$, for some constant $c_3>0$. Combining (\ref{ibofhk1}), (\ref{lbohk3}) and (\ref{vnclpatak}), we have
$$
C\geq \int_{M_i} K(\cdot, t)d g_{i,t}\geq \sum_{k=1}^{Q}\int_{B_{ g_{i}}(\gamma(a_k),t,1)} K(\cdot, t)d g_{i,t}\geq Q\cdot c_2 \cdot c_3,
$$
hence we have $Q\leq C$. This completes the proof of Proposition \ref{stdde}.
\end{proof}
%


\medskip
\subsection{Local heat kernel lower bound}

Next, we have the following heat kernel lower bound.
\begin{proposition}\label{hklb}
For any $\eta\in (0, 1)$, $A, D<\infty$, there exists constant $C<\infty$ depends on $\eta, A, D$, such that the following statements hold on the Ricci flow $(M_i, (g_{i,t})_{t\in [-T_i , 0]})$.
If $(x_0, t_0)\in M_i\times [-T_i+10\eta, 0]$ satisfies
$$
v_i(x_0, t_0)\leq A,
$$
then for any $s_0\in [ \max\left\{t_0-\eta^{-1}, -T_i+\eta\right\}  , t_0-\eta ]$ and $y_0\in B_{g_i}( x_0, s_0, D )$, we have
$$
K_i(x_0,t_0;y_0, s_0) \geq C^{-1},
$$
where $K_i(x,t; y,s)$, $s<t$ denotes the heat kernel along the flow $g_{i,t}$.
\end{proposition}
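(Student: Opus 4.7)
The plan is to reduce the heat kernel lower bound to a uniform upper bound on Perelman's reduced distance $\ell_{(x_0,t_0)}(y_0, s_0)$ via the $\ell$-Harnack inequality, and to obtain this bound by an explicit spacetime-curve construction.

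First, I would propagate the hypothesis $v_i(x_0, t_0) \leq A$. By two applications of Lemma \ref{viprop2} (first in the time direction to reach $s_0$, then in space), the barrier bound yields $v_i \leq C$ on a parabolic tube $B_{g_{i, s_0}}(x_0, D') \times [s_0, t_0]$ for any prescribed $D'$; accordingly, $R_{g_i} \leq C$ on this tube. Using the trivial spacetime curve $\gamma(\tau) \equiv x_0$ in Lemma \ref{wdn} together with the Nash entropy bound $\cN^*_{s_0}(x_0, t_0) \geq \nu[g_{i, -T_i}, 2T_i] \geq -Y_0$, I obtain
\[ d^{g_{i, s_0}}_{W_1}(\delta_{x_0}, \nu_{x_0, t_0; s_0}) \leq C(1 + C\tau_0 + Y_0)^{1/2}\sqrt{\tau_0} \leq C, \]
since $\tau_0 := t_0 - s_0 \in [\eta, \eta^{-1}]$.

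Next, I would locate an $\ell_n$-center $(z_0, s_0)$ of $(x_0, t_0)$ via Lemma \ref{lcenter}, with a minimizing $\mathcal{L}$-geodesic $\gamma_0$ of length at most $n\sqrt{\tau_0}$. Lemma \ref{wdn} applied to $\gamma_0$ gives $d^{g_{i, s_0}}_{W_1}(\delta_{z_0}, \nu_{x_0, t_0; s_0}) \leq C$, and the triangle inequality in the Wasserstein $1$-metric then yields $d_{g_{i, s_0}}(z_0, x_0) \leq C$. In particular, $d_{g_{i, s_0}}(z_0, y_0) \leq C + D$, and $z_0$ lies inside the tube of bounded scalar curvature from the previous step.

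Perelman's $\ell$-Harnack inequality for the fundamental solution of the conjugate heat equation gives
\[ K_i(x_0, t_0; y_0, s_0) \geq (4\pi\tau_0)^{-n/2}\exp\bigl(-\ell_{(x_0, t_0)}(y_0, s_0)\bigr), \]
so it suffices to prove $\ell_{(x_0, t_0)}(y_0, s_0) \leq C$. I would construct a comparison curve from $(x_0, t_0)$ to $(y_0, s_0)$, parametrized by backward time $\tau \in [0, \tau_0]$, by concatenating a reparametrization of $\gamma_0$ over $[0, \tau_0 - \delta]$ ending at a spacetime point $(z_0', s_0 + \delta)$ close to $(z_0, s_0)$, with a $g_{i, s_0 + \delta}$-minimizing geodesic from $z_0'$ to $y_0$ traversed over $[\tau_0 - \delta, \tau_0]$. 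The main obstacle is controlling the velocity of the final arc as the metric evolves, since only scalar (not Ricci) curvature is under control; this is handled by invoking Proposition \ref{stdde} at $(z_0, s_0)$ (where the barrier hypothesis is available from the first step), which yields a uniform distance bound $d_{g_{i, t}}(z_0', y_0) \leq C'$ for $t \in [s_0, s_0 + \delta]$. With this distortion estimate and the scalar-curvature bound on the tube, the $\mathcal{L}$-cost of the final arc is $O(\delta^{-1} + \delta)$, and after fixing $\delta$ one gets $\mathcal{L}(\gamma) \leq C$, hence $\ell_{(x_0, t_0)}(y_0, s_0) \leq \mathcal{L}(\gamma)/(2\sqrt{\tau_0}) \leq C$ as required.
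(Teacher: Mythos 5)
Your strategy — reducing to a uniform bound on $\ell_{(x_0,t_0)}(y_0,s_0)$ via Perelman's Harnack inequality for the conjugate heat kernel, with the bound produced from a concatenated comparison spacetime curve — is natural, and the preparatory barrier propagation, $W_1$-estimates and location of the $\ell_n$-center are sound. The gap is in bounding the $\mathcal{L}$-cost of the final arc. You traverse a fixed $g_{i,s_0+\delta}$-geodesic from $z_0'$ to $y_0$ over $\tau\in[\tau_0-\delta,\tau_0]$, and the $\mathcal{L}$-cost contains $\int_{\tau_0-\delta}^{\tau_0}\sqrt{\tau}\,|\dot\gamma(\tau)|^2_{g_{i,t_0-\tau}}\,d\tau$: the velocity must be measured against the evolving metric $g_{i,t_0-\tau}$, not the fixed metric $g_{i,s_0+\delta}$. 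Proposition \ref{stdde} does not supply this comparison. It bounds $d_{g_{i,t}}(z_0',y_0)$ for $t$ near $s_0+\delta$, which is a statement about $t$-dependent minimizing geodesics, not a pointwise bound on the ratio of $g_{i,t_0-\tau}$ to $g_{i,s_0+\delta}$ along your one fixed arc. Since $\partial_t g_i=-2\mathrm{Ric}(g_i)$, such a ratio bound requires Ricci curvature control, and the hypotheses give only a scalar-curvature barrier. Gluing the family of $g_{i,t}$-geodesics into a single spacetime curve does not repair this either, as the $\tau$-variation of the family introduces Jacobi-field terms needing the same curvature control.

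This is exactly the obstruction the paper's proof avoids. Rather than estimating $\ell$ along a long spatial path, the paper applies the Harnack inequality only with the trivial spacetime curve $\gamma(\tau)\equiv y_0$, which has zero spatial velocity and costs only the (controlled) scalar-curvature integral; this yields the on-diagonal lower bound $K_i(y_0,s_0+\delta;y_0,s_0)\geq c$. Zhang's gradient estimate \cite{ZhQ06} for $K_i(\cdot,s_0+\delta;y_0,s_0)$ together with the distance distortion of Proposition \ref{stdde} then extends this lower bound to a whole ball around $x_0$ at time $s_0+\delta$, and the reproduction formula
\[
K_i(x_0,t_0;y_0,s_0)=\int_{M_i}K_i(x_0,t_0;\cdot,s_0+\delta)\,K_i(\cdot,s_0+\delta;y_0,s_0)\,dg_{i,s_0+\delta}
\]
is closed using $H_n$-center concentration. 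Every ingredient in this chain is valid under scalar curvature bounds alone, which is why this route succeeds where a direct $\ell$-estimate stalls; if you wish to pursue the $\ell$-approach, you would need a substitute for the final-arc velocity bound that works at this level of generality.
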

\begin{proof}

%
Throughout this proof, all the constants depend at most on $\eta, A, D$. All the times we consider in this proof is in $[-T_i+ \eta , 0]$, hence we have $R_{g_i}\geq -C$ when we need the lower scalar curvature bound. We let $0<\delta<\eta/2$ to be determined.

By our assumption, $v_i(x_0, t_0)\leq A$, hence by Lemma \ref{viprop2}, we have
\begin{equation}\label{upbohvi4}
v_i(x_0, t)\leq C,
\end{equation}
for all $t\in [ s_0 , t_0 ]$. Hence we have
$$
R_{g_{i}}(x_0, t)\leq C,
$$
for all $t\in [ s_0 , t_0 ]$. Consider the spacetime curve defined by $\gamma(\tau)=(x_0, t_0-\tau)$, $\tau\in [0, t_0-(s_0 + \delta)]$, then we have
$$\cL(\gamma)= \int_{0}^{t_0-(s_0 + \delta)} \tau^{1/2}R_{g_i}(x_0, t_0-\tau) d\tau \leq C.$$
Hence we can apply Lemma \ref{wdn} to obtain
\begin{equation}\label{dw1x0}
\begin{split}
&d^{g_{i,s_0 + \delta}}_{W_1} ( \nu_{x_0, t_0; s_0 + \delta} , \delta_{x_0} )  \\
\leq& C \left(1+ \frac{\cL(\gamma)}{2(t_0- (s_0 + \delta))^{\frac{1}{2}}} - \cN^*_{s_0 + \delta}(x_0, t_0) \right) ^{1/2} (t_0- (s_0 + \delta))^{1/2}\leq C.\\
\end{split}
\end{equation}
Next let $(w_0,s_0 + \delta)$ be an $H_{n}$-center of $(x_0,t_0)$, then from (\ref{dvh}) we have
\begin{equation}\label{dvh2}
d^{g_{i,s_0 + \delta}}_{W_1}( \nu_{x_0, t_0; s_0 + \delta} , \delta_{w_0} ) \leq \sqrt{\var( \nu_{x_0, t_0; s_0 + \delta} , \delta_{w_0} )} \leq \sqrt{H_{n}(t_0-(s_0 + \delta))}\leq C.
\end{equation}
Combining (\ref{dw1x0}) and (\ref{dvh2}) and the triangle inequality, we obtain
\[
\begin{split}
d_{g_{i, s_0 + \delta}}&(x_0, w_0)=d^{g_{i, s_0 + \delta}}_{W_1} ( \delta_{x_0} , \delta_{w_0} )\\
&\leq d^{g_{i, s_0 + \delta}}_{W_1} ( \delta_{x_0} , \nu_{x_0, t_0; s_0 + \delta} ) + d^{g_{i, s_0 + \delta}}_{W_1} ( \nu_{x_0, t_0; s_0 + \delta} , \delta_{w_0} ) \leq  C,
\end{split}
\]
hence we can find a constant $B_1$ large enough, such that
\begin{equation}\label{bw0inbxo}
B_{g_i}( w_0, s_0 + \delta, \sqrt{2H_{n}(t_0-(s_0 + \delta))} )\subset B_{g_i}( x_0, s_0 + \delta, B_1) .
\end{equation}
Next, we need the following lemma.
\begin{lemma}\label{hklbaty0}
There exists constant $C<\infty$, such that
$$
K_i(y_0,s_0 + \delta ;y_0, s_0) \geq C^{-1}.
$$
\end{lemma}
\begin{proof}
First, by (\ref{upbohvi4}) we have $v_i(x_0, s_0)\leq C$. Hence by Lemma \ref{viprop2} again,  we have $v_i(y_0, s_0)\leq C$. Hence by Lemma \ref{viprop2} again, we have
\begin{equation}\label{upbohvi3}
v_i(y_0, t)\leq C,
\end{equation}
for all $t\in [s_0, s_0+\delta]$. Hence we have $R_{g_{i}}(y_0, t)\leq C$ for all $t\in [s_0, s_0+\delta]$. Now let $\gamma$ be the spacetime curve defined by $\gamma(\tau)=(y_0, s_0+\delta-\tau)$, $\tau\in[0, \delta]$, then we can compute
\begin{equation}\label{eofrl1}
\begin{split}
\mathcal{L}(\gamma)= \int_{0}^{\delta}\sqrt{\tau}R_{g_i}(y_0, s_0+\delta-\tau)d\tau \leq \int_{0}^{\delta}\sqrt{\tau}\cdot Cd\tau \leq C\delta^{3/2},
\end{split}
\end{equation}
hence we have
$$
\ell_{(y_0,s_0 + \delta)}(y_0, s_0) \leq \frac{\mathcal{L}(\gamma)}{2\sqrt{\delta}} \leq C.
$$
Then by Perelman's Harnack estimate, we have
$$
K_i(y_0,s_0 + \delta ;y_0, s_0) \geq \frac{1}{(4\pi(s_0 + \delta-s_0))^n}e^{-\ell_{(y_0,s_0 + \delta)}(y_0, s_0)}\geq C^{-1},
$$
which proves the lemma.
\end{proof}
Now, for all $t\in [s_0+\frac{\delta}{2}, s_0+\delta]$, by \cite{ZhQ11}, we have
\begin{equation}\label{ubofhk2}
K_i(\cdot, t ;y_0, s_0)\leq B_2,
\end{equation}
on $M_i$, for some constant $B_2=B_2(n,\omega_0)<\infty$. Hence we can apply \cite[Theorem 3.2]{ZhQ06} to obtain that
\begin{equation}\label{gbofhk2}
\left| \nabla \sqrt{\ln  \frac{B_2}{K_i(\cdot, t ;y_0, s_0)} } \right|_{ g_{i,t}} \leq \sqrt{\frac{1}{ t - (s_0+\frac{\delta}{2})}}\leq C,
\end{equation}
for all $t\in [s_0+\frac{3\delta}{4}, s_0+\delta]$. Since $y_0\in B_{g_i}( x_0, s_0, D )$, by (\ref{upbohvi4}) and Proposition \ref{stdde}, we have
$$
d_{g_{i,s_0 + \delta}}( y_0 , x_0 ) \leq C,
$$
if we choose $\delta>0$ small enough. Hence for any $y\in B_{g_i}( x_0, s_0 + \delta, B_1 )$,
\begin{equation}\label{dbbybx0}
d_{g_{i,s_0 + \delta}}( y , y_0 ) \leq d_{g_{i,s_0 + \delta}}( y , x_0 ) + d_{g_{i,s_0 + \delta}}( x_0 , y_0 )\leq B_1 + C \leq C.
\end{equation}
Combining (\ref{gbofhk2}) and (\ref{dbbybx0}) with Lemma \ref{hklbaty0}, we have
\begin{equation}\label{hklbatbx0}
K_i(y ,s_0 + \delta ; y_0, s_0) \geq C^{-1} ,
\end{equation}
for all $y\in B_{g_i}( x_0, s_0 + \delta, B_1 )$. Now, by the reproduction formula, (\ref{bw0inbxo}), (\ref{hklbatbx0}), we have
\begin{eqnarray*}
&&K_i(x_0 ,t_0 ; y_0, s_0) \\
&=& \int_{M_i} K_i(x_0 ,t_0 ; y, s_0+ \delta)K_i(y , s_0 + \delta ; y_0, s_0) dg_{i,s_0 + \delta}(y) \\
&\geq& \int_{B_{g_i}( x_0, s_0 + \delta, B_1 )} K_i(x_0 ,t_0 ; y, s_0+ \delta)K_i(y , s_0 + \delta ; y_0, s_0) dg_{i,s_0 + \delta}(y) \\
&\geq& C^{-1} \int_{B_{g_i}( x_0, s_0 + \delta, B_1 )} K_i(x_0 ,t_0 ; y, s_0+ \delta) dg_{i,s_0 + \delta}(y) \\
&\geq& C^{-1} \nu_{x_0, t_0; s_0 + \delta} \left( B_{g_i}\left( w_0, s_0 + \delta, \sqrt{2H_{n}(t_0-(s_0 + \delta))} \right) \right) \\
&\geq& C^{-1},
\end{eqnarray*}
where in the last inequality we have used Lemma \ref{hnc'2}.
This completes the proof of Proposition \ref{hklb}.
\end{proof}
%


\medskip
\subsection{Proof of Theorem \ref{mainforRF2}} \label{proofofmainrf2}

Now, we can prove Theorem \ref{mainforRF2}.
\begin{proof}[\bf{Proof of Theorem \ref{mainforRF2}}]
According to \cite{Bam20b}, for almost every $t\in (-T_\infty, 0)$, (that is, except a countable set of times), we have the metric measure space $(M_{i}, d_{g_{i,t}}, \nu_{p_i,0; t})$ converge to $(\cX_t, d_t, \nu_{p_\infty; t})$ in the Gromov-$W_1$-Wasserstein topology. We fix one such time $t_0\in (-T_\infty, 0)$. Let $\eta\in (0,1)$ such that $t_0-10\eta>-T_\infty$.

Let $D<\infty$ be any constant, and let $y_0\in B_{g_i}(p_i, t_0, D)$ for all $i$ large enough (so that $t_0\in ( -T_i + 5\eta , 0 ]$). Consider one such fixed $i$.

We have assumed that $v_i (p_i, 0)\leq Y_0$, hence by Lemma \ref{viprop2}, we have $v_i(p_i, t_0)\leq C(t_0)$. But for any $r\in (0,1)$, we have $d_{g_{i,t_0}}(y,p_i)\leq D+1$ for all $y\in B_{g_i}(y_0, t_0, r)$. Hence by Lemma \ref{viprop2} again, we have
$$
v_i(y, t_0)\leq C,
$$
for all $y\in B_{g_i}(y_0, t_0, r)$, for some $C=C(t_0, D)<\infty$. Hence we have
$$
R_{g_{i}}(y, t_0)\leq C,
$$
for all $y\in B_{g_i}(y_0, t_0, r)$. Hence we can apply Perelman's volume non-collapsing estimate to obtain that
\begin{equation}\label{vnclpatak2}
\vol_{ g_{i,t_0}}(B_{g_i}(y_0, t_0, r))\geq C^{-1}r^{n}.
\end{equation}
On the other hand, by Proposition \ref{hklb}, we have
\begin{equation}\label{hklb2}
K_i(p_i, 0; y, t_0) \geq C^{-1} ,
\end{equation}
for all $y\in B_{g_i}(y_0, t_0, r)$, for some constant $C=C(t_0, D)<\infty$. Combining (\ref{vnclpatak2}) and (\ref{hklb2}), we conclude that
$$
\nu_{p_i,0; t_0} (  B_{g_i}(y_0, t_0, r)  ) = \int_{B_{g_i}(y_0, t_0, r)} K_i(p_i, 0; y, t_0) dg_{i, t_0} \geq C^{-1}r^{n} ,
$$
for some constant $C=C(t_0, D)<\infty$. Hence by \cite[Proposition 2.7]{Hal}, we conclude that, by passing to a subsequence, we have $(M_{i}, d_{g_{i,t_0}}, p_i)$ converge to $(\cX_{t_0}, d_{t_0}, q_{t_0})$ in the Gromov-Hausdorff topology, for some $q_{t_0}\in \cX_{t_0}$. This completes the proof of Theorem \ref{mainforRF2}.
\end{proof}
%


\medskip
\subsection{Proof of   (1) in Theorem \ref{2main1}}

We now come back to the set-up of Section \ref{setupofKRF}. Given any sequence of times $t_i\nearrow 1$ in the normalized flow, let $s_i=-\ln (1-t_i)\to\infty$ as $i\to\infty$. Let $B_0=B(n , \omega_0, \|\rho \|_{C^2(\omega_{\fs})}, 0)$ be the constant from Lemma \ref{a'3}, then we define
\begin{equation}\label{cordob2}
b_i(s) = e^{s-s_i}a(s_i)-B_0,
\end{equation}
where $s$ is the time parameter in the normalized flow $(X, \tilde g(s))$. By Lemma \ref{a'3}, we have $b_i(s)\leq a(s)$ for all $s\in [0 , s_i]$. We then denote by
\begin{equation}\label{cordovsi}
v_{i}=u-b_i(s)+1,
\end{equation}
which is a smooth function on the normalized flow $(X, \tilde g(s))$, $s\in [0, \infty)$. Then we have $v_i\geq 1$ on $X\times [0 , s_i]$. According to Theorem \ref{main1}, we have the following gradient and Laplacian estimates
\begin{equation} \label{gleou0}
\frac{ \left| \Delta u \right| }{ u - a + 1 } + \frac{ |\nabla u|^2}{ u - a + 1 } \leq C ,
\end{equation}
on the normalized flow $(X, \tilde g(s))$, $s\in [0, \infty)$. Hence for $v_i$, by Lemma \ref{a'3} and the parabolic Schwarz lemma, we have
\begin{equation} \label{gleovi}
\frac{ \left| \partial_s v_i \right| }{ v_i } + \frac{ \left| \Delta v_i \right| }{ v_i } + \frac{ |\nabla v_i |^2}{ v_i } \leq C ,
\end{equation}
on $X\times [0 , s_i]$. Now, recall from the unnormalized flow $(X, g(t))$, $t\in [0, 1)$, we define $M_i=X$ and $g_{i,t}:=(1-t_i)^{-1}g((1-t_i)t+t_i)$, $t\in [-T_i , 0]$ with $T_i=t_i/(1-t_i)$. Hence we can compute that
$$
\omega_{i,t} = (1-t) \tilde \omega(s(t)), ~~ s(t)= -\ln (1-t) -\ln (1-t_i), ~~t\in [-T_i , 0] .
$$
For the convenience of the notations, we still denote by $v_{i}(t)=v_{i}(s(t))$, where $s(t)=-\ln (1-t) -\ln (1-t_i)$, which makes $v_i$ a function on the Ricci flow $(M_i, (g_{i,t})_{t\in [-T_i , 0]})$, hence we have $v_i\geq 1$ on $M_i\times [-T_i, 0]$. From (\ref{gleovi}), on $(M_i, (g_{i,t})_{t\in [-T_i , 0]})$, we have
\begin{equation} \label{gleovi2}
\frac{ \left| \partial_t v_i \right| }{ v_i } + \frac{ \left| \Delta v_i \right| }{ v_i } + \frac{ |\nabla v_i |^2}{ v_i } \leq \frac{C}{1-t} \leq C.
\end{equation}
In conclusion, if we let $p_i$ be the Ricci vertex associated to $\theta_Y$ at $t_i=1-e^{-s_i}$, then we have the following estimates.
\begin{lemma}[] \label{viprop}
There exists constant $C=C(n , \omega_0, \|\rho \|_{C^4(\omega_{\fs})})<\infty$, such that the following statements hold on the Ricci flow $(M_i, (g_{i,t})_{t\in [-T_i , 0]})$.
\begin{enumerate}
\item $v_i\geq 1$;
\item $\frac{|\partial_t v_i|}{v_i} + \frac{| \Delta v_i |}{v_i} + \frac{|\nabla v_i|^2}{v_i} \leq C$;
\item $R_{g_{i}} \leq Cv_i$;
\item $v_i (p_i, 0)= B_0 + 1$.
\end{enumerate}
Here all the operators are with respect to the metric $g_{i,t}$.
\end{lemma}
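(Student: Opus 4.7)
The plan is to collect the four conclusions from the material established immediately before the statement; (1) and (2) are already essentially in hand, while (3) and (4) are one-line verifications.

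For (1), I apply Lemma \ref{a'3} with $T=0$ and $s_0=s_i$ to obtain $b_i(s) \leq a(s)$ on $[0,s_i]$. Then $v_i = u - b_i + 1 \geq u - a + 1 \geq 1$ on $X\times[0,s_i]$, and the bound transfers to $M_i\times[-T_i,0]$ under the reparameterization $s(t)=s_i-\ln(1-t)$. For (2), the estimate (\ref{gleovi2}) is precisely the desired bound: since $t\in[-T_i,0]$ forces $1-t\geq 1$, the prefactor $C/(1-t)$ is bounded by an absolute constant depending only on $n$, $\omega_0$, and $\|\rho\|_{C^4(\omega_\fs)}$. For (4), by construction $b_i(s_i) = a(s_i) - B_0$, and by the definition of the Ricci vertex $u(p_i,s_i)=a(s_i)$, so
$$ v_i(p_i,0) \;=\; u(p_i,s_i) - b_i(s_i) + 1 \;=\; B_0 + 1. $$

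For (3), I would combine the second identity in (\ref{ceofu}) with the Laplacian bound just obtained. Rewriting
$$ R_{\tilde g}(s) \;=\; n - \Delta u + \tr_{\tilde\omega(s)}\!\bigl(e^s(\alpha-\omega_Y)\bigr), $$
the trace term is uniformly bounded by the parabolic Schwarz lemma (Lemma \ref{pschwarz}), and $|\Delta u| = |\Delta v_i| \leq C v_i$ from (\ref{gleovi}); together with $v_i \geq 1$ these give $R_{\tilde g} \leq C v_i$ on $X\times[0,s_i]$. Under the parabolic rescaling $g_{i,t} = (1-t)\,\tilde g(s(t))$, the scalar curvature transforms as $R_{g_{i,t}} = (1-t)^{-1} R_{\tilde g}(s(t))$; since $1-t\geq 1$ on $[-T_i,0]$, this factor is at most $1$, yielding $R_{g_i} \leq C v_i$ uniformly on $M_i\times[-T_i,0]$.

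The entire argument is a bookkeeping exercise collecting Theorem \ref{main1}, Lemma \ref{a'3}, and the parabolic Schwarz lemma, so no step constitutes a real obstacle. The only mild care required is in (2), where controlling $\partial_s v_i = \partial_s u - b_i'$ in the normalized flow (which is the substance of (\ref{gleovi})) uses both the Laplacian estimate from Theorem \ref{main1} and the crude pointwise bound $|a(s_i)| \leq C e^{s_i}$ from (\ref{C0boundonu}) to dominate the term $u - e^{s-s_i}a(s_i)$ by $v_i + C$; after that, everything is routine rescaling.
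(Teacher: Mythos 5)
Your plan is correct and follows the paper's own derivation (which appears in the paragraphs preceding the statement rather than in a separate proof block): (1) follows from Lemma~\ref{a'3} with $T=0$; (2) and (3) come from (\ref{gleovi}), the parabolic Schwarz lemma, and the rescaling factor $(1-t)^{-1}\leq 1$ for $t\leq 0$; and (4) is the evaluation $v_i(p_i,0)=a(s_i)-b_i(s_i)+1=B_0+1$. One small inaccuracy in your closing remark about (2): the domination $|u-e^{s-s_i}a(s_i)|\leq v_i+C$ is immediate from the algebraic identity $u-e^{s-s_i}a(s_i)=v_i-(B_0+1)$ together with $v_i\geq 1$, and requires neither the Laplacian estimate nor the crude bound $|a(s_i)|\leq Ce^{s_i}$ from (\ref{C0boundonu}); that crude bound is used only inside the proof of Theorem~\ref{main1} to verify hypothesis (3) of Proposition~\ref{gandleou}, not in the subsequent transfer from $v$ to $v_i$.
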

Now we can finish the proof of item (1) of Theorem \ref{2main1}.

\begin{proof}[Proof of Theorem \ref{2main1}, item (1)]
Let $\theta_Y\in \vartheta$ be the given smooth closed $(1,1)$-form on $Y$, and let $p_i\in X$ be a Ricci vertex associated with $\theta_Y$ at $t_i$. According to \cite{HJST}, the limiting metric flow $\cX$ is continuous in the Gromov-$W_1$-sense, hence by the results in \cite[Section 7]{Bam20b}, for every $t<0$, the $\bF$-convergence (\ref{FcorKRF'1}) is time-wise at $t$. According to Lemma \ref{viprop}, we conclude that $v_i$ is a $C$-barrier of $R_{g_{i}}$ and is $2B_0$-based at $ (p_i, 0) $. Hence item (1) of Theorem \ref{2main1} follows immediately from Theorem \ref{mainforRF2}.
\end{proof}


\bigskip


\section{Complex geometric compactness}\label{cgc}

In this section, we will prove Theorem \ref{2main1}. The partial $C^0$-estimate introduced in \cite{T90} is a fundamental scheme to obtain quantitive and effective Kodaira embeddings with geometric and algebraic estimates for the Bergman metrics. We will apply the techniques developed in \cite{T90, DS1, DS2, T13, T15} to  establish the partial $C^0$-estimate for finite time solutions of the K\"ahler-Ricci flow using the gradient and Laplace estimates in Theorem \ref{main1}.

Consider any sequence $t_i \rightarrow 1$ and let $(X, (g_{i, t})_{t\in [-T_i, 0]})$ be the blow-up of $(X, (g(t))_{t\in [0, 1)})$ by
$$ g_{i, t}= (1-t_i)^{-1} g((1-t_i) t + t_i), ~ t\in [-T_i, 0), ~ T_i = \frac{t_i}{1-t_i}. $$

If we let $p_i$ be a Ricci vertex at $g(t_i)$ with respect to a fixed $\theta_Y$,  by Theorem \ref{mainforRF2}
$\{ (X, g_{i, t}, p_i) \}_{i=1}^\infty $ $\mathbb{F}$-converge to $(\cX, (\nu_{p_\infty,t} )_{t\in (-\infty, 0]})$ and for every $t\in (-\infty, 0)$ the convergence is also in pointed Gromov-Hausdorff topology. The tangent flow of $(\cX, (\nu_{p_\infty,t} )_{t\in (-\infty, 0]})$ at each time $t$ must be a Ricci-flat cone with closed singular set of Hausdorff dimension no greater than $2n-4$ because the scalar curvature vanishes for the tangent flow, which will have to be static and Ricci-flat. More precisely, we have the following proposition for the $\mathbb{F}$-convergent sequence $(X, g_{i, t}, p_i)$ (cf. \cite{HJST}).

\begin{proposition} \label{hjst1}
For each $t\in (-\infty, 0)$, $\{ (X, g_{i, t}, p_i) \}_{j=1}^\infty $ chosen above converge  in pointed Gromov-Hausdorff topology to a complete metric space $(\cX_t, d_t, q_t)$.  Furthermore, the blow-ups of $(\cX_t, d_t)$ at any point $p\in (\cX_t, d_t) $ converge sequentially in pointed Gromov-Hausdorff topology to a Ricci-flat tangent cone. The convergence is smooth to a K\"ahler metric on the regular part of $\cX_t$ and the tangent cones off closed singular sets of Hausdorff dimension no greater than $2n-4$.
\end{proposition}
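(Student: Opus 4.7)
The plan is to combine the Type-I scalar curvature bounds around the Ricci vertices (Theorem \ref{cor1} together with the $P^*$-parabolic strengthening in Corollary \ref{scbonPnbhd2}) with Bamler's structure theory for non-collapsed Ricci flow limits. The pointed Gromov-Hausdorff convergence of $(X, g_{i,t}, p_i)$ to $(\cX_t, d_t, q_t)$ for every $t\in(-\infty, 0)$ is already supplied by Theorem \ref{mainforRF2} applied to the $C$-barrier $v_i$ from Lemma \ref{viprop}, which is $2B_0$-based at $(p_i,0)$. So the real content is the complex-analytic and metric-regularity description of each time-slice $\cX_t$.

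First I would fix $t\in(-\infty, 0)$ and upgrade the scalar curvature estimate into a uniform bound $|R_{g_{i,t}}|\leq C(D)$ on a metric ball $B_{g_{i,t}}(p_i, D)$ of arbitrary but fixed $D$, using Corollary \ref{scbonPnbhd2} translated to the rescaled flows. Combined with the Nash entropy bound inherited from $\nu(g_{i,-T_i}, 2T_i)\geq -Y_0$ and Perelman's no-local-collapsing, this gives a uniform two-sided volume bound on balls $B_{g_{i,t}}(x, r)$ of definite radius around $p_i$. Then Bamler's $\varepsilon$-regularity theorem \cite{Bam20c} applies on a definite-size parabolic neighborhood of any point with small pointed Nash entropy drop, producing uniform $C^k$ curvature control there. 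Standard parabolic bootstrap upgrades the $\bF$-convergence to smooth Cheeger--Gromov convergence on the Ricci flow spacetime regular set $\cR$, which is automatically K\"ahler since each $g_{i,t}$ is. This gives the smooth convergence part of the statement on $\cR_t = \cX_t \cap \cR$, and $\cS_t$ is closed of Hausdorff dimension at most $2n-4$ by the parabolic Minkowski dimension estimate in \cite{Bam20c}.

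Next, for the tangent cone assertion, I would fix a base point $p\in \cX_t$ and consider rescalings $\lambda_j^{-2} d_t$ with $\lambda_j\to 0$. Because the scalar curvature is uniformly bounded on the time-slice (not just Type-I with respect to the singular time), after further rescaling by $\lambda_j^{-2}$ the scalar curvature of the approximating smooth flows tends to zero. Hence any $\bF$-subsequential limit of the further rescaled flows is a static metric flow on $(-\infty, 0]$, and static non-collapsed limits are Ricci-flat by \cite{Bam20c}. The splitting/cone theorem for metric solitons then forces the limit to be a metric cone on a cross-section of Hausdorff dimension $2n-1$, with smooth convergence off a closed singular set of Hausdorff dimension $\leq 2n-4$.

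The main obstacle I anticipate is controlling the rescaled scalar curvature at the level of the blow-up: Theorem \ref{cor1} gives $(1-t)|R|\leq C(1+d_t^2/(1-t))$, which after the parabolic rescaling at the $\lambda_j$ scale becomes $|R|\leq C\lambda_j^2(1+d^2)$ and so decays to zero only in the interior of each ball; at the boundary the quadratic growth must be handled using $\var$-estimates on conjugate heat kernels so that the rescaled conjugate heat measures remain $H_n$-concentrated. The other delicate point will be the identification of $\cS_t$ with the analytic singular set of the normal variety structure, which is the content of item (2) of Theorem \ref{2main1} in the next step but is tightly linked to the tangent-cone analysis here through the partial $C^0$-estimate machinery of \cite{T90, DS1, DS2, T13, T15}.
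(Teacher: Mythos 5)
Your outline matches the paper's approach: the proposition is deduced from Theorem \ref{mainforRF2} together with Bamler's structure theory, using precisely the observation you make that the Type-I scalar curvature bound forces the tangent flows to be static, hence Ricci-flat cones with singular sets of codimension at least four (the paper leaves the detailed verification to \cite{HJST}). One small correction: the quadratic-growth obstacle you flag does not actually arise, since under the rescaling $\lambda_j^{-2}d_t$ a ball of fixed radius around $p\in\cX_t$ corresponds to a shrinking ball $B_{d_t}(p,\lambda_j R)$ in $\cX_t$, so $d_t(\cdot, q_t)$ stays bounded there and $\lambda_j^2 R_{g_t}\to 0$ uniformly on compacta without any boundary correction.
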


For each fixed $t\in (-\infty, 0]$, the K\"ahler metric $g_{i, t}$ lies in the K\"ahler class
$$(1-t) [ - K_X + (1-t)^{-1}(1-t_i)^{-1} L_Y], $$
where $L_Y$ is the ample line bundle defined by $L_0+K_X$. Let $m_i$ be the smallest integer no less than $ (1-t)^{-1}(1-t_i)^{-1}$ and
$$\delta_i = m_i -  (1-t)^{-1} (1-t_i)^{-1}  \in [0, 1). $$
Then we define the line bundle $L_i$ by
$$L_i = -K_X + m_i L_Y = -K_X + (1-t)^{-1}(1-t_i)^{-1} L_Y + \delta_i L_Y. $$

Let $\rho$ be the Fubini-Study potential for $\omega_Y$. In particular, we can assume
$$\rho = - m_Y^{-1}\log |\sigma_Y|^2_{h_Y^{m_Y}} $$
for some global section $\sigma_Y\in H^0(Y, m_Y L_Y)$. Therefore $- \ddbar\rho= -\omega_Y + m_Y^{-1} [\Delta_{\sigma_Y}]$, where $\Delta_{\sigma_Y}$ is the support of $\sigma_Y$. We let $h_i$ be the hermitian metric on $L_i$ defined by
\begin{equation}
h_i = (h_Y)^{m_i}  \Omega e^{-(1-t)^{-1} (1-t_i)^{-1} \phi(\cdot, (1-t_i)t+ t_i) + \delta_i \rho},
\end{equation}
where $\phi(\cdot, t)$ is given by (\ref{mauflow2}) as the quasi-K\"ahler potential of $g(t)$ of the unnormalized K\"ahler-Ricci flow.
Then
\begin{eqnarray*}
&&Ric(h_i) \\
&=& -\ddbar \log h_i \\
&=&  (1-t)^{-1}(1-t_i)^{-1} \omega_Y + \chi  + (1-t)^{-1} (1-t_i)^{-1} \ddbar \phi (\cdot, (1-t_i)t+t_i)+ \delta_i m_Y^{-1} [\Delta_{\sigma_Y}] \\
&=& (1-t)^{-1} g_{i, t} +\delta_i m_Y^{-1} [\Delta_{\sigma_Y}],
\end{eqnarray*}
where $h_Y$ is the hermitian metric on $L_Y$ with $-\ddbar \log h_Y = \omega_Y$.
We also have
\begin{eqnarray*}
Ric(g_{i, t}) - (1-t)^{-1}g_{i, t} &=&  -\ddbar v_i - (1-t_i)^{-1}\theta_Y\geq - \ddbar v_i - Bg_{i,t},
\end{eqnarray*}
for some uniform $B>0$ by the parabolic Schwarz lemma, where $v_i$ is the Ricci potential associated to $\theta_Y$.

We will now fix $t =-1$ without loss of generality, and oppress it in the following notations.  We will also write $\hat h= h_i^k$ and $\hat g=k  g_{i, -1} $ for conveniences. In particular,
$$Ric(\hat h) = \frac{1}{2} \hat g$$
on $X\setminus \Delta_{\sigma_Y}$.  For $\sigma\in (L_i)^k$, we define  the scaled norms by
$$|\sigma|^2 = |\sigma|^2_{\hat h} =|\sigma|^2_{h_i^k}$$
and
$$ | \nabla \sigma|^2 = | \nabla \sigma|^2_{\hat h, \hat g } = | \nabla \sigma|^2_{h_i^k, 2^{-1} kg_{i, -1}}, $$
where $\nabla$ is the  gradient  associated to  $\hat h$ and $\hat g$ for conveniences.
By our gradient estimate for $v_i$, there exists $C>0$ such that for any $x\in X$ and $k\geq 1$, we have
$$|\nabla v_i|(x, -1) \leq Ck^{-1/2} (1+ k^{-1/2} d_{\hat g}(x, p_i)). $$

\begin{lemma} For any $\sigma \in H^{0}(X, (L_i)^k)$, we have

$$\Delta |\sigma|^2 = |\nabla \sigma|^2 - \frac{n}{2} |\sigma |^2,$$

$$\Delta |\nabla \sigma|^2 = |\nabla\nabla \sigma|^2 - \frac{n}{2} |\nabla \sigma|^2 + \frac{n}{4}| \sigma|^2 + Ric(\nabla \sigma, \overline{\nabla \sigma}), $$
where $\Delta$ and $Ric$ are the Laplacian operator and Ricci curvature associated to $\hat h$ and $\hat g$.

\end{lemma}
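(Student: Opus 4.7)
The plan is to prove both identities by direct local Bochner-type computations using the Chern connection of $\hat h$ and the curvature identity $Ric(\hat h)=\tfrac{1}{2}\hat g$ which holds off $\Delta_{\sigma_Y}$. Fix a local holomorphic frame $e$ of $L_i$ with $|e|_{h_i}^{2}=e^{-\psi}$, so that $|e^{\otimes k}|_{\hat h}^{2}=e^{-k\psi}$ and the curvature identity translates into the pointwise relation $k\psi_{i\bar j}=\tfrac{1}{2}\hat g_{i\bar j}$. Writing $\sigma=f\cdot e^{\otimes k}$ for a local holomorphic function $f$, the Chern connection then satisfies $\nabla_{i}\sigma=(f_i-kf\psi_i)e^{\otimes k}$ and $\nabla_{\bar j}\sigma=0$.

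The first identity follows immediately: differentiating $|\sigma|^{2}=|f|^{2}e^{-k\psi}$ twice and using $f_{\bar j}=0$ gives the pointwise formula
\[
\partial_{i}\partial_{\bar j}|\sigma|^{2}=\hat h(\nabla_{i}\sigma,\nabla_{j}\sigma)-k\psi_{i\bar j}|\sigma|^{2},
\]
and tracing against $\hat g^{i\bar j}$ with the help of $k\psi_{i\bar j}=\tfrac{1}{2}\hat g_{i\bar j}$ and $\hat g^{i\bar j}\hat g_{i\bar j}=n$ yields the first identity.

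For the second identity I would fix a point $p$ and work in normal coordinates for $\hat g$ together with a normal local holomorphic frame for $L_{i}$, so that at $p$ all first derivatives of $\hat g$ and of $\psi$ vanish and the purely holomorphic derivatives $\psi_{ij}(p)$ can be made to vanish as well by a quadratic holomorphic change of frame. The decisive input is the commutator identity
\[
\nabla_{\bar j}\nabla_{i}\sigma=[\nabla_{\bar j},\nabla_{i}]\sigma=-R^{\hat h}_{i\bar j}\sigma=-\tfrac{1}{2}\hat g_{i\bar j}\sigma,
\]
which follows from $\nabla_{\bar j}\sigma=0$ and algebraically determines the antiholomorphic covariant derivative of the tensor $\nabla\sigma$; its traced squared norm is exactly $\tfrac{n}{4}|\sigma|^{2}$.

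Differentiating $|\nabla\sigma|^{2}=\hat g^{i\bar j}\hat h(\nabla_{i}\sigma,\nabla_{j}\sigma)$ twice at $p$ then splits naturally into four pieces. The second derivative of $\hat g^{i\bar j}$ at $p$, which in K\"ahler normal coordinates encodes the Ricci tensor of $\hat g$, contracts with $\nabla\sigma$ to produce the term $Ric(\nabla\sigma,\overline{\nabla\sigma})$. The $(2,0)$-covariant Hessian of $\sigma$ contributes $|\nabla\nabla\sigma|^{2}$. The $(1,1)$-covariant Hessian, which by the commutator identity above is algebraically determined, contributes $\tfrac{n}{4}|\sigma|^{2}$. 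Finally, the remaining derivatives of $e^{-k\psi}$ and of the $\nabla_{i}\sigma$-factors combine, through the curvature identity $k\psi_{i\bar j}=\tfrac{1}{2}\hat g_{i\bar j}$, into $-\tfrac{n}{2}|\nabla\sigma|^{2}$ by exactly the same mechanism as in the first identity. Summing yields the claimed formula. The whole argument is a routine Kodaira--Bochner calculation, and I do not anticipate a genuine obstacle; the only real care required is bookkeeping the curvature contributions and repeatedly invoking $Ric(\hat h)=\tfrac{1}{2}\hat g$.
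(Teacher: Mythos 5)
Your proposal is correct and takes essentially the same approach as the paper: a local computation in holomorphic normal coordinates centred at a fixed point (with the gradient of the hermitian metric killed there), followed by expansion and tracing against $\hat g^{i\bar j}$ while invoking $Ric(\hat h)=\tfrac{1}{2}\hat g$ to produce the $-\tfrac{n}{2}$ and $\tfrac{n}{4}$ coefficients. The paper writes both identities in one line as $\hat g^{i\bar j}(\cdots)_{i\bar j}$ in this normal frame; your splitting into the Ricci term, the $(2,0)$-Hessian term, the $(1,1)$-Hessian term via the Chern curvature commutator, and the leftover trace term is precisely the bookkeeping behind that one-line expansion.
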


\begin{proof} We will use the holomorphic normal coordinates at a fixed point, where the gradient of $h$ also vanishes. Straightforward calculations give
\begin{eqnarray*}
\Delta |\sigma|^2 &=&  \hat g^{i\bar j} ( \sigma \overline{\sigma} \hat h )_{i\bar j}\\
&=&|\nabla \sigma|^2 - \frac{n}{2} |\sigma|^2,
\end{eqnarray*}
\begin{eqnarray*}
\Delta |\nabla \sigma|^2 &=&  \hat g^{i\bar j} \left(  \hat g^{k\bar l} \left( \sigma_k - \sigma (\log \hat h)_k \right)  \left(\overline{\sigma}_{\bar l} - \overline{\sigma} (\log \hat h)_{\bar l}\right) \hat h \right)_{i\bar j}\\
&=&|\nabla^2 \sigma|^2  - \frac{n}{2} |\nabla \sigma|^2+ Ric(\nabla \sigma, \overline{\nabla \sigma})+ \frac{n}{4} |\sigma|^2.
\end{eqnarray*}
This completes the proof.
\end{proof}

\begin{corollary} There exists $B>0$ such that for any $k\geq 1$, $i\geq 1$, $\sigma \in H^0(X, (L_i)^k)$, we have
\begin{equation}
\Delta |\sigma| \geq - \frac{n}{4} |\sigma| ,
\end{equation}
\begin{equation} \label{l2lap}
\Delta |\nabla \sigma| \geq  \frac{\nabla^2 v_i (\nabla\sigma, \overline{\nabla\sigma})}{2|\nabla\sigma|} +\frac{|\nabla^2\sigma|^2}{8 |\nabla \sigma|}- Bn |\nabla \sigma|.
\end{equation}

\end{corollary}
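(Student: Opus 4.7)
My plan is to derive both estimates by combining the two identities of the previous lemma with the elementary pointwise identity
\[
\Delta |T|^2 \;=\; 2|T|\,\Delta|T| + 2\bigl|\nabla|T|\bigr|^2
\]
(valid away from the zero set of $|T|$), applied in turn to $T=\sigma$ and $T=\nabla\sigma$, together with suitable Kato-type inequalities and the Ricci lower bound coming from the K\"ahler-Ricci flow.

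For the first estimate I would invoke the refined Kato inequality for a holomorphic section of a hermitian line bundle, namely $|\nabla|\sigma||^2\leq\tfrac{1}{2}|\nabla\sigma|^2$. This follows from the identity $\partial|\sigma|^2 = \langle\nabla\sigma,\sigma\rangle_{\hat h}$ and the Cauchy--Schwarz inequality, which give $|\partial|\sigma||^2\leq\tfrac{1}{4}|\nabla\sigma|^2$. Substituting into $\Delta|\sigma|^2 = 2|\sigma|\Delta|\sigma| + 2|\nabla|\sigma||^2$ and using $\Delta|\sigma|^2=|\nabla\sigma|^2-\tfrac{n}{2}|\sigma|^2$ from the previous lemma immediately yields $2|\sigma|\,\Delta|\sigma|\geq -\tfrac{n}{2}|\sigma|^2$, which is the first claimed inequality.

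For the second estimate I would apply the same scheme to $T=\nabla\sigma$, viewed as a section of $T^{*(1,0)}X\otimes L$. The plan is to first establish a Kato-type inequality of the form $|\nabla|\nabla\sigma||^2 \leq \tfrac{3}{8}|\nabla\nabla\sigma|^2$, obtained by decomposing $d|\nabla\sigma|^2$ into its $(1,0)$- and $(0,1)$-parts and using that $\dbar\sigma=0$ forces the $(0,1)$-covariant derivative of $\nabla\sigma$ to be controlled by the Chern curvature of $L$ alone. Combined with the second identity of the previous lemma and discarding the non-negative $\tfrac{n}{4}|\sigma|^2$ term, this would yield
\[
2|\nabla\sigma|\,\Delta|\nabla\sigma| \;\geq\; \tfrac{1}{4}|\nabla\nabla\sigma|^2 - \tfrac{n}{2}|\nabla\sigma|^2 + \mathrm{Ric}(\nabla\sigma,\overline{\nabla\sigma}).
\]
The Ricci term would then be handled via the identity $\mathrm{Ric}(g_{i,-1})=\tfrac{1}{2}g_{i,-1}-\ddbar v_i - (1-t_i)^{-1}\theta_Y$ established earlier in the section: the contribution of $(1-t_i)^{-1}\theta_Y$ is absorbed using the uniform parabolic Schwarz bound $(1-t_i)^{-1}\theta_Y\leq B g_{i,-1}$, and the rescaling $\hat g = k g_{i,-1}$ converts the remainder into the Hessian term $\nabla^2 v_i(\nabla\sigma,\overline{\nabla\sigma})/(2|\nabla\sigma|)$ modulo an error of size $O(|\nabla\sigma|)$, which is absorbed into $-Bn|\nabla\sigma|$ after dividing through by $2|\nabla\sigma|$.

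The main obstacle will be the refined Kato inequality for $|\nabla\sigma|$ with a constant strictly smaller than $1/2$: the naive Kato bound $|\nabla|\nabla\sigma||^2 \leq |\nabla\nabla\sigma|^2$ consumes the entire $|\nabla\nabla\sigma|^2$ term in the previous lemma and leaves no residual $|\nabla^2\sigma|^2/(8|\nabla\sigma|)$. The improved constant $\tfrac{3}{8}$ reflects the partial holomorphic structure of $\nabla\sigma$ as a section of $T^{*(1,0)}X\otimes L$ and requires a Chern-connection Bochner-type computation to establish precisely.
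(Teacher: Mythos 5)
The first estimate is fine — in the paper's conventions $|\nabla|\sigma||^2=\tfrac14|\nabla\sigma|^2$ holds as an exact identity (since $\partial_k|\sigma|^2=(\nabla_k\sigma)\overline\sigma\,\hat h$), and plugging this together with $\Delta|\sigma|^2=|\nabla\sigma|^2-\tfrac n2|\sigma|^2$ into $\Delta|\sigma|=\frac{\Delta|\sigma|^2}{2|\sigma|}-\frac{|\nabla|\sigma|^2|^2}{4|\sigma|^3}$ gives $\Delta|\sigma|=\frac{|\nabla\sigma|^2}{4|\sigma|}-\frac n4|\sigma|\geq-\frac n4|\sigma|$. This is the same computation as the paper.

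The second estimate, however, has a genuine gap: the Kato-type inequality you propose, $|\nabla|\nabla\sigma||^2\leq\frac38|\nabla\nabla\sigma|^2$, is \emph{false}. You correctly note that since $\dbar\sigma=0$, the $(0,1)$-covariant derivative of $\nabla\sigma$ is determined by the Chern curvature of $L^k$, but the conclusion you draw from this is wrong. The curvature identity gives $\nabla_{\bar k}\nabla_j\sigma=-\tfrac12 g_{j\bar k}\sigma$ (because $\textnormal{Ric}(\hat h)=\tfrac12\hat g$), so the $(0,1)$-part of $\nabla\nabla\sigma$ is controlled by $|\sigma|$, \emph{not} by $|\nabla\nabla\sigma|$. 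Concretely,
\[
\partial_k\bigl(|\nabla\sigma|^2\bigr)\;=\;\bigl\langle\nabla_k\nabla\sigma,\overline{\nabla\sigma}\bigr\rangle\;-\;\tfrac12(\nabla_k\sigma)\,\overline\sigma\,\hat h,
\]
and hence
\[
\bigl|\nabla\bigl(|\nabla\sigma|^2\bigr)\bigr|^2\;\leq\;\Bigl(|\nabla^2\sigma|\,|\nabla\sigma|+\tfrac12|\sigma|\,|\nabla\sigma|\Bigr)^2
\;=\;|\nabla^2\sigma|^2|\nabla\sigma|^2+|\sigma||\nabla\sigma|^2|\nabla^2\sigma|+\tfrac14|\sigma|^2|\nabla\sigma|^2.
\]
There is no way to remove the $|\sigma|$-terms: whenever $|\sigma|$ dominates $|\nabla^2\sigma|$, your proposed bound fails.

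This is precisely why discarding the $\tfrac n4|\sigma|^2$ term from $\Delta|\nabla\sigma|^2$ is the wrong move. The paper keeps it and uses it, via Young's inequality $\frac{|\sigma||\nabla^2\sigma|}{4|\nabla\sigma|}\leq\frac{|\nabla^2\sigma|^2}{8|\nabla\sigma|}+\frac{|\sigma|^2}{8|\nabla\sigma|}$, to absorb the $|\sigma|$-contributions coming from the Chern curvature: one ends up with a coefficient $\tfrac{2n-3}{16}\geq 0$ (using $n\geq 2$) on the residual $\frac{|\sigma|^2}{|\nabla\sigma|}$ term, which can then be dropped. Your argument throws away the very term that makes the bookkeeping close. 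So while your arithmetic is internally consistent starting from the false Kato constant, the proof does not go through: you must carry the $|\sigma|$-terms through the estimate and cancel them against the $\frac n4|\sigma|^2$ term, exactly as the paper does.

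(A side remark: the sign in front of $\nabla^2 v_i$ in the final line of the paper and in the statement appears to be a typographical inconsistency with the earlier displayed inequality $\textnormal{Ric}(g_{i,t})-(1-t)^{-1}g_{i,t}\geq-\ddbar v_i-Bg_{i,t}$; but since the downstream Moser iteration only uses this via an integration by parts producing $|\nabla v_i|$, the sign is immaterial and this is not a defect of your argument.)
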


\begin{proof} Straightforward calculations show that

\begin{eqnarray*}
\Delta |\sigma| &=& \frac{\Delta |\sigma|^2}{2|\sigma|}- \frac{|\nabla |\sigma|^2 |^2}{4|\sigma|^3}\\
&=& \frac{|\nabla \sigma|^2}{2|\sigma|} - \frac{n}{4}|\sigma| - \frac{|\nabla \sigma|^2}{4|\sigma|}\\
&\geq& - \frac{n}{4} |\sigma|,
\end{eqnarray*}
and
\begin{eqnarray*}
&&\Delta |\nabla \sigma| \\
&=& \frac{\Delta |\nabla\sigma|^2}{2|\nabla \sigma|}- \frac{|\nabla (|\nabla\sigma|^2) |^2}{4|\nabla \sigma|^3}\\
&=& \frac{|\nabla^2 \sigma|^2 + Ric(\nabla \sigma, \overline{\nabla \sigma}) - \frac{n}{2}|\nabla \sigma|^2 + \frac{n}{4} |\sigma|^2}{2|\nabla\sigma|} - \frac{|\nabla (|\nabla\sigma|^2)|^2}{4|\nabla\sigma|^3}\\
&\geq& \frac{|\nabla^2 \sigma|^2 + Ric(\nabla \sigma, \overline{\nabla \sigma}) -\frac{n}{2} |\nabla \sigma|^2 + \frac{n}{4}  |\sigma|^2}{2|\nabla\sigma|}  - \frac{|\nabla^2 \sigma|^2|\nabla \sigma|^2 + |\sigma||\nabla \sigma|^2 |\nabla^2\sigma|+ \frac{1}{4} |\sigma|^2|\nabla\sigma|^2}{4|\nabla\sigma|^3}\\
&\geq& \frac{ Ric(\nabla \sigma, \overline{\nabla \sigma}) }{2|\nabla\sigma|}+ \frac{ |\nabla^2 \sigma|^2}{4|\nabla\sigma|} + \frac{(2n-1)|\sigma|^2}{16|\nabla\sigma|}- \frac{  |\sigma| |\nabla^2\sigma|}{4|\nabla\sigma|} - \frac{n|\nabla \sigma|}{4}\\
&\geq& \frac{ Ric(\nabla\sigma, \overline{\nabla\sigma})}{2|\nabla\sigma|} +\frac{|\nabla^2\sigma|^2}{8 |\nabla \sigma|}- \frac{n|\nabla \sigma|}{4}.
\end{eqnarray*}

The proof is completed with the observation that $Ric(\hat g) \geq  -B \hat g - \nabla^2 v_i. $
\end{proof}

\begin{proposition} For any $R\geq 1$, there exists $K>0$ such that for any $k\in \mathbb{Z}^+$ and $\sigma \in H^0(X, (L_i)^k)$, we have
\begin{equation}
||\sigma||_{L^\infty(B (p_i, R))} \leq K ||\sigma||_{L^2(B (p_i, 2R))}.
\end{equation}
\begin{equation}
||\nabla \sigma||_{L^\infty(B (p_i, R))} \leq K ||\sigma||_{L^2(B  (p_i, 2R))},
\end{equation}
where $B(p_i, R)=B_{\hat g}(p_i, R)$.

\end{proposition}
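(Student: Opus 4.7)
The plan is to prove both estimates by Moser iteration, using the differential inequalities already established in the lemma and corollary above together with a Sobolev inequality on $B(p_i,2R)$. All relevant constants will be allowed to depend on $R$.

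First I would establish a uniform Sobolev inequality on $B(p_i, 2R)$ with respect to the metric $\hat g = k g_{i,-1}$, uniform in $k$ and $i$. By Theorem \ref{cor1} applied to the Ricci vertex $p_i$, the scalar curvature of $g_{i,-1}$ is bounded on $B_{g_{i,-1}}(p_i, CR k^{1/2})$; combined with Perelman's $\kappa$-noncollapsing (valid by the uniform lower bound on Perelman's $\nu$-entropy along the flow) this yields a uniform non-collapsing bound for $\hat g$ on $B(p_i, 2R)$. Standard arguments then give a Sobolev inequality
\[
\left(\int_{B(p_i,2R)} |f|^{\frac{2n}{n-1}}\,d\hat g\right)^{\frac{n-1}{n}} \leq C_S \int_{B(p_i,2R)} \left(|\nabla f|^2 + |f|^2\right) d\hat g
\]
for compactly supported $f$, with $C_S$ independent of $i,k$.

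Next, the estimate for $|\sigma|$ is a straightforward Moser iteration. From $\Delta|\sigma|\geq -\tfrac{n}{4}|\sigma|$ one gets $\Delta |\sigma|^p\geq -c_p |\sigma|^p$ for every $p\geq 1$ by Kato. Testing against $\eta^2|\sigma|^p$ with nested cut-offs $\eta$ between $B(p_i,R)$ and $B(p_i,2R)$ and iterating (Moser--De Giorgi) gives $\|\sigma\|_{L^\infty(B(p_i,R))}\leq K \|\sigma\|_{L^2(B(p_i,2R))}$.

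For $|\nabla \sigma|$ I would proceed in two steps. First, from the identity $\Delta|\sigma|^2 = |\nabla\sigma|^2 - \tfrac{n}{2}|\sigma|^2$, pairing with $\eta^2$ and integrating by parts on a slightly smaller ball gives
\[
\int_{B(p_i,\tfrac{3R}{2})} |\nabla\sigma|^2 \,d\hat g \leq C\int_{B(p_i,2R)} |\sigma|^2 \,d\hat g.
\]
Then I would run Moser iteration on $|\nabla\sigma|$ using (\ref{l2lap}). The main technical point is the Hessian term $\frac{\nabla^2 v_i(\nabla\sigma,\overline{\nabla\sigma})}{2|\nabla\sigma|}$, since $\nabla^2 v_i$ is not pointwise controlled. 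This is handled by integration by parts: in the iterated energy estimate obtained by testing against $\eta^2|\nabla\sigma|^{p-1}$, the Hessian contribution becomes
\[
\int \eta^2 |\nabla\sigma|^{p-2}\,\nabla^2 v_i(\nabla\sigma,\overline{\nabla\sigma})\,d\hat g,
\]
which, after an integration by parts, is expressed in terms of $\nabla v_i \cdot \nabla(\eta^2 |\nabla\sigma|^p)$ plus good terms involving $|\nabla^2\sigma|^2|\nabla\sigma|^{p-2}$ (absorbed by the positive $|\nabla^2\sigma|^2$ term in (\ref{l2lap})). Since $|\nabla v_i|$ is uniformly bounded on $B(p_i,2R)$ by Theorem \ref{main1} together with the uniform $L^\infty$ bound for $v_i$ in a bounded neighborhood of the Ricci vertex (Section \ref{setupofKRF}), Cauchy--Schwarz closes the estimate. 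The Moser iteration then gives the stated $L^\infty$--$L^2$ bound for $|\nabla\sigma|$.

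The main obstacle is precisely this last step: controlling the Bakry--\'Emery-type Hessian term $\nabla^2 v_i$ that appears in the Bochner identity. All pointwise information on $v_i$ is at the level of $|\nabla v_i|$ and $\Delta v_i$ (not the full Hessian), so the only route is to exploit the divergence structure and integrate by parts, absorbing the resulting $|\nabla^2\sigma|^2$ contribution into the good term in (\ref{l2lap}) and the linear $|\nabla v_i|\cdot|\nabla|\nabla\sigma|^p|$ term into the standard Moser energy, where the Sobolev inequality of Step 1 makes the iteration work.
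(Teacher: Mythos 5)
Your plan is correct and matches the paper's argument essentially step for step: a uniform Sobolev inequality from the entropy bound, Kato/Moser iteration for $|\sigma|$, then Moser iteration for $|\nabla\sigma|$ using (\ref{l2lap}), with the Bakry--\'Emery Hessian term handled by integrating one derivative off $\nabla^2 v_i$, absorbing the resulting $|\nabla^2\sigma|^2$ contribution into the good term and controlling the remainder by the Li--Yau bound $|\nabla v_i|\leq C(1+R)$ on $B(p_i,2R)$. This is exactly what the paper does, if more tersely (it records only the resulting $C(1+\sup_{B(p_i,R)}|\nabla v_i|)p(R-r)^{-2}$ coefficient without displaying the integration by parts, and invokes the Sobolev inequality via \cite[Theorem~1.5]{Ye} directly from the scalar curvature bound).
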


\begin{proof} Let $F_{r, R}$ be the standard nonnegative cut-off function such that $F_{r, R}=1$ on $[0, r]$ and $F_{r, R}=0$ on $[R, \infty)$ with
$$|F'_{r, R}| \leq A(R-r)^{-1}, ~ R-r\geq 1.$$
for some fixed $A>0$. We then let %
$$\eta(x) = F_{r,R}(d_{\hat g}(p_i, x)).$$
Immediately, we have
$$|\nabla \eta|^2+ |\Delta \eta| \leq A(R-r)^{-2}.$$ %
Then we have
\begin{eqnarray*}
&&\int_X |\nabla (\eta |\sigma|^{(p+1)/2})|^2 dV\\
&\leq& 2\int_X |\nabla \eta|^2 |\sigma|^{p+1} dV + 2\int_X \eta^2 |\nabla |\sigma|^{(p+1)/2}|^2 dV  \\
%
&\leq& A(R-r)^{-2} \int_{B(p_i, R)}|\sigma|^{p+1}dV  - 2\int_X \eta^2 |\sigma|^{(p+1)/2}\Delta|\sigma|^{(p+1)/2}dV \\
&&+ \frac{1}{2}\int_X |\nabla (\eta|\sigma|^{(p+1)/2})|^2 dV  + 32\int_X |\nabla\eta|^2|\sigma|^{p+1}dV \\
&\leq& Cp(R-r)^{-2}\int_{B(p_i,R)}|\sigma|^{p+1} dV + \frac{1}{2}\int_X |\nabla (\eta |\sigma|^{(p+1)/2})|^2 dV ,
\end{eqnarray*}
where $dV= dV_{\hat g}.$
Therefore we have
$$\int_X |\nabla (\eta |\sigma|^{(p+1)/2})|^2 dV  \leq Cp(R-r)^{-2}\int_{B(p_i, R)}|\sigma|^{p+1} dV .$$
Since the scalar curvature is uniformly bounded in $B(p_i, R)$, the Sobolev inequality holds uniformly for all smooth functions with compact support in $B(p_i, R)$ (cf. \cite[Theorem 1.5]{Ye}). After applying the Sobolev inequality, we have
$$|| |\sigma|^{(p+1)/2}||^2_{L^{\frac{2n}{n-1}}(B(p_i, r))} \leq Cp(R-r)^{-2} \int_{B(p_i, R)} |\sigma|^{p+1} dV,$$
or
$$||\sigma||_{L^{\frac{n(p+1)}{n-1}}(B(p_i, r))}\leq \left(\frac{Cp}{(R-r)^2}\right)^{\frac{1}{p+1}} ||\sigma||_{L^{p+1}(B(p_i, R))}.$$
We now let $\beta=\frac{n}{n-1}$ and $r_l= r + \frac{R-r}{2^l}$, $p+1=2\beta^l$.
Then
\begin{eqnarray*}
||\sigma||_{L^{2\beta^{l+1}}(B(p_i, r_{l+1}))} &\leq& \left(\frac{C\beta^l}{(r_l-r_{l+1})^2}\right)^{\frac{1}{2\beta^l}} ||\sigma||_{L^{2\beta^l}(B(p_i, r_l))} \\
&\leq & \left(C2^l\beta^l\right)^{\frac{1}{2\beta^l}} ||\sigma||_{L^{2\beta^l}(B(p_i, r_l))}.
\end{eqnarray*}
Hence
\begin{eqnarray*}
||\sigma||_{L^{\infty}(B (p_i, r))} &\leq& ||\sigma||_{L^2(B(p_i, R))} \exp\left(\log (2C\beta) \sum_{l=1}^\infty \frac{l}{2\beta^l}   \right) \\
&\leq &C(r,R) ||\sigma||_{L^2(B(p_i, R))}.
\end{eqnarray*}
Similar calculations show that for $p>1$
\begin{eqnarray*}
&&\int_X \left|\nabla \left(\eta |\nabla\sigma|^{\frac{p+1}{2}} \right)\right|^2 dV \\
&\leq& 2\int_X |\nabla \eta|^2 |\nabla \sigma|^{p+1} dg + 2\int_X \eta^2 |\nabla \left(|\nabla\sigma|^{(p+1)/2}\right)|^2 dV \\
&\leq& A(R-r)^{-2} \int_{B(p_i, R)}|\nabla\sigma|^{p+1}dV -\int_X \eta^2 |\nabla\sigma|^{(p+1)/2}\Delta\left(|\nabla\sigma|^{(p+1)/2}\right) dV\\
&&+ \frac{1}{2}\int_X |\nabla \left(\eta|\nabla\sigma|^{(p+1)/2}\right)|^2 dV + 32\int_X |\nabla\eta|^2|\nabla\sigma|^{p+1}dV\\
&\leq& Cp(R-r)^{-2}\int_{B(p_i, R)}|\nabla\sigma|^{p+1} dV+ \frac{1}{2}\int_X |\nabla (\eta |\nabla \sigma|^{(p+1)/2})|^2 dV \\
&& - \frac{p+1}{16} \int_X \eta^2  |\nabla \sigma|^{p-1} \left( 4\nabla^2 v_i(\nabla\sigma, \overline{\nabla \sigma}) +|\nabla^2 \sigma|^2 - 16Bn |\nabla \sigma|^2  \right)dV \\
&\leq& C(1+ \sup_{B(p_i, R)} |\nabla v_i|) p(R-r)^{-2}\int_{B(p_i, R)}|\nabla\sigma|^{p+1} dV+ \frac{1}{2}\int_X |\nabla (\eta |\nabla \sigma|^{(p+1)/2})|^2 dV\\
&\leq& Cp(1+ R)(R-r)^{-2}\int_{B(p_i, R)}|\nabla\sigma|^{p+1} dV+ \frac{1}{2}\int_X |\nabla (\eta |\nabla \sigma|^{(p+1)/2})|^2 dV,
\end{eqnarray*}
where we apply integration by parts and estimate (\ref{l2lap}).

We can apply the Sobolev inequality and derive the following estimate.

$$ || \nabla \sigma  ||_{L^{\frac{n(p+1)}{n-1}}(B(p_i, r))} \leq  \left( \frac{Cp (1+R)}{(R-r)^2} \right)^{\frac{1}{p+1}} || \nabla \sigma ||_{L^{p+1}(B(p_i, R))}.$$
By similar calculations for the $L^2$-estimate of $|\sigma|$, we can apply Moser's iteration to derive
$$||\nabla \sigma||_{L^\infty(B(p_i, r))} \leq C(r, R) ||\sigma||^2_{L^2(B(p_i, R))}.$$
This completes the proof.
\end{proof}

The following corollary immediately follows by the bound of the Ricci potential $v_i$.

\begin{corollary} If we let $\tilde h = \hat h e^{-v_i}$, then for and $R\geq 1$, there exists $K=K(R)>0$ such that for all $k\geq K$ and $\sigma \in H^0(X,  (L_i)^k)$, we have
\begin{equation}
||\sigma||_{\tilde h,  L^\infty(B(p_i, R) )} \leq K ||\sigma||_{\tilde h_, L^2(B(p_i, 2R), \hat g)}.
\end{equation}
\begin{equation}
||\nabla\sigma||_{\tilde h,  L^\infty(B(p_i, R) )} \leq K ||\sigma||_{\tilde h_, L^2(B(p_i, 2R), \hat g)}.
\end{equation}
\end{corollary}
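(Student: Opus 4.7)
The plan is to reduce the corollary to the preceding proposition (applied to $\hat h$) by controlling the Ricci potential $v_i$ on the ball $B_{\hat g}(p_i,2R)$ and using it to compare the twisted metric $\tilde h = \hat h\,e^{-v_i}$ with $\hat h$.

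First I would observe that, since $\hat g = k g_{i,-1}$, we have $B_{\hat g}(p_i,2R) = B_{g_{i,-1}}(p_i,2R/\sqrt{k})$. Combining Lemma \ref{viprop} (which gives $v_i(p_i,0) = B_0+1$ and the uniform bounds $|\partial_t v_i|/v_i + |\nabla v_i|^2/v_i + |\Delta v_i|/v_i \le C$ with respect to $g_{i,t}$) with propagation of the time-derivative bound from $t=0$ to $t=-1$ and integration of the spatial gradient estimate along $g_{i,-1}$-geodesics out of $p_i$, one obtains $v_i(x,-1) \le C(1 + d_{g_{i,-1}}^2(x,p_i))$. On $B_{g_{i,-1}}(p_i,2R/\sqrt{k})$ this yields $v_i \le C(1+R^2/k)$, bounded by a constant $C(R)$ as soon as $k \ge R^2$; the gradient estimate then gives $|\nabla v_i|_{g_{i,-1}} \le C(R)$ on this ball, which rescales to the crucial decaying bound $|\nabla v_i|_{\hat g} \le C(R)\, k^{-1/2}$.

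Next, since $|\sigma|^2_{\tilde h} = |\sigma|^2_{\hat h}\, e^{-v_i}$, the uniform two-sided bound on $e^{-v_i}$ on $B_{\hat g}(p_i,2R)$ makes the $L^\infty$- and $L^2$-norms (with respect to the unchanged volume form $dV_{\hat g}$) of $|\sigma|$ with respect to $\tilde h$ and $\hat h$ comparable up to a factor $C(R)$. Plugging these equivalences into the $L^\infty$-versus-$L^2$ bound for $\hat h$ in the preceding proposition gives the first inequality of the corollary.

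For the gradient estimate, the Chern connection one-forms of $\tilde h$ and $\hat h$ differ by $-\partial v_i$, so for any holomorphic section $\sigma$ one has $\nabla_{\tilde h}\sigma = \nabla_{\hat h}\sigma - \sigma\,\partial v_i$. Taking $\hat g$-norms and using the bounds on $v_i$ and $|\nabla v_i|_{\hat g}$ from the first step gives
$$|\nabla \sigma|_{\tilde h,\hat g} \le C(R)\bigl(|\nabla \sigma|_{\hat h,\hat g} + k^{-1/2}\,|\sigma|_{\hat h}\bigr).$$
Applying both statements of the preceding proposition to $\hat h$ and then absorbing the error $k^{-1/2}\,\|\sigma\|_{\hat h, L^2}$ into the main term for $k \ge K(R)$ sufficiently large completes the argument. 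The one point requiring care — and the reason one needs a lower bound $K(R)$ on $k$ — is precisely this correction term arising from the change of connection, which is harmless thanks to the factor $k^{-1/2}$ coming from the rescaling $\hat g = k g_{i,-1}$; no new analytic ingredient beyond the gradient estimate of Theorem \ref{main1} is needed.
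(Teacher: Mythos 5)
Your proof is correct and follows the same underlying mechanism as the paper, which disposes of this corollary in one line: ``immediately follows by the bound of the Ricci potential $v_i$.'' The essential point, in both your argument and theirs, is that $v_i(p_i,-1)$ is uniformly bounded (by propagating $v_i(p_i,0)=B_0+1$ in time) and the gradient estimate $|\nabla v_i|^2 \leq Cv_i$ integrates along $g_{i,-1}$-geodesics to give $v_i \leq C(R)$ on $B_{\hat g}(p_i,2R)$, so the conversion factor $e^{-v_i}$ between $\tilde h$ and $\hat h$ is bounded above and below by constants depending only on $R$. You go one step further than the paper by explicitly addressing whether $\nabla$ in the corollary is taken with respect to the Chern connection of $\tilde h$ or of $\hat h$: the identity $\nabla_{\tilde h}\sigma = \nabla_{\hat h}\sigma - \sigma\,\partial v_i$ together with the bound $|\nabla v_i|_{\hat g}\leq C(R)k^{-1/2}$ (stated right before the proposition) handles the former interpretation, whereas the paper implicitly assumes the latter (keeping the $\hat h$-connection throughout and only changing the pointwise norm), under which the corollary is a pure pointwise rescaling by $e^{-v_i/2}$. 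This extra care on your part is correct and harmless. Two small over-cautionary points: the bound $v_i\leq C(R)$ on the $\hat g$-ball holds for all $k\geq 1$, not merely $k\geq R^2$, since the radius in $g_{i,-1}$-distance is $2R/\sqrt{k}\leq 2R$; and the correction term $k^{-1/2}\|\sigma\|_{\hat h,L^2}$ does not actually require $k$ large to be ``absorbed'' --- for $k\geq 1$ it is already dominated by $\|\sigma\|_{\hat h,L^2}$ and hence by the main term, so the triangle inequality suffices. Neither point affects validity, since the corollary is only claimed for $k\geq K(R)$ anyway.
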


For the global $L^2$-estimate, the Ricci curvature will appear and we do not have control of the Ricci curvature for $g_{i, t}$. However, it will help us eliminate the Ricci curvature by adding the twist $e^{-v_i}$ and establish the following $L^2$-estimate.

\begin{proposition} There exist $K>0$ and $A>0$ such that for all $i\geq 1$, $k\geq K$ and any $(L_i)^k$-valued $(0,1)$-section $\tau$ on $X $ satisfying
$$\dbar \tau =0, ~ \int_X |\tau|^2_{\tilde h} dV < \infty,$$
there exists an $(L_i)^k$-valued section $\nu$ such that
$$\dbar \nu = \tau$$ and
\begin{equation}\label{l2}
\int_X |\nu|^2_{\tilde h} dV \leq A  \int_X |\tau|^2_{\tilde h, \hat g} dV .
\end{equation}
\end{proposition}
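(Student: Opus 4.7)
The plan is to apply H\"ormander's $L^2$-existence theorem for $\dbar$ to the line bundle $(L_i)^k$ equipped with the twisted Hermitian metric $\tilde h = \hat h e^{-v_i}$. The weight $e^{-v_i}$ is precisely what is needed: it will absorb the $\ddbar v_i$ term that appears in the Ricci curvature of the rescaled flow metric $\hat g$, after which the parabolic Schwarz Lemma will supply the remaining positivity.

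First, I would compute the Chern curvature of $\tilde h$ on $(L_i)^k$ and combine it with $\mathrm{Ric}(\hat g)$. Since $\mathrm{Ric}(\hat h) = \tfrac{1}{2}\hat g + k\delta_i m_Y^{-1}[\Delta_{\sigma_Y}]$ as currents (including the nonnegative divisorial contribution coming from the factor $e^{\delta_i\rho}$ in $h_i$), one has
\begin{equation*}
\mathrm{i}\Theta_{\tilde h} \;=\; \tfrac{1}{2}\hat g + \ddbar v_i + k\delta_i m_Y^{-1}[\Delta_{\sigma_Y}].
\end{equation*}
On the other hand, rescaling the K\"ahler--Ricci flow identity to $g_{i,-1}$ gives $\mathrm{Ric}(g_{i,-1}) = \tfrac{1}{2}g_{i,-1} - \tfrac{1}{2(1-t_i)}\theta_Y - \ddbar v_i$, and since $\hat g = k g_{i,-1}$ has the same Ricci tensor as $g_{i,-1}$, the crucial cancellation
\begin{equation*}
\mathrm{i}\Theta_{\tilde h} + \mathrm{Ric}(\hat g) \;=\; \tfrac{1}{2}\hat g + \tfrac{1}{2}g_{i,-1} - \tfrac{1}{2(1-t_i)}\theta_Y + k\delta_i m_Y^{-1}[\Delta_{\sigma_Y}]
\end{equation*}
takes place: the $\ddbar v_i$ terms exactly cancel.

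Next, I would apply the parabolic Schwarz Lemma (Lemma \ref{pschwarz}) to get the uniform form-level bound $(1-t_i)^{-1}\theta_Y \leq B g_{i,-1}$, which yields
\begin{equation*}
\mathrm{i}\Theta_{\tilde h} + \mathrm{Ric}(\hat g) \;\geq\; \tfrac{1}{2}\hat g + \left(\tfrac{1}{2} - B\right)g_{i,-1} \;=\; \left(\tfrac{1}{2} + \tfrac{1/2 - B}{k}\right)\hat g \;\geq\; \tfrac{1}{4}\hat g,
\end{equation*}
provided $k \geq K$ with $K$ depending only on $B$; the singular current on the right-hand side is nonnegative and may be dropped. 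This is precisely the Bochner--Kodaira--Nakano positivity required to solve $\dbar\nu=\tau$ with a uniform $L^2_{\tilde h}$-bound, giving \eqref{l2} with some universal constant $A$ independent of $i$ and $k\geq K$.

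The one point that needs care is the mild singularity of $\tilde h$ along $\Delta_{\sigma_Y}$, which is of order $|\sigma_Y|^{-2\delta_i/m_Y}$ with $\delta_i/m_Y < 1/m_Y < 1$; the associated multiplier ideal sheaf is therefore trivial, and the standard $L^2$-theorem of Demailly for singular Hermitian metrics of positive curvature applies directly. Alternatively, one can regularize $\tilde h$ by replacing the factor $e^{\delta_i\rho}$ by $(|\sigma_Y|^2_{h_Y^{m_Y}}+\varepsilon)^{-\delta_i/m_Y}$, apply the smooth H\"ormander estimate with curvature bound preserved (using that the regularized curvature still lies above $\tfrac{1}{4}\hat g$ on the complement of a neighborhood of $\Delta_{\sigma_Y}$), and send $\varepsilon\to 0$; this is the step I expect to require the most care, but it is standard.
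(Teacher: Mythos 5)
Your proposal is correct and follows essentially the same approach as the paper's proof: both compute the curvature of the twisted metric $\tilde h = \hat h e^{-v_i}$, exploit the $e^{-v_i}$ twist to cancel the unknown $\ddbar v_i$ term appearing in $\mathrm{Ric}(\hat g)$, control the residual $\theta_Y$ contribution by the parabolic Schwarz lemma, and then invoke the standard H\"ormander $L^2$-estimate. Your closing remark about the sub-logcanonical singularity of $\tilde h$ along $\Delta_{\sigma_Y}$ makes explicit what the paper handles implicitly by working ``as currents'' and citing the $L^2$-theory.
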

\begin{proof} By definition,
\begin{eqnarray*}
Ric(\tilde h ) &=& Ric(\hat h) + \ddbar v_i \\
&\geq & \left(\frac{1}{2} - Bk^{-1}\right) \hat g - Ric(\hat g) + \delta_i m_Y^{-1}[\Delta_{\sigma_{Y}}]\\
&\geq&   \left(\frac{1}{2} - Bk^{-1}\right) \hat g - Ric(\hat g)
\end{eqnarray*}
as currents. The standard global $L^2$-estimate (cf. \cite{T1}) implies there exists  $(L_i)^k$-valued smooth section $\nu$ solving the $\dbar$-equation
$$\dbar \nu = \tau$$
and satisfies the estimate (\ref{l2}).
\end{proof}
%


For any $q \in \cX_{-1}$, we can assume that $(\cX_{-1}, kd_{-1}, q) $ converges in pointed Gromov-Hausdorff topology to a metric cone $C(Z)$ over the cross section $Z$ as $k\rightarrow \infty$ by \cite[Proposition 4.31]{HJST}. Let $O$ be  the vertex of $C(Z)$.  We write $\cR_Z$ and $\cS_Z$ the regular and singular part of $Z$. $\cS_Z$ is closed and has Hausdorff dimension no greater than $2n-4$.  $C(\cR_Z)\setminus \{ O \}$ has a natural complex structure induced from the Gromov-Hausdorff limit and the cone metric $g_C$ on $C(Z)$ is given by
$$g_C = \frac{1}{2} \ddbar r^2 ,$$
where $r$ is the distance function for any point $z\in C(Z)$ to $p$.

We can consider the rescaled sequence $(X, k_i g_{i, -1}, q_{k_i})$ such that its Gromov-Hausdorff limit after passing to a subsequence is $C(Z)$ with $q_{k_i}\rightarrow O$ as $i \rightarrow \infty$. In particular, the convergence is smooth on the regular part of $C(Z)$.

One considers the trivial line bundle $L_C$ on $C(Z)$ equipped with the connection $A_C$ whose curvature coincides with $\frac{1}{2} g_C$. In particular, the curvature of the hermitian metric defined by $h_C=e^{-r^2}$ is $g_C$.   Then $k_i g_{i, -1}$ and $((L_i)^{k_i}, A_i)$ converge smoothly  to $g_C$ and $(L_C, A_C)$ on the regular part of $C(Z)$, where $A_i$ is the connection of $((L_i)^{k_i}, (h_i)^{k_i})$.

One can apply the same techniques in \cite{DS1, T15} to construct the following peak section with the $L^2$-estimates we derive earlier.

\begin{lemma} There exist $\bar k>0$, $R>0$, $\delta>0$  such that for each $i\geq 1$ and $q\in B_{\bar k g_{i, -1}}(p_i, R)$, there exist holomorphic section $\sigma\in H^0(X, L_i^{\bar k})$ with $|| \sigma||_{L^2(B_{\bar k g_{i, -1}}(p_i, 2R), (h_i)^{\bar k}, \bar k g_{i, -1})} =1$,
$$|\sigma|^2_{(h_i)^{\bar k}} (q) \geq  \delta. $$

\end{lemma}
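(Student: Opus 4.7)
The plan is to follow the partial $C^0$ estimate scheme of Tian, refined by Donaldson-Sun, adapted to the present K\"ahler-Ricci flow setting through the weighted Ricci potential $v_i$. The three essential ingredients are already in place: the two Moser iteration corollaries above convert $L^2$ control into $L^\infty$ and $C^1$ control for holomorphic sections; the $L^2$-estimate (\ref{l2}) with the twisted hermitian metric $\tilde h = \hat h e^{-v_i}$ absorbs the unknown Ricci curvature of $\hat g$ into a nonnegative twist using only the bounds on $v_i$ from Theorem \ref{main1} and Theorem \ref{cor1}; and Proposition \ref{hjst1} supplies the Ricci-flat tangent cone on which an explicit peak section is available.

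I would argue by contradiction. If no $\bar k, R, \delta$ work, choose sequences $\bar k_j \to \infty$, indices $i_j$, and points $q_j$ with $d_{\bar k_j g_{i_j,-1}}(p_{i_j}, q_j) \leq 1$ such that every holomorphic $\sigma \in H^0(X, L_{i_j}^{\bar k_j})$ of unit $L^2$-norm on $B_{\bar k_j g_{i_j,-1}}(p_{i_j}, 2)$ satisfies $|\sigma|^2_{h_{i_j}^{\bar k_j}}(q_j) \to 0$. By a diagonal argument based on Proposition \ref{hjst1}, after passing to a subsequence $(X, \bar k_j g_{i_j,-1}, q_j)$ converges in pointed Gromov-Hausdorff topology to a Ricci-flat tangent cone $(C(Z), g_C, O)$ with singular set $\cS_Z$ of Hausdorff codimension at least four, the convergence being smooth on the regular part. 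Under this scaling, the twisted bundle $(L_{i_j}^{\bar k_j}, h_{i_j}^{\bar k_j} e^{-v_{i_j}})$ converges smoothly on $C(\cR_Z) \setminus \{O\}$ to the trivial model $(L_C, h_C = e^{-r^2})$ of curvature proportional to $g_C$, since the twist $\ddbar v_{i_j}$ at the rescaled scale is of size $O(\bar k_j^{-1})$ by Theorem \ref{main1}.

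Next, on the cone multiply the constant section $\mathbf{1}$ of $L_C$ by a radial cutoff $\eta_\rho$ equal to one on a ball of fixed large radius $\rho$ around $O$ and supported in a slightly larger ball, further modified to vanish on a small tubular neighborhood of $\cS_Z$; the codimension-four bound makes the $L^2$-energy of the latter modification arbitrarily small. Transporting $\eta_\rho \mathbf{1}$ through the smooth identification on $C(\cR_Z) \setminus \{O\}$ yields smooth sections $\tilde \sigma_j$ of $L_{i_j}^{\bar k_j}$ with $\|\tilde \sigma_j\|_{L^2, \tilde h}^2 \to \|\eta_\rho\|_{L^2(C(Z), h_C)}^2 > 0$, $\|\dbar \tilde \sigma_j\|_{L^2, \tilde h}^2 \to 0$, and $|\tilde \sigma_j|_{\tilde h}(q_j) \to 1$. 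Applying (\ref{l2}) to $\tau_j := \dbar \tilde \sigma_j$ produces $\nu_j$ with $\dbar \nu_j = \tau_j$ and $\|\nu_j\|_{L^2, \tilde h} \to 0$; set $\sigma_j := \tilde \sigma_j - \nu_j$, which is holomorphic. Elliptic regularity applied to the Hodge equation for $\nu_j$, combined with the Moser mean value estimate on the holomorphic section $\sigma_j$, gives $|\nu_j|_{\tilde h}(q_j) \to 0$, hence $|\sigma_j|_{\tilde h}(q_j) \to 1$. After renormalizing to unit $L^2$-norm on $B(p_{i_j}, 2)$ and observing that $e^{-v_i}$ is uniformly bounded above and below on the relevant ball by Theorem \ref{cor1}, this contradicts $|\sigma|^2_{h_{i_j}^{\bar k_j}}(q_j) \to 0$ and produces the desired $\delta$.

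The main obstacle I anticipate is when $q_j$ sits arbitrarily close to the singular set $\cS_Z$ of the tangent cone, where the naive pullback of $\mathbf{1}$ is not smooth across $\cS$ and the constant in (\ref{l2}) threatens to blow up. This is handled exactly as in \cite{DS1, T15}: the codimension-four estimate on $\cS_Z$ together with a cutoff supported uniformly away from a small neighborhood of $\cS_Z$ forces the $\dbar$-energy introduced by the cutoff to vanish in the limit, while Perelman's $\kappa$-non-collapsing (already invoked in (\ref{vncpatzl})) keeps the Sobolev constant uniformly bounded, so both (\ref{l2}) and the Moser $L^\infty$ bound pass to the limit with constants independent of $j$.
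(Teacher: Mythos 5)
Your proposal follows the same route the paper takes: the paper's own proof of this lemma is precisely a citation of the techniques in Donaldson–Sun and Tian together with the $L^2$-estimate, the Moser iteration corollaries, and the tangent-cone structure from Proposition \ref{hjst1} — which is exactly the contradiction-by-rescaling argument with a Gaussian peak section on the Ricci-flat limit that you sketch, including the crucial twist $\tilde h = \hat h\, e^{-v_i}$ that substitutes for a Ricci lower bound. Your sketch is at roughly the same level of detail as the paper's (which simply asserts "one can apply the same techniques in \cite{DS1, T15}"), and the one place you are a bit loose — calling the pointed GH limit of $(X, \bar k_j g_{i_j,-1}, q_j)$ a tangent cone outright, when depending on the relative rates of $\bar k_j, i_j$ it may only be an intermediate Ricci-flat limit to which the Donaldson–Sun iterated-cone argument must still be applied — matches the level of gloss in the paper itself and does not constitute a genuine gap.
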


For any sequence $q_j$ of $X$ with
$$q_j \rightarrow q \in \cX_{-1},$$
there exist $\delta_1>\delta_2>0$, $r_1>r_2>0$, $\bar k>0$, $C>0$ such that one can further construct peak sections of Gaussian type
$$\{ \sigma_{i, 0}, \sigma_{i, 1}, \sigma_{i, 2}, ..., \sigma_{i, N}\} \in H^0(X, L_i^{\bar k})$$
as in \cite{DS2} satisfying the following properties.

\begin{enumerate}

\item \begin{equation}\label{Fpar1}
\inf_{B_{\bar k g_{i, -1}}(q_i, \delta_1)} |\sigma_{i, 0} |_{(h_i)^{\bar k}} \geq 1/2.
\end{equation}

\item Let $f_{i, j} =\frac{ \sigma_{i, j} }{\sigma_{i, 0}}$ and
\begin{equation}\label{Fpar2}
F_i = (f_{i, 1}, ..., f_{i, N}): B_{\bar k g_{i, -1}}(q_i, 2\delta_1) \rightarrow \mathbb{C}^N.
\end{equation}
Then
  \begin{equation}\label{Fpar3}
 \sup_{B_{\bar k g_{i, -1}}(q_i, 2\delta_1)} |\nabla F_i|_{ \bar k g_{i, -1}} \leq C.
 \end{equation}
\item \begin{equation}\label{Fpar4}
\inf_{\partial B_{\bar k g_{i, -1}}(q_i, \delta_1)} |F_i| > r_1, ~\sup_{\partial B_{\bar k g_{i, -1}}(q_i, \delta_2)} |F_i| < r_2.
\end{equation}

\end{enumerate}
If we let $W_i = F( B_{\bar k g_{i, -1}}(q_i, \delta_1)$. After passing to a subsequence, $W_i$ converges to an analytic subvariety of $B_{g_{\mathbb{C}^N}}(q, \delta_1) \subset \mathbb{C}^N$ and $F_i$ converges to a map
$$F:  B_{d_{-1}}(q, \delta_1) \subset \cX_{-1} \rightarrow W \subset \mathbb{C}^N, $$
where $d_t$ is the metric of the limiting flow $\cX$ at $t\in (-\infty, 0)$. By the same argument in \cite{DS2}, one can further assume $W$ is normal and $F$ is bijective, after suitable modification of $F_i$.  We can replace $t=-1$ by any $t\in (-\infty, 0)$. Immediately, we have the following proposition as the analogue of the consequence from the partial $C^0$-estimates in \cite{DS1, T15, DS2, TiZZL16, CSW}.

\begin{proposition} \label{normana}
For each $t\in (-\infty, 0)$, $\cX_t$ is homeomorphic to an analytic normal variety $\cY_t$ of complex dimension $n$.
\end{proposition}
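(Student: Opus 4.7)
The plan is to promote the local analytic structure already constructed before the statement to a global analytic normal variety structure on $\cX_t$, then verify the homeomorphism. The argument for $t=-1$ carries over to arbitrary $t\in(-\infty,0)$ after trivial rescaling, so I will present it for $t=-1$.

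First I will assemble a countable cover of $\cX_{-1}$ by local charts. Since $(\cX_{-1},d_{-1})$ is a complete separable metric space (being a pointed Gromov-Hausdorff limit), I pick a countable dense set $\{q^{(\alpha)}\}_{\alpha\in\mathbb{N}}$ and, at each $q^{(\alpha)}$, apply the construction of peak sections $\{\sigma_{i,0}^{(\alpha)},\dots,\sigma_{i,N_\alpha}^{(\alpha)}\}$ and the maps $F_i^{(\alpha)}$ in \eqref{Fpar1}--\eqref{Fpar4}. Passing to a diagonal subsequence (independent of $\alpha$), each $F_i^{(\alpha)}$ converges to a map $F^{(\alpha)}:B_{d_{-1}}(q^{(\alpha)},\delta_1^{(\alpha)})\to W^{(\alpha)}\subset\mathbb{C}^{N_\alpha}$, with $W^{(\alpha)}$ a normal analytic subvariety and $F^{(\alpha)}$ a homeomorphism onto its image, by the same argument in \cite{DS2} recalled in the excerpt. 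These charts cover $\cX_{-1}$ because the radii $\delta_1^{(\alpha)}$ are uniform on compact sets of $\cX_{-1}$ and $\{q^{(\alpha)}\}$ is dense.

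Next I verify that the transition maps are biholomorphic, so that the charts define a well-defined analytic structure $\cY_{-1}$. On an overlap $U^{(\alpha\beta)}:=F^{(\alpha)}(B^{(\alpha)}\cap B^{(\beta)})\subset W^{(\alpha)}$, the transition $F^{(\beta)}\circ(F^{(\alpha)})^{-1}$ is the limit of $F_i^{(\beta)}\circ(F_i^{(\alpha)})^{-1}$, which are holomorphic maps between open subsets of $X$, written in rescaled holomorphic coordinates given by the holomorphic sections $\sigma_{i,j}^{(\alpha)}$, $\sigma_{i,j}^{(\beta)}$ of the ample line bundle $(L_i)^{\bar k}$. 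The uniform gradient bound \eqref{Fpar3} and the uniform $L^\infty$-bound from the Moser iteration give locally uniform $C^{0,1}$-bounds; combined with uniform smooth convergence on the regular part (Proposition \ref{hjst1}), the transition is holomorphic on the regular part of $U^{(\alpha\beta)}$, hence holomorphic on all of $U^{(\alpha\beta)}$ by Riemann extension across the singular set (which has Hausdorff codimension $\geq 4$, hence analytic codimension $\geq 2$ on the normal variety $W^{(\alpha)}$). Thus $\cY_{-1}$ is an analytic normal variety; the complex dimension is $n$ because the smooth regular part $\cR_{-1}\subset\cX_{-1}$ carries a K\"ahler metric coming from the smooth convergence of $g_{i,-1}$ and has real dimension $2n$, and sits as a dense open in $\cY_{-1}$.

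The homeomorphism $\cX_{-1}\cong\cY_{-1}$ follows chart by chart from the fact that each local $F^{(\alpha)}$ is a homeomorphism of metric spaces onto a normal analytic variety; compatibility over overlaps is automatic since $F^{(\beta)}\circ(F^{(\alpha)})^{-1}$ is induced pointwise by the identity on the underlying set of $\cX_{-1}$. The \textbf{main obstacle} I anticipate is precisely the verification that the transition maps extend holomorphically across the singular stratum $\cS_{-1}$: this requires combining the complex-codimension-2 bound for $\cS_{-1}$ (which follows from the $(2n-4)$-Hausdorff-dimension bound of Proposition \ref{hjst1} together with normality of each $W^{(\alpha)}$) with a Riemann extension argument, and also ensuring that none of the $F^{(\alpha)}$ collapse a positive-dimensional subset of $\cX_{-1}$ to a point on the singular stratum --- this non-collapsing is the content of the partial $C^0$-estimate \eqref{Fpar4} and the lower bound \eqref{Fpar1} on $|\sigma_{i,0}|$, which prevent the peak sections from degenerating in the limit and thereby force $F^{(\alpha)}$ to be injective even across singularities.
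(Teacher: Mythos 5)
Your proposal is correct and follows the same route as the paper: the paper constructs the local holomorphic charts $F$ via Gaussian peak sections and the $L^2$-estimate, invokes \cite{DS2} to get normality of $W$ and bijectivity of $F$, and then declares the proposition to follow ``immediately'' with a pointer to \cite{DS1, T15, DS2, TiZZL16, CSW}; you are simply making explicit the countable-cover-and-gluing step that the paper compresses into that citation. One small point worth tightening: the phrase ``Hausdorff codimension $\geq 4$, hence analytic codimension $\geq 2$'' is not a valid inference as stated (a closed set of low Hausdorff dimension need not be analytic at all); what you actually need is that $F^{(\alpha)}(\cS_{-1})$ has vanishing $(2n-2)$-Hausdorff measure — which does follow from the dimension bound and the uniform Lipschitz estimate \eqref{Fpar3} — so that the measure-theoretic Riemann extension theorem (Shiffman-type) applies on the normal variety $W^{(\alpha)}$; with that correction the transition-map argument closes as intended.
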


We will also show that the regular parts of the metric space $\cX_t$ and algebraic variety $\cY_t$ coincide.

\begin{lemma} \label{recoin} The regular set of $\cY_t$ coincides with the regular set of $\cX_t$.

\end{lemma}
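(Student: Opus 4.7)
The plan is to verify both inclusions $\cR_t \subseteq \cY_t^{\mathrm{reg}}$ and $\cY_t^{\mathrm{reg}} \subseteq \cR_t$ separately, using the partial $C^0$ construction culminating in Proposition \ref{normana}.

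For $\cR_t \subseteq \cY_t^{\mathrm{reg}}$, fix $q \in \cR_t$ and take approximants $q_i \in X$ with $(X, k_i g_{i,t}, q_i) \to (\cX_t, d_t, q)$. By Proposition \ref{hjst1}, this convergence is smooth Kähler in a small geodesic ball around $q$. The peak sections $\sigma_{i,0}, \ldots, \sigma_{i,N}$ in \eqref{Fpar1}--\eqref{Fpar4}, together with their holomorphic quotients $f_{i,j} = \sigma_{i,j}/\sigma_{i,0}$, converge smoothly in this neighborhood by standard elliptic estimates applied to the limit Kähler metric. The limit map $F$ is then a smooth holomorphic map whose differential is injective at $q$ (by the construction of Donaldson--Sun type peak sections), so $F$ is a biholomorphism from a neighborhood of $q$ onto a smooth complex submanifold of $\mathbb{C}^N$; hence $q \in \cY_t^{\mathrm{reg}}$.

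For the reverse inclusion, suppose $q \in \cY_t^{\mathrm{reg}}$ yet $q \in \cS_t$. Pick any tangent cone $C(Z) = \lim_k (\cX_t, \lambda_k^2 d_t, q)$ with $\lambda_k \to \infty$, which by Proposition \ref{hjst1} is a Ricci-flat metric cone with singular set of Hausdorff codimension at least four. Applying the partial $C^0$ machinery of Proposition \ref{normana} to this rescaled sequence identifies $C(Z)$ with a normal affine analytic variety $W_\infty \subseteq \mathbb{C}^N$, generated by suitably renormalized peak sections. Sorting peak sections according to their order of vanishing at $q$ and using the uniform gradient estimate \eqref{Fpar3} and the global $L^2$ estimate \eqref{l2} to pass to the limit across scales, one identifies the coordinate ring of $W_\infty$ with the associated graded ring $\bigoplus_{m \geq 0} \mathfrak{m}_q^m / \mathfrak{m}_q^{m+1}$ of the analytic local ring $\mathcal{O}_{\cY_t, q}$. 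Since $q \in \cY_t^{\mathrm{reg}}$, this local ring is regular, so its associated graded is a polynomial ring in $n$ variables and $W_\infty \cong \mathbb{C}^n$ as a complex affine variety. Combined with the fact that $C(Z)$ carries a Ricci-flat Kähler cone structure, the Cheeger--Colding volume cone rigidity (applied to the link $Z$, together with the Bishop--Gromov comparison that the volume ratio at the vertex equals the Euclidean one) forces $C(Z)$ to be isometric to flat $(\mathbb{C}^n, g_{\mathrm{Euc}})$, contradicting $q \in \cS_t$.

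The main obstacle is to make the identification between the metric tangent cone and the algebraic tangent cone of $\mathcal{O}_{\cY_t, q}$ rigorous: one must control the limits of peak sections under the joint blow-up $i \to \infty$ (the original Ricci-flow index) and $\lambda_k \to \infty$ (the tangent-cone scale) so that their leading jets at $q$ generate the symbol algebra of $\mathcal{O}_{\cY_t, q}$. This requires a diagonal extraction argument leveraging the uniform $L^\infty$ and gradient bounds for holomorphic sections established earlier in the section, as well as Hörmander's $L^2$ estimate \eqref{l2} in the rescaled setting, to guarantee that the algebra generated in the limit exhausts the full local symbol algebra rather than only a subalgebra.
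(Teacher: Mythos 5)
The first inclusion (metric regular set $\subseteq$ analytic regular set) is essentially the argument the paper gives: smooth convergence of $g_{i,t}$ and $J_i$ on $\cR_t$ makes the peak-section map a local biholomorphism. You state it more elaborately, but the idea is the same.

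For the reverse inclusion, your route is genuinely different from the paper's, and it contains a gap that I do not think can be closed without importing a substantial additional theorem. You pass to a tangent cone $C(Z)$ of $\cX_t$ at $q$, identify it as an affine variety $W_\infty$ via peak sections, argue from regularity of $\mathcal{O}_{\cY_t,q}$ that $W_\infty \cong \mathbb{C}^n$, and then assert that ``the volume ratio at the vertex equals the Euclidean one,'' whence $C(Z)$ is flat by volume rigidity. The problem is that knowing $W_\infty$ is biholomorphic to $\mathbb{C}^n$ does not by itself tell you anything about the metric volume density of $C(Z)$ at its vertex: the density there is governed by the (renormalized) limit of the Nash entropy, and equating it to the Euclidean value is essentially equivalent to what you are trying to prove, namely that $q$ is a metrically regular point. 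There do exist K\"ahler cones biholomorphic to $\mathbb{C}^n$ with non-standard Reeb fields, and ruling out a non-flat Ricci-flat cone metric on $\mathbb{C}^n$ requires a genuine uniqueness input (in the spirit of normalized-volume maximality / K-semistability of the flat cone), which you neither establish nor cite. Absent that, the ``Bishop--Gromov forces the ratio to be Euclidean'' step is unjustified, and the argument is close to circular. Your identification of $W_\infty$ with $\mathrm{Spec}$ of the associated graded ring of $\mathcal{O}_{\cY_t,q}$ is also only sketched, as you acknowledge.

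The paper avoids all of this by working with the complex Monge--Amp\`ere equation directly. In the biholomorphic chart given by $F_i$, writing $(\ddbar\psi_i)^n = e^{v_{i,t}}|f_i|^2(\ddbar|z|^2)^n$, the Li--Yau gradient bound on the Ricci potential $v_{i,t}$, the gradient bound \eqref{Fpar3} (giving a lower bound on $|f_i|$), and the volume non-inflation estimate (giving an upper bound on $|f_i|$ through the $L^2$-norm) pin down the right-hand side uniformly. Combined with \eqref{Fpar3} again, this gives $C^0$-equivalence of $g_{i,t}$ with the Euclidean metric; Schauder theory for complex Monge--Amp\`ere then upgrades this to $C^\alpha$-equivalence, so the tangent flow at $q$ is flat $\mathbb{C}^n$ and $q\in\cR_t$. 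This is both more elementary and more robust than the tangent-cone-comparison route you propose; if you want to keep your approach, you would at minimum need to invoke a uniqueness theorem for Ricci-flat K\"ahler cone structures on $\mathbb{C}^n$, and to complete the diagonal-extraction argument identifying the coordinate ring of $W_\infty$ with the full symbol algebra.
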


\begin{proof} It is obvious that the regular set of $\cX_t$ must be a subset of the regular set of $\cY_t$ because $g_{i, t}$ and the complex structure $J_i$ of $(X, p_i)$ converge smoothly on the regular part of $\cX_t$. Suppose $q$ is a regular point of $\cY_t$ for fixed $t\in (-\infty, 0)$. Then there exists  $F: B_{d_t}(q, \delta) \rightarrow \mathbb{C}^n$ such that $F(B_{d_t}(q, \delta))$ is an open neighborhood of $0\in \mathbb{C}^n$. Furthermore, $F$ is a homeomorphism from $B_{d_t}(q, \delta)$ to $F(B_{d_t}(q, \delta))$ and $F$ is holomorphic on the regular part of $B_{d_t}(q, \delta)$. We can approximate $F$ by a sequence of biholomorphisms
$$F_i: B_{g_{i, t}}(q_i , \delta) \rightarrow F(B_{g_{i, t}}(q_i , \delta)) \subset \mathbb{C}^n$$
satisfying (\ref{Fpar1}), (\ref{Fpar2}), (\ref{Fpar3}), (\ref{Fpar4}) for some sufficiently small $\delta>0$ and uniformly $\delta_2, \delta_2 < \delta$, $r_1> r_2>0$ and $C>0$ with $N=n$,  by constructing Gaussian peak sections from the $L^2$-estimates and the partial $C^0$ techniques in \cite{DS1, T15, DS2}.

We can now assume $g_{i, t} = \ddbar \psi_i$ in $B_{g_{i, t}}(q_i, \delta)$. Let $v_{i, t}$ be the Ricci potential. Since $F_i$ is biholormphic and $B_{g_{\mathbb{C}^n}}(0, r_1) \subset F_i (B_{g_{i, t}}(q_i, \delta))$, we can identify $g_{i, t}$ as a K\"ahler metric on $B_{g_{\mathbb{C}^n}}(0, r_1)$. Then there exists a nonvanishing holomorphic function $f_i$ on $B_{g_{\mathbb{C}^n}}(0, r_1) $ such that
\begin{equation}\label{maF}
 (\ddbar \psi_i)^n = e^{v_{i, t}} |f_i|^2 (\ddbar |z|^2)^n
 \end{equation}%
on $B_{g_{\mathbb{C}^n}}(0, r_1) $, where $z$ is the holomorphic coordinates of $\mathbb{C}^n$ and $g_{\mathbb{C}^n}= \ddbar |z|^2$. By our Li-Yau gradient estimates, there exists $C>0$ independent of $i$ such that
$$||v_{i, t}||_{C^1\left(B_{g_{\mathbb{C}^n}}(0, r_1), g_{i, t} \right)} \leq C. $$
By (\ref{Fpar3}), there exists $C>0$ independent of $i$ such that
$$ \inf_{B_{g_{\mathbb{C}^n}}(0, r_1)} |f_i| \geq C^{-1}. $$
On the other hand, there exists $C>0$ independent of $i$ such that
$$\vol (B_{g_{i, t}}(q_i , \delta), g_{i, t}) \leq C$$
by the volume non-inflation estimate (cf. \cite[Theorem 8.1]{Bam20a}). This immediately implies that the $L^2$-norm of $f_i$ with respect to $g_{\mathbb{C}^n}$ is uniformly bounded and so there exists $C>0$ independent of $i$ such that
$$ \sup_{B_{g_{\mathbb{C}^n}}(0, r_1)} |f_i| \leq C.$$
By (\ref{Fpar3}), $g_{i,,t}$ is uniformly bounded below by a multiple of $g_{\mathbb{C}^n}$. By the uniform bounds on $f_i$ and $v_i$, $g_{i, t}$ is also bounded above by a multiple of $g_{\mathbb{C}^n}$ from (\ref{maF}). Therefore, $g_{i, t}$ is $C^0$-equivalent to $g_{\mathbb{C}^n}$. We now apply the Schauder estimate of complex Monge-Amp\`ere equations for $\psi_i$ and so $g_{i, t}$ is $C^{\alpha}$ equivalent to $g_{\mathbb{C}^n}$ for some uniform $\alpha>0$. Therefore $g_{i, t}$ converges in $C^\alpha$ and so the tangent flow of $\cX$ at $q\in \cX_t$  is the flat $\mathbb{C}^n$. This implies that $q$ must be a regular point of $\cX$. The lemma is then proved.
\end{proof}

We can now completed the proof Theorem \ref{2main1}.

\begin{proof}[Proof of Theorem \ref{2main1}, item (2)]
This follows immediately from combining Proposition \ref{hjst1}, Proposition \ref{normana} and Lemma \ref{recoin}.
\end{proof}


\bigskip
\section{Diameter and scalar curvature estimates on Fano bundles}\label{dandseonbundle}

In this section, we are under the set-up of Section \ref{setupofKRF}, that is, we have the solution to the unnormalized K\"ahler-Ricci flow, which we denote by $\omega(t),~~ t\in [0, 1)$; and we have the solution to the normalized K\"ahler-Ricci flow, which we denote by $\tilde\omega(s),~~ s\in [0, \infty)$. These two flows are related by
\begin{equation}\label{rb unkrf and nkrf'4}
s=-\ln (1-t),~~ t=1-e^{-s},~~ \tilde{\omega}(s)=(1-t)^{-1}\omega(t).
\end{equation}
We use the notations and conventions from Section \ref{setupofKRF}.

%

%
Now we assume that $0< m:=\dim_{\mathbb{C}} Y < \dim_{\mathbb{C}} X = n$, and that the regular fibres of $\Phi: X \rightarrow Y$ are biholomorphic to each other.

For any point $q\in Y^{\circ}$, there exists a Zariski open set $U_q$ of $Y$ such that
$$\pi^{-1}(U_q) = U_q \times Z, $$
where $Z : = \Phi^{-1}(q)$ is a Fano manifold of dimension $n-m$. For simplicity, we can assume $U_q = Y \setminus D_q$ for some divisor $D_q$ of $Y$. Let $\sigma_q$ be the holomorphic section associated to $D_q$ equipped with a smooth hermtian metric $h_q$. Without loss of generality, we can assume
$$ Ric(h_q) \leq B_0 \omega_Y , $$
for some fixed constant $ B_0 = B_0 ( q ) < \infty $. We let
$$F_q: U_q\times Z \rightarrow Z $$
be the horizontal projection.

The following lemma is essentially proved in \cite{FZ} (cf. Lemma 2.2).

\begin{lemma} \label{hor} There exist constants $C, k<\infty$, depending on $n, \omega_0$, such that for any $t\in [0, 1)$ and $z \in Z$, we have
$$  \omega(t)|_{F_q^{-1}(z)} \leq C\left( |\sigma_q|^{-2k}_{h_q} \right)  \omega_Y.$$
Here we identify $F_q^{-1}(z)$ with $U_q$ and $\omega(t)|_{F_q^{-1}(z)}$ is the restriction of $\omega(t)$ on $F_q^{-1}(z)$.
\end{lemma}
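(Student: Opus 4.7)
The plan is to establish the estimate by the maximum principle applied to a Schwarz-type quantity on the horizontal trace of $\omega(t)$, with a $|\sigma_q|_{h_q}^{2k}$ weight to accommodate the degeneration of the product structure along $D_q$, essentially reproducing \cite[Lemma~2.2]{FZ}. Writing $\iota_z:U_q\to X$ for the horizontal embedding $y\mapsto(y,z)$ under the identification $\pi^{-1}(U_q)=U_q\times Z$, the claim is equivalent (up to a dimensional factor, since $\iota_z^*\omega(t)$ is positive so its largest eigenvalue with respect to $\omega_Y$ controls the trace from both sides) to the uniform bound
$$H(x,t):=\tr_{\omega_Y}\bigl(\iota_z^*\omega(t)\bigr)\leq C\,|\sigma_q|_{h_q}^{-2k}\qquad\text{on}\qquad \pi^{-1}(U_q)\times[0,1),$$
where $H$ is a smooth positive function on $\pi^{-1}(U_q)\times[0,1)$.

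The argument has three ingredients. First, a Bochner-type computation in the horizontal/vertical decomposition, using the K\"ahler-Ricci flow $\partial_t\omega(t)=-\ric(\omega(t))$, the K\"ahler identities for $\omega(t)$, and the upper bound on the bisectional curvature of $\omega_Y$, yields
$$\bigl(\partial_t-\Delta_{\omega(t)}\bigr)\log H\leq C_1\,\tr_{\omega(t)}\omega_Y+C_2,$$
whose right side is bounded by a uniform constant thanks to the parabolic Schwarz lemma (Lemma~\ref{pschwarz}). Second, on $X^\circ$ we have $\ddbar\log|\sigma_q|_{h_q}^2=-\ric(h_q)$ and $\ric(h_q)\leq B_0\omega_Y$, so Lemma~\ref{pschwarz} gives
$$\bigl(\partial_t-\Delta_{\omega(t)}\bigr)\log|\sigma_q|_{h_q}^2=\tr_{\omega(t)}\ric(h_q)\leq B_0\,\tr_{\omega(t)}\omega_Y\leq C.$$
Third, we form
$$Q:=\log H+k\log|\sigma_q|_{h_q}^2-A\Psi,$$
where $\Psi$ is a uniformly bounded auxiliary function built from the Monge-Amp\`ere potential $\phi$ of (\ref{mauflow2}) (a standard linear combination of $\phi$ and $\log(\omega(t)^n/\Omega)$) chosen so that $(\partial_t-\Delta_{\omega(t)})(-A\Psi)$ contributes a dominating negative term $-Ac_0$ with $c_0>0$. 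For $A,k$ large enough the combination shows that any interior space-time maximum of $Q$ yields a contradiction; since $Q\to-\infty$ along $\Phi^{-1}(D_q)$ for $k$ sufficiently large and $Q(\cdot,0)$ is bounded above by the smoothness of the initial data and a polynomial blow-up estimate for the product structure near $D_q$, the maximum principle applied on an exhausting family of subdomains $\{|\sigma_q|^2\geq\epsilon\}\times[0,T]$, followed by passing $\epsilon\to 0$ and $T\to 1$, produces a uniform bound $Q\leq C$, hence $H\leq C|\sigma_q|_{h_q}^{-2k}$.

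The main obstacle is the first step. The standard parabolic Schwarz lemma (Lemma~\ref{pschwarz}) controls $\tr_{\omega(t)}\omega_Y$, which only gives a \emph{lower} bound on the horizontal eigenvalues of $\omega(t)$, whereas $\log H$ measures their \emph{upper} end; the Bochner computation for $\log H$ produces non-standard cross-terms arising from the mixed $(\alpha\bar b)$ components of $\omega(t)$ in the horizontal/vertical decomposition, which have no analogue in the usual global Schwarz framework. The bounded-bisectional-curvature hypothesis on $\omega_Y$, together with the a priori bound on $\tr_{\omega(t)}\omega_Y$ from Lemma~\ref{pschwarz}, provide just enough control to absorb these cross-terms; this computation has been carried out in detail in \cite[Lemma~2.2]{FZ}, and we adapt it essentially verbatim, inserting the $|\sigma_q|_{h_q}^{2k}$ penalty to handle the failure of the product trivialization along $D_q$.
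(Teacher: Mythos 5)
The paper gives no proof for this lemma and defers entirely to \cite[Lemma 2.2]{FZ}; your reconstruction of the weighted parabolic Schwarz argument for the horizontal trace $H=\tr_{\omega_Y}(\iota_z^*\omega(t))$ matches that source and is sound. The only remark worth making is that the auxiliary function $\Psi$ is superfluous (and producing a uniformly bounded $\Psi$ with $(\partial_t-\Delta_{\omega(t)})\Psi\geq c_0>0$ would in fact be delicate here, since the Monge--Amp\`ere density $\log(\omega(t)^n/\Omega)$ is not bounded below in the collapsing regime): once $(\partial_t-\Delta_{\omega(t)})\left(\log H + k\log|\sigma_q|^2_{h_q}\right)\leq C$ is established via Lemma \ref{pschwarz}, subtracting $Ct$ and applying the parabolic maximum principle on the exhaustion $\{|\sigma_q|^2_{h_q}\geq\varepsilon\}\times[0,T]$, with $k$ chosen so that the $t=0$ data controls the boundary contribution, already yields a bound uniform in $T<1$.
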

Then we have the following estimates on volume forms.
\begin{lemma}\label{vbonFanobundle}
There exists $C, k<\infty$, depending on $n, \omega_0$, such that for any $t\in [0, T)$, we have
$$C^{-1}\omega_Y^m \wedge \omega(t)^{n-m} \leq \omega(t)^n \leq C\left( |\sigma_q|^{-2mk}_{h_q}\right) \omega_Y^m\wedge \omega(t)^{n-m} , $$
on $U_q\times Z$.
\end{lemma}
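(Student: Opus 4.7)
The statement is pointwise, so the plan is to fix $p=(y_0,z_0)\in U_q\times Z$, work in local holomorphic coordinates $y^1,\ldots,y^m$ on $U_q$ near $y_0$ and $z^1,\ldots,z^{n-m}$ on $Z$ near $z_0$, and compare the two volume forms via matrix block decompositions. Writing the Hermitian matrix of $\omega(t)$ in these coordinates as
\[
g(t) = \begin{pmatrix} A & B \\ B^* & D \end{pmatrix},
\]
with $A$ the $m\times m$ horizontal block and $D$ the $(n-m)\times(n-m)$ vertical block, and using the fact that $\omega_Y$ is the pullback of a form on $U_q$ so that its matrix has the form $\mathrm{diag}(g_Y^h,0)$, a direct calculation yields
\[
\frac{\omega_Y^m\wedge\omega(t)^{n-m}}{\omega(t)^n} = \binom{n}{m}^{-1}\frac{\det(g_Y^h)\det(D)}{\det(g(t))} = \binom{n}{m}^{-1}\frac{\det(g_Y^h)}{\det(S)},
\]
where $S:=A-BD^{-1}B^*>0$ is the Schur complement (positive definite since $g(t)>0$). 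Both inequalities of the lemma then reduce to a two-sided bound on the ratio $\det(g_Y^h)/\det(S)$.

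For the lower bound $C^{-1}\omega_Y^m\wedge\omega(t)^{n-m}\le\omega(t)^n$, I will re-express $\det(g_Y^h)/\det(S)$ as the product $\lambda_1\cdots\lambda_m$ of the (at most $m$) nonzero eigenvalues of $\omega_Y$ with respect to $\omega(t)$, which one sees by factoring the characteristic polynomial $\det(g_Y-\lambda g(t))$ using the Schur identity. The parabolic Schwarz lemma (Lemma \ref{pschwarz}) gives $\sum_i\lambda_i=\tr_{\omega(t)}\omega_Y\le C$ uniformly on $X\times[0,1)$, and AM--GM delivers $\prod_i\lambda_i\le (C/m)^m$, producing a uniform constant depending only on $n$ and $\omega_0$.

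For the upper bound $\omega(t)^n\le C|\sigma_q|^{-2mk}_{h_q}\omega_Y^m\wedge\omega(t)^{n-m}$, I need the reverse estimate $\det(S)\le C|\sigma_q|^{-2mk}_{h_q}\det(g_Y^h)$. The key input is Lemma \ref{hor}: at $p$ the horizontal tangent space $T_{y_0}(U_q\times\{z_0\})$ is spanned by $\partial_{y^1},\ldots,\partial_{y^m}$, so the restriction $\omega(t)|_{F_q^{-1}(z_0)}$ at $p$ is represented precisely by the block $A$, while $\omega_Y|_{F_q^{-1}(z_0)}$ at $p$ is $g_Y^h$. Lemma \ref{hor} therefore reads $A\le C|\sigma_q|^{-2k}_{h_q}g_Y^h$ as Hermitian matrices, giving $\det(A)\le C^m|\sigma_q|^{-2mk}_{h_q}\det(g_Y^h)$. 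Since $BD^{-1}B^*\ge 0$, the Schur complement satisfies $0<S\le A$ and hence $\det(S)\le\det(A)$, yielding the required bound.

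The main conceptual step is the identification of $\omega(t)|_{F_q^{-1}(z)}$ at $p$ with the horizontal block $A$ in the product coordinates, together with the observation that the Schur complement identity converts the volume-form comparison into a comparison of Hermitian matrix blocks; once these are in place, Lemma \ref{hor} and the parabolic Schwarz lemma do all the analytic work, and no further PDE estimates are needed.
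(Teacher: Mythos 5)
Your proof is correct and follows essentially the same route as the paper: the paper's linear-algebra inequality $\det\bigl(\begin{smallmatrix}A & B\\ \bar B^T & D\end{smallmatrix}\bigr)\leq \det A\,\det D$ is exactly the observation $\det(S)\leq\det(A)$ for the Schur complement $S=A-BD^{-1}B^*$ that you use, combined with Lemma \ref{hor} identifying $A$ as $\omega(t)|_{F_q^{-1}(z)}$; and for the first inequality the paper simply invokes the parabolic Schwarz lemma, which via $\omega_Y\leq(\tr_{\omega(t)}\omega_Y)\,\omega(t)\leq C\omega(t)$ gives $\omega_Y^m\wedge\omega(t)^{n-m}\leq C^m\omega(t)^n$ in one line, a mild shortcut past your eigenvalue/AM--GM argument.
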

\begin{proof} The first inequality is a direct consequence of the parabolic Schwarz lemma. For the second inequality follows from the following linear algebraic inequality
\begin{equation} \label{algdet} \det \left(
             \begin{array}{cc}
              A & B \\
              \bar B^T & D \\
             \end{array}
           \right)  \leq \det A \det D
        \end{equation}
for any positive definite hermitian matrix $\left(\begin{array}{cc}
   A  & B \\
   \bar B^T  & D
  \end{array}  \right)$ and Lemma \ref{hor}.
For the sake of completeness, we will prove (\ref{algdet}) below. There exist a  unitary matrix $U$ and a positive definite diagonal matrix $V$ such that $V^2=\bar U^T D U$. Then
$$  \left(
             \begin{array}{cc}
             A & B \\
             \bar B^T & D \\
             \end{array}
           \right) =  \left(
             \begin{array}{cc}
              A & 0 \\
              \bar B^T & I \\
             \end{array}
           \right)  \left(
             \begin{array}{cc}
             I &  A^{-1} B\\
          0 &  D- \bar B^T A^{-1} B \\
             \end{array}
           \right)  $$    
  and so
  $$\det \left(
             \begin{array}{cc}
              A & B \\
              \bar B^T  & D \\
             \end{array}
           \right) 
           = (\det A) (\det D) \det( I- K )   \leq \det A \det D, $$
where $K= \left(\overline{BUV^{-1}} \right)^T A^{-1} \left(BUV^{-1} \right)$. The last inequality follows from the observation that both $I-K$ and $K$ are semi-positive definite with each eigenvalue of $K$ lying in $[0, 1]$. We have now proved (\ref{algdet}).
\end{proof}
As before, we fix a smooth K\"ahler metric $\omega_B$ on the base space $\mathbb{CP}^N$, e.g. $\omega_B=\lambda \omega_\fs$ for any $\lambda>0$, and for any $p\in X$, we denote by $\mathcal{B}_{\omega_B}(p, r) = \Phi^{-1}\left(B_{\omega_B} ( \Phi(p) , r) \right)$ the pre-image of the geodesic ball $B_{\omega_B}(\Phi(p), r) \subset \mathbb{CP}^N$. We have the following estimates on the Type I tubular neighborhood of the Ricci vertices.

\begin{theorem} \label{scanddeonFanobundle}
Let $g(t)$ be the maximal solution of the K\"ahler-Ricci flow (\ref{unkrflow1'1}) on $X\times [0, 1)$ as described above. For any open subset $U\subset\subset Y^\circ$, $T<\infty$, there exists a smooth closed $(1,1)$-form $\theta_Y\in \vartheta$ on $Y$, constants $\ep>0$, $C<\infty$, both depend on $n, g_0, \theta_Y, U, T$, such that the followings hold for all $t\in [1-\ep, 1)$.

\begin{enumerate}

\item There exists a Ricci vertex $p_t \in \Phi^{-1}(U) $ associated with $\theta_Y$ at time $t$.

\medskip

\item For any $s\in [t-T(1-t), 1-T^{-1}(1-t) ]$, we have the estimates
\begin{equation} \label{scandde1}
\diam \left( \mathcal{B}_{\omega_B}(p_t, (1-s)^{1/2}) , g(s) \right)  \leq C(1-s)^{1/2} ,
\end{equation}
\begin{equation} \label{scandde2}
\sup_{ \mathcal{B}_{\omega_B}(p_t, (1-s)^{1/2}) } |\rr ( \cdot , s) |\leq \frac{C}{1-s},
\end{equation}
where $R$ is the scalar curvature of $g(t)$.
\end{enumerate}
\end{theorem}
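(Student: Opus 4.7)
The plan is to reduce Theorem \ref{scanddeonFanobundle} to Theorem \ref{scanddearv} by combining two ingredients: (a) a choice of $\theta_Y$ for which the Ricci vertex $p_t$ always lies in $\Phi^{-1}(U)$ for $t$ close to $1$, and (b) the Type I volume bound
\begin{equation*}
\vol_{g(s)}\bigl(\mathcal{B}_{\omega_B}(p_t, 10(1-s)^{1/2})\bigr) \leq A (1-s)^n
\end{equation*}
holding for every $s \in [t-T(1-t),\, 1-T^{-1}(1-t)]$, with $A$ depending only on $n, g_0, \theta_Y, U, T$. Once both are in place, Theorem \ref{scanddearv}, applied with its $t_0$ corresponding to our $t$ and its $t$ to our $s$, delivers (\ref{scandde1}) and (\ref{scandde2}) directly.

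For (a) I would generalize Lemma \ref{a'1} by replacing the single-point cut-off by one adapted to $U$. Since $Y$ is a normal projective variety embedded in $\mathbb{CP}^N$, one can choose a smooth function $\rho$ on $Y$ (the restriction of a smooth function on $\mathbb{CP}^N$) such that $\rho \equiv 0$ on some intermediate $V \subset\subset U$ and $\rho \geq A_0$ on $Y \setminus U$. The maximum principle applied to (\ref{ceofu0}) supplies $|u_0| \leq C_0 e^s$, so on $\Phi^{-1}(V)$ the function $u = u_0 + e^s \rho$ satisfies $u \leq C_0 e^s$, while on $X \setminus \Phi^{-1}(U)$ one has $u \geq (A_0 - C_0) e^s$; fixing $A_0 > 2 C_0$ then forces the infimum of $u(\cdot, s)$ to be attained in $\Phi^{-1}(U)$ for every $s \geq 0$, and setting $\theta_Y := \omega_Y - \ddbar \rho$ realizes (a).

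For (b) the key point is the Fano-bundle structure over $Y^\circ$. Fix $U' \subset\subset Y^\circ$ with $U \subset\subset U'$; choosing $\ep = \ep(T, U, U')$ small enough, every base-ball $B_{\omega_B}(\Phi(p_t), 10(1-s)^{1/2})$ in (b) lies in $U'$ once $t \geq 1 - \ep$, since $\Phi(p_t) \in U$ by (a). Lemma \ref{vbonFanobundle}, applied on $\Phi^{-1}(\overline{U'})$ and using a finite cover by trivializing charts on $\overline{U'}$, gives the pointwise bound
\begin{equation*}
\omega(s)^n \leq C\, \omega_Y^m \wedge \omega(s)^{n-m}.
\end{equation*}
Because $\omega_Y$ is pulled back from $Y$, its restriction to each fibre vanishes, so $[\omega(s)|_{\Phi^{-1}(q)}] = (1-s)[\omega_0|_{\Phi^{-1}(q)}]$ and the fibre volume equals $(1-s)^{n-m}$ times the $\omega_0$-volume of the (uniformly bounded) fibre. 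A Fubini argument for pullback $(m,m)$-forms then yields
\begin{equation*}
\int_{\Phi^{-1}(W)} \omega_Y^m \wedge \omega(s)^{n-m} \leq C (1-s)^{n-m} \vol_{\omega_Y}(W),
\end{equation*}
and plugging in $W = B_{\omega_B}(\Phi(p_t), 10(1-s)^{1/2})$ together with $\vol_{\omega_Y}(W) \leq C(1-s)^m$ (from equivalence of $\omega_Y$ and $\omega_B$ on $\overline{U'}$) produces the required $(1-s)^n$ bound.

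The chief obstacle is (a): Lemma \ref{a'1} only localizes near a single point of $Y$, whereas here we must force the Ricci vertex into an open set that may be large. This is handled by the cut-off construction above, but it crucially uses the fact that smooth functions on the normal variety $Y$ are restrictions of smooth functions on $\mathbb{CP}^N$; the remaining step (b) is essentially dimensional analysis via Fubini, once Lemma \ref{vbonFanobundle} is at hand.
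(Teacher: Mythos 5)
Your proposal is correct and structurally identical to the paper's argument: force the Ricci vertex into $\Phi^{-1}(U)$ by choosing $\theta_Y = \omega_Y - \ddbar\rho$ appropriately, verify the Type I volume bound (\ref{scanddearv1}) from Lemma \ref{vbonFanobundle} in a slightly larger relatively compact open set in $Y^\circ$, and invoke Theorem \ref{scanddearv}. The one place you diverge is step (a): you treat as an obstacle that Lemma \ref{a'1} localizes only near a single point of $Y$, and construct a new cut-off that vanishes on $V \subset\subset U$ and is $\geq A_0$ off $U$. That generalized cut-off does work (the same global bound $|u_0| \leq C_0 e^s$ from the maximum principle applied to (\ref{ceofu0}) pushes the infimum of $u_0 + e^s\rho$ into $\Phi^{-1}(U)$), but it is unnecessary. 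The paper simply fixes one point $q\in U$, chooses $r_0$ so small that $B_{\omega_B}(q,10r_0)\cap Y\subset U\cap U_q$ and the connectedness requirement of Proposition \ref{loe'1} is automatic, and applies Lemma \ref{a'1} verbatim, which already gives $p_t \in \Phi^{-1}(B_{\omega_B}(q,r_0)) \subset \Phi^{-1}(U)$. That single-point localization also immediately supplies a uniform lower bound on $|\sigma_q|$ on $B_{\omega_B}(q,2r_0)\cap Y$, which is all one needs to integrate the inequality of Lemma \ref{vbonFanobundle} over $\Phi^{-1}(B_{\omega_B}(p,r))$; your finite-cover-by-trivializing-charts version of the same Fubini computation on $\overline{U'}$ is morally identical but carries a bit more bookkeeping.
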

\begin{proof}
Throughout the proof, all the constants depend at most on $n, g_0, \theta_Y, U, T$.

Let $q\in U \subset Y^{\circ}$ be a given point. Then we can choose $r_0>0$ small enough, which depends only on $q$ (hence $U$), such that
\begin{enumerate}
\item $B_{\omega_B} \left (q , 10r_0 \right )\cap Y \subset U\cap U_q$ ;
\item $B_{\omega_B} \left( p , r \right)\cap Y$ has only one component for any $p\in B_{\omega_B} \left (q , r_0 \right )$ and $r\in (0, 10r_0)$.
\end{enumerate}
%
Hence there is a constant $C<\infty$, such that $|\sigma_q|\geq C^{-1}$ on $B_{\omega_B} \left (q , 2r_0 \right )\cap Y$. Then for any $p\in B_{\omega_B} \left (q , r_0 \right )$ and $r\in (0, r_0)$, we have $B_{\omega_B} \left( p , r \right) \subset B_{\omega_B} \left( q , 2r_0 \right)$,  hence by Lemma \ref{vbonFanobundle}, we have
\begin{equation}\label{vbaq}
\vol_{g(s)}\left(\Phi^{-1}(B_{\omega_B}(p, r))\right)\leq C r^{2m}(1-s)^{n-m},
\end{equation}
for any $s\in [0, 1)$. Then we choose $\ep>0$ small enough, such that for any $s\in [t-T(1-t), 1-T^{-1}(1-t) ]$, we have
$$
10(1-s) \leq 10(1+T)(1-t)< r_0,
$$
provided that $1-t\leq\ep$ .Hence from (\ref{vbaq}), for any $p\in B_{\omega_B} \left (q , r_0 \right )$ and $s\in [t-T(1-t), 1-T^{-1}(1-t) ]$, we have
\begin{equation}\label{vbaq2}
\vol_{g(s)}\left(\Phi^{-1}(B_{\omega_B}(p, 10(1-s)^{1/2}))\right)\leq C (1-s)^{n},
\end{equation}
for some constant $C<\infty$.

Now, by Lemma \ref{a'1}, we can choose a smooth closed $(1,1)$-form $\theta_Y\in \vartheta$ on $Y$, such that for any $t\in [0, 1)$, if $p_t$ is a Ricci vertex associated to $\theta_Y$, then we have $p_t \in \Phi^{-1}(B_{\omega_B}(q, r_0)) \subset \Phi^{-1}(U) $. Hence by (\ref{vbaq2}), we can apply Corollary \ref{cor2} to obtain
$$
\diam \left( \mathcal{B}_{\omega_B}(p_t, (1-s)^{1/2}) , g(s) \right)  \leq C(1-s)^{1/2} ,
$$
$$
\sup_{ \mathcal{B}_{\omega_B}(p_t, (1-s)^{1/2}) } |\rr ( \cdot , s) |\leq \frac{C}{1-s},
$$
this completes the proof.
\end{proof}
Now we can finish the proof of Theorem \ref{app3}.
\begin{proof}[Proof of Theorem \ref{app3}]
This follows from Theorem \ref{scanddeonFanobundle} immediately.
\end{proof}

\begin{proof}[Proof of Corollary \ref{mainbund}]
(1) follows directly from Theorem \ref{app3}. For (2), we consider the fixed holomorphic map $\Phi: X \rightarrow Y$ induced by the limiting cohomology class. For any fixed $t\in (-\infty, 0]$ and a fixed smooth K\"ahler metric $\mathrm{g}_Y$ on $Y$,  there exists $C>0$ such that for all $i\geq 1$,
$$\sup_X |\nabla \Phi|_{ g_{i, t}, \mathrm{g}_{Y, i}}  \leq C$$
by the parabolic Schwarz lemma, where
$$\mathrm{g}_{Y, i}= (1-t_i)^{-1} \mathrm{g}_Y. $$
 Hence $\Phi_i=\Phi: (X, g_{i, t}) \rightarrow (Y, \mathrm{g}_Y)$ is uniformly Lipschitz for all $i\geq 1$. After passing to a subsequence, $(X, g_{i, t}, p_i)$ converges in pointed Gromov-Hausdorff sense to $(\cX_t, g_t, p_\infty)$ and $(Y, g_{Y, i}, \Phi(p_i))$ converges in pointed Cheeger-Gromov sense to $(\mathbb{C}^m, g_{\mathbb{C}^m}, O)$. Then by holomorphic extension,  $\Phi_i$ converges in $C^\alpha$ to a holomorphic map $$\Phi_\infty: (\cX_t, g_t) \rightarrow (\mathbb{C}^m, g_{\mathbb{C}^m}).$$
We claim that $\Phi_\infty$ is surjective.  If not, $V=\Phi_\infty(\cX_t)$ is a closed analytic subvariety of $\mathbb{C}^m$. We choose a fixed point $y \in \mathbb{C}^m \setminus V$ such that
$$d_{\mathbb{C}^m}(y, O)=1, ~d_{\mathbb{C}^m}(y, V)>\delta>0$$
for some $\delta>0$.
 Suppose $y_i \in (Y, \mathrm{g}_{Y, i}) \rightarrow y \in (\mathbb{C}^m, g_{\mathbb{C}^m})$. We pick  $z_i\in \Phi_i^{-1}(y_i)$. By Lemma \ref{hor} and the fibre diameter estimates, there exists $C>0$ such that for all $i>0$,
 $$ d_{g_{i, t}}(z_i, p_i) \leq C. $$
 After passing to a subsequence, $z_i$ converges to $z_\infty\in \cX_t$ with $d_{g_t}(z_\infty, p_\infty) \leq C$. On the other hand, $\Phi_i(z_i)=y_i \rightarrow y$ and so $\Phi_\infty(z_\infty) = y$. Contradiction.
\end{proof}

\begin{proof}[Proof of Corollary \ref{rules}]
Since $\dim_{\mathbb{C}} X=2$, the limiting ancient solution $\cX$ is a complete K\"ahler surface with isolated orbifold singularities. Suppose $p$ is a nontrivial orbifold singularity of the limiting space $(\cX_t, g_t)$ for some $t\in (-\infty, 0]$. Let $p_i \in (X, g_{i, t})$ with $p_i \rightarrow p$.  By Corollary \ref{mainbund},
$\Phi_i: X \rightarrow Y$ converges to a surjective Lipschitz map $\Phi_\infty: \cX_t \rightarrow \mathbb{C}$.
By Lemma \ref{hor} and the parabolic Schwarz lemma,  there exists $C>0$ such that
$$C^{-1} \leq \inf_{\mathcal{B}_{\mathrm{g}_{Y, i}}(p_i, 1)} |  \nabla \Phi_i |_{g_{i, t}, g_{Y, i}} \leq \sup_{\mathcal{B}_{\mathrm{g}_{Y, i}}(p_i, 1)} |  \nabla \Phi_i |_{g_{i, t}, g_{Y, i}} \leq C, $$
where $$ \mathcal{B}_{\mathrm{g}_{Y, i}}(p_i, 1) = \Phi^{-1} \left( B_{\mathrm{g}_{Y, i}}(\Phi(p_i), 1)\right). $$
Immediately, we have
$$C^{-1} \leq \inf_{\mathcal{B}_{g_{\mathbb{C}} (p, 1)}}  |  \nabla \Phi_\infty |_{g_t, g_{\mathbb{C}}} \leq \sup_{\mathcal{B}_{\mathbb{C}}(p, 1)} |  \nabla \Phi_\infty |_{g_t, g_{\mathbb{C}}} \leq C.$$
We can assume that $\Phi_\infty: B\slash \Gamma \rightarrow \mathbb{C}$ is a holomorphic map and $\Phi_\infty(0)=0$, where $B$ is a Euclidian ball in $\mathbb{C}^2$ centered at $0$ and $\Gamma$ is finite subgroup in $U(2)$. We can now lift $\Phi_\infty$ to a $\Gamma$-invariant holomorphic map $\tilde \Phi: B \rightarrow \mathbb{C}$. Since $\Gamma$ is not trivial and $\tilde\Phi(0)=0$, $\tilde \Phi$ does not have linear terms in $z=(z_1,z_2)$. Immediately we have $\partial \tilde\Phi (0)=0$ and  so $\partial \Phi_\infty(p)=0$, which leads to contradiction. Therefore $p$ must be a smooth point and so $\cX_t$ must be smooth. This further implies smooth convergence $g_{i, t}$ and so $\cX_t$ must be a $\mathbb{CP}^1$-bundle over $\mathbb{C}$. We have completed the proof of the corollary.
\end{proof}


\bigskip
\section{K\"ahler-Ricci flow with Calabi symmetry}\label{calsym}


\subsection{The Calabi symmetry}
We consider the un-normalized Kahler-Ricci flow
$$\ddt{g (t)} = - Ric(g(t))$$
on a projective manifold $X$ satisfying the Calabi symmetry. The manifold $$X= \mathbb{P}(\mathcal{O}_{Z^n} \oplus L^{\oplus (m+1)}) $$ is a projective bundle over an $n$-dimensional  Kahler-Einstein manifold $Z$, where $L$ is a negative line bundle over $Z$. Let $h$ be a smooth hermitian metric on $L$ so that $\omega_Z = -Ric(h)= \ddbar \log h$ is a Kahler-Einstein metric satisfying
$$Ric(\omega_Z) = \lambda \omega_Z, $$
where we can assume $\lambda = -1$,  $0$ or $1$ after normalization.
A special example is $X=X_{m, n}$ when $Z=\mathbb{CP}^n$ and $L= \mathcal{O}_{\mathbb{CP}^n}(-1)$. When $m=0$, $X_{0, n}$ is the blow-up of $\mathbb{CP}^{n+1}$ at one point. When $m\geq 1$, $X_{m,n}$ and $X_{n,m}$ are birationally equivalent for $m\geq 1$, and differ by a flip or a flop.

Let $D_\infty$ be the divisor in $X $ at the infinity given by $\mathbb{P}(L^{\oplus (m+1)})$. We also let $P_0$ be the zero section, i.e. $(1, 0,0,..., 0) \in \mathcal{O}_Z \oplus L^{m+1}$.     We also define the divisor $D_H$ on $Z $ by the pullback of the divisor $L^{-1}$ on $Z$.
Then  %
\begin{equation}\label{canbundle}
[ K_X ] = - (m+2) [D_\infty] + (m+1-\lambda ) [D_H] .
\end{equation}
In particular, $D_\infty$ is a big and semi-ample divisor. The divisor $ a[D_H] + b[D_\infty]$ is ample if and only if $a>0$ and $b>0$. Hence $X$ is Fano if and only if $\lambda>m+1$, i.e,
$-K_Z + (m+1) L$ is ample.

We choose sufficiently large $k\in \mathbb{Z}^+$ so that $L^{-k}$ is very ample. Let $\{\sigma_0, ..., \sigma_{N_k}\}$ be a basis for $H^0(Z, L^{-k})$.   Let $W_0, ..., W_m$ be the homogenous coordinates for $\mathbb{CP}^m$. Then monomials of $W_0, .., W_m$ of degree $k$ can be identified as a basis $H^0(\mathbb{CP}^{m+1}, \mathcal{O}(k))$ and we denote it as $\{\eta_0, ...., \eta_{M_k}\}$. The linear system $| k [D_\infty]|$ is base-point-free for sufficiently large $k$ and it induces a birational morphism

$$\Phi_k: X \rightarrow \mathbb{CP}^{(N_k+1)\times (M_k+1)-1}.$$

$\Phi_k$ is an immersion on $X\setminus P_0$ and it contracts $P_0$ to a point. $Y$, the image of $\Phi_k$ in $\mathbb{CP}^{(N_k+1)\times (M_k+1)-1}$, is a projective cone over $Z \times \mathbb{CP}^m $ in $\mathbb{CP}^{(N_k+1)\times (M_k+1)-1}$ by the Segre embedding $$[\sigma_0, ..., \sigma_{N_k} ]\times[\eta_0, ..., \eta_{M_k} ]\rightarrow [\sigma_0\eta_0, ..., \sigma_i \eta_j, ..., \sigma_{N_k} \eta_{M_k} ]\in \mathbb{CP}^{(N_k+1)\times (M_k+1)-1} .$$

Let us define the Calabi ansatz constructed by Calabi \cite{Cal} (also see \cite{Li}). We consider the vector bundle $$E= L ^{\oplus (m+1)}$$ over $Z$. The induced hermitian metric $h_E$ on $E$ is given by $h_E= h^{\oplus (m+1)}$.
Under local trivialization of $E$, we write
$$e^\rho = h(z) |\xi|^2, ~~~ \xi = (\xi^0, \xi^1, ..., \xi^{m}),$$ where $h(z)$ is a local representation for $h$. In the future calculations, we will always compute in terms of $\rho$.

We would like to find appropriate conditions for $a\in \mathbb{R}$ and a smooth real valued function $\phi=\phi(\rho)$ such that
\begin{equation}\label{metricrep1}
\omega = a \omega_Z + \ddbar \phi(\rho)
\end{equation}
defines a K\"ahler metric on $X$.
In fact,
\begin{equation}\label{metricrep2}
\omega = (a + \phi'(\rho)) \omega_Z + \frac{\sqrt{-1}}{2\pi} h e^{-\rho} ( \phi' \delta_{\alpha \beta} + h e^{-\rho} ( \phi'' - \phi') \xi^{\bar \alpha} \xi^{\beta} ) \nabla \xi^\alpha \wedge \overline{\nabla \xi^\beta}.
\end{equation}
where $$\nabla \xi^\alpha = d \xi^\alpha + h^{-1} \partial h \xi^\alpha$$ and $\{ dz^i, \nabla \xi^\alpha\}$ is dual to the basis

$$ \nabla_{z^i} = \frac{\partial}{\partial z^i} - h^{-1} \frac{\partial h }{\partial z^i} \sum_\alpha \xi^\alpha \frac{\partial }{\partial \xi^\alpha}, ~~ \frac{\partial}{\partial \xi ^\alpha}.$$
At each point of $X$, we can choose the local coordinate for $Z$ with $d\log h=0$ and assume $\xi=(0, 0, ..., 0, \xi_{m})$ at this point. Then (\ref{metricrep2}) becomes
\begin{equation}\label{metricrep3}
\omega = (a + \phi'(\rho)) \omega_Z + \frac{\sqrt{-1}}{2\pi} he^{-\rho} \phi'  \sum_{j=1}^m d\xi^j\wedge d\overline{\xi^j}  + \frac{\sqrt{-1}}{2\pi} h e^{-\rho} \phi'' d\xi^{0}\wedge d\overline{\xi^{0}}
\end{equation}

The following criterion is due to Calabi \cite{Cal}.
\begin{proposition}\label{kacon}

$\omega$ as defined above is a K\"ahler metric if and only if

\begin{enumerate}

\item

$a>0$.

\item $\phi'>0$ and $\phi''>0$ for $\rho\in (-\infty, \infty)$.

\item $\varPhi_0 (e^\rho) = \phi(\rho)$ is smooth on $(-\infty, 0]$ and $\varPhi_0' (0)>0$.

\item $\varPhi_\infty (e^{-\rho}) = \phi(\rho) - b \rho$ is smooth on $[0, \infty)$ for some $b>0$ and $\varPhi_\infty'(0)>0$.

\end{enumerate}

\end{proposition}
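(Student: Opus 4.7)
The plan is to derive (1)--(4) as the precise conditions needed for smoothness and strict positivity of $\omega$ on each of the three natural strata of $X$: the open subset $X \setminus (P_0 \cup D_\infty) \cong E \setminus P_0$, a neighborhood of the zero section $P_0$, and a neighborhood of the infinity divisor $D_\infty$. On the open stratum $\rho$ is smooth, so smoothness of $\omega$ is automatic, and the pointwise formula (\ref{metricrep3}) displays $\omega$ as block-diagonal with coefficients $a + \phi'(\rho)$ on $\omega_Z$, $he^{-\rho}\phi'(\rho)$ on the $m$ ``angular'' fiber directions, and $he^{-\rho}\phi''(\rho)$ on the ``radial'' direction. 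Strict positivity here is therefore equivalent to $a + \phi' > 0$, $\phi' > 0$ and $\phi'' > 0$ for every $\rho \in \mathbb{R}$.

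To extend $\omega$ across $P_0$ I would use the fiber coordinates $(z,\xi) \in Z \times \mathbb{C}^{m+1}$ on the affine chart $\{u_0 \neq 0\}$, so that $P_0 = \{\xi = 0\}$ and $t := e^{\rho} = h(z)|\xi|^2$ is a smooth function vanishing to second order on $P_0$. Smoothness of $\phi(\rho) = \phi(\log t)$ across $\{\xi = 0\}$ is equivalent to $\phi(\rho) = \varPhi_0(t)$ for some smooth $\varPhi_0$ on $[0, \infty)$. Differentiating yields $\phi'(\rho) = t\varPhi_0'(t)$, so $\phi'(\rho) \to 0$ as $\rho \to -\infty$ and the interior inequality $a + \phi' > 0$ forces $a > 0$. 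Evaluating (\ref{metricrep2}) at $\xi = 0$, where the cross-terms $\xi^{\bar\alpha}\xi^\beta$ vanish and $\nabla \xi^\alpha = d\xi^\alpha$, gives
\begin{equation*}
\omega|_{P_0} \;=\; a\,\omega_Z \;+\; \varPhi_0'(0) \sum_{\alpha=0}^{m} \tfrac{\sqrt{-1}}{2\pi}\, d\xi^\alpha \wedge d\overline{\xi^\alpha},
\end{equation*}
which is strictly positive precisely when $a > 0$ and $\varPhi_0'(0) > 0$, yielding (1) and (3).

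Crossing $D_\infty$ is the principal technical step. On the affine chart $\{\xi^0 \neq 0\}$ I would change to fiber coordinates $\eta^0 = 1/\xi^0$ and $\eta^j = \xi^j/\xi^0$ for $j = 1, \ldots, m$, so that $D_\infty = \{\eta^0 = 0\}$ and $s := e^{-\rho} = |\eta^0|^2 / (h(1 + |\eta'|^2))$ is smooth, non-negative, and vanishes transversally on $D_\infty$. The function $\rho$ itself has a logarithmic singularity along $D_\infty$, but $\ddbar \rho = \omega_Z + \ddbar \log(1 + |\eta'|^2)$ is smooth on this chart because $\ddbar \log|\eta^0|^2 \equiv 0$ as a smooth form. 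Writing $\phi(\rho) = b\rho + \varPhi_\infty(s)$ as in (4) then gives
\begin{equation*}
\omega \;=\; (a + b)\,\omega_Z \;+\; b\,\ddbar \log(1 + |\eta'|^2) \;+\; \ddbar \varPhi_\infty(s),
\end{equation*}
which is smooth on the chart precisely when $\varPhi_\infty$ is smooth on $[0, \infty)$. A direct computation using $\partial s|_{\eta^0 = 0} = 0$ and $\ddbar s|_{\eta^0 = 0} = \sqrt{-1}\, (h(1+|\eta'|^2))^{-1}\, d\eta^0 \wedge d\overline{\eta^0}$ reduces the transverse fiber block at $D_\infty$ to $\varPhi_\infty'(0)\cdot \sqrt{-1}\,(h(1+|\eta'|^2))^{-1}\, d\eta^0 \wedge d\overline{\eta^0}$, so positivity at $D_\infty$ reduces to $\varPhi_\infty'(0) > 0$, the tangential block $(a + b)\omega_Z + b\,\ddbar \log(1 + |\eta'|^2)$ being a sum of two semi-positive forms whose positivity is automatic from $a > 0$ and $b > 0$.

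Combining the three strata, conditions (1)--(4) are both necessary and sufficient: failure of (1) or (2) is detected in the interior by (\ref{metricrep3}), failure of (3) is detected at $P_0$, and failure of (4) at $D_\infty$. The main obstacle in carrying out this plan is the bookkeeping of sign conventions in the $\xi \leftrightarrow \eta$ change of coordinates, and in particular verifying that the correct smoothness-and-positivity condition at $D_\infty$ is $\varPhi_\infty'(0) > 0$ on the direction transverse to $D_\infty$ rather than some higher-order condition on $\varPhi_\infty$.
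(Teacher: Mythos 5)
The paper does not prove this proposition; it is stated as Calabi's criterion and attributed to \cite{Cal} with no argument given. Your proof is a correct reconstruction of Calabi's original three-chart analysis (interior, zero section $P_0$, infinity divisor $D_\infty$), and your key computations --- the restriction of $\omega$ to $P_0$ giving $a\,\omega_Z + \varPhi_0'(0)\cdot\text{(fiber Euclidean form)}$, the change of coordinates $\eta^0 = 1/\xi^0$, $\eta^j = \xi^j/\xi^0$ making $s = e^{-\rho}$ a smooth defining function for $D_\infty$, and the evaluation $\ddbar\varPhi_\infty(s)|_{\eta^0=0} = \varPhi_\infty'(0)\,\ddbar s|_{\eta^0=0}$ --- all check out. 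One small imprecision: letting $\rho\to -\infty$ in $a + \phi' > 0$ only yields $a \ge 0$; the strict inequality $a > 0$ comes from the positivity of $\omega|_{P_0}$ in the $Z$-directions, which you do compute, so no actual gap results.
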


We remark that given $a$, $b>0$, the K\"ahler metric constructed above lies in the K\"ahler class

\begin{equation}
\omega=a\omega_Z + \ddbar \phi (\rho) \in a[D_H]+ b[D_\infty]
\end{equation}
and
$$\lim_{\rho\rightarrow-\infty} \phi'(\rho)=0 < \phi'(\rho) < \lim_{\rho\rightarrow \infty} \phi'(\rho) = b.$$

Straightforward calculations show that the induced volume form of $\omega$ is given by

\begin{equation}\label{volrep}
\omega^{m+n+1} = (a + \phi')^n h^{m+1} e^{-(m+1)\rho} ( \phi')^{m} \phi''  (\omega_Z)^n \wedge \prod_{\alpha=0}^{m} \frac{\sqrt{-1}}{2\pi} d\xi^\alpha\wedge d \xi^{\bar \alpha}.
\end{equation}
Therefore,

$$- Ric(\omega) = \ddbar (  \log [(a+ \phi')^n (\phi')^{m} \phi''] - (m+1)\rho )  +  ( m+1  - \lambda  )\omega_Z.$$
Let $u= -  \log \left((a+ \phi')^n (\phi')^{m} \phi'' \right) +  (m+1)\rho.$ Then using the special coordinates,  we have
\begin{eqnarray*}
Ric(\omega) &=& \ddbar \log u +(n-m) \omega_Z \\
&=& (\lambda-m -1 +u') \omega_Z + \frac{\sqrt{-1}}{2\pi} he^{-\rho} u'  \sum_{j=1}^m d\xi^j\wedge d\overline{\xi^j}  + \frac{\sqrt{-1}}{2\pi} h e^{-\rho} u'' d\xi^{0}\wedge d\overline{\xi^{0}}
\end{eqnarray*}
and
\begin{eqnarray} \label{scurv}
R &=& \frac{n(\lambda-m-1+u')}{a+\phi'} + m \frac{ u'}{\phi'} + \frac{u''}{\phi''}\\
&=&- \frac{\phi^{(4)}}{(\phi'')^2} +  \frac{(\phi''')^2}{(\phi'')^3} - 2n \frac{\phi'''}{(a+\phi')\phi''} - 2m\frac{\phi'''}{\phi'\phi''} - 2mn \frac{\phi''}{\phi'(a+\phi')}   \nonumber\\
&& + (m - m^2) \frac{\phi''}{(\phi')^2} + (n-n^2) \frac{\phi''}{(a+\phi')^2}  +  \frac{n\lambda}{a+\phi'}  + \frac{m(m+1)}{\phi'}    \nonumber
\end{eqnarray}

If we let
$$(y_1, ..., y_{m+n+1}) = (\xi_0, \xi_1, ..., \xi_m, z_1, ..., z_n)$$
 be the holomorphic coordinates on $X_{m, n}$, by the Calabi symmetry, we can assume that $z=(z_1, ..., z_n)$ is the normal coordinates for $\omega_Z$ and  $ \xi_1=....= \xi_m=0$ after suitable rotations.

The curvature formula
$$R_{i\bar j k\bar l} = - \frac{\partial^2 g_{i\bar j}}{\partial y_k \partial y_{\bar l}} + g^{p\bar q} \frac{\partial g_{i\bar q}}{\partial y_k} \frac{\partial g_{p\bar j}}{\partial y_{\bar l}}.$$
gives
\begin{equation}\label{R1bar1}
R_{1\bar 1 1 \bar 1} = - e^{-2\rho} \phi^{(4)} +  e^{-2\rho} F_{1\bar 1 1\bar 1}( \phi', \phi'', \phi'''),
\end{equation}
\begin{equation}\label{Rijkl}
(1-\delta_{ijkl1}) R_{i\bar j k \bar l} =  e^{-2\rho}  F_{ijkl}(\phi', \phi'', \phi'''),
\end{equation}
where $\delta_{ijkl1} =1$ when $i=j=k=l=1$ and $\delta_{ijkl1}=0$ otherwise. Each term of $F_{ijkl}$ is a monomial in $\phi'$, $\phi''$ and $\phi'''$ of degree $1$ with $\phi'''$ only appearing on the numerator. The only curvature term that contains $\phi^{(4)}$ is $R_{1\bar 1 1\bar 1} $.

It is straightforward to check that  the Calabi ansatz is preserved by the Ricci flow. Indeed, the K\"ahler-Ricci flow
\begin{equation}
\ddt{\omega(t)} = - Ric (\omega(t)),  ~~~\omega|_{t=0} = \omega_0= a_0 \omega_Z + \ddbar u_0 \in a_0[D_H]+b_0[D_\infty]
\end{equation}
is equivalent to the following parabolic equation
$$ a'(t)\omega_Z + \ddbar \ddt{\phi} = (m+1-\lambda) \omega_Z + \ddbar (  \log [(a+ \phi')^n (\phi')^{m} \phi''] - (m+1) \rho ) .
 $$
Separating the variables, we have that

\begin{equation}
a=a(t) =a_0  - ( \lambda-m-1) t
\end{equation}
 and
\begin{equation}\label{krfu}
\ddt{\phi} = \log [(a+ \phi')^n (\phi')^{m} \phi''] - (m+1)\rho + c_t,
\end{equation}
where we choose
$$
c_t = - \log \phi''(0,t) - m \log \phi'(0,t)- n\log (a(t)+\phi'(0,t)) ,
$$
so that $\frac{\partial \phi}{\partial t}(0, t)=0$ for all $t\geq 0$.  From the formula (\ref{canbundle}) and the K\"ahler class evolves by $[\omega(t)]=(a_0-(\lambda-m-1)t)[D_H]+ (b_0-(m+2)t) [D_\infty]$, and so $$ b=b(t) = b_0 - (m+2) t. $$
Therefore, the Ricci flow must develop finite time singularities as either $a(t)$ or $b(t)$ will become $0$. There are only there possibilities at the first singular time $T$.

\begin{enumerate}

\item When $a(T)=0$ and $b(T)>0$, then the flow will contract the zero section $P_0$.

\medskip

\item When $a(T)>0$ and $b(T)=0$, the flow will collapse all the $\mathbb{CP}^m$ fibres.

\medskip

\item When $a(T)=b(T)=0$, the flow will be extinct.

\end{enumerate}

 It is straightforward to show that equation (\ref{krfu}) admits a smooth solution $\phi$ satisfying the Calabi ansatz as long as the K\"ahler-Ricci flow admits a smooth solution, by comparing $\phi$ to the solution of the Monge-Amp\`ere flow associated to the K\"ahler-Ricci flow. Let us also give the evolution equations for $\phi'$ and $\phi''$  by
\begin{eqnarray}
 \ddt{\phi'} &=& \frac{\phi'''}{\phi''} + \frac{ m \phi''}{\phi'} + \frac{ n \phi''}{a + \phi'} - (m+1),
 \\ \label{udpevolution}
\ddt{\phi''}& =& \frac{\phi^{(4)}} {\phi''} - \frac{ (\phi''')^2 }{ (\phi'')^2} + \frac{ m \phi'''}{\phi'} - \frac{ m (\phi'')^2}{ (\phi')^2} + \frac{ n \phi'''}{ a + \phi'} - \frac{ n (\phi'')^2}{( a + \phi')^2},
\end{eqnarray}
as can be seen from differentiating (\ref{krfu}).


\subsection{Analytic estimates}

The first singular time of the K\"ahler-Ricci flow on $X$ is given by
\begin{equation}
T=\sup\{ t>0~|~[\omega_0] + t [K_{X}] >0\}.
\end{equation}
Since $X$ is not a minimal model, $T<\infty$.




%
%


We also notice that $0< \phi'(\rho, t) < b(t)$ for $\rho\in(-\infty, \infty)$ because $\phi'$ is increasing and $\lim_{\rho\rightarrow \infty} \phi'(\rho, t)=  b(t)$.  Therefore, $\phi'$ is uniformly bounded above for $t\in [0, T)$, where $T$ is the singular time for the Ricci flow on $X$. The following estimate is the main result of this section.

\begin{proposition}\label{2ndder1}
There exist $C>0$ and  such that for all $(\rho, t)\in (-\infty,\infty)\times [0, T)$,
\begin{equation}\label{calest1}
C^{-1} \leq \frac{ b(t)\phi''(\rho, t)}{\phi'(\rho, t) (b(t)- \phi'(\rho, t))}  \leq C ,
\end{equation}
\begin{equation}\label{calest2}
\frac{|\phi'''(\rho, t) |}{\phi''(\rho, t)} \leq C  .
\end{equation}

\end{proposition}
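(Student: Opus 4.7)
The plan is to exploit the $U(m+1)$-symmetry of the Calabi ansatz to reduce the estimates to a one-dimensional analysis in the radial variable $\rho$, and then apply the general Li--Yau type bounds of Theorem \ref{main1} together with the scalar curvature bound of Theorem \ref{cor1} to extract the differential inequalities we need.

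First, because the Calabi symmetry is preserved by the K\"ahler--Ricci flow, I would choose a $U(m+1)$-invariant closed $(1,1)$-form $\theta_Y \in \vartheta$ on $Y$ whose associated Ricci potential $u = u(\rho,t)$ depends only on $\rho$ and $t$; by Lemma \ref{a'1} this can additionally be arranged so that the Ricci vertex at every time $t$ lies in a fixed compact radial range. Taking $\ddbar \log$ of the volume form \eqref{volrep} and comparing to $\ric(\omega) - (T-t)^{-1}\omega = -(T-t)^{-1}\theta_Y - \ddbar u$ then identifies, up to a time-dependent constant,
\begin{equation*}
u(\rho,t) = \log\!\bigl[(a+\phi')^n(\phi')^m\phi''\bigr] - (m+1)\rho + (T-t)^{-1}\bigl(\phi(\rho,t) - b(T)\phi_0(\rho)\bigr),
\end{equation*}
where $\phi_0$ is a fixed radial profile representing $\theta_Y$. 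Differentiating in $\rho$ expresses $u'(\rho,t)$ as an explicit rational function of $\phi',\phi'',\phi'''$ plus the controlled term $(T-t)^{-1}(\phi' - b(T)\phi_0')$.

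Second, I would translate the Li--Yau estimate $|\nabla u|^2 \leq Cv/(T-t)$ of Theorem \ref{main1} into the radial setting. A direct computation in the Calabi coordinates \eqref{metricrep3} shows $|\nabla\rho|^2 = (\phi'')^{-1}$ at a general point, so $|\nabla u|^2 = (u'(\rho))^2/\phi''$, and hence
\begin{equation*}
(u'(\rho,t))^2 \;\leq\; \frac{C\,v(\rho,t)\,\phi''(\rho,t)}{T-t}.
\end{equation*}
Combined with Theorem \ref{cor1}, which supplies $|R| \leq C/(T-t)$ on a Type-I neighborhood of the Ricci vertex, and with the explicit scalar curvature formula \eqref{scurv} in which $\phi^{(4)}/(\phi'')^2$ is tied to $R$ plus lower-order rational terms in $\phi',\phi'',\phi'''$, this yields one-sided differential inequalities for the quantities $\phi''/\phi'$, $\phi''/(b-\phi')$, and $\phi'''/\phi''$.

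Third, the two-sided bound \eqref{calest1} is proved by applying the maximum principle to
\begin{equation*}
Q(\rho,t) \;=\; \log\!\frac{b(t)\,\phi''(\rho,t)}{\phi'(\rho,t)\bigl(b(t)-\phi'(\rho,t)\bigr)},
\end{equation*}
whose parabolic equation, obtained by differentiating \eqref{udpevolution} and using $b'(t) = -(m+2)$, takes the schematic form $(\partial_t - L)Q \leq C|Q| + C$ for a degenerate one-dimensional parabolic operator $L$ along $\rho$. The smoothness conditions of Proposition \ref{kacon} force $Q(\rho,t) \to 0$ as $\rho \to \pm\infty$, so any interior extremum is attained and is controlled by the radial Li--Yau bound of Step two, giving both a uniform upper and lower bound on $Q$. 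The estimate \eqref{calest2} then follows from a parallel Bernstein-type argument on $(\phi'''/\phi'')^2$, or equivalently by differentiating the formula for $u'(\rho,t)$ and inserting the bound on $\phi''$ just obtained, with the scalar curvature bound of Theorem \ref{cor1} supplying the remaining control on $\phi^{(4)}$.

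The main obstacle will be closing the maximum principle argument on the non-compact radial interval while keeping the constants uniform up to $T$: one must verify that the asymptotics of Proposition \ref{kacon} are strong enough to exclude the possibility of the extrema of $Q$ escaping to $\rho = \pm\infty$, and that the weight $v$ appearing in the Li--Yau estimate does not blow up on compact radial regions containing the Ricci vertex, both of which require the choice of $\theta_Y$ and the location of the Ricci vertex provided by Lemma \ref{a'1} to be carefully compatible.
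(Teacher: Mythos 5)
Your choice of the maximum-principle quantity $Q = \log\bigl(b\phi''/(\phi'(b-\phi'))\bigr)$ and the observation that the smoothness conditions of Proposition \ref{kacon} force $Q\to 0$ as $\rho\to\pm\infty$ agree with the paper's $H$, and the overall maximum-principle framework is right. But the way you propose to close the estimate is a genuinely different route from the paper's, and it has a gap. The paper's proof is entirely self-contained in the Calabi ansatz: it never invokes Theorem \ref{main1} or Theorem \ref{cor1}. At a space-time maximum $(\rho_0,t_0)$ of $H$ it uses the two spatial extremum conditions $H'=0$ and the second-order inequality coming from $H''\leq 0$, then substitutes the evolution equation \eqref{udpevolution} for $\phi''$; the $\phi^{(4)}$ term coming from $\partial_t\phi''$ is eliminated by the second-order condition, and the $\phi'''$ terms are then eliminated by $H'=0$. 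What remains is an algebraic inequality in $\phi'$, $\phi''$ alone, which gives $\phi''/(\phi'(b-\phi'))\leq (4m+1)/b$ at the maximum. No curvature or Li--Yau estimate enters at all, and the same scheme (with a separate maximum principle on $\phi'''/\phi''$) gives \eqref{calest2}.

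The gap in your proposal is precisely where you substitute the curvature estimate for this cancellation. You want to control $\phi^{(4)}$ through the scalar curvature formula \eqref{scurv} together with Theorem \ref{cor1}, and to control $u'$ through the Li--Yau bound of Theorem \ref{main1}. But Theorem \ref{cor1} gives $(1-t)|\rr(x,t)|\leq C\bigl(1 + d_t^2(x,p)/(1-t)\bigr)$, which grows quadratically away from the Ricci vertex, and Theorem \ref{main1} gives $|\nabla u|^2\leq Cv/(1-t)$ where $v$ itself grows quadratically. The spatial maximum of $Q$ has no reason to occur near the Ricci vertex; its $\tilde g(s)$-distance to the vertex may diverge as $t\to T$, and at such a point neither bound is uniform. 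Pinning the Ricci vertex to a compact $\rho$-range via Lemma \ref{a'1} controls the location of the vertex, not the location of the extremum of $Q$. So Step 3 does not close uniformly in $\rho$. The reason the paper does not need these inputs is exactly the cancellation described above: after imposing $H'=0$ and $H''\leq 0$, the highest derivatives disappear from $\partial_t H$, so there is nothing left that requires a curvature bound. Without that structural observation, routing through Theorems \ref{main1} and \ref{cor1} runs into the global-versus-local mismatch and your argument as written would not establish \eqref{calest1}--\eqref{calest2} for all $\rho\in(-\infty,\infty)$.
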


\begin{proof}

Let $H= \log \phi'' - \log \phi' - \log (b(t) - \phi')+ \log b(t).$ Notice that by Proposition \ref{kacon}, for fixed $t\in[0, T)$ and near $\rho=-\infty$, $\phi(\rho, t) = \varPhi_0(e^{\rho}, t)$ for some smooth function $\varPhi_0$, and near $\rho=\infty$,  $\phi(\rho, t) = \varPhi_\infty(e^{-\rho}, t)+ b(t) \rho$ for some smooth function $\varPhi_\infty$ and $b(t)>0$.
We then have
$$
\lim_{\rho\rightarrow -\infty} \frac{\phi''}{\phi'}  = \lim_{\rho\rightarrow -\infty}  \frac{ (\varPhi_0(e^\rho))''}{ (\varPhi_0(e^\rho))'}=\lim_{\rho\rightarrow -\infty}  (  \frac{\varPhi_0'  +  e^{\rho}  \varPhi_0''} {  \varPhi_0' }) = 1 + \lim_{\rho\rightarrow -\infty} e^\rho \frac{\varPhi_0''}{\varPhi_0'} =1 ,
$$
$$
\lim_{\rho\rightarrow \infty}  \frac{\phi''}{b(t)- \phi'}  =  \lim_{\rho\rightarrow \infty} \frac{ (\varPhi_\infty(e^{-\rho}) + b\rho)''}{ (- \varPhi_\infty(e^{-\rho}) )'}= \lim_{\rho\rightarrow \infty} \frac{e^{-\rho} \varPhi_\infty'  +  e^{-2\rho}  \varPhi_\infty'' } { e ^{-\rho} \varPhi_\infty' } =1.
$$
Therefore  $H(-\infty, t) = H(\infty, t) = 0$, and we can apply maximum principle for $H$ in $ (-\infty, \infty)\times [0, T)$.
\begin{eqnarray*}
\ddt{H}
&=&  \frac{1}{\phi''} \{ \frac{ \phi^{(4)} }{\phi''} - \frac{ (\phi''')^2}{ (\phi'')^2} + \frac{  m\phi'''}{ \phi'} - \frac{ m (\phi'')^2}{ (\phi')^2} + \frac{ n \phi'''}{a+ \phi'} - \frac{ n (\phi'')^2}{ (a + \phi')^2} \} \\
&& - \frac{1}{\phi'} \{ \frac{\phi'''}{\phi''} + \frac{ m \phi''}{\phi'} + \frac{ n \phi''}{ a+ \phi'} - (m+1)\} \\
&& + \frac{1}{b-\phi'} \{ \frac{\phi'''}{\phi''} + \frac{ m \phi''}{\phi'} + \frac{ n \phi''}{ a+ \phi'} +1\}  - \frac{m+2}{b}.
\end{eqnarray*}
Suppose that $H(\rho_0, t_0) = \sup_{(-\infty, \infty) \times  [0, t_0]  } H(\rho, t)$ is achieved for some $t_0\in (0, T)$, $\rho_0\in (-\infty, \infty)$.  At $(t_0, \rho_0)$, we have

$$H'= \frac{ \phi'''}{\phi''} - \frac{ \phi''}{\phi'} + \frac{\phi''}{b-\phi'}= 0 , $$ and

$$ \frac{ \phi^{(4)}} {\phi''} - \frac{ (\phi''')^2}{(\phi'')^2} - \frac{ \phi''' } { \phi'} + \frac{ (\phi'')^2}{ (\phi')^2}  + \frac{\phi'''}{b- \phi'} + \frac{(\phi'')^2}{(b-\phi')^2} \leq 0.
$$
Then at $(\rho_0, t_0)$,
\begin{eqnarray*}
&&\ddt{H}\\
& \leq & \frac{1}{\phi''} \{ \frac{ \phi''' }{\phi'} - \frac{ (\phi''')^2}{ (\phi'')^2} - \frac{\phi'''}{b-\phi'} - \frac{(\phi'')^2}{(b-\phi')^2}+ \frac{ m \phi'''}{ \phi'} - \frac{ m (\phi'')^2}{ (\phi')^2} + \frac{n \phi'''}{a+\phi'} - \frac{n(\phi'')^2}{ (a+\phi')^2} \}  \\
&&- \frac{1}{\phi'} \{ \frac{\phi'''}{\phi''} + \frac{ m \phi''}{\phi'}  +\frac{n \phi''}{a+\phi'} - (m+1)\}   + \frac{1}{b- \phi'} \{ \frac{\phi'''}{\phi''} + \frac{ m \phi''}{\phi'}  +\frac{n \phi''}{a+\phi'} +1 \}  - \frac{m+2}{b}\\
&=&   -   \frac{(2m+1)\phi''}{(\phi')^2} - \frac{\phi''}{(b-\phi')^2} - \frac{n \phi''}{(a+\phi')^2} + \frac{m \phi'''}{\phi'\phi''} + \frac{n\phi'''}{(a+\phi')\phi''} - \frac{n\phi''}{\phi'(a+\phi')} + \frac{m+1}{\phi'} \\
&& + \frac{m \phi''}{\phi'(b-\phi')} + \frac{n\phi''}{(a+\phi')(b-\phi')} + \frac{1}{b-\phi'} - \frac{m+2}{b}\\
&=&  -   \frac{(2m+1)\phi''}{(\phi')^2} - \frac{\phi''}{(b-\phi')^2} - \frac{n \phi''}{(a+\phi')^2}  - \frac{n\phi''}{\phi'(a+\phi')} + \frac{m+1}{\phi'} + \frac{m \phi''}{\phi'(b-\phi')} + \frac{1}{b-\phi'}\\
&&  + \frac{n\phi''}{(a+\phi')(b-\phi')} +\frac{m\phi''}{(\phi')^2} - \frac{m\phi''}{\phi'(b-\phi')} + \frac{n\phi''}{\phi' (a+\phi')} - \frac{n\phi''}{(a+\phi')(b-\phi')}- \frac{m+2}{b}\\
&=&  -   \frac{(m+1)\phi''}{(\phi')^2}  - \frac{\phi''}{(b-\phi')^2} - \frac{n\phi''}{(a+\phi')^2}  + \frac{m+1}{\phi'} +\frac{1}{b- \phi'} - \frac{m+2}{b} . %
\end{eqnarray*}
Hence at $(\rho_0, t_0)$, we have
\begin{eqnarray*}
\frac{\phi''}{\phi'(b-\phi')} &\leq& \frac{ (m+1)(b-\phi') + \phi'}{(m+1) (b-\phi')^2 + (\phi')^2}\leq \frac{ (m+1)^{1/2} \left( (m+1)^{1/2} (b-\phi') + \phi'\right)}{(m+1) (b-\phi')^2 + (\phi')^2} \\
&\leq& \frac{2 (m+1)^{1/2}} {(m+1)^{1/2} (b-\phi') + \phi'} \leq  \frac{4m+1}{b} .
\end{eqnarray*}
Therefore by the maximum principle, $H(\rho_0, t_0) \leq 0$ and so
$$\sup_{ (-\infty, \infty) \times [0, t_0)} H(\rho, t) \leq \sup_{(-\infty, \infty)} H(\rho, 0) < \infty.$$

Now we will prove the lower bound for $H$.  Suppose that $H(t_0, \rho_0) = \inf_{(-\infty, \infty) \times  [0, t_0] } H(\rho, t)$  for some $t_0\in (0, T)$, $\rho_0\in (-\infty, \infty)$.  At $(\rho_0, t_0)$, we have

$$0\geq \ddt{H} \geq  -   \frac{(m+1)\phi''}{(\phi')^2}  - \frac{\phi''}{(b-\phi')^2} - \frac{n\phi''}{(a+\phi')^2}  + \frac{m+1}{\phi'} + \frac{1}{b- \phi'} - \frac{m+2}{b} , $$
and so
\begin{eqnarray*}
\left( \frac{m+n+1}{(\phi')^2} + \frac{1}{(b-\phi')^2} \right) \phi'' &\geq& \left( \frac{m+1}{(\phi')^2} +\frac{n}{(a+\phi')^2} + \frac{1}{(b-\phi')^2} \right) \phi'' \\
&\geq& \frac{m+1}{\phi'} + \frac{1}{b-\phi'}- \frac{m+2}{b}\\
&=& \frac{(m+1)(b-\phi')^2 + (\phi')^2 }{b \phi' (b-\phi')}.
\end{eqnarray*}
Hence at $(t_0, \rho_0)$, we have
\begin{eqnarray*}
e^H &\geq& \frac{(m+1)(b- \phi')^2+ (\phi')^2 }{(\phi')^2+ (m+n+1)(b-\phi')^2} \geq \frac{m+1}{m+n+1}.
\end{eqnarray*}
We have now completed the proof for estimate (\ref{calest1}).

We will now prove estimate (\ref{calest2}) Let $H=\frac{\phi'''}{\phi''}. $ The evolution of $H$ is given by
\begin{eqnarray*}
\ddt{H} &=&\frac{H''}{\phi''} + \left( \frac{m\phi''}{\phi'} + \frac{n\phi''}{a+\phi'} - \frac{\phi'''}{\phi''} \right) \frac{H'}{\phi''} \\
&& - \left( 2m \frac{(\phi'')^2}{(\phi')^2} + 2n \frac{(\phi'')^2 }{ (a+\phi')^2} \right) \frac{H}{\phi''} + \frac{1}{\phi''}\left(2m \left( \frac{\phi''}{\phi'} \right)^2 + 2n \left( \frac{\phi''}{a+\phi'} \right)^2\right).
\end{eqnarray*}
Since $\frac{\phi''}{\phi'}$ is uniformly bounded by (\ref{calest1}), the bound for $H$ immediately follows by the maximum principle and we have completed the proof of the proposition.
\end{proof}


\subsection{The case of contractions}

In this section, we consider the case of
$$a(T)=0, ~b(T)>0. $$
For simplicity, we can assume $T=1$ after rescaling.  The limiting Kahler class $[g_0]+K_X$ for $[g(t)]$ is a big and semi-ample class that induces the unique birational morphism $\Phi: X \rightarrow Y$ by contracting the zero section $P_0$ to a point. Let $s= -\ln (1-t)$.  Then
$$\tilde g (s) = e^s g(t)$$
is the solution of the normalized K\"ahler-Ricci flow
$$\frac{\partial \tilde g (s) }{\partial s} = - Ric(\tilde g (s) ) + \tilde g (s) . $$

Let $y_0=\Phi(P_0)$. We let
$$\omega_Y =b \ddbar \log (1+e^\rho) \in [g_0]+K_X $$ be the pullback of a smooth Kahler metric on $Y$ as a trivial extension through $P_0$ and $D_\infty$. It obviously satisfies the Calabi symmetry and we choose $u_0$ as in (\ref{defofu0}) which also satisfies the Calabi symmetry. By (\ref{ceofu0}) and the maximum principle, there exists $C_0>0$ such that $$|u_0| \leq C_0 e^s$$ on $X\times [0, \infty)$. Let $\eta_A(\rho)$ be a smooth positive increasing function defined by
\begin{equation}\label{weightcs}
\eta_A(\rho) = \left\{
\begin{array}{ll}
b   \log (1+e^\rho), & \rho\leq 2A\\
b   \log (1+e^{4A}) & \rho \geq 4A.
\end{array} \right.
\end{equation}
Then for a fixed sufficiently large $A>0$,
$$\inf_{\rho\in \mathbb{R}} \left( u_0(\rho, s) + e^s\eta_A(\rho) \right)$$ must be achieved for $\rho\in (-\infty, A)$ for all $s\in \mathbb{R}$.
We now fix a sufficiently large $A>0$ satisfying the above and let
$$\theta_Y= \omega_Y - \ddbar \eta_A.$$
$\theta_Y$ vanishes in the open subset $\{ \rho < 2A \}$ of $X$.
 Let $$\tilde v(\rho, s) = (u_0(\rho, s) + e^s \eta_A(\rho) ) -  \inf_{\mathbb{R}}\left(u_0(\cdot, s) + e^s \eta_A(\cdot) \right)+1.$$ Then $\tilde v$ is the weighted Ricci potential associated to $\theta_Y$ and satisfies
$$\ddbar \tilde v = - Ric(\tilde g (s) ) + \tilde g (s) - e^s \theta_Y. $$
In particular, for all $s\in [0, \infty)$ we have
$$\inf_{\mathbb{R}}\tilde v(\cdot, s)=\tilde v(\rho_{min}, s), ~{\rm for ~some~} \rho_{min}\in (-\infty, A), $$
$$\ddbar \tilde v = - Ric(\tilde g (s) ) + \tilde g (s), ~{\rm for~all~} \rho\in (-\infty, 2A).$$
The above choice of $\theta_Y$ and $\tilde v$ will be used to show that the Ricci vertex must be sufficiently close to $P_0$. One  advantage is that $\theta_Y$ will always vanish in a fixed large neighborhood of the Ricci vertex.

We let
$$\Psi: X\setminus\{P_0 \cup D_\infty\} \rightarrow \mathbb{R}$$
be the map associated to $\rho$ by the $U(m+1)$-action. We then can identify $\tilde v(\cdot, s)$ as a function in $\rho$ on $\mathbb{R}$ with $\tilde v( \Psi^{-1}(\cdot), s)$.

At each $s\in [0, \infty)$, there is a Ricci vertex $p_s \in X$ such that $\tilde v$ achieves its minimal point at $p_s$. Furthermore, by our gradient and Laplacian estimates, $\tilde v$ has quadratic growth in distance from $p_s$ and the scalar curvature is uniformly bounded in any geodesic ball centered at $p_s$ with fixed radius.

Due to Lemma \ref{a'1}, we can choose the weighted Ricci potential $\tilde v$ in a way, such that for each $s\in [0, \infty)$, $\Psi(p_s)\leq 0$, (in fact, as small as we want).

We  let
$$\tilde a(s)=e^s a(t), ~\tilde b(s)=e^s b(t).$$
In fact, we can check that $\tilde a(s) = a(0)$ for all $s\geq 0$.  Let
$$\rho_s = \Psi(p_s)\leq 0,$$
and let the potential for $\tilde g(s) =a(0)\omega_Z+ \ddbar \tilde \phi(s)$ be given by
$$\tilde \phi  (\rho, s) =  e^s \phi (\rho, t), ~s= -\ln (1-t).  $$
Then $ \tilde \phi$ is smooth  away from $P_0$ and $P_\infty$ in $X$. The weighted Ricci potential $\tilde v$ satisfies the equation
\begin{equation}\label{ricpeqn}
\Delta_{\tilde g} \tilde v= n - R(\tilde g) + tr_{\tilde g} (e^s \theta_Y).
\end{equation}
Furthermore, the Schwarz lemma implies that $tr_{\tilde g} (e^s \theta_Y)$ is uniformly bounded above on $X\times [0, \infty)$.
%
%
%

We note that the K\"ahler metrics with Calabi symmetry along the Ricci flow is a warp product metric expressed in the following formula
\begin{equation}\label{warppro}
\tilde g (s) = (\tilde a(s) + \tilde \phi'(\rho) ) g_{Z} + \tilde \phi'(\rho) g_{\mathbb{CP}^m} + \tilde \phi''(\rho) g_{cyl},
\end{equation}
where $g_{cyl}= d\rho^2 + \eta^2$, $\eta$ is the contact one form associated to the $S^1$-bundle over $Z\times \mathbb{CP}^m$. $\tilde g$ is the warp product of a line and $S^1$-bundle over $\mathbb{CP}^m\times Z$. Such an $S^1$-bundle is the link of the contraction of $P_0$ of the normal bundle of $P_0$.

We first prove the following estimates for $\tilde\phi'$ and $\tilde \phi''$.

\begin{lemma}\label{caldifin}
For any $K>0$, there exists $A>0$ such that for all $s\geq 0$, $\rho_0 \leq K$,  we have
$$
e^{A^{-1} \rho} \leq \frac{\tilde\phi'(\rho+\rho_0, s)}{\tilde\phi'(\rho_0, s)} \leq  e^{A \rho},~ ~ for~ 0\leq \rho \leq K.
$$
$$
e^{A \rho } \leq \frac{\tilde\phi'(\rho+\rho_0, s)}{\tilde\phi'(\rho_0, s)} \leq  e^{A^{-1} \rho},~ for ~ \rho\leq 0,
$$
$$
e^{- A |\rho| } \leq \frac{\tilde\phi''(\rho+\rho_0, s)}{\tilde\phi''(\rho_0, s)} \leq  e^{A |\rho| },~ for ~ all ~ \rho .
$$
\end{lemma}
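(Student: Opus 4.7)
The plan is to derive the three estimates by integrating pointwise control of the logarithmic derivatives $\tilde{\phi}''/\tilde{\phi}'$ and $\tilde{\phi}'''/\tilde{\phi}''$, starting from Proposition \ref{2ndder1}. The key observation is that the quantities $\phi''/\phi'$, $\phi'''/\phi''$, and $\phi'/b$ are all invariant under the rescaling $\tilde{\phi}(\rho,s) = e^s \phi(\rho,t)$, since $\tilde{\phi}^{(k)} = e^s \phi^{(k)}$ and $\tilde{b} = e^s b$. Consequently Proposition \ref{2ndder1} transfers verbatim to
\begin{equation*}
C^{-1}\bigl(1 - \tilde{\phi}'/\tilde{b}\bigr) \leq \frac{\tilde{\phi}''}{\tilde{\phi}'} \leq C\bigl(1 - \tilde{\phi}'/\tilde{b}\bigr), \qquad \frac{|\tilde{\phi}'''|}{\tilde{\phi}''} \leq C,
\end{equation*}
uniformly in $(\rho, s) \in \mathbb{R}\times [0,\infty)$, with $C$ depending only on $\omega_0$.

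The third estimate now follows immediately by integrating $|(\log\tilde{\phi}'')'|\leq C$ on $[\rho_0, \rho_0 + \rho]$. For the first two, one inequality on each line comes from the trivial upper bound $\tilde{\phi}''/\tilde{\phi}' \leq C$: integration gives $\tilde{\phi}'(\rho+\rho_0,s)/\tilde{\phi}'(\rho_0,s) \leq e^{C\rho}$ for $\rho \geq 0$, and symmetrically $\geq e^{C\rho}$ for $\rho \leq 0$. What remains is a uniform positive lower bound on $\tilde{\phi}''/\tilde{\phi}'$ over the range $(-\infty, 2K]$ swept out by $\rho_0 + \rho$; by the ratio inequality above, this amounts to a uniform gap
\begin{equation*}
\sup_{s \geq 0,\ \rho \leq 2K}\ \phi'(\rho,t)/b(t) \leq 1 - c(K), \qquad t = 1 - e^{-s},
\end{equation*}
for some $c(K) > 0$, which would then yield $\tilde{\phi}''/\tilde{\phi}' \geq C^{-1}c(K)$ on this range and finish the integration.

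The hard part will be this uniform gap. On the tail $\rho \leq -\rho_*$, the normalization of the Calabi ansatz in Proposition \ref{kacon} forces $\phi'/b \to 0$ uniformly as $\rho \to -\infty$, so the gap holds automatically there once $\rho_*$ is chosen large. The remaining compact window $\rho \in [-\rho_*, 2K]$ requires that the Calabi-symmetric K\"ahler-Ricci flow in the contraction case converges smoothly to the limit $\omega_Y = b(1)\,\ddbar\log(1+e^\rho)$ away from the contracted zero section $P_0$. Granting this, $\phi'(\rho,t)$ is continuous on $[-\rho_*, 2K]\times [0,1]$ with $\phi'(\rho,1) = b(1)\, e^\rho/(1+e^\rho) < b(1)$, and a compactness argument delivers the gap. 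This smoothness away from $P_0$ can be justified by combining the Type I scalar curvature bound at a Ricci vertex (Theorem \ref{cor1}) with the parabolic Schwarz lemma to obtain uniform two-sided bounds on $\omega(t)$ on any tube $\{\rho \geq -\rho_*\}$, followed by standard parabolic bootstrapping applied to the Monge-Amp\`ere flow (\ref{krfu}).
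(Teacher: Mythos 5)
Your decomposition of the lemma---scale invariance of $\phi''/\phi'$, $\phi'''/\phi''$, $\phi'/b$ under the normalization, the unconditional third estimate from integrating $|\tilde\phi'''|/\tilde\phi''\leq C$, and the reduction of the first two to a uniform gap $1-\phi'/b\geq c(K)$ on $\{\rho_0+\rho\leq 2K\}$---is exactly right and agrees with the paper, which compresses the gap step into the single remark that $b(t)-\phi'(\cdot,t)$ is bounded below away from $0$ on compact $\rho$-sets, uniformly in $t$.

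The step that would fail is your proposed mechanism for the gap. You want to get uniform two-sided bounds on $\omega(t)$ on a tube $\{\rho\geq-\rho_*\}$ by combining Theorem \ref{cor1} (based at the Ricci vertex) with bootstrapping. But in the contraction case the Ricci vertex converges to $P_0$: Corollary \ref{dist} gives $d_{\tilde g(s)}(p_s,P_0)\leq C$, so $d_{g(t)}(p_t,P_0)=O((1-t)^{1/2})$. For a point $x$ at fixed $\rho=-\rho_*$ one therefore has $d_{g(t)}(x,p_t)\to d_{g(T)}(x,P_0)=:c_0>0$, and the quadratic-growth bound of Theorem \ref{cor1} only yields $|\rr(x,t)|\lesssim(1-t)^{-1}+c_0^2(1-t)^{-2}$, which degenerates precisely where you need a uniform bound. (A scalar curvature bound alone also would not give two-sided metric bounds.) The cleanest route to the gap is the parabolic Schwarz lemma (Lemma \ref{pschwarz}) on its own: $\tr_{\omega(t)}\Phi^*\omega_{\fs}\leq C$ uniformly, and on the compact annulus $\{2K\leq\rho\leq 2K+1\}$ the form $\Phi^*\omega_{\fs}$ is a genuine K\"ahler metric (since $\Phi$ embeds $X\setminus P_0$), so its $\partial_\rho$-component $k(\rho)$ satisfies $k\geq c_1>0$ there. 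Taking $v=\partial_\rho$ in $\tr_{\omega(t)}\Phi^*\omega_{\fs}\geq(\Phi^*\omega_{\fs})(v,\bar v)/\omega(t)(v,\bar v)=k/\phi''$ gives $\phi''(\rho,t)\geq c_1/C$ on that annulus, hence $b(t)-\phi'(2K,t)\geq\int_{2K}^{2K+1}\phi''\,d\rho\geq c_1/C$, and the monotonicity of $\phi'$ in $\rho$ extends the gap to all $\rho\leq 2K$. This stays entirely inside the Calabi-symmetric framework and does not rely on the Ricci vertex machinery, which in this regime lives on the wrong side of the manifold.
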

\begin{proof}   We note that $b(t) -\phi(\cdot, t)$ is uniformly bounded below from $0$ for all $t$ on any fixed compact set of $\mathbb{R}$. By Proposition \ref{2ndder1},  for any $K\geq 0$, there exists $A>0$ such that for all $s\geq 0$ and $\rho \leq K$, we have
$$
A^{-1} \leq \frac{ \tilde\phi''(\rho, s)}{  \tilde\phi'(\rho, s)}\leq A  .
$$
There exists $B>0$, such that for all $s\geq 0$,
$$
-B  \leq  \frac{ \tilde\phi'''(\rho,s)}{  \tilde\phi''(\rho,s)}\leq B.
$$
The lemma immediately follows from the above  differential inequalities.
\end{proof}
\begin{lemma} \label{uup}
There exists $C>0$ such that for all $s\geq 0$,
$$\tilde \phi' (\rho_s, s) \leq C. $$
\end{lemma}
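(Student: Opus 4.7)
The plan is to exploit the Calabi symmetry so that the bound on $\tilde\phi'(\rho_s, s)$ reduces to the first-order condition for a one-variable minimum. Since both $u_0(\cdot, t)$ and the localizing potential $\eta_A$ depend only on $\rho$, the Ricci vertex condition forces $\rho_s$ to be a minimum of the one-dimensional function $u_0(\cdot, t) + e^s\eta_A(\cdot)$, so
\[
u_0'(\rho_s, t) \;=\; -e^s \eta_A'(\rho_s).
\]
The choice of $A$ above guarantees $\rho_s < A < 2A$, in the region where $\eta_A = b(1)\log(1+e^\rho)$, so $\eta_A'(\rho_s) = b(1)\, e^{\rho_s}/(1 + e^{\rho_s}) > 0$, and consequently $u_0'(\rho_s, t) \leq 0$.

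Next I would differentiate the Monge-Amp\`ere identity (\ref{krfu}) in $\rho$, together with the definition $u_0 = \partial_t\phi + (\phi + nt)/(1-t)$, to obtain the explicit formula
\[
u_0'(\rho, t) \;=\; \frac{n\phi''}{a(t)+\phi'} + \frac{m\phi''}{\phi'} + \frac{\phi'''}{\phi''} - (m+1) + \frac{\phi'}{1-t}.
\]
Three of these five summands are already controlled: the first two are non-negative since $\phi', \phi''>0$ and $a(t) > 0$ for $t<1$, while $|\phi'''/\phi''| \leq C$ uniformly by estimate (\ref{calest2}) of Proposition \ref{2ndder1}. Evaluating at $\rho = \rho_s$ and using $u_0'(\rho_s, t) \leq 0$, the positive quantity $\phi'(\rho_s, t)/(1-t)$ must absorb the deficit, giving
\[
\tilde\phi'(\rho_s, s) \;=\; \frac{\phi'(\rho_s, t)}{1-t} \;\leq\; (m+1) + C,
\]
after recalling the rescaling $\tilde\phi(\rho, s) = e^s\phi(\rho, t)$ with $1-t = e^{-s}$.

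I do not expect any substantive obstacle here — the argument is essentially algebraic once Proposition \ref{2ndder1} supplies the $C^0$-bound on $\phi'''/\phi''$. The only point requiring care is that $\rho_s$ genuinely lies in the region $\{\rho < 2A\}$ where $\eta_A$ coincides with $b(1)\log(1+e^\rho)$, so that $\eta_A'(\rho_s)$ has the explicit positive form above; but this is exactly what the definition (\ref{weightcs}) and the choice of $A$ (in the paragraph introducing $\theta_Y$) were set up to ensure. Note also that the sign of the right-hand side $-e^s\eta_A'(\rho_s)$ is crucial: it is strictly negative and grows like $-\tilde{b}(s) e^{\rho_s}/(1+e^{\rho_s})$, which foreshadows that $\rho_s$ itself must drift to $-\infty$, though only the crude consequence $\tilde\phi'(\rho_s, s) \leq C$ is needed for the present lemma.
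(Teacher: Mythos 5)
Your plan is attractively direct and, once corrected, yields a more elementary proof than the paper's. The idea of exploiting the one-variable first-order condition at $\rho_s$ together with the uniform bound on $\phi'''/\phi''$ from Proposition \ref{2ndder1} is sound and does give the estimate. The paper instead argues by contradiction: it rescales at a sequence where $\tilde\phi'(\rho_{s_j},s_j)\to\infty$, invokes Perelman's pseudolocality to extract a smooth Cheeger--Gromov limit, and derives a contradiction from the Ricci potential identity on the flat blow-up. Your approach avoids all of this machinery.

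There is, however, a genuine error in your claimed formula for $u_0'$, and it changes how the first-order condition must be used. The $\phi$ appearing in (\ref{defofu0}) solves (\ref{mauflow2}), i.e.\ it is the potential with $\ddbar\phi = \omega(t) - (1-t)\omega_0 - t\omega_Y$, whereas the $\phi$ in (\ref{krfu}) is the Calabi potential, with $\ddbar\phi = \omega(t) - a(t)\omega_Z$. These two differ by a $\rho$-dependent term: using $\omega_0 = a_0\omega_Z + \ddbar\phi_0$, $\omega_Y = b\,\ddbar\log(1+e^\rho)$ and $a(t)=a_0(1-t)$, one finds that the Calabi potential equals the Monge--Amp\`ere potential plus $(1-t)\phi_0 + t b\log(1+e^\rho)$ up to a function of $t$. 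Consequently, with $\phi$ the \emph{Calabi} potential,
$$u_0' \;=\; \frac{n\phi''}{a+\phi'} + \frac{m\phi''}{\phi'} + \frac{\phi'''}{\phi''} - (m+1) + \frac{\phi'}{1-t} \;-\; \frac{b}{1-t}\,\frac{e^\rho}{1+e^\rho}.$$
Your formula omits the last term. Since that term is of the same order $(1-t)^{-1}$ as $\phi'/(1-t)$, the inequality $u_0'(\rho_s)\leq 0$ alone, with the corrected formula, no longer isolates a bound on $\phi'(\rho_s)/(1-t)$; the negative contribution of the omitted term could in principle absorb the deficit on its own.

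The repair is to use the full equality $(u_0 + e^s\eta_A)'(\rho_s) = 0$ rather than the one-sided inequality. For $\rho<2A$ one has $e^s\eta_A(\rho) = \frac{b}{1-t}\log(1+e^\rho)$, so the omitted term in $u_0'$ is \emph{exactly} $-e^s\eta_A'$, and hence for $\rho<2A$
$$(u_0 + e^s\eta_A)' \;=\; \partial_t\phi' + \frac{\phi'}{1-t}.$$
If $\rho_s$ is finite (i.e.\ $p_s\notin P_0\cup D_\infty$), the first-order condition at the interior minimum $\rho_s<2A$ gives the identity
$$\partial_t\phi'(\rho_s,t) + \frac{\phi'(\rho_s,t)}{1-t} = 0,$$
whence, by the evolution of $\phi'$ and $|\phi'''/\phi''|\leq C$,
$$\tilde\phi'(\rho_s,s) = \frac{\phi'(\rho_s,t)}{1-t} = (m+1) - \frac{\phi'''}{\phi''} - \frac{m\phi''}{\phi'} - \frac{n\phi''}{a+\phi'} \leq (m+1)+C.$$
If instead the minimum is attained on $P_0$, then $\tilde\phi'$ vanishes there and the bound holds trivially. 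This corrected version of your argument is cleaner than what you wrote (an equality rather than an inequality, with no appeal to the sign of $\eta_A'$), and is to my eye a genuine simplification of the paper's proof; the cancellation it relies on is not accidental but reflects that $\theta_Y = \omega_Y - \ddbar\eta_A$ vanishes identically for $\rho<2A$, which is what the localizing weight $\eta_A$ was designed to ensure.
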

\begin{proof}
Suppose $\tilde \phi'(\rho_{s_j}, s_j)=K_j \rightarrow \infty$ with $s_j \rightarrow \infty$ as $j\rightarrow \infty$. There exists $A>0$ such that
\begin{equation}\label{lem621}
K_j e^{-A |\rho |} \leq \tilde \phi'(\rho+\rho_{s_j} , s_j)  \leq  K_j e^{A |\rho |}.
\end{equation}
Let
$$
W_j=\{ x \in X~|~| \Psi(x) - \rho_{s_j} |< 1\} .
$$
Since $K_j \rightarrow \infty$,  (\ref{lem621}) implies that $\tilde \phi'(\rho, s_j)$ tends to $\infty$ uniformly on $\rho\in (\rho_{s_j} -1, \rho_{s_j} +1)$ as $j \rightarrow \infty$. Then by Proposition \ref{2ndder1} and the fact $\rho_{s_j}\leq 0$, we have $\tilde \phi''(\rho, s_j)$ tends to $\infty$ uniformly on $\rho\in (\rho_{s_j} -1, \rho_{s_j} +1)$ as $j \rightarrow \infty$. Then for any $D>0$, there exits $j_D>>1$ such that if $j>j_D$, we have
$$B_{\tilde g(s_j)}(p_{s_j}, D) \subset W_j $$
from (\ref{warppro}).
By the choice of $p_{s_j}$, the scalar curvature of $\tilde g(s_j)$ is uniformly bounded in $B_{\tilde g(s_j)}(p_{s_j}, D)$ for any fixed $D\geq 1$. Therefore by (\ref{scurv}), there exists $C=C(D)>0$ such that in $B_{\tilde g(s_j)}(p_{s_j}, D)$, we have
$$\left| \frac{\tilde \phi^{(4)}}{(\tilde \phi'')^2}\right| \leq C$$
for all $j$.
It in turns implies that both the scalar curvature and the curvature tensor of $\tilde g(s_j)$ are uniformly bounded in $B_{\tilde g(s_j)}(p_{s_j}, D)$.

By Perelman's pseudolocality theorem, there exists $\delta=\delta(D)>0$ such that the curvature of $\tilde g$ is uniformly bounded on
$$B_{\tilde g(s_j)}(p_{s_j}, D) \times [s_j-\delta, s_j+\delta].$$
We now show that the curvature in fact tends to $0$ uniformly.

Let
$$l_j = \frac{\tilde \phi''(\rho_{s_j} , s_j)}{ \tilde \phi'(\rho_{s_j}, s_j) }.$$
By Proposition \ref{2ndder1}, $l_j$ is uniformly bounded above and below away from $0$. By Lemma \ref{caldifin}, the metric near $p_{s_j}$ is bounded by
\[
\begin{split}
e^{-A|\rho|} \big( (K_j^{-1} a(0)+ 1) g_Z +  g_{\mathbb{CP}^m} + & l_j g_{cyl} \big) \leq  K_j^{-1} \tilde g(\rho+\rho_{s_j}, s_j) \\
& \leq  e^{A|\rho|} \left( (K_j^{-1} a(0) + 1) g_Z +  g_{\mathbb{CP}^m} +  l_j g_{cyl} \right).
\end{split}
\]
We remark that the metric $(K_j^{-1} a(0)+ 1) g_Z +  g_{\mathbb{CP}^m} +  l_j g_{cyl}$ is a warp product metric on $X$ but it is not a Kahler metric compatible to the complex structure on $X$. After taking a subsequence, we can assume  for some $l_\infty>0$, we have
$$
(K_j^{-1} a(0)+ 1) g_Z +  g_{\mathbb{CP}^m} +   l_j g_{cyl} \rightarrow g_Z +  g_{\mathbb{CP}^m} +   l_\infty g_{cyl} .
$$
%
%

%
Therefore $(X, \tilde g (s_j), p_{s_j})$ converges in $C^0$ locally to the tangent space of
$(Z\times \mathbb{CP}^m \times \mathbb{C}^*, g_Z +  g_{\mathbb{CP}^m} +  l_\infty g_{cyl})$.  Then the convergence must also be in pointed Cheeger-Gromov sense due to the curvature bounds.

%
%
%
%
%

%
%
%

%
Hence $(X, \tilde g (s_j), p_{s_j})$ converges smoothly to the flat $(\mathbb{C}^{m+n+1}, \tilde g_\infty)$. By the linear estimates for equation (\ref{ricpeqn}), i.e., $\Delta_{\tilde g} \tilde v= n -R(\tilde g)$ since $\theta_Y$ vanishes near $P_0$, we have
$$\tilde v (\rho+\rho_s, s) - \tilde v (\rho_s, s)= \tilde v (\rho+\rho_s, s) - \inf_{\mathbb{R}} \tilde v (\cdot, s)$$
converges smoothly to $\tilde v_\infty$ based at $\rho_s$.   The limiting equation for $\tilde v_\infty$ is then given by
\begin{equation}\label{ric0}
0=Ric(\tilde g_\infty)= \tilde g_\infty -\ddbar \tilde v_\infty .
\end{equation}
Note that the flat space is a trivial $\mathbb{C}^{m}$-bundle over $\mathbb{C}^{n+1}$ associated to the blow-up of the cylinder and $Z$. Since $\tilde v$ is $U(m+1)$-invariant , it is constant on the $S^1$-bundle over $\mathbb{CP}^m$. Therefore the limit $\tilde v_\infty$  is constant on each $\mathbb{C}^{m}$-fibre over $\mathbb{C}^{n+1}$ in $\mathbb{C}^{m+n+1}$. Restricting (\ref{ric0}) to any such fibre $\mathbb{C}^{m}$, we have
$$\tilde g_\infty|_{\mathbb{C}^{m} }= 0.$$
This is a contradiction.
\end{proof}

\begin{corollary}  \label{rhos}
$\lim_{s\rightarrow \infty} \rho_s = -\infty$ .
\end{corollary}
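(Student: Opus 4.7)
We proceed by contradiction: assume there is a subsequence $s_j \to \infty$ with $\rho_{s_j} \geq -K$ for some fixed $K > 0$, so after extraction $\rho_{s_j} \to \rho_\infty \in [-K, 0]$. By Lemma \ref{uup}, $\tilde{\phi}'(\rho_{s_j}, s_j)$ is uniformly bounded, and combined with Proposition \ref{2ndder1} together with $\tilde b(s_j) \to \infty$, so is $\tilde{\phi}''(\rho_{s_j}, s_j)$. Passing to a further subsequence, $\tilde{\phi}'(\rho_{s_j}, s_j) \to K_\infty \in [0, \infty)$.

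In the main case $K_\infty > 0$, Lemma \ref{caldifin} and Proposition \ref{2ndder1} give two-sided bounds on $\tilde\phi'$ and $\tilde\phi''$ over bounded $\rho$-intervals around $\rho_{s_j}$. Combining the scalar curvature bound of Theorem \ref{cor1} at the Ricci vertex with the explicit formula (\ref{scurv}) for the scalar curvature yields a bound on $\tilde\phi^{(4)}$; iterating via the flow equation (\ref{udpevolution}) produces $C^k$-bounds for all $k$. A pointed Cheeger-Gromov limit $(\mathcal{X}_\infty, \tilde g_\infty, p_\infty)$ then exists, with $\mathcal{X}_\infty$ diffeomorphic to the total space of $L^{\oplus(m+1)} \to Z$ (since $\tilde b(s_j) \to \infty$ pushes the divisor $D_\infty$ off to infinity) and $\rho(p_\infty) = \rho_\infty$ finite. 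Since $\theta_Y \equiv 0$ on $\{\rho < 2A\}$, in this region $\tilde v$ satisfies the shrinking K\"ahler-Ricci soliton equation $\ddbar \tilde v = \tilde g - \ric(\tilde g)$, and Theorem \ref{main1} provides the uniform control on bounded neighborhoods of the Ricci vertex needed to pass to a smooth limit $\tilde v_\infty$. Thus $(\mathcal{X}_\infty, \tilde g_\infty, \tilde v_\infty)$ is a complete Calabi-symmetric shrinking K\"ahler-Ricci soliton on $L^{\oplus(m+1)} \to Z$ whose potential attains its minimum at the interior point $p_\infty$. But by the ODE analysis of Calabi-symmetric shrinking K\"ahler-Ricci solitons on $L^{\oplus(m+1)} \to Z$ (cf.\ \cite{FIK} and Corollary \ref{calmain2}(1)), such a soliton is unique up to scaling and its potential attains its minimum precisely on the zero section, i.e.\ at $\rho = -\infty$; this contradicts $\rho_\infty$ being finite.

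In the residual case $K_\infty = 0$, set $K_j := \tilde\phi'(\rho_{s_j}, s_j) \to 0$ and consider the rescaled metrics $K_j^{-1}\tilde g(s_j)$, in which the $g_Z$-direction blows up to Euclidean $\mathbb{C}^n$ (because $\tilde a / K_j \to \infty$) while the $\mathbb{CP}^m$ and cylindrical directions remain comparable to $1$. The weighted Ricci potential equation degenerates in the manner of Lemma \ref{uup}: the shrinking term $K_j \hat g_j$ vanishes in the limit, forcing $\ddbar \tilde v_\infty = -\ric(\hat g_\infty)$ and making $\tilde v_\infty$ pluriharmonic on the Euclidean $\mathbb{C}^n$ factor with an interior minimum at $p_\infty$, hence constant, and finally forcing a Ricci-flatness of $\hat g_\infty$ incompatible with the warp-product structure of the limit. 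The chief technical obstacle throughout is upgrading from the scalar curvature bound at $p_{s_j}$ to smooth Cheeger-Gromov convergence of both the metric and the Ricci potential, which requires the ODE structure of the Calabi ansatz to promote one-derivative bounds to full $C^k$ control on $\tilde\phi$.
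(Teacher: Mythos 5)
The paper's own proof is a two-line argument that you have missed entirely. Since $\tilde\phi'(\rho_s,s) = e^s\phi'(\rho_s,t(s))$ is uniformly bounded by Lemma \ref{uup}, one has $\phi'(\rho_s,t(s)) \le Ce^{-s}\to 0$ in the \emph{unnormalized} flow. But it is a known fact in the contraction case that $\phi'(\cdot,t)$ converges smoothly on every compact subset of $\mathbb{R}$ to a positive increasing limit $\phi'(\cdot,1)$ as $t\to 1$ (the unnormalized flow converges away from $P_0\cup D_\infty$ to the pullback of a K\"ahler metric on $Y$, cf.\ \cite{SW1,SY,So13}). Hence if $\rho_{s_j}\ge -K$ along a subsequence, then $\phi'(\rho_{s_j},t_j)\ge \phi'(-K,t_j)\to \phi'(-K,1)>0$, a contradiction. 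No blow-up analysis is needed.

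Your argument, by contrast, has a genuine gap in the main case $K_\infty>0$. You pass to a pointed Cheeger--Gromov limit $(\mathcal{X}_\infty,\tilde g_\infty,\tilde v_\infty)$ satisfying $\ddbar\tilde v_\infty = \tilde g_\infty - \ric(\tilde g_\infty)$ and then assert that this makes $\tilde g_\infty$ ``a complete Calabi-symmetric shrinking K\"ahler-Ricci soliton.'' This is false as stated: the equation $\ddbar f = g - \ric(g)$ is merely the Ricci-potential equation, which holds for \emph{any} K\"ahler metric in the relevant class. A gradient shrinking K\"ahler-Ricci soliton additionally requires $\nabla_i\nabla_j f = 0$, equivalently that $\nabla f$ be real holomorphic. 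Nothing in your argument establishes this extra condition on the limit — you would need a Perelman-type monotonicity and gap argument, or to use the soliton identities coming from the Calabi ODE, none of which is provided. Without the soliton structure, there is no uniqueness and no control on the location of the minimum of $\tilde v_\infty$, so the contradiction does not follow. Citing Corollary \ref{calmain2}(1) is also logically circular, since that corollary is a downstream consequence of the very theorem in Section 9.3 whose proof depends on the present Corollary \ref{rhos}. The residual case $K_\infty=0$ is also left too vague (``incompatible with the warp-product structure'' is not an argument — there do exist Ricci-flat warp products, and what is needed is the explicit fiber-restriction computation as in Lemma \ref{uup}).

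The lesson: the corollary is designed to be cheap — it is a direct consequence of Lemma \ref{uup} plus the standard smooth convergence of the unnormalized flow away from the contracted locus. A blow-up/classification argument is the wrong tool here, and in any case the one you propose does not close.
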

\begin{proof}
Since $\tilde \phi'(\rho_s, s)$ is uniformly bounded above, $\phi'(\rho_s, s)$ must tend to $0$ as $s\rightarrow \infty$. Since $\phi'$ converges to a positive increasing function smoothly on any compact subset of $\mathbb{R}$, we immediately can conclude  that $\rho_s$ tends to $-\infty$ as $s\rightarrow \infty$.
\end{proof}

\begin{corollary} \label{dist}
There exists $C>0$ such that for all $s>0$,
$$d_{\tilde g(s)}(p_s, P_0) \leq C . $$
\end{corollary}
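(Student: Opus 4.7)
The strategy is to exploit the warp-product form \eqref{warppro} together with the fact, already established in Corollary \ref{rhos}, that $\rho_s \to -\infty$. In the coordinate $\rho$ the direction transverse to the $Z\times\mathbb{CP}^m$-orbits has metric $\tilde\phi''(\rho,s)\,d\rho^2$, so the radial path from $p_s$ (with $\Psi(p_s)=\rho_s$) toward $\rho=-\infty$ has length exactly
$$L(s):=\int_{-\infty}^{\rho_s}\sqrt{\tilde\phi''(\rho,s)}\,d\rho.$$
Because $\tilde a(s)\equiv a(0)>0$ keeps the $Z$-factor uniformly non-degenerate while $\tilde\phi'(\rho,s)\to 0$ as $\rho\to -\infty$ forces the $\mathbb{CP}^m$-factor and the $S^1$-fibre to collapse simultaneously, this path terminates at a point of $P_0\cong Z$. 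Hence $d_{\tilde g(s)}(p_s,P_0)\leq L(s)$ and it suffices to bound $L(s)$ uniformly.

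The bound on $L(s)$ will come from combining Proposition \ref{2ndder1}, rescaled to the normalized flow, with the upper bound $\tilde\phi'(\rho_s,s)\leq C_1$ supplied by Lemma \ref{uup}. Since the ratio in \eqref{calest1} is scale-invariant, the rescaled pinching reads
$$C^{-1}\leq \frac{\tilde b(s)\,\tilde\phi''(\rho,s)}{\tilde\phi'(\rho,s)\bigl(\tilde b(s)-\tilde\phi'(\rho,s)\bigr)}\leq C.$$
We are in the contraction case $b(T)>0$, so $\tilde b(s)=e^s b(t)\to\infty$; at the same time, monotonicity of $\tilde\phi'$ in $\rho$ together with Lemma \ref{uup} gives $\tilde\phi'(\rho,s)\leq C_1$ for every $\rho\leq \rho_s$. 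Therefore, for $s$ large enough, $\tilde b(s)-\tilde\phi'(\rho,s)\geq \tfrac12\tilde b(s)$ on $(-\infty,\rho_s]$, and the pinching collapses to the two-sided bound
$$(2C)^{-1}\,\tilde\phi'(\rho,s)\;\leq\;\tilde\phi''(\rho,s)\;\leq\;C\,\tilde\phi'(\rho,s),\qquad \rho\leq \rho_s.$$

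The lower inequality integrates to $\tilde\phi'(\rho,s)\leq \tilde\phi'(\rho_s,s)\,e^{-(\rho_s-\rho)/(2C)}$ for $\rho\leq\rho_s$; inserting the upper inequality then yields
$$L(s)\leq \sqrt{C\,\tilde\phi'(\rho_s,s)}\int_{-\infty}^{\rho_s}e^{-(\rho_s-\rho)/(4C)}\,d\rho=4C\sqrt{C\,\tilde\phi'(\rho_s,s)},$$
which is uniformly bounded by Lemma \ref{uup}. For $s$ in any bounded interval the statement is clear from smoothness of the flow and continuity of $p_s$. I do not foresee a serious obstacle: the only delicate point is ensuring that the two-sided pinching persists all the way to $\rho=-\infty$, which is automatic because the upper bound $\tilde\phi'(\rho,s)\leq C_1$ propagates backward from $\rho_s$ by monotonicity in $\rho$.
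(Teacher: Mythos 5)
Your proof is correct and follows essentially the same route as the paper: bound $d_{\tilde g(s)}(p_s,P_0)$ by the length $\int_{-\infty}^{\rho_s}\sqrt{\tilde\phi''}\,d\rho$ of the radial geodesic, control $\tilde\phi''$ by $\tilde\phi'$, and exploit the exponential decay of $\tilde\phi'$ backward from $\rho_s$ together with Lemma \ref{uup}. The only cosmetic difference is that you re-derive the two-sided pinching $\tilde\phi''\asymp\tilde\phi'$ (for $\rho\le\rho_s$ and $s$ large) directly from Proposition \ref{2ndder1} plus $\tilde b(s)\to\infty$, whereas the paper routes through the packaged statement of Lemma \ref{caldifin}; since that lemma itself rests on the same pinching, the two arguments coincide.
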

\begin{proof}
By Lemma \ref{uup}, and Corollary \ref{rhos}, there exists $A>0$ such that for all $s>0$ and $\rho\leq \rho_s$, we have
$$
\tilde \phi'(\rho, s) \leq \tilde \phi'(\rho_s, s) e^{A^{-1} (\rho-\rho_s)} \leq C e^{A^{-1} (\rho-\rho_s)}  .
$$
By Proposition \ref{2ndder1}, we have $\tilde \phi''\leq C \tilde \phi'$ globally. Hence we have
$$
d_{\tilde g(s)}(p_s , P_0) \leq \int_{-\infty}^{\rho_s} \sqrt{ \tilde \phi''(\rho,s)} d\rho \leq C \int_{-\infty}^{0} e^{(2A)^{-1} \rho} d\rho \leq C,
$$
by picking the geodesic line along the radial direction of the warp product metric $\tilde g$.
\end{proof}

Let
$$ \mathcal{B}_{\tilde g(s)}(P_0, d)= \{ p: d_{\tilde g(s)}(p, P_0)< d\}$$
be the tubular neighborhood of $P_0$ of radius $d>0$ with respect to $\tilde g(s)$.

\begin{lemma} \label{calpolow}
For any $ d>0$, there exists $c=c(d)>0$, such that for all $s\geq 0$, we have
$$\left. \tilde \phi' (\cdot , s) \right|_{\partial \mathcal{B}_{\tilde g(s)}(P_0, d)} \geq c . $$
\end{lemma}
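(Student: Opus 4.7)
\textbf{Proof plan for Lemma \ref{calpolow}.} The plan is to exploit the Calabi warped-product structure (\ref{warppro}) together with the quantitative comparison between $\tilde\phi''$ and $\tilde\phi'$ supplied by Proposition \ref{2ndder1}. The key observation is that for any point $p\in X$ with radial coordinate $\rho(p)$, the purely radial curve obtained by keeping the base point $z\in Z$ fixed and letting $\rho\to -\infty$ converges to a point of $P_0$ and has length exactly $\int_{-\infty}^{\rho(p)}\sqrt{\tilde\phi''(\rho',s)}\,d\rho'$ (since $\partial_{\rho}$ has squared norm $\tilde\phi''$ in the warp product). Hence
\[
d_{\tilde g(s)}(p, P_0)\;\leq\;\int_{-\infty}^{\rho(p)}\sqrt{\tilde\phi''(\rho',s)}\,d\rho'.
\]

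First I would dispose of the large-$\tilde\phi'$ regime. Fix $M=1$; on the region $\{\tilde\phi'(\cdot,s)\geq M\}$ there is nothing to prove, since the claimed bound is then automatic with $c\leq M$. So I may assume $\tilde\phi'(\rho(p),s)\leq M$, and by monotonicity $\tilde\phi'(\rho',s)\leq M$ for every $\rho'\leq \rho(p)$.

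Next I would split into two cases in $s$. For $s$ in any compact interval $[0,S_0]$, the flow is smooth, $\tilde\phi'(\cdot,s)$ is strictly positive and strictly increasing with $\lim_{\rho\to-\infty}\tilde\phi'(\rho,s)=0$, and the metrics $\tilde g(s)$ depend smoothly on $s$; a direct compactness/continuity argument then yields a lower bound $c_1=c_1(d,S_0)>0$. For $s\geq S_0$ with $S_0$ chosen so that $\tilde b(s)\geq 2M$, Proposition \ref{2ndder1} (estimate (\ref{calest1})) gives on the set $\{\tilde\phi'\leq M\}$ a two-sided comparison
\[
c_1\,\tilde\phi'\;\leq\;\tilde\phi''\;\leq\;c_2\,\tilde\phi',
\]
for uniform $c_1,c_2>0$, because $\tilde b-\tilde\phi'\geq \tilde b/2$ and the $\tilde b$ factors cancel. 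In particular $(\log\tilde\phi')'\geq c_1$ on $(-\infty,\rho(p)]$, and integrating yields the exponential decay
\[
\tilde\phi'(\rho',s)\;\leq\;\tilde\phi'(\rho(p),s)\,e^{-c_1(\rho(p)-\rho')},\qquad \rho'\leq \rho(p).
\]

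Combining this with $\tilde\phi''\leq c_2\tilde\phi'$ and the radial length bound above, I would conclude
\[
d\;=\;d_{\tilde g(s)}(p,P_0)\;\leq\;\sqrt{c_2\,\tilde\phi'(\rho(p),s)}\int_{-\infty}^{\rho(p)}e^{-c_1(\rho(p)-\rho')/2}\,d\rho'\;=\;\frac{2\sqrt{c_2}}{c_1}\sqrt{\tilde\phi'(\rho(p),s)},
\]
which rearranges to $\tilde\phi'(\rho(p),s)\geq c_1^2d^2/(4c_2)$, giving the desired $s$-uniform positive lower bound $c=c(d)>0$. The main obstacle I anticipate is the careful verification that the radial curve produces the correct upper bound on $d_{\tilde g(s)}(p,P_0)$ near the collapsed section $P_0$ (one must check that the $\mathbb{CP}^m$-fibre truly collapses to a single point of $P_0$ along this curve, so that the curve is genuinely admissible in the length distance); once this is in hand, the rest of the argument is a short ODE comparison driven by Proposition \ref{2ndder1}.
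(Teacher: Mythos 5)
Your argument is correct, but it proceeds by a genuinely different route from the paper. The paper's proof is a volume comparison: integrating the warp-product volume form over the tube shows that $\vol_{\tilde g(s)}\bigl(\mathcal{B}_{\tilde g(s)}(P_0, d)\bigr)$ is bounded above by a polynomial in $\tilde\phi'(\rho_{d,s},s)$, while Perelman's $\kappa$-noncollapsing, combined with the local scalar curvature bound near the Ricci vertex from Theorem \ref{cor1} and the uniform distance bound $d_{\tilde g(s)}(p_s, P_0)\leq C$ from Corollary \ref{dist}, provides a uniform positive lower bound on that volume. Your proof replaces all of this with a one-dimensional ODE comparison driven solely by Proposition \ref{2ndder1}: once $\tilde b(s)\geq 2$, the two-sided comparison $\tilde\phi''\asymp\tilde\phi'$ holds on $\{\tilde\phi'\leq 1\}$, yielding exponential decay of $\tilde\phi'$ along the radial ray and hence $d\leq C\sqrt{\tilde\phi'(\rho(p),s)}$, which rearranges into the desired lower bound. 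This is shorter and strictly more elementary — it uses neither the Li-Yau estimates nor $\kappa$-noncollapsing nor the location of the Ricci vertex, only the Calabi-symmetry estimates of Section \ref{calsym}. The paper's volume route is more robust in principle (it would survive without the warp-product ODE structure), but within the symmetric setting yours is the more direct argument. Finally, your anticipated obstacle is harmless: precisely because your bound shows the radial length integral $\int_{-\infty}^{\rho(p)}\sqrt{\tilde\phi''}\,d\rho'$ is finite, the radial curve is rectifiable with endpoint in $P_0$ and is therefore an admissible competitor for $d_{\tilde g(s)}(p,P_0)$ — this is exactly the observation the paper itself already uses in the proof of Corollary \ref{dist}.
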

\begin{proof} Let $\rho_{d, s} =\Psi(q)$ corresponding to any point $q \in \partial \mathcal{B}_{\tilde g(s)}(P_0, d)$. The volume of $\mathcal{B}_{\tilde g(s)}(P_0, d)$ can be estimated by
\begin{eqnarray*}
&&Vol_{\tilde g(s)}(\mathcal{B}_{\tilde g(s)}(P_0, d)) \\
&=& [\omega_Z]^n \int_{-\infty}^{\rho_{d,s}} (\tilde a(s)+\tilde \phi')^n (\tilde \phi')^m \tilde \phi'' d\rho\\
&\leq& Ca(0) [\omega_Z]^n \int_{-\infty}^{\rho_{d,s}}  (\tilde \phi')^m \tilde \phi'' d\rho + C\int_{-\infty}^{\rho_{d,s}}  (\tilde \phi')^{m+n}  \tilde \phi'' d\rho\\
&=& Ca(0)(m+1)^{-1} [\omega_Z]^n\int_{-\infty}^{\rho_{d,s}} \left( (\tilde \phi')^{m+1} \right)' d\rho+ C \int_{-\infty}^{\rho_{d,s}} \left( (\tilde \phi')^{m+1} \right)' d\rho\\
&\leq& Ca(0) (m+1)^{-1}  [\omega_Z]^n(\tilde \phi' (\rho_{d,s},s) )^{m+1} +  C (\tilde \phi' (\rho_{d,s},s) )^{m+n+1}.
\end{eqnarray*}
On the other hand, if we pick any point $p_0\in P_0$, $ B_{\tilde g(s)}(p_0, d)) \subset \mathcal{B}_{\tilde g(s)}(P_0, d) $ and $d_{\tilde g(s)}(p_0, p_s)$ is uniformly bounded above. Therefore by our scalar curvature estimate and $\kappa$-noncollapsing, the volume of $  B_{\tilde g(s)}(p_0, d)) $ must have a uniform lower bound. This immediately implies the uniform lower bound of $\tilde \phi' (\rho_{q,s},s)$.
\end{proof}

We now pick $q_s$ be a point satisfying
\begin{equation}\label{unidistpt}
d_{\tilde g(s)}(q_s, P_0) = 1, ~ \rho_{1,s} =\Psi (q_s).
\end{equation}

\begin{corollary} \label{locc1}
For any compact subset $K$ of $\mathbb{R}$ and $k>0$, there exists $C>0$ such that
$$ \inf_{(\rho,s)\in K\times[0, \infty)} \tilde \phi' (\rho+\rho_{1,s}, s) \geq C^{-1},$$
$$\inf_{(\rho,s)\in K\times[0, \infty)} \tilde \phi''(\rho+\rho_{1,s} , s) \geq C^{-1},$$
$$||\tilde \phi' (\rho+\rho_{1,s}, s)||_{C^k(K\times[0, \infty))} \leq C. $$
\end{corollary}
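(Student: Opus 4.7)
My strategy is to sandwich $\tilde\phi'(\rho_{1,s}, s)$ and $\tilde\phi''(\rho_{1,s}, s)$ between uniform constants, propagate these bounds to $\rho + \rho_{1,s}$ for $\rho \in K$ via Lemma \ref{caldifin}, and finally bootstrap to $C^k$ regularity using curvature estimates. The lower bound $\tilde\phi'(\rho_{1,s}, s) \geq c > 0$ is immediate from Lemma \ref{calpolow} with $d = 1$.

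For the matching upper bound, I would combine Corollary \ref{dist} with Theorem \ref{main1}: since $d_{\tilde g(s)}(p_s, P_0) \leq C_0$, Theorem \ref{main1} gives a uniform scalar curvature bound on the enclosing ball $B_{\tilde g(s)}(p_s, C_0 + 1)$, which contains $\mathcal{B}_{\tilde g(s)}(P_0, 1)$; combined with the uniform $\nu$-entropy lower bound along the flow, Bamler's volume non-inflation estimate yields $\vol_{\tilde g(s)}(\mathcal{B}_{\tilde g(s)}(P_0, 1)) \leq V_0$. The Calabi-symmetric volume integral, using only $\tilde a(s) + \tilde\phi' \geq \tilde\phi'$, gives
\begin{equation*}
\vol_{\tilde g(s)}(\mathcal{B}_{\tilde g(s)}(P_0, 1)) \geq c \int_{-\infty}^{\rho_{1,s}} (\tilde\phi')^{m+n}\tilde\phi'' \, d\rho = \frac{c}{m+n+1} \bigl(\tilde\phi'(\rho_{1,s}, s)\bigr)^{m+n+1},
\end{equation*}
so $\tilde\phi'(\rho_{1,s}, s) \leq C$. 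I then verify $\rho_{1,s}$ is uniformly bounded above: if $\rho_{1,s_j} \to \rho_\infty > -\infty$ along a subsequence, smooth convergence of $\phi'(\cdot, t)$ on compact $\rho$-sets to the strictly positive increasing function $\phi'(\cdot, 1)$ would force $\tilde\phi'(\rho_{1,s_j}, s_j) = e^{s_j}\phi'(\rho_{1,s_j}, t(s_j)) \to \infty$, contradicting the upper bound just established. With $\rho_{1,s}$ bounded above, Lemma \ref{caldifin} yields $\tilde\phi'(\rho + \rho_{1,s}, s) \asymp \tilde\phi'(\rho_{1,s}, s) \asymp 1$ for $\rho \in K$. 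The bound on $\tilde\phi''$ follows from Proposition \ref{2ndder1}: as $\tilde b(s) = e^s b_0 \to \infty$ while $\tilde\phi'(\rho_{1,s}, s) \leq C$, the factor $(\tilde b - \tilde\phi'(\rho_{1,s}, s))/\tilde b \to 1$, giving $\tilde\phi''(\rho_{1,s}, s) \asymp \tilde\phi'(\rho_{1,s}, s) \asymp 1$, and Lemma \ref{caldifin} extends this uniform bound to the $K$-shift.

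For the $C^k$ estimate, the two-sided bounds on $\tilde\phi'$ and $\tilde\phi''$ on the shifted region render $\tilde g(s)$ uniformly non-degenerate there, and Theorem \ref{main1} combined with Perelman's pseudolocality (applied as in the proof of Lemma \ref{uup}) upgrades the scalar curvature bound to a full curvature bound $|\Rm_{\tilde g(s)}| \leq C$ in a slightly smaller parabolic neighborhood. Shi's local derivative estimates then produce $|\nabla^k \Rm_{\tilde g(s)}| \leq C_k$; by the explicit formulas \eqref{R1bar1} and \eqref{Rijkl}, these invert algebraically to give uniform control on $\tilde\phi^{(j)}$ for all $j \geq 2$, and all time derivatives are recovered from the evolution equation \eqref{udpevolution}.

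The main obstacle is the uniform upper bound on $\tilde\phi'(\rho_{1,s}, s)$: this is where the purely local Calabi-symmetric volume computation must be coupled with genuinely global inputs, namely Theorem \ref{main1} on a fixed-radius $\tilde g(s)$-ball around $p_s$ together with Bamler's volume non-inflation. Once this is secured, propagating the bounds to $K + \rho_{1,s}$ and to all derivative orders is comparatively routine.
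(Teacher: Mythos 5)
Your overall strategy matches the paper's: get two-sided bounds on $\tilde\phi'$ and $\tilde\phi''$ at $\rho_{1,s}$, propagate over $K$ by Lemma \ref{caldifin}, and bootstrap to $C^k$. You also correctly notice something the paper's terse proof glosses over: to invoke Lemma \ref{caldifin} with $\rho_0 = \rho_{1,s}$ one must first know $\rho_{1,s}$ is bounded above, which in turn needs an upper bound on $\tilde\phi'(\rho_{1,s},s)$. That observation is a genuine contribution of your write-up, and your volume-non-inflation route does deliver the bound. However, it is heavier than needed, and your closing remark that ``genuinely global inputs'' are unavoidable here is not correct. The upper bound follows from a purely local Calabi-ansatz computation: by Proposition \ref{2ndder1}, $\tilde\phi'' \asymp \tilde\phi'(1-\tilde\phi'/\tilde b)$, so if $\tilde\phi'(\rho_{1,s},s)\le\tilde b(s)/2$ then $\tilde\phi''\ge c\,\tilde\phi'$ on $(-\infty,\rho_{1,s}]$ together with $(\log\tilde\phi')'\le C$ forces
$1=\int_{-\infty}^{\rho_{1,s}}\sqrt{\tilde\phi''}\,d\rho\ge c\sqrt{\tilde\phi'(\rho_{1,s},s)}$, giving the bound; and the case $\tilde\phi'(\rho_{1,s},s)>\tilde b(s)/2$ is impossible once $\tilde b(s)$ is large because the same estimate at the level set $\tilde\phi'=\tilde b/2$ would already give distance $>1$ from $P_0$. (Also, a small slip: $\tilde b(s)=e^{s}b(t)$ with $b(t)=b_0-(m+2)t$, not $e^s b_0$; the conclusion $\tilde b(s)\to\infty$ is unaffected.)

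The step that does not work as written is the $C^k$ bound. You assert that ``Theorem \ref{main1} combined with Perelman's pseudolocality (applied as in the proof of Lemma \ref{uup}) upgrades the scalar curvature bound to a full curvature bound.'' Pseudolocality cannot do that. In Lemma \ref{uup} the time-slice full curvature bound is first obtained algebraically from $|\tilde\phi^{(4)}/(\tilde\phi'')^2|\le C$ via the curvature formulas \eqref{R1bar1}, \eqref{Rijkl}, and only then is pseudolocality used (in the almost-flat rescaled setting, where the metric is $C^0$-close to Euclidean and so the isoperimetric hypothesis holds) to propagate that time-slice bound to a spacetime slab. Here the metric on $K+\rho_{1,s}$ is a genuine, non-degenerate Calabi-type metric, not close to flat, so that pseudolocality hypothesis fails; and in any case pseudolocality never converts a scalar curvature bound into a full curvature bound. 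The correct route, which is what the paper does, is: the two-sided bounds on $\tilde\phi',\tilde\phi''$ and $|\tilde\phi'''|\le C\tilde\phi''$ from Proposition \ref{2ndder1}, together with the scalar curvature bound near the Ricci vertex (Theorem \ref{main1}/Corollary \ref{cor1} plus Corollary \ref{dist}), solve the scalar curvature identity \eqref{scurv} for $\tilde\phi^{(4)}$ and bound it; then \eqref{R1bar1}, \eqref{Rijkl} give $|\mathrm{Rm}(\tilde g(s))|\le C$ directly; the spacetime extension and the higher $C^k$ bounds then follow from the parabolic equation \eqref{udpevolution} (now uniformly parabolic with bounded coefficients), not from pseudolocality. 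So the Shi/inversion scaffold you describe is salvageable, but the claimed source of the initial full curvature bound is incorrect and needs to be replaced by the algebraic argument from the Calabi-symmetry curvature formulas.
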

\begin{proof}
Both of the lower bounds for $\tilde \phi'$ and $\tilde \phi''$ follow from Lemma \ref{calpolow} and Lemma \ref{caldifin}. Then $\tilde \phi'''$ is uniformly bounded and $\tilde \phi^{(4)}$ is also uniformly bounded by the scalar curvature bound and the scalar curvature formula.
Then the curvature tensors are uniformly bounded which implies higher order estimates for $\tilde \phi'$.
\end{proof}

Immediately we have the following corollary.
\begin{corollary}
For any sequence $s_j\rightarrow \infty$, (after possibly passing to a subsequence), $(X\setminus \{ P_0\cup D_\infty\} , \tilde g(s_j), q_{s_j})$ converges smoothly to a shrinking gradient soliton on $(X\setminus \{ P_0\cup D_\infty\}, \tilde g_\infty)$.
\end{corollary}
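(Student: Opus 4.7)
The plan is to combine the $C^k$-control on the Calabi profile furnished by Corollary \ref{locc1} with the Li--Yau estimate from Theorem \ref{main1} to pass smoothly to a limit metric, and then to identify this limit as a shrinking gradient K\"ahler--Ricci soliton via the limiting equation satisfied by the weighted Ricci potential $\tilde v$.

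First, I would extract the limit metric. Corollary \ref{locc1} gives uniform positive lower bounds on $\tilde\phi'(\rho+\rho_{1,s_j},s_j)$ and $\tilde\phi''(\rho+\rho_{1,s_j},s_j)$ together with $C^k$-bounds on $\tilde\phi'$ for every compact $\rho$-interval and every $k$. Via the Calabi-ansatz expressions (\ref{metricrep3}) and (\ref{volrep}), this converts into uniform smooth bounds on the metric, curvature, and all covariant derivatives of curvature of $\tilde g(s_j)$ on a fixed $\tilde g(s_j)$-geodesic ball around $q_{s_j}$ of any given radius. Because the Calabi symmetry allows us to identify such neighborhoods via the $\mathbb{C}^\ast$-translation $\rho\mapsto \rho-\rho_{1,s_j}$ in the trivialization of $L^{\oplus(m+1)}$ on $X\setminus\{P_0\cup D_\infty\}$, after passing to a subsequence Arzel\`a--Ascoli yields a smooth positive limit $\tilde\phi'_\infty(\rho)$, hence a smooth $U(m+1)$-invariant K\"ahler metric $\tilde g_\infty$ in Calabi form on $X\setminus\{P_0\cup D_\infty\}$. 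The convergence is in pointed Cheeger--Gromov sense and is smooth on compact subsets.

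Second, I would pass the Ricci potential to the limit. Since by Corollary \ref{rhos} we have $\rho_{s_j}\to -\infty$, for $j$ large enough the support of $\theta_Y$, which lies inside $\{\rho\geq 2A\}$, is disjoint from any fixed $\tilde g(s_j)$-neighborhood of $q_{s_j}$. There the weighted Ricci potential $\tilde v$ satisfies the clean identity
\begin{equation*}
\ddbar \tilde v = -\ric(\tilde g(s_j)) + \tilde g(s_j).
\end{equation*}
Theorem \ref{main1} gives $|\nabla \tilde v|^2+|\Delta \tilde v|\leq C\tilde v$ with $\tilde v\geq 1$, and Corollary \ref{dist} together with the normalization $\tilde v(p_{s_j},s_j)=1$ shows $\tilde v(q_{s_j},s_j)\leq C$; integrating the gradient bound then gives $C^0$-control on $\tilde v(\cdot,s_j)$ on compact neighborhoods of $q_{s_j}$, and elliptic regularity applied to the above identity (whose right-hand side is smoothly bounded by the first step) promotes this to $C^{k}$-control. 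After a further subsequential extraction, $\tilde v(\cdot,s_j)$ converges smoothly to a limit $\tilde v_\infty$ on $X\setminus\{P_0\cup D_\infty\}$ satisfying $\ddbar \tilde v_\infty = -\ric(\tilde g_\infty)+\tilde g_\infty$.

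Third, to upgrade this one-time identity to the full gradient shrinking soliton structure, I would apply the same estimates over compact backward time-intervals. The $C^k$-bounds on $\tilde\phi'$ and the evolution equations for $\tilde\phi',\tilde\phi''$ (given after (\ref{udpevolution})) yield uniform bounds on all time derivatives, so the shifted flows $\tilde g(s+s_j)$ converge smoothly on $K\times[-T,0]$ for any compact $K\subset X\setminus\{P_0\cup D_\infty\}$ and any $T<\infty$ to an eternal solution $(\tilde g_\infty(s))_{s\in\mathbb{R}}$ of the normalized K\"ahler--Ricci flow. Along this eternal limit, the Perelman $\mu$-entropy computed on any fixed compact domain is bounded above (by the entropy at the initial time $g_0$) and non-decreasing along the original normalized flow, hence constant on the limit; the standard rigidity argument for the equality case then forces $\ric(\tilde g_\infty)+\nabla^2\tilde v_\infty=\tilde g_\infty$ with $\nabla\tilde v_\infty$ a real holomorphic vector field, which is precisely the shrinking gradient K\"ahler--Ricci soliton equation with potential $\tilde v_\infty$. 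I expect the main obstacle to be this last step: in our setting the base manifold is open and neither Fano nor complete, so Perelman's entropy monotonicity has to be run locally, or replaced by an argument at the level of the Calabi ODE, where one shows directly that the time-derivatives appearing in the evolution equations for $\tilde\phi'$ and $\tilde\phi''$ converge to the holomorphic-gradient terms generated by the limiting vector field $\nabla\tilde v_\infty$.
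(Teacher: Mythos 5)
Your first step — extracting a smooth Cheeger--Gromov limit of the Calabi profile using Corollary~\ref{locc1} and the translation gauge $\rho\mapsto \rho - \rho_{1,s_j}$ — matches the paper's construction precisely; the paper defines $\hat\phi_j(\rho) = \tilde\phi(\rho+\rho_{1,s_j},s_j)-\tilde\phi(\rho_{1,s_j},s_j)$ and invokes exactly this corollary to produce the limit profile $\hat\phi_\infty$. Your second step (smooth convergence of the Ricci potential $\tilde v$ once one restricts to the region where $\theta_Y$ vanishes, using the Li--Yau bound and the distance estimate of Corollary~\ref{dist}) is sound, and supplies useful detail that the paper omits.

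The substantive divergence is in the third step, and this is also where your proposal has a gap. Perelman's $\mu$-entropy monotonicity is a global statement on a closed manifold, and the usual rigidity argument for the equality case (which forces $\ric + \nabla^2 f = g$) requires a complete background. On the open, incomplete manifold $X\setminus\{P_0\cup D_\infty\}$ with a metric whose completion has a conical or cusp-like structure, the integration by parts behind the entropy formula acquires boundary terms that you have not controlled, and the monotonicity-plus-rigidity scheme does not run as stated. You flag this yourself, which is good judgment, but the paragraph leaves both the entropy route and the ``Calabi ODE'' alternative as sketches rather than arguments. The Calabi ODE route is in fact the one compatible with the paper's gauge choice: the translation $\rho_{1,s}$ realizes the soliton diffeomorphism, and the soliton equation emerges as the stationary equation for the shifted profile $\hat\phi(\rho,s)=\tilde\phi(\rho+\rho_{1,s},s)-\tilde\phi(\rho_{1,s},s)$, namely
\begin{equation*}
0 \;=\; \hat\phi_\infty' + \frac{\hat\phi_\infty'''}{\hat\phi_\infty''} + \frac{m\hat\phi_\infty''}{\hat\phi_\infty'} + \frac{n\hat\phi_\infty''}{a(0)+\hat\phi_\infty'} - (m+1) + \lambda\,\hat\phi_\infty'',
\end{equation*}
where $\lambda$ is the limit of $\dot\rho_{1,s_j}$ along the subsequence. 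To close this you would need to show that $\partial_s\hat\phi'(\cdot,s_j)\to 0$ and that $\dot\rho_{1,s_j}$ converges, neither of which is immediate from smooth convergence of the time-slices alone; this is the actual content of the soliton identification. For what it is worth, the paper's proof of this corollary is itself terse on this point — it asserts that $\ddbar\hat\phi_\infty$ is a soliton metric without spelling out the ODE argument — so you have correctly located a step that requires more care than either write-up provides, but the route you choose first (global entropy monotonicity) is the wrong tool for the open-manifold setting.
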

\begin{proof}
Let $\hat \phi_j(\rho) = \tilde\phi(\rho+\rho_{1,s_j} , s_j) -\tilde\phi(\rho_{1,s_j}, s_j)$. By Corollary \ref{locc1}, $\hat \phi_j$ converges smoothly to a smooth convex function $\hat\phi_\infty(\rho)$ for $\rho\in \mathbb{R}$.  The K\"ahler metric
$$
\tilde g_\infty = \ddbar \hat \phi_\infty = (a(0) + \hat\phi_\infty'(\rho))g_Z + \hat \phi_\infty'(\rho) g_{\mathbb{CP}^m} + \hat \phi_\infty''(\rho) g_{cyl}
$$
is a smooth K\"ahler-Ricci soliton metric for $\rho\in \mathbb{R}$. Hence it is a K\"ahler-Ricci soliton metric on the open manifold $X\setminus \{ P_0\cup D_\infty\} $.
\end{proof}

In the next lemma, we will prove the uniform curvature bounds for $\tilde g$ near $P_0$.
\begin{lemma}\label{RmboundP0}
For any $D>0$, there exists $C>0$ such that for  $s\geq 0$
$$\sup_{\mathcal{B}_{\tilde g(s)}(P_0, D)} |Rm(\tilde g(s))| \leq C. $$
\end{lemma}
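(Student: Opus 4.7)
The plan is to prove the lemma by a contradiction/blow-up argument that combines the scalar curvature estimate at the Ricci vertex with the rigidity of Ricci-flat limits under Calabi symmetry, mirroring the structure of the proof of Lemma \ref{uup}.

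First, I would set up the local scalar curvature bound on $\mathcal{B}_{\tilde g(s)}(P_0, D)$. By Corollary \ref{dist}, $d_{\tilde g(s)}(p_s, P_0)$ is uniformly bounded, so $\mathcal{B}_{\tilde g(s)}(P_0, D)$ is contained in a geodesic ball around $p_s$ of uniformly bounded $\tilde g(s)$-radius. Since by construction $\theta_Y$ vanishes in a fixed large neighborhood of $P_0$ containing the Ricci vertex, Theorem \ref{cor1} together with Corollary \ref{scbonPnbhd2} (applied in the normalized flow) yields $|R_{\tilde g}| \leq C$ on a uniform parabolic neighborhood of $\mathcal{B}_{\tilde g(s)}(P_0, D)$. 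Perelman's $\kappa$-noncollapsing, which follows from the $\nu$-entropy lower bound inherited from the initial metric, ensures that this neighborhood is uniformly noncollapsed.

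Next, suppose the lemma fails: pick $s_j\to\infty$ and $p_j\in \mathcal{B}_{\tilde g(s_j)}(P_0, D)$ with $Q_j:=|\Rm(\tilde g(s_j))|(p_j)\to\infty$. A standard point-picking produces $p_j'$ nearby where $|\Rm|$ is comparable to $Q_j$ and locally controlled at scale $Q_j^{-1/2}$. Rescaling $\hat g_j(t):=Q_j\tilde g(s_j + Q_j^{-1}t)$ yields Ricci flows with $|\Rm(\hat g_j,0)|(p_j')\sim 1$ and $|R(\hat g_j)|\leq C/Q_j\to 0$ on parabolic neighborhoods. By Hamilton--Cheeger--Gromov compactness, after passing to a subsequence, $(X,\hat g_j(t),p_j')$ converges smoothly to a complete ancient solution $(X_\infty,g_\infty(t),p_\infty)$ with $R_{g_\infty}\equiv 0$ and $|\Rm(g_\infty,0)|(p_\infty)=1$. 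The strong maximum principle applied to $\partial_t R=\Delta R+2|\Ric|^2$ then forces $\Ric(g_\infty)\equiv 0$, so $g_\infty$ is a static Ricci-flat K\"ahler metric.

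The final step is to identify $(X_\infty,g_\infty)$ as flat Euclidean space, which yields the desired contradiction. The Calabi symmetry of $\tilde g(s_j)$ is preserved under rescaling and passes to a (possibly degenerate) warped-product symmetry on the limit, as in the final part of the proof of Lemma \ref{uup}. Depending on whether $Q_j^{1/2}d_{\tilde g(s_j)}(p_j',P_0)$ stays bounded or diverges, $g_\infty$ is either a smooth rotationally symmetric extension of the Calabi ansatz through the image of $P_0$, or a warped product over a flat limit of the base $Z$. In either case the reduced ODE satisfied by the Calabi profile (together with Ricci-flatness, smoothness through the exceptional locus, and the estimates of Proposition \ref{2ndder1}) forces the profile to be the Euclidean one, hence $g_\infty$ is flat and $|\Rm(g_\infty)|(p_\infty)=0$, contradicting $|\Rm(g_\infty,0)|(p_\infty)=1$.

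The main obstacle is the point-picking/local curvature bound step: a priori we only control the scalar curvature, and promoting this to a local full curvature bound on the rescaled flows $\hat g_j$ in order to apply Hamilton compactness requires Perelman's pseudolocality theorem (as in the proof of Lemma \ref{uup}) in combination with the Calabi symmetry, which effectively reduces derivative estimates for the metric to one-dimensional ODE estimates on the profile $\tilde\phi(\rho,s)$ that are controlled by the bounds of Proposition \ref{2ndder1} and the scalar curvature formula (\ref{scurv}).
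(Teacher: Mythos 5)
Your high-level strategy (rescale at a curvature blow-up point, use the Calabi symmetry to reduce to ODE estimates on the profile $\tilde\phi$, and derive a contradiction from the rigidity of Ricci-flat Calabi-symmetric limits) is the right one and matches the paper's. However, there is a genuine gap in how you handle the degenerate case.

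The paper splits into three cases according to the behavior of $\tilde\phi'_j(\rho_{x_j},s_j)$, where $\tilde\phi_j=K_j\tilde\phi$ is the rescaled profile: it tends to $\infty$, tends to $0$, or stays bounded away from $0$ and $\infty$. Your dichotomy based on $Q_j^{1/2}d_{\tilde g(s_j)}(p_j',P_0)$ only captures the split between the first case and the union of the other two, because (using Proposition \ref{2ndder1} and Lemma \ref{caldifin}) one has $d_{\tilde g_j}(x_j,P_0)^2 \sim \tilde\phi'_j(\rho_{x_j})$. In the crucial sub-case $\tilde\phi'_j(\rho_{x_j})\to 0$, the blow-up points converge to $P_0$ in the rescaled metric and the $\mathbb{CP}^m$-fiber there collapses; the $U(m+1)$-invariant cylindrical coordinates $(\rho,\theta)$ degenerate at $x_j$, so the one-dimensional ODE machinery you are relying on cannot be applied directly at $x_j$. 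Your phrase ``a smooth rotationally symmetric extension of the Calabi ansatz through the image of $P_0$'' glosses over exactly this difficulty, which is where the paper's proof spends most of its effort. The paper resolves it by first getting a smooth pointed limit at $x_j$ (via Hamilton compactness and the quasi-maximality, which gives a $\delta$-ball of definite curvature $\geq \epsilon$), then \emph{re-basing} at a nearby point $z_j$ chosen so that $\tilde\phi'_j(\rho_{z_j})=c>0$ is fixed — the existence of such a point being guaranteed by a continuity argument together with the distance-to-$P_0$ estimate — and only then doing the elliptic $C^{2,\alpha}$ analysis of the profile at $z_j$ and concluding flatness from the explicit Ricci-flat ODE $\tilde\phi'_\infty=Ce^\rho$. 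Without this re-basing step, the contradiction does not go through.

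Two further points. First, the parabolic strong maximum principle is more than is needed and not immediately available: it presupposes a complete smooth ancient solution, which you have not established, whereas the paper works with a \emph{local} (time-frozen) $C^{2,\alpha}$ limit of the profile and obtains Ricci-flatness algebraically: under the Calabi symmetry the Ricci tensor is determined by $\tilde\phi',\tilde\phi'',\tilde\phi'''$ and the scalar curvature (\ref{scurv}), and $\tilde\phi^{(4)}$ is controlled by (\ref{scurv}) once the scalar curvature is bounded, so the rescaled scalar curvature tending to $0$ pushes the limit Ricci potential to be pluriharmonic directly. Second, your invocation of Perelman's pseudolocality at an arbitrary blow-up point is not justified a priori; pseudolocality requires an almost-Euclidean or isoperimetric input, which in the proof of Lemma \ref{uup} is supplied explicitly by the Calabi formula showing the warp-product metric is $C^0$-close to a flat product. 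In Case I of Lemma \ref{RmboundP0} this does carry over (because $\tilde\phi'_j\to\infty$ makes the rescaled metric nearly flat), but in the degenerate case pseudolocality at $x_j$ would not help; that is precisely why the re-basing at $z_j$ is needed.
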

\begin{proof}
It suffices to prove for $D=1$. We prove by contradiction. We first note that the curvature of $\tilde g (\rho+\rho_{1,s}, s)$ is uniformly bounded for all $s\geq 0$ and $\rho$ on any compact subset of $\mathbb{R}$ by the previous two corollaries. Let $(x_j, s_j)$ be the quasi-maximal point for the curvature, i.e.,
$$ K_j=|Rm(x_j, s_j)| \geq  \sup_{(\rho, s)\in(-\infty, 0] \times [0, s_j]} |Rm(\tilde g(\rho+\rho_{1, s_j}, s_j))| -1\rightarrow \infty. $$
We can assume $x_j$ does not lie in $P_0$. Let $\rho_{x_j} =\Psi(x_j) $ be the value of $\rho$ associated to $x_j$. We note that $\rho_{x_j}\to -\infty$ as $j\to\infty$. We consider the rescaled metric $\tilde g_j = K_j \tilde g(s_j)$ with potential $\tilde\phi_j (\rho, s_j)= K_j \tilde \phi(\rho, s_j)$.

We break the proof into following cases.

\noindent {\bf Case I:}
Suppose $\tilde \phi'_j(\rho_{x_j}, s_j) \rightarrow \infty$ as $j\to\infty$.

By Lemma \ref{caldifin}, near $x_{j}$ we have
\[
\begin{split}
e^{-A|\rho|} \big( (L_j a(0)+ 1) g_Z  +  g_{\mathbb{CP}^m} + & l_j g_{cyl} \big) \leq \tilde \phi'_j(\rho_{x_j}, s_j)^{-1} \tilde g_j(\rho+\rho_{x_j}) \\
& \leq  e^{A|\rho|} \left( (L_j a(0)+ 1) g_Z +  g_{\mathbb{CP}^m} +  l_j g_{cyl} \right),
\end{split}
\]
where $l_j=\tilde \phi''(\rho_{x_j}, s_j)/\tilde \phi'(\rho_{x_j}, s_j)$ and $L_j=K_j/\tilde \phi'_j(\rho_{x_j}, s_j)$. Since $\rho_{x_j}\to -\infty$, by Proposition \ref{2ndder1}, $l_j$ is uniformly bounded above and below away from $0$. Then we can make use of the scalar curvature bound and the same argument in the proof of Lemma \ref{uup} to show that $(X, \tilde g_j, x_j)$ converges in pointed Cheeger-Gromov sense to flat $\mathbb{C}^{n+m+1}$. This leads to contradiction as the norm of the curvature tensor at $x_j$ is $1$.

\medskip

\noindent {\bf Case II:}
Suppose $\tilde \phi'_j(\rho_{x_j}, s_j) \rightarrow 0$ as $j\to\infty$.

By Lemma \ref{caldifin} and the same argument of Corollary \ref{dist}, $d_{\tilde g_j}(x_j, P_0)\to 0$. Hence by the choice of $x_j$ and Hamilton's compactness theorem, passing to a subsequence, $(X, \tilde g_j, x_j)$ smoothly converges to a smooth limit $(\tilde X_\infty, \tilde g_\infty, x_\infty)$. Hence we can find $\delta>0$ such that there exists $\epsilon>0$, such that for any point $q \in X$ with $d_{\tilde g_j}(x_j, q) <\delta$, we have
$$|Rm(\tilde g_j)|(q) \geq \epsilon.$$
This can also be proved by Shi's derivative estimates.

We then let $y_{j} \in \Psi^{-1}(\rho_{y_{j}})$ be the point on the radial geodesic of $x_j$ with $d_{\tilde g_j}(x_j, y_{j})=\delta$ and $\rho_{y_{j}} > \rho_{x_j}$. We claim that there exists $c>0$, such that for each $j>0$, we have
$$\tilde \phi_j'(\rho_{y_j}, s_j) \geq c.$$
Otherwise we have $\tilde \phi_j'(\rho_{y_j}, s_j) \to 0$, then by the same argument of Corollary \ref{dist}, we have $d_{\tilde g_j}(x_j, y_{j})\to 0$, which is impossible. Indeed, we have $\rho_{y_j}\to -\infty$ as $j\to\infty$. Hence by Proposition \ref{2ndder1}, we have for some $A>0$
$$
\tilde\phi_j'(\rho+\rho_{y_j}, s_j)\leq \tilde\phi_j'(\rho_{y_j}, s_j)e^{A^{-1} \rho}, ~ for ~ \rho\leq 0,
$$
hence $\tilde\phi_j''(\rho+\rho_{y_j}, s_j)\leq C\tilde\phi_j'(\rho_{y_j}, s_j)e^{A^{-1} \rho}$ for all $\rho\leq 0$. Hence we have
$$
d_{\tilde g_j}(x_j , y_j) \leq \int_{-\infty}^{\rho_{y_j}} \sqrt{ \tilde \phi_j''(\rho,s)} d\rho \leq C\tilde\phi_j'(\rho_{y_j}, s_j),
$$
by picking the geodesic line along the radial direction of the warp product metric $\tilde g_j$.

Hence by the continuity of $\tilde \phi_j'$, we can find $z_j$ on the radial geodesic of $x_j$ such that
$$\tilde \phi'_j(\rho_{z_j}, s_j)=c,~ \rho_{z_j}=\Psi(z_j)\in (\rho_{x_j}, \rho_{y_j} ] .$$
By Lemma \ref{caldifin} and the same argument of Corollary \ref{dist}, $d_{\tilde g_j}(z_j, P_0)$ is uniformly bounded. By Lemma \ref{caldifin} and Proposition \ref{2ndder1}, there exist $C>0$ such that for all $\rho \in (-1, 1)$,
$$C^{-1} \leq \tilde\phi_j'(\rho_{z_j} + \rho) \leq C, ~C^{-1} \leq \tilde\phi_j''(\rho_{z_j} + \rho) \leq C, ~|\tilde \phi_j'''(\rho_{z_j}+\rho)| \leq C, $$
then by the scalar curvature formula (\ref{scurv}) and the fact that the scalar curvature of $\tilde g_j$ tend to zero on the region under consideration, we have $|\tilde \phi_j^{(4)}(\rho_{z_j}+\rho)| \leq C$.

Then after passing to a subsequence $f_j(\rho):=\tilde\phi_j'(\rho_{z_j} +\rho) $ converges in $C^{2,\alpha}$-topology to some function $f_\infty(\rho)$ for $\rho \in (-1,1)$, and so the warp product metrics
\begin{equation}\label{wme3}
\tilde g_j(\rho_{z_j}+\rho)=\left(K_j a(0) +  \tilde \phi'_j(\rho_{z_j}+\rho)\right) g_Z + \tilde\phi'_j(\rho_{z_j}+\rho)g_{\mathbb{CP}^m} + \tilde \phi''_j(\rho_{z_j}+\rho) g_{cyl},
\end{equation}
converge to
$$\hat g_\infty (\rho) = g_{\mathbb{C}^{n} }+ f_\infty(\rho)g_{\mathbb{CP}^m}+f_\infty'(\rho) g_{cyl},~~ \rho \in (-1,1).$$
Here $\theta_\infty(\rho):=f_\infty(\rho)g_{\mathbb{CP}^m}+f_\infty'(\rho) g_{cyl}$ is a smooth $U(m+1)$-invariant  non-flat K\"ahler metric satisfying the $U(m+1)$-symmetry. In particular,
$$ \theta_\infty = \ddbar \tilde \phi_\infty$$
for some smooth $U(m+1)$-invariant $\tilde \phi_\infty$. We can identify $\tilde \phi_\infty \in C^\infty((-1,1)))$ by the moment map associated to $\theta_\infty$. By Corollary \ref{dist}, the scalar curvature of $\tilde g_j$ tends to $0$,  $\theta_\infty$ must be Ricci flat for $\rho \in (-1,1)$. Let
$$\tilde v_\infty = \log \left( (\tilde \phi'_\infty)^m \tilde \phi_\infty''  \right) - (m+1) \rho$$
be the Ricci potential for $\tilde \phi_\infty$. Then $\tilde v_\infty$ must be pluriharmonic on $\rho \in (-1,1)$, i.e.,
$$\ddbar \tilde v_\infty (\rho) =0, ~~\rho \in (-1,1).$$
This is equivalent to
$$\tilde v_\infty' =\tilde v_\infty''=0$$
and so
$$ e^{-(m+1))\rho} (\tilde \phi'_\infty)^m  \tilde\phi_\infty'' =C $$
for some constant $C$. This implies that
$$ \left( \left( \tilde \phi'_\infty \right)^{m+1} \right)' = C e^{(m+1)\rho}, $$ or
$$ \tilde \phi'_\infty = Ce^\rho.$$
Hence $\theta_\infty(\rho)$ must be flat for $\rho \in (-1,1)$. This contradicts to the fact that $|Rm(\tilde g_j)|(z_j) \geq \epsilon$.

\medskip

\noindent {\bf Case III:} Suppose $\tilde \phi_j'(\rho_{x_j}, s_j)$ is uniformly bounded above and below away from $0$ for all sufficiently large $j$. Then we can apply the argument of the latter part of Case II (with $z_j$ been replaced by $x_j$) and show that $\tilde g_j$ converges to a flat metric near $x_\infty$. Contradiction.
\end{proof}
%

%
%
%
%

%
Finally, we will prove the curvature of $\tilde g$ is uniformly bounded globally on $X$ and classify the limits of $\tilde g(s)$ as $s\rightarrow \infty$.

\begin{theorem}
There exists $C>0$ such that
$$\sup_{X\times [0, \infty)}|Rm(\tilde g)| \leq C. $$
Furthermore, for any $p \in P_0$, $(X, \tilde g(s))$ converges smoothly to the unique shrinking gradient K\"ahler-Ricci soliton metric with Calabi symmetry on the total space of the vector bundle $ L^{\oplus (m+1)}$ over $Z$.
For any $p\in X\setminus P_0$, $(X, \tilde g(s))$ converges smoothly to the flat Euclidean space $\mathbb{C}^{m+n+1}.$
\end{theorem}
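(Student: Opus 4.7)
The plan is to upgrade the local curvature bound of Lemma~\ref{RmboundP0} to a global one via a blow-up contradiction, and then classify the pointed smooth limits of $(X,\tilde g(s))$ by Hamilton's compactness, with the behaviour of $\tilde\phi'$ at the base point distinguishing the two cases in the statement.

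\smallskip

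For the global curvature bound, the argument proceeds by contradiction: suppose $\sup_{X\times[0,\infty)}|Rm(\tilde g)|=\infty$ and choose $(x_j,s_j)$ with $s_j\to\infty$ quasi-maximal for $|Rm(\tilde g(s_j))|$ on $X\times[0,s_j]$. Lemma~\ref{RmboundP0} prevents $x_j$ from remaining in any fixed tubular neighborhood of $P_0$, so after passing to a subsequence $d_{\tilde g(s_j)}(x_j,P_0)\to\infty$. Writing $\rho_j:=\Psi(x_j)$, the warp-product distance bound $d_{\tilde g(s_j)}(P_0,x_j)\le\int_{-\infty}^{\rho_j}\sqrt{\tilde\phi''(\rho,s_j)}\,d\rho$, together with Proposition~\ref{2ndder1} and Lemma~\ref{caldifin}, forces $\tilde\phi'(\rho_j,s_j)\to\infty$. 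As in the proof of Lemma~\ref{uup}, the warp-product factors of $\tilde g(s_j)$ near $x_j$ then have sizes comparable to $\tilde\phi'(\rho_j,s_j)\to\infty$, so on any fixed Euclidean scale the local geometry of $(X,\tilde g(s_j),x_j)$ flattens out. The Type~I scalar estimate of Theorem~\ref{cor1} (applied at a parabolic scale commensurate with the rescaling) combined with the scalar curvature formula~(\ref{scurv}) controls $\tilde\phi^{(4)}/(\tilde\phi'')^2$, which upgrades the $C^0$-convergence of the warp-product factors to smooth Cheeger-Gromov convergence $(X,\tilde g(s_j),x_j)\to(\mathbb{C}^{m+n+1},g_{\mathrm{flat}},0)$. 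In particular $|Rm(\tilde g(s_j))|(x_j)\to 0$, contradicting the choice of $x_j$.

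\smallskip

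Once the global bound is established, Hamilton's compactness theorem and Perelman's $\kappa$-noncollapsing deliver smooth pointed subsequential convergence at every basepoint $p\in X$. For $p\in P_0$, Corollary~\ref{dist} gives $d_{\tilde g(s)}(p_s,p)\le C$, so the limits agree up to a basepoint shift with those based at the Ricci vertices $p_{s_j}$. The smooth convergence on $X\setminus\{P_0\cup D_\infty\}$ to a Calabi-type K\"ahler-Ricci soliton established in the corollary following Corollary~\ref{locc1}, combined with the uniform curvature bound across $P_0$, allows passage to the smooth limit through $P_0$, producing a complete smooth shrinking gradient K\"ahler-Ricci soliton on the total space of $L^{\oplus(m+1)}$ with Calabi symmetry. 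Uniqueness of such a soliton via the associated Calabi-ansatz ODE (in the spirit of the Feldman-Ilmanen-Knopf construction and its generalizations to higher rank bundles) pins down the limit and yields full smooth convergence. For $p\in X\setminus P_0$, $\rho_p$ is bounded below and $\tilde\phi'(\rho_p,s)\to\infty$, so the same local-flattening argument identifies the smooth limit of $(X,\tilde g(s),p)$ with $\mathbb{C}^{m+n+1}$.

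\smallskip

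The principal obstacle I anticipate is uniform control of $\tilde\phi^{(4)}$, which is needed to upgrade the $C^0$-convergence of the warp-product factors to smooth convergence: Proposition~\ref{2ndder1} controls only $\tilde\phi'$, $\tilde\phi''$, $\tilde\phi'''$, while the curvature tensor contains $\tilde\phi^{(4)}$ directly (see~(\ref{R1bar1})). Recovering $\tilde\phi^{(4)}$ requires the Type~I scalar bound of Theorem~\ref{cor1}, which is only directly available inside a Type~I parabolic neighborhood of the Ricci vertex $p_s$. Verifying that the faraway blow-up point $x_j$ sits inside such a parabolic neighborhood at the relevant blow-up scale requires careful tracking of the comparison between the warp-product coordinate $\rho$, the intrinsic $\tilde g(s_j)$-distance, and the curvature scale, and this delicate interplay between Proposition~\ref{2ndder1}, Lemma~\ref{caldifin}, and the Calabi ansatz is where most of the technical work lies.
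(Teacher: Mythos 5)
Your classification of the pointed limits (the two cases $p\in P_0$ and $p\in X\setminus P_0$) is essentially the paper's argument, but the route to the global curvature bound is where you part ways from the paper, and there is a genuine gap there.

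The paper also sets up a blow-up contradiction with a quasi-maximal point $(x_j,s_j)$ and shows, via Lemma~\ref{RmboundP0} and Lemma~\ref{caldifin}, that $L_j:=\tilde\phi'(\rho_{x_j},s_j)\to\infty$. But at that stage the mechanism is different: the paper rescales by $L_j$, observes that the warp-product metric $\hat g_j=L_j^{-1}\tilde g$ is $C^0$-close to the flat tangent model on a fixed ball, hence has nearly Euclidean isoperimetric/Sobolev constant, and then invokes Perelman's pseudolocality theorem. Pseudolocality only requires this almost-Euclidean volume control at one slice together with a scalar curvature \emph{lower} bound (which is automatic along Ricci flow), so it produces a forward-in-time curvature bound at $x_j$ and the contradiction follows immediately, with no need for any a priori \emph{upper} bound on $R$ or on $\tilde\phi^{(4)}$.

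Your proposal instead tries to feed the Type~I scalar estimate of Theorem~\ref{cor1} into the scalar curvature formula~(\ref{scurv}) to bound $\tilde\phi^{(4)}/(\tilde\phi'')^2$. This step fails, and not merely because of delicate bookkeeping. The Ricci vertex $p_{s}$ sits at bounded $\tilde g(s)$-distance from $P_0$ (Corollary~\ref{dist}), whereas the distance from $x_j$ to $P_0$ can be computed from the warp-product metric and Proposition~\ref{2ndder1} to be $d_{\tilde g(s_j)}(x_j,P_0)\asymp\sqrt{L_j}\to\infty$. Theorem~\ref{cor1} degrades quadratically in the distance to the Ricci vertex, so all it gives at $x_j$ is $|R(\tilde g(s_j))(x_j)|\leq C(1+L_j)$, which diverges. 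In the regime $\tilde\phi',\tilde\phi''\to\infty$ the formula~(\ref{scurv}) collapses to $R=-\tilde\phi^{(4)}/(\tilde\phi'')^2+o(1)$ (all other terms being handled by Proposition~\ref{2ndder1}), so bounding $\tilde\phi^{(4)}/(\tilde\phi'')^2$ is \emph{equivalent} to bounding $|R|$ there, which is precisely the thing you are trying to prove; the Type~I estimate does not break this circularity because it only provides a diverging bound. One could imagine remedying this by choosing a different $\theta_Y$ (via Lemma~\ref{a'1}) for each $j$ so that the Ricci vertex tracks $x_j$, but you do not make that argument and it would require verifying that the constant in Theorem~\ref{cor1} is uniform over the family of choices, which is not addressed. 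The clean way out, as the paper shows, is pseudolocality, which makes the scalar curvature upper bound unnecessary.
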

\begin{proof}
We prove by contradiction. We first note that  the curvature of $\tilde g (\rho+\rho_{1,s}, s)$ is uniformly bounded for all $s\geq 0$ and $\rho \subset (-\infty, K)$ for any fixed $K>0$. We also can assume that $b- \phi' $ is uniformly bounded below away from $0$ since the curvature is uniformly bounded for the unnormalized flow when $\phi'$ is close to $b$.

Let $(x_j, s_j)$ be maximal point for the curvature, i.e.,
$$
K_j=|Rm(x_j, s_j)| = \sup_{(\rho, s)\in \mathbb{R} \times [0, s_j]} |Rm(\tilde g(\rho+\rho_{1,s}, s_j))| \rightarrow \infty.
$$
Let $\rho_{x_j} $ be the value of $\rho$ associated to $x_j$, we may assume that $\rho_{x_j}\leq 0 $. Then by Lemma \ref{RmboundP0}, we have
$$
\rho_{x_j} - \rho_{1, s_j}  \rightarrow \infty, ~ \tilde \phi'(\rho_{x_j}, s_j ) \rightarrow \infty, ~\tilde \phi''(\rho_{x_j}, s_j ) \rightarrow \infty.
$$
Let
$$L_j(s) =  \tilde \phi'(\rho_{x_j}, s+ s_j ), ~M_j(s) =  \tilde \phi''(\rho_{x_j}, s+ s_j ). $$
Then $L_j(0), M_j(0) \rightarrow \infty$ as $j\rightarrow \infty$.
Since
$$\frac{\partial \tilde \phi'}{\partial s} = \ddt{\phi'} = \frac{\phi'''}{\phi''} + \frac{m\phi''}{\phi'} + \frac{n\phi''}{a+\phi'} - (m+1)$$
is uniformly bounded, there exists $C>0$ such that for $s\in (-1, 1)$,
$$ L_j(0) - C \leq L_j(s) \leq L_j(0) + C.  $$
Therefore for $s\in (-1, 1)$,  $L_j(s), M_j(s) \rightarrow \infty$ as $j\rightarrow \infty$.

We then rescale $\tilde \phi$ and $\tilde g$ by
%
%
%
$$
\hat \phi_j(\rho, s) = L_j(s)^{-1} \tilde \phi(\rho+\rho_{x_{j}}, s+ s_j),~ \hat g_j (\rho, s)= L_j(s)^{-1} \tilde g(\rho+\rho_{x_{j}}, s+s_j).
$$
Then for $\rho \in (-1, 1)$ and $s\in (-1 , 1)$, we have
$$
e^{-A|\rho|}  \leq      \hat \phi_j' (\rho, s) \leq e^{A|\rho|},  ~ l_j(s) e^{-A|\rho|}  \leq      \hat \phi_j '' (\rho, s) \leq l_j(s) e^{A|\rho|}  ,
$$
where $l_j(s):=\frac{M_j(s)}{L_j(s)}$ is uniformly bounded above and below from $0$, for each fixed $s\in (-1, 1)$,  after passing to a subsequence, we can assume that $l_j (s) \rightarrow l_{\infty, s}>0$ as $j\rightarrow \infty$. Then by (\ref{warppro}), we have
\[
\begin{split}
e^{-A|\rho|} & \left( (L_j(s)^{-1} a(0)+ 1) g_Z +  g_{\mathbb{CP}^m} +  l_j(s) g_{cyl} \right) \leq  \hat g_j(\rho, s) \\
& \qquad\qquad\qquad\qquad \leq  e^{A|\rho|} \left( (L_j(s)^{-1} a(0)+ 1) g_Z +  g_{\mathbb{CP}^m} +  l_j(s) g_{cyl} \right).
\end{split}
\]
Therefore for each $s\in (-1, 1)$ and any fixed radius $R>>1$,
$$( B_{\tilde g(s+s_j)}(x_j, R) \subset X , \tilde g(s+s_j))$$
is $C^0$-close to $\mathbb{R}^{2(n+m+1)}$ with the flat metric, the tangent space of $(X, g_Z \times g_{\mathbb{CP}^m} \times g_{cyl})$, for sufficiently large $j=j(s, R)>>1$. Therefore the Sobolev constant for $B_{\tilde g(s+s_j)}(x_j, R)$ is arbitrarily close to that of the Euclidean ball by choosing sufficiently large $j$.
By Perelman's pseudolocality theorem, there exists $C>0$ such that for all $s\in (-1/2, 1/2)$, we have the curvature bound
$$\sup_{B_{ \tilde g(s+s_j)}(x_j, 1)} |Rm(\tilde g(s+s_j))| \leq C.$$
In particular, the curvature tensor $Rm(x_j, s_j)$ of $\tilde g$ at $(x_j, s_j)$ is uniformly bounded for all $j$. This leads to contradiction.

We will now classify the limits of $(X, \tilde g(s), p)$ for a fixed base point $p$. The unnormalized flow converges smoothly to a K\"ahler metric $g_T$ in $X\setminus P_0$ as $t\rightarrow T$. If $p\notin P_0$,  $\tilde g(s)$ of the normalized flow converge to the tangent space of $g_T$ at $p$, which must be the flat $\mathbb{C}^{m+n+1}$ as $s\rightarrow \infty$.  If $p\in P_0$, then $(X, \tilde g(s))$ must converge in pointed Cheeger-Gromov sense to a smooth shrinking K\"ahler-Ricci soliton $(X_\infty, g_\infty)$ by our curvature estimates. We choose $\rho_{1,s}$ as in (\ref{unidistpt}) and let
$$\mathrm{g}(\rho, s) = \tilde g(\rho+\rho_{1,s}), s), ~\psi(\rho, s) =\tilde \phi(\rho+\rho_{1,s}, s).$$
Then $\mathrm{g}(\rho, s)$ converges smoothly with a fixed gauge and the limiting K\"ahler metric  $g_\infty (\rho)$ satisfies the same Calabi symmetry on the total space of $L^{\oplus(m+1)}$.   $X_\infty$ is either the total space of $L^{\oplus(m+1)}$ over $Z$ or $\{ \rho \leq \hat \rho\} \cap L^{\oplus(m+1)}$ for some fixed $\hat \rho\in \mathbb{R}$. Lemma \ref{caldifin} guarantees that $g_\infty(\rho)$ is defined for all $\rho\in \mathbb{R}$ and so $X_\infty$ must be the total space of $L^{\oplus(m+1)}$. Therefore $g_\infty$ is a complete shrinking K\"ahler-Ricci soliton on the total space of $L^{\oplus(m+1)}$ and it must be unique as proved in \cite{Li}.
\end{proof}


\subsection{The case of collapsing with extinction}

In this section, we consider the case $a(T)>0$ and $b(T)=0$. We will again assume $T=1$ and   the limiting cohomology class is the pullback of $-a(1)c_1(L)$. We will use the same normalized flow
$$\tilde g(s)=e^s g(t), ~\tilde\phi(\rho, s) = e^s \phi(\rho, t),  ~ s= -\ln (1-t).$$
We let
$$\tilde b_\infty = \tilde b(s) \equiv m+2.$$
\begin{lemma}
There exists $C>0$ such that for all $s\geq 0$, we have %
$$C^{-1} \leq \int_{-\infty}^{\infty} \sqrt{\tilde \phi''(\rho, s)} d\rho \leq C. $$
In particular,  there exists $C>0$ such that for all $p\in Z$,
$$C^{-1} \leq diam(F_p, \tilde g(s)|_{F_p}) \leq C,$$
where $F_p = \pi^{-1} (p)$ be the fibre over $p\in Z$.
\end{lemma}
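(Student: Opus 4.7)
The plan is to reduce both bounds to a one-variable integral estimate and then exploit the sharp two-sided control of $\tilde\phi''$ coming from Proposition \ref{2ndder1}. First, note that since $\tilde\phi(\rho,s)=e^s\phi(\rho,t)$ and $\tilde b(s)=e^s b(t)$, the ratio
$$\frac{\tilde b(s)\,\tilde\phi''(\rho,s)}{\tilde\phi'(\rho,s)\bigl(\tilde b(s)-\tilde\phi'(\rho,s)\bigr)}=\frac{b(t)\,\phi''(\rho,t)}{\phi'(\rho,t)\bigl(b(t)-\phi'(\rho,t)\bigr)}$$
is scale-invariant, so Proposition \ref{2ndder1} yields a constant $C>0$ such that
$$C^{-1}\,\frac{\tilde\phi'(\tilde b-\tilde\phi')}{\tilde b}\ \leq\ \tilde\phi''(\rho,s)\ \leq\ C\,\frac{\tilde\phi'(\tilde b-\tilde\phi')}{\tilde b}$$
for all $(\rho,s)\in\mathbb{R}\times[0,\infty)$. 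Moreover, in the present case we have computed $\tilde b(s)\equiv m+2$.

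Next I would use the substitution $u=\tilde\phi'(\rho,s)$, which is strictly monotone from $0$ (at $\rho=-\infty$) to $\tilde b=m+2$ (at $\rho=+\infty$) by Proposition \ref{kacon}, with $du=\tilde\phi''\,d\rho$. Then
$$\int_{-\infty}^{\infty}\sqrt{\tilde\phi''(\rho,s)}\,d\rho=\int_{0}^{\tilde b}\frac{du}{\sqrt{\tilde\phi''}}.$$
Substituting the two-sided bound on $\tilde\phi''$ and scaling by $v=u/\tilde b$ yields
$$C^{-1}\sqrt{\tilde b}\int_{0}^{1}\frac{dv}{\sqrt{v(1-v)}}\ \leq\ \int_{-\infty}^{\infty}\sqrt{\tilde\phi''(\rho,s)}\,d\rho\ \leq\ C\sqrt{\tilde b}\int_{0}^{1}\frac{dv}{\sqrt{v(1-v)}},$$
and the inner integral equals $\pi$. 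Since $\tilde b\equiv m+2$, this gives the desired uniform bounds on the integral.

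To transfer this to the fibre diameter statement, recall from (\ref{warppro}) that the induced metric on $F_p=\pi^{-1}(p)\cong\mathbb{CP}^{m+1}$ is
$$\tilde g(s)\big|_{F_p}=\tilde\phi'(\rho,s)\,g_{\mathbb{CP}^m}+\tilde\phi''(\rho,s)\bigl(d\rho^{2}+\eta^{2}\bigr).$$
For the upper bound, any two points in $F_p$ can be joined by first moving along the $S^{2m+1}$-slice at some $\rho$ (whose intrinsic diameter is controlled by $\sqrt{\tilde\phi'(\rho,s)}\le\sqrt{\tilde b}$ up to constants) and then along the radial geodesic, whose total length is $\int_{-\infty}^{\infty}\sqrt{\tilde\phi''}\,d\rho$; both are uniformly bounded by the previous step. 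For the lower bound, the distance from the zero $P_0\cap F_p$ (corresponding to $\rho=-\infty$) to the divisor $D_\infty\cap F_p$ (corresponding to $\rho=+\infty$), projected onto the radial direction, is precisely $\int_{-\infty}^{\infty}\sqrt{\tilde\phi''}\,d\rho$, which is bounded below uniformly. Hence $C^{-1}\le\diam(F_p,\tilde g(s)|_{F_p})\le C$. The main conceptual point is really the $u=\tilde\phi'$ substitution that makes the integral depend only on $\tilde b$; the rest is bookkeeping.
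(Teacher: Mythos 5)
Your proof is correct and follows the same key idea as the paper's: the change of variable $u=\tilde\phi'$ (so $du=\tilde\phi''\,d\rho$) together with the two-sided bound from Proposition \ref{2ndder1} reduces the integral to the beta-type integral $\sqrt{\tilde b}\int_0^1\frac{dv}{\sqrt{v(1-v)}}=\pi\sqrt{\tilde b}$, which is a fixed constant since $\tilde b\equiv m+2$. Your version is in fact more complete than the paper's, which writes out only the upper bound of the integral and leaves the matching lower bound and the passage to the intrinsic fibre diameter implicit.
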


\begin{proof}
The diameter bound of each fibre can be calculated below:
\begin{eqnarray*}
&&\int_{-\infty}^\infty  \sqrt{\tilde \phi''(\rho, s)} d\rho = \int_{-\infty}^\infty \left( \frac{\tilde \phi'}{\tilde \phi''} \right)^{1/2} (\tilde \phi')^{-1/2} \tilde \phi'' d\rho\\
&\leq&C \int_{-\infty}^\infty \left(  \frac{\tilde b(s)}{\tilde \phi' (\tilde b(s) - \tilde \phi')} \right)^{1/2} \tilde \phi'' d\rho \leq C \int_0^{\tilde b_\infty} \sqrt{ \frac{\tilde b_\infty}{ x(\tilde b_\infty-x)}} dx\leq C ,
\end{eqnarray*}
for a uniform constant $C<\infty$ for all $s \geq 0$.
We have completed the proof of the lemma.
\end{proof}

\begin{corollary}\label{gscb}
There exists $C<\infty$ such that
$$\sup_{X\times [0, \infty)} |R(\tilde g)| \leq C. $$
\end{corollary}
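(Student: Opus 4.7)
The plan is to reduce the global scalar curvature bound to the fiber-wise Type~I bound from the Fano bundle theory of Section~\ref{Fanobundle}, and then to propagate this bound to all of $X$ by invoking the Calabi symmetry in the explicit scalar curvature formula.

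First I would verify that in this collapsing regime we are exactly in the Fano bundle setting. Since $a(T)>0$ and $b(T)=0$, the limiting cohomology class is
$$\vartheta=\lim_{t\to 1}[g(t)]=a(1)[D_H]=a(1)\,\pi^{*}c_{1}(L^{-1}),$$
which is the pullback of an ample class on $Z$ via the bundle projection $\pi:X\to Z$. Thus the morphism $\Phi:X\to Y$ induced by $\vartheta$ is precisely $\pi$, with $Y=Z$, $Y^{\circ}=Y$, and $\dim_{\mathbb{C}}Y=n<n+m+1=\dim_{\mathbb{C}}X$. Since $X=\mathbb{P}(\mathcal{O}_{Z}\oplus L^{\oplus(m+1)})$ is a holomorphic $\mathbb{CP}^{m+1}$-bundle over $Z$, all fibers are biholomorphic to the Fano manifold $\mathbb{CP}^{m+1}$, so the hypotheses of Section~\ref{Fanobundle} hold. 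I would then fix any smooth closed $(1,1)$-form $\theta_{Y}\in\vartheta$ on $Z$ (for instance a positive multiple of $\omega_{Z}$) and apply Corollary~\ref{mainbund}(1), equivalently Theorem~\ref{app3}: for every $t\in[0,1)$ there is a Ricci vertex $p_{t}\in X$ associated to $\theta_{Y}$ and a constant $C>0$ independent of $t$ with
$$\sup_{\mathcal{F}_{\pi(p_{t})}}|R(g(t))|\leq\frac{C}{1-t},$$
where $\mathcal{F}_{\pi(p_{t})}=\pi^{-1}(\pi(p_{t}))\cong\mathbb{CP}^{m+1}$.

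The final step is to globalize this fiberwise bound using the Calabi symmetry, which is immediate from formula~(\ref{scurv}): at each time $t$ the scalar curvature $R(g(t))$ is a function of the moment coordinate $\rho$ alone, depending on $\phi',\phi'',\phi''',\phi^{(4)}$ at $(\rho,t)$ but \emph{independent} of the base point in $Z$. Since along any single fiber $\mathcal{F}_{p}$ the coordinate $\rho$ sweeps through all of $(-\infty,+\infty)$, with the endpoints corresponding to the compact $P_{0}$ and $D_{\infty}$ slices, the function $R(g(t))$ restricted to $\mathcal{F}_{\pi(p_{t})}$ already realizes every value it attains on $X$; hence
$$\sup_{X}|R(g(t))|=\sup_{\mathcal{F}_{\pi(p_{t})}}|R(g(t))|\leq\frac{C}{1-t}.$$
Rescaling by $R(\tilde g(s))=(1-t)R(g(t))$ with $s=-\ln(1-t)$ gives $\sup_{X\times[0,\infty)}|R(\tilde g)|\leq C$, as required. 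All the serious analytic content is packaged inside Theorem~\ref{app3} (whose applicability here ultimately relies on the volume estimates for Fano bundles from Lemma~\ref{vbonFanobundle}); the symmetry reduction is a triviality once one observes that~(\ref{scurv}) gives a global pointwise formula for $R$ through Calabi symmetry, so I do not anticipate any substantive obstacle in executing this plan.
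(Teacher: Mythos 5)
Your proposal is correct, but it takes a heavier route than the paper's own argument, and it is worth noting the difference. The paper does not invoke Theorem~\ref{app3} or the Fano--bundle package of Section~\ref{Fanobundle} at all. Instead, it uses the lemma proved immediately before Corollary~\ref{gscb}, which gives a uniform bound on the fibre diameter by an explicit Calabi-symmetric computation with $\int\sqrt{\tilde\phi''}\,d\rho$, and then applies only Theorem~\ref{cor1} (the Li--Yau-type estimate giving quadratic growth of $|\rr|$ in distance from the Ricci vertex): since the fibre containing the Ricci vertex $p_s$ has uniformly bounded $\tilde g(s)$-diameter, $|\rr(\tilde g(s))|$ is bounded on that fibre, and then formula~(\ref{scurv}) propagates the bound to all of $X$ because $R$ depends only on $\rho$. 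Your version instead reaches the per-fibre curvature bound by calling Theorem~\ref{app3}/Corollary~\ref{mainbund}(1), which internally relies on the Harnack estimate (Theorem~\ref{main2}/\ref{cor2}), the volume-bound verification (Lemma~\ref{vbonFanobundle}), and Lemma~\ref{a'1}, so it is applying the more elaborate general Fano bundle machinery in a situation where the key input (bounded fibre diameter) has already been established directly. Also note that the paper deliberately chooses the twisted potential $\tilde v$ to locate the Ricci vertex near a fixed fibre $F_z$, in which case $\tilde v$ need not be $U(m+1)$-invariant; your proposal of simply taking $\theta_Y$ a multiple of $\omega_Z$ does not constrain where $p_t$ lies, which is harmless here because Calabi symmetry makes any fibre of $p_t$ sufficient, but it is a slightly different choice than what Theorem~\ref{app3} actually constructs. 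The logical upshot is the same, and there is no circularity, but the paper's proof is more economical in this symmetric setting.
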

\begin{proof}
We pick any point $z\in Z$ and let $F_z$ be the fibre of $\pi: X \rightarrow Z$ over $z$. We then choose a twisted Ricci potential $\tilde v$ associated to $z$ or $F_z$. We remark that $\tilde v$ is not necessary $U(m+1)$-invariant.  Let $p_s$ be a Ricci vertex at $s$ for $\tilde v$. Since the fibre diameter of $\tilde g(s)$ is uniformly bounded for all $s$, the scalar curvature is uniformly bounded on the fibre containing $p_s$. Since the scalar curvature only depends on $\tilde \phi$, $\tilde \phi'$ and $\tilde \phi''$ by (\ref{scurv}), the scalar curvature is uniformly bounded for each fibre, hence everywhere on $X$.
\end{proof}
%


%
For each $s\geq 0$, we let $\rho_s\in \mathbb{R}$ be the point where
$$\tilde\phi'(\rho_s, s) = \frac{\tilde b(s)}{2}. $$

\begin{lemma} \label{colphiab}
There exists $C>0$ such that for all $\rho\in \mathbb{R}$ and $s\geq 0$, we have
$$\tilde\phi'(\rho, s) \leq C, ~ \tilde\phi''(\rho, s) \leq C, ~~\left| \tilde\phi'''(\rho, s)\right| \leq C. $$
\end{lemma}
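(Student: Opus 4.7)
The approach is to transfer the differential estimates of Proposition \ref{2ndder1} (which are stated for the unnormalized potential $\phi$) to the normalized potential $\tilde\phi$, and combine them with the crucial observation that in the present regime ($a(1)>0$, $b(1)=0$) the rescaled parameter $\tilde b(s)$ is actually a constant. Indeed, from $b(t)=b_0-(m+2)t$ together with $b(1)=0$ we read off $b_0=m+2$, hence $b(t)=(m+2)(1-t)$ and therefore
\[
\tilde b(s)=e^s b(t)=m+2 \qquad \text{for all } s\geq 0.
\]
This already gives the bound on $\tilde\phi'$, because the Calabi ansatz (Proposition \ref{kacon}) is preserved along the normalized flow, so $\tilde\phi'(\cdot,s)$ is strictly increasing in $\rho$ with $\lim_{\rho\to-\infty}\tilde\phi'=0$ and $\lim_{\rho\to+\infty}\tilde\phi'=\tilde b(s)=m+2$. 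Hence $0<\tilde\phi'(\rho,s)<m+2$ uniformly on $\mathbb{R}\times[0,\infty)$.

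For the bound on $\tilde\phi''$, the key point is the scale-invariance of the ratio appearing in Proposition \ref{2ndder1}: since $\tilde\phi'(\rho,s)=e^s\phi'(\rho,t)$, $\tilde\phi''(\rho,s)=e^s\phi''(\rho,t)$, and $\tilde b(s)=e^s b(t)$, all the factors of $e^s$ cancel to give
\[
\frac{\tilde b(s)\,\tilde\phi''(\rho,s)}{\tilde\phi'(\rho,s)\bigl(\tilde b(s)-\tilde\phi'(\rho,s)\bigr)}\;=\;\frac{b(t)\,\phi''(\rho,t)}{\phi'(\rho,t)\bigl(b(t)-\phi'(\rho,t)\bigr)}.
\]
By Proposition \ref{2ndder1} the right-hand side is bounded above by $C$, so using $\tilde b(s)\equiv m+2$ together with the elementary inequality $x(\tilde b-x)\leq \tilde b^2/4$ for $x\in[0,\tilde b]$, I obtain
\[
\tilde\phi''(\rho,s)\;\leq\;C\,\frac{\tilde\phi'(\tilde b-\tilde\phi')}{\tilde b}\;\leq\;\frac{C\,\tilde b(s)}{4}\;=\;\frac{C(m+2)}{4}.
\]
The same scale-invariance gives $|\tilde\phi'''|/\tilde\phi''=|\phi'''|/\phi''\leq C$ from the second estimate of Proposition \ref{2ndder1}, and combined with the uniform bound on $\tilde\phi''$ this produces $|\tilde\phi'''|\leq C$ uniformly.

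There is no genuine obstacle: the content of the lemma is precisely that once $\tilde b(s)$ is uniformly bounded (in fact constant), the conditional estimates of Proposition \ref{2ndder1} become unconditional uniform bounds on the normalized Calabi potential derivatives. The only care needed is to track the rescaling factors and to confirm the scale-invariance of the two relevant ratios, both of which are straightforward.
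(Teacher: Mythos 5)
Your proof is correct and follows exactly the same route as the paper: observe that $\tilde b(s)\equiv m+2$ (which the paper states at the start of the subsection), deduce $\tilde\phi'\leq m+2$, and then invoke the two ratio bounds of Proposition \ref{2ndder1} to control $\tilde\phi''$ and $|\tilde\phi'''|$. You have simply made explicit the scale-invariance of the ratios (cancellation of $e^s$ factors) that the paper uses implicitly when it says "by Proposition \ref{2ndder1}."
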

\begin{proof}
$\tilde\phi'(\rho, s) \leq \tilde b(s)$ is uniformly bounded above for all $\rho\in \mathbb{R}$ and $s\geq 0$. Then by Proposition \ref{2ndder1}, $\tilde\phi''$ and $|\tilde\phi'''|$ are uniformly bounded above as well.
\end{proof}

\begin{lemma} \label{colphilow}
There exists $C<\infty$, such that for all $\rho\in \mathbb{R}$ and $s\geq 0$, we have
$$\tilde\phi'(\rho+\rho_s, s) \geq C^{-1}e^{-C|\rho|}, ~ \tilde\phi''(\rho+\rho_s, s) \geq C^{-1}e^{-C|\rho|}.$$
\end{lemma}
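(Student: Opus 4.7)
The key point is that in this collapsing case, $\tilde b(s) \equiv m+2$ is a fixed constant, and $\rho_s$ is chosen precisely so that $\tilde\phi'(\rho_s, s) = \tilde b(s)/2$ sits at the midpoint of the range $(0, \tilde b(s))$. This lets us turn the differential inequalities in Proposition \ref{2ndder1} into two-sided ODE estimates that propagate outward from $\rho_s$ symmetrically.

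\textbf{Plan for $\tilde\phi'$.} Set $f(\rho) = \tilde\phi'(\rho+\rho_s, s)$, so $f(0) = (m+2)/2$ and $0 < f < m+2$. Translating Proposition \ref{2ndder1} to the normalized flow (using $\tilde\phi = e^s\phi$ and $\tilde b = e^s b$) gives
\begin{equation*}
C^{-1} \le \frac{\tilde b\, \tilde\phi''}{\tilde\phi'(\tilde b-\tilde\phi')} \le C,
\end{equation*}
which says $f'/f = \tilde\phi''/\tilde\phi' \le C(\tilde b-\tilde\phi')/\tilde b \le C$ everywhere. Hence $(\ln f)'$ is bounded above by $C$ on all of $\mathbb R$, and integrating from $0$ down to $\rho < 0$ yields $f(\rho) \ge f(0) e^{-C|\rho|} = \tfrac{m+2}{2} e^{-C|\rho|}$. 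For $\rho \ge 0$, monotonicity of $\tilde\phi'$ gives $f(\rho) \ge f(0) = (m+2)/2$, which is stronger than the desired bound.

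\textbf{Plan for $\tilde\phi''$.} Write $\tilde\phi'' \ge C^{-1}\tilde\phi'(\tilde b - \tilde\phi')/\tilde b$. On $\rho \le 0$ we have $\tilde\phi'(\rho+\rho_s,s) \le (m+2)/2$, so $\tilde b-\tilde\phi' \ge (m+2)/2$, and the bound just obtained gives
\begin{equation*}
\tilde\phi''(\rho+\rho_s,s) \ge C^{-1} \tilde\phi'(\rho+\rho_s,s) \ge C^{-1} e^{-C|\rho|}.
\end{equation*}
On $\rho \ge 0$ we run the symmetric argument on $h(\rho) = \tilde b - \tilde\phi'(\rho+\rho_s, s)$: we have $h(0) = (m+2)/2$, $h' = -\tilde\phi'' \le 0$, and from Proposition \ref{2ndder1},
\begin{equation*}
-\frac{h'}{h} = \frac{\tilde\phi''}{\tilde b-\tilde\phi'} \le C\,\frac{\tilde\phi'}{\tilde b} \le C.
\end{equation*}
Integrating gives $h(\rho) \ge (m+2)/2 \cdot e^{-C\rho}$ for $\rho \ge 0$. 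Combined with the trivial lower bound $\tilde\phi'(\rho+\rho_s,s) \ge (m+2)/2$ on $\rho \ge 0$, this produces
\begin{equation*}
\tilde\phi''(\rho+\rho_s,s) \ge C^{-1}\,\frac{\tilde\phi'\cdot h}{\tilde b} \ge C^{-1} e^{-C|\rho|}.
\end{equation*}

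\textbf{Remarks on difficulty.} There is no real obstacle here beyond bookkeeping: the whole statement is an ODE consequence of Proposition \ref{2ndder1} once one observes that $\tilde b(s)$ is constant in the collapsing-to-extinction case and that $\rho_s$ has been normalized to the midpoint of the range of $\tilde\phi'$. The asymmetry between $\rho \ge 0$ and $\rho \le 0$ (where $\tilde\phi' \to \tilde b$ resp. $\tilde\phi' \to 0$) is handled by applying the same logarithmic argument to $\tilde\phi'$ and to $\tilde b - \tilde\phi'$ respectively. The constant $C$ at the end depends only on the constant in Proposition \ref{2ndder1} and on $m$.
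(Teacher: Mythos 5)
Your argument for the lower bound on $\tilde\phi'$ is exactly the paper's: bound $0\le(\log\tilde\phi')'\le C$ via (\ref{calest1}), integrate from $\rho_s$, and use $\tilde\phi'(\rho_s,s)=\tilde b(s)/2$. The argument for $\tilde\phi''$, however, takes a genuinely different route. The paper first derives $\tilde\phi''(\rho_s,s)\geq C^{-1}\tilde b(s)/4$ from the lower bound in (\ref{calest1}), and then propagates this value outward using the uniform bound $|\tilde\phi'''/\tilde\phi''|\le C$ from (\ref{calest2}), i.e.\ a log-Lipschitz estimate on $\tilde\phi''$ itself. You instead work entirely from (\ref{calest1}): on $\rho\le 0$ you note $\tilde b-\tilde\phi'\ge\tilde b/2$ and feed the already-established lower bound on $\tilde\phi'$ through (\ref{calest1}); on $\rho\ge 0$ you run the symmetric logarithmic estimate on $h=\tilde b-\tilde\phi'$ using $-h'/h=\tilde\phi''/(\tilde b-\tilde\phi')\le C\tilde\phi'/\tilde b\le C$, and again invoke (\ref{calest1}). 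This buys a slight economy, since it makes no use of the third-derivative control in (\ref{calest2}); the paper's version is shorter to state but relies on both halves of Proposition~\ref{2ndder1}. Both routes are correct, and both use the same normalization $\tilde\phi'(\rho_s,s)=\tilde b(s)/2$ and the constancy $\tilde b(s)\equiv m+2$ as the anchor.
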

\begin{proof}
By Proposition \ref{2ndder1}, there exists $C>0$ such that
$$0\leq \left( \log \tilde\phi'(\rho, s) \right)'= \frac{\tilde\phi''(\rho, s)}{\tilde\phi'(\rho, s)} \leq C  \frac{\tilde b(s) - \tilde\phi'(\rho, s)}{\tilde b(s)} \leq C.$$
Therefore for all $\rho\in \mathbb{R}$, we have
$$\tilde\phi'(\rho+ \rho_s, s) \geq e^{-C|\rho|} \tilde\phi'(\rho_s, s)= \frac{\tilde b(s)}{2}e^{-C|\rho|}.$$
This proves the estimate for $\tilde\phi'$.

Next, by Proposition \ref{2ndder1}, we have
$$\tilde\phi''(\rho_s, s) \geq C^{-1}  \frac{\tilde b(s) - \tilde\phi'(\rho_s, s)}{\tilde b(s)} \tilde\phi'(\rho_s, s) = \frac{\tilde b(s)}{4}C^{-1}.$$
Hence by Proposition \ref{2ndder1} again, we have
$$\tilde\phi''(\rho+ \rho_s, s) \geq e^{-C|\rho|} \tilde\phi''(\rho_s, s)\geq C^{-1}e^{-C|\rho|}.$$
This proves the estimate for $\tilde\phi''$.
\end{proof}
\begin{lemma}\label{lRmbound}
For any $K\subset \subset \mathbb{R}$, there exists $C=C(K)<\infty$ such that
$$\sup_{(\rho, s) \in K\times [0, \infty)} |Rm\left(\tilde g(\Psi^{-1}(\rho+\rho_s), s)\right)| \leq C. $$
\end{lemma}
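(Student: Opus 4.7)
The plan is to bootstrap from the already-established scalar curvature bound (Corollary \ref{gscb}) and the uniform bounds on $\tilde\phi', \tilde\phi'', \tilde\phi'''$ near $\rho_s$ to extract a bound on $\tilde\phi^{(4)}$; the full Riemann tensor of the Calabi-ansatz metric then follows because its only component that contains a fourth derivative is $R_{1\bar 1 1\bar 1}$, and every other component is a rational expression in $\tilde\phi', \tilde\phi'', \tilde\phi'''$ with denominators controlled on $K+\rho_s$.

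First I would assemble the pointwise bounds on the compact translated strip. Lemma \ref{colphiab} supplies uniform upper bounds $\tilde\phi'(\rho+\rho_s,s), \tilde\phi''(\rho+\rho_s,s), |\tilde\phi'''(\rho+\rho_s,s)| \le C$, and Lemma \ref{colphilow} supplies uniform positive lower bounds $\tilde\phi'(\rho+\rho_s,s), \tilde\phi''(\rho+\rho_s,s)\ge C^{-1}(K)$ for $\rho\in K$. Since the collapsing hypothesis $b(1)=0, a(1)>0$ forces $\tilde a(s)\ge \tilde a(0)=a(0)>0$, the quantities $1/\tilde\phi',\; 1/\tilde\phi'',\; 1/(\tilde a+\tilde\phi')$ are uniformly bounded on $K+\rho_s$. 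Next I would invoke the scalar curvature formula (\ref{scurv}) for $\tilde g$ (which has the same form with $a, \phi$ replaced by $\tilde a, \tilde\phi$):
\begin{equation*}
R(\tilde g) = -\frac{\tilde\phi^{(4)}}{(\tilde\phi'')^2} + \mathcal{P}\!\bigl(\tilde\phi', \tilde\phi'', \tilde\phi''', \tilde a\bigr),
\end{equation*}
where $\mathcal{P}$ is a rational expression whose numerator is polynomial in $\tilde\phi'''$ and $\tilde\phi''$ and whose denominators are products of $\tilde\phi', \tilde\phi''$ and $\tilde a+\tilde\phi'$. By the preceding bounds $|\mathcal{P}|$ is uniformly bounded on $K+\rho_s$. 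Combining with $|R(\tilde g)|\le C$ and the upper bound on $\tilde\phi''$ yields $|\tilde\phi^{(4)}(\rho+\rho_s,s)|\le C(K)$.

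Finally I would convert these bounds on $\tilde\phi^{(\le 4)}$ into a bound on $|Rm(\tilde g)|$. By (\ref{R1bar1})--(\ref{Rijkl}), the only curvature coefficient containing $\tilde\phi^{(4)}$ is $R_{1\bar 1 1\bar 1}=-e^{-2\rho}\tilde\phi^{(4)}+e^{-2\rho}F_{1\bar 1 1\bar 1}(\tilde\phi',\tilde\phi'',\tilde\phi''')$, and every other $R_{i\bar j k\bar l}$ is $e^{-2\rho}$ times a monomial of degree one in $\tilde\phi',\tilde\phi'',\tilde\phi'''$ over products of the same quantities. When one contracts with $\tilde g^{-1}$ (whose $\xi$-components carry compensating factors of $e^{\rho}/(h\tilde\phi')$ or $e^{\rho}/(h\tilde\phi'')$ by (\ref{metricrep3})), the explicit $e^{-2\rho}$ cancels and the intrinsic norm $|Rm|_{\tilde g}$ becomes a rational function purely in the ``radial data'' $\tilde a, \tilde\phi', \tilde\phi'', \tilde\phi''', \tilde\phi^{(4)}$ with denominators built from $\tilde\phi', \tilde\phi'', \tilde a+\tilde\phi'$. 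All of these are controlled on $K+\rho_s$, giving the desired bound. The one delicate point, which I would verify explicitly, is this cancellation of the $e^{-2\rho}$ coordinate factors against the inverse metric — since $\rho+\rho_s$ need not remain in any compact set, the bound must come from the scale-invariant warp-product combinations rather than from pointwise control of $\rho$ itself.
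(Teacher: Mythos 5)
Your proposal is correct and follows essentially the same route as the paper: bound $\tilde\phi'$ and $\tilde\phi''$ above and below on the translated compact strip via Lemmas \ref{colphiab} and \ref{colphilow}, feed the global scalar curvature bound from Corollary \ref{gscb} into (\ref{scurv}) to control $\tilde\phi^{(4)}$, and then read off the full curvature bound from (\ref{R1bar1})--(\ref{Rijkl}). The paper compresses all of this into one sentence ("the curvature tensor is uniformly bounded if $\tilde\phi'$ and $\tilde\phi''$ are uniformly bounded above and below away from zero"), while you make explicit both the extraction of $\tilde\phi^{(4)}$ and the cancellation of the coordinate factors $e^{-2\rho}$ against the inverse metric — a correct and worthwhile elaboration of what the paper leaves implicit.
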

\begin{proof}
Since the scalar curvature is uniformly bounded globally, the curvature tensor $Rm(\Psi^{-1}(\rho+\rho_s), s)$ is uniformly bounded on $K \times [0, \infty)$ if $\tilde\phi'(\Psi^{-1}(\rho+\rho_s), s)$ and $\tilde \phi''(\Psi^{-1}(\rho+\rho_s), s)$ are uniformly bounded both from above and from below away from $0$ on $K\times [0, \infty)$. The lemma then immediately follows from Lemma \ref{colphiab} and Lemma \ref{colphilow}.
\end{proof}

\begin{theorem}
There exists $C>0$ such that
$$ \sup_{X\times [0, \infty)} |Rm(\tilde g)| \leq C. $$
Furthermore, for any point $p\in X$, $(X, \tilde g(s))$ converges smoothly to $$\left( \mathbb{C}^n \times \mathbb{CP}^{m+1}, g_{\mathbb{C}^n}\times g_{\mathbb{CP}^{m+1}} \right).$$
\end{theorem}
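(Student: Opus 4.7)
The plan is to follow the template of the contraction case: establish the global curvature bound via Perelman's pseudolocality theorem by contradiction, then classify the pointed Cheeger-Gromov limits using the Calabi symmetry together with the fact that $\tilde a(s) \to \infty$ like $e^s$ in this collapsing regime.

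For Step 1, suppose $|Rm(\tilde g)|$ is unbounded and pick essentially-maximal points $(x_j, s_j)$ with $K_j := |Rm(\tilde g(s_j))|(x_j) \to \infty$; let $L_j := \tilde\phi'(\rho_{x_j}, s_j)$. Passing to a subsequence, either $L_j \to L \in (0, m+2)$, $L_j \to 0$, or $L_j \to m+2$. The intermediate case is ruled out at $x_j$: Proposition \ref{2ndder1} and Lemma \ref{colphiab} bound $\tilde\phi', \tilde\phi'', \tilde\phi'''$ above and below away from zero there, the scalar curvature formula (\ref{scurv}) with the global bound (Corollary \ref{gscb}) then gives $\tilde\phi^{(4)}$ bounded, and the curvature formulas (\ref{R1bar1})--(\ref{Rijkl}) yield a bound on $|Rm|$ at $x_j$, contradicting $K_j \to \infty$. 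In the boundary cases $L_j \to 0$ (near $P_0$) or $L_j \to m+2$ (near $D_\infty$), the Calabi-ansatz smoothness of Proposition \ref{kacon} forces $\tilde\phi''/\tilde\phi' \to 1$ or $\tilde\phi''/(m+2 - \tilde\phi') \to 1$. Setting $\hat g_j := K_j \tilde g(s_j)$, the rescaled scalar curvature $R(\hat g_j) \to 0$, and following the pattern of Case II/III in the contraction argument, the warp-product metric $\hat g_j$ becomes $C^0$-close to a flat Euclidean ball of fixed $\hat g_j$-radius around $x_j$. Then the Sobolev constant converges to the Euclidean one and Perelman's pseudolocality theorem gives a uniform curvature bound on $B_{\hat g_j}(x_j, 1) \times (-\tfrac{1}{2}, \tfrac{1}{2})$, contradicting $|Rm|_{\hat g_j}(x_j, 0) = 1$.

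For Step 2, with the global curvature bound and $\kappa$-noncollapsing, Hamilton's compactness theorem yields subsequential smooth pointed Cheeger-Gromov limits $(X_\infty, \tilde g_\infty, p_\infty)$ of $(X, \tilde g(s), p)$. In the collapsing regime $\tilde a(s) \to \infty$ like $e^s$, so the coefficient $\tilde a(s) + \tilde\phi'(\rho)$ of $g_Z$ in (\ref{warppro}) blows up; centered at $z_p = \pi(p) \in Z$, the pointed Cheeger-Gromov limit of $(Z, \tilde a(s) g_Z, z_p)$ is the flat tangent space $(\mathbb{C}^n, g_{\mathbb{C}^n})$. The transverse fibre factor $\tilde\phi'(\rho) g_{\mathbb{CP}^m} + \tilde\phi''(\rho) g_{cyl}$ limits to a Calabi-symmetric K\"ahler metric on $\mathbb{CP}^{m+1}$ whose potential $\tilde\phi_\infty$ is smooth on $\mathbb{R}$ with $\tilde\phi_\infty'(-\infty) = 0$ and $\tilde\phi_\infty'(+\infty) = m+2$; because the base contribution to the Ricci tensor vanishes like $\tilde a(s)^{-1} \to 0$, the fibre limit is K\"ahler-Einstein, and by uniqueness of the Fubini-Study metric on $\mathbb{CP}^{m+1}$ it equals $g_{\mathbb{CP}^{m+1}}$. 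The mixed base-fibre curvature terms are controlled by $\tilde a(s)^{-1}$ times bounded derivatives of $\tilde\phi$ and hence vanish in the limit, so the limit splits as the Riemannian product $\mathbb{C}^n \times \mathbb{CP}^{m+1}$. Uniqueness of this limit model upgrades the subsequential convergence to full-sequence smooth convergence, completing the proof.

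The hardest step is producing the flat $C^0$ Euclidean model for $\hat g_j = K_j \tilde g(s_j)$ at the endpoints $L = 0$ and $L = m+2$ in Step 1: the rescaling by $K_j$ must be reconciled with the endpoint degeneration rates of $\tilde\phi'$ and $\tilde\phi''$ dictated by the Calabi boundary ansatz, while the base factor $\tilde a(s) g_Z$ simultaneously grows like $e^s$ rather than staying bounded. Ensuring that the rescaled warp product truly converges to a Euclidean (rather than a degenerate or incomplete) model is where the Calabi asymptotics and the global scalar curvature bound must be used jointly, and this is exactly the mechanism that distinguished the analogous blow-up step in the contraction case of this section.
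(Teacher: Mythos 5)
Your reduction of the intermediate case to the curvature formulas is essentially what the paper does (it is packaged there as Lemma \ref{lRmbound}, which then forces $|\eta_j|\to\infty$), and your treatment of Step 2 via the splitting (\ref{wme1}) and uniqueness of the Fubini--Study soliton on $\mathbb{CP}^{m+1}$ matches the paper.

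However, your handling of the endpoint cases $L_j\to 0$ and $L_j\to m+2$ via pseudolocality has a genuine gap, and in fact it does not match the mechanism the paper actually uses in the collapsing regime (the paper \emph{does} invoke pseudolocality in the contraction case, but not here). Two specific problems. First, your claim that $\hat g_j = K_j\tilde g(s_j)$ becomes $C^0$-close to flat Euclidean at unit scale near $x_j$ is only plausible when the rescaled fibre coefficient $K_j\tilde\phi'(\rho_{x_j},s_j)$ (respectively $K_j\tilde\phi''(\rho_{x_j},s_j)$) is bounded or tends to infinity; but you have no control over its rate relative to $K_j$, and when $K_j\tilde\phi'(\rho_{x_j})\to 0$ (or $K_j\tilde\phi''(\rho_{x_j})\to 0$ near $D_\infty$) the rescaled fibre is still collapsing, so $\hat g_j$ is close to a degenerate lower-dimensional product, not to $\mathbb{R}^{2(n+m+1)}$, and the almost-Euclidean Sobolev hypothesis of pseudolocality fails. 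The paper deals with exactly these sub-regimes by tracking the rescaled quantities $\tilde\phi'_j = K_j\tilde\phi'$ and $\tilde\phi''_j=K_j\tilde\phi''$ separately, finding an intermediate point $z_j$ on the radial geodesic where these equal a fixed constant, showing the $C^\alpha$ limit there is Ricci flat and hence (by the explicit Calabi ODE, as in Case II of Lemma \ref{RmboundP0}) flat, and deriving a contradiction from Shi's lower curvature bound at $z_j$ (near $P_0$) or from $\kappa$-noncollapsing (near $D_\infty$). Your proposal omits this subdivision and therefore has nothing to say when the fibre degenerates faster than the curvature scale. Second, the assertion that the Calabi smoothness of Proposition \ref{kacon} forces $\tilde\phi''/\tilde\phi'\to 1$ is only a pointwise statement for each fixed time $s$; Proposition \ref{2ndder1} gives uniform two-sided bounds on $\tilde\phi''/\tilde\phi'$ near $P_0$ but not convergence to $1$ uniformly in $s$, so the asymptotic flatness of the fibre cone that your pseudolocality step requires is not established uniformly as $s_j\to\infty$. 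You flag the endpoint step as the hardest part, which is correct, but the mechanism you propose (flat Euclidean model plus pseudolocality) does not close that gap; the subcase division and the Ricci-flat-implies-flat argument in the paper are what make it work.
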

\begin{proof}
We have obtained uniform bounds for  $\tilde \phi'(\rho+\rho_s)$, $\tilde \phi''(\rho+\rho_s)$ and $\tilde \phi'''(\rho+\rho_s)$ for $\rho$ on any compact subset of $\mathbb{R}$ for all $s\geq 0$ as well as a uniform positive lower bound for $\tilde \phi'$ and $\tilde\phi''$. This implies a local parabolic $C^{2,\alpha}$-estimate for $\tilde\phi$ by suitable translation for $\tilde\phi$, which gives higher order estimates for $\tilde\phi$.  Then $\tilde\phi(\rho+\rho_s, s)- \tilde\phi(\rho_s, s)$ subconverges smoothly to a K\"ahler potential $\tilde\phi_\infty$ with the associated smooth K\"ahler metric
$\tilde g_\infty = \ddbar \tilde\phi_\infty$. Since $\tilde a(s) \rightarrow \infty$,  we have
\begin{equation}\label{wme1}
\tilde g_\infty =  g_{\mathbb{C}^n} + \tilde \phi_\infty' g_{\mathbb{CP}^m} + \tilde \phi_\infty'' g_{cyl}.
\end{equation}
on $\mathbb{C}^n \times \left( \mathbb{CP}^{m+1} \setminus \{ P_{\mathbb{CP}^{m+1}, 0} \cup D_{\mathbb{CP}^{m+1}, \infty} \} \right) $ with a splitting flat $\mathbb{C}^n$, where $P_{\mathbb{CP}^{m+1}, 0}$ annd $D_{\mathbb{CP}^{m+1}, \infty}$ are the zero and $\infty$ sections of $\mathbb{CP}^{m+1}$ respectively.

Our goal is now to show the curvature must be uniformly bounded for $\tilde g(\rho, s)$ for all $s$ and $\rho$ near $\infty$. We will prove by contradiction.  Suppose there exist $(\rho_{s_j}+\eta_j, s_j)$ such that
$$
K_j=|Rm(\tilde g)|( \rho_{s_j} + \eta_j, s_j)= \sup_{(\rho, s)\in \mathbb{R} \times [0, s_j]} |Rm(\tilde g( \rho_{s_j} + \rho , s_j))|-1\rightarrow \infty.
$$
Then by Lemma \ref{lRmbound} we must have $|\eta_j| \rightarrow \infty$. Let $x_j \in \Psi^{-1}(\rho_{s_j}+ \eta_j)$ and
$$\tilde g_j(\rho) = K_j \tilde g(\rho, s_j), ~\tilde\phi_j(\rho) = K_j \tilde\phi(\rho, s_j).$$
We further let $(\tilde X_\infty, \tilde g_\infty, x_\infty)$ be the smooth limit of $(X, \tilde g_j, x_j)$.

\medskip

\noindent {\bf Case I:} $\lim_{j\rightarrow \infty} \eta_j = -\infty$.

For this case, we should note that, for all $\rho\leq 0$, we have $\tilde \phi'(\rho_{s_j} + \rho) \leq \tilde \phi'(\rho_{s_j}) \leq \tilde b(s)/2$, hence by Proposition \ref{2ndder1}, we have $\tilde \phi''(\rho_{s_j} + \rho)/\tilde \phi'(\rho_{s_j} + \rho)\geq C^{-1}$. We consider the following three cases.

\begin{enumerate}

\item Suppose $ \tilde\phi'_j(\rho_{s_j}+\eta_j) \rightarrow \infty$ as $j\to\infty$. Then by $\eta_j<0$, we have $\tilde \phi''_j(\rho_{s_j}+\eta_j) \rightarrow \infty$ as  well. Since the scalar curvature of $\tilde g_j$ tends to $0$, we can apply the curvature formula (\ref{scurv}) to obtain that
$$
\frac{\tilde \phi^{(4)}_j(\rho_{s_j}+\eta_j)}{\left( \tilde \phi''_j(\rho_{s_j}+\eta_j) \right)^2}   \rightarrow 0.
$$
Hence by the curvature formulas (\ref{R1bar1}), (\ref{Rijkl}), the full curvature of $\tilde g_j$ at $x_j$ tends to $0$. Contradiction.

\medskip

\item Suppose $\tilde \phi'_j(\rho_{s_j}+\eta_j) \rightarrow 0$ as $j\to\infty$. The proof for this case is quite similar to Case II of Lemma \ref{RmboundP0}, we outline the main steps of the arguments.

First we can find $\delta>0$ such that there exists $\epsilon>0$, such that for any $q \in X$ with $d_{\tilde g_j}(x_j, q) <\delta$, we have $|Rm(\tilde g_j)|(q) \geq \epsilon$. We then let $y_{j} \in \Psi^{-1}(\rho_{y_{j}})$ be the point on the radial geodesic of $x_j$ with $d_{\tilde g_j}(x_j, y_{j})=\delta$ and $\rho_{y_{j}} > \rho_{x_j}=\rho_{s_j}+\eta_j$. By the Schwarz lemma we have $\rho_{y_j}\to-\infty$, hence $\tilde \phi''(\rho_{y_j} + \rho)/\tilde \phi'(\rho_{y_j} + \rho)\geq C^{-1}$ for all $\rho\leq 1$. Hence by the same argument of Corollary \ref{dist}, $\tilde \phi_j'(\rho_{y_j}) \to 0$ would imply that $\delta=d_{\tilde g_j}(x_j, y_{j})\to 0$, which is impossible, hence we have $\tilde \phi_j'(\rho_{y_j})>c>0$ for all large $j$.

Then we find $z_j$ on the radial geodesic of $x_j$ such that
$$\tilde \phi'_j(\rho_{z_j})=c,~ \rho_{z_j}=\Psi(z_j)\in (\rho_{x_j}, \rho_{y_j} ] .$$
By Proposition \ref{2ndder1}, there exist $C>0$ such that for all $\rho \in (-1, 1)$,
$$C^{-1} \leq \tilde\phi_j'(\rho_{z_j} + \rho) \leq C, ~C^{-1} \leq \tilde\phi_j''(\rho_{z_j} + \rho) \leq C, ~|\tilde \phi_j'''(\rho_{z_j}+\rho)| \leq C, $$
then by the scalar curvature formula (\ref{scurv}) and Corollary \ref{gscb}, we have $|\tilde \phi_j^{(4)}(\rho_{z_j}+\rho)| \leq C$.

Then after passing to a subsequence $f_j(\rho):=\tilde\phi_j'(\rho_{z_j} +\rho) $ converges in $C^{2,\alpha}$-topology to some function $f_\infty(\rho)$ for $\rho \in (-1,1)$, and so the warp product metrics
\begin{equation}\label{wme4}
\tilde g_j(\rho_{z_j}+\rho)=\left(K_j \tilde a(s_j) +  \tilde \phi'_j(\rho_{z_j}+\rho)\right) g_Z + \tilde\phi'_j(\rho_{z_j}+\rho)g_{\mathbb{CP}^m} + \tilde \phi''_j(\rho_{z_j}+\rho) g_{cyl},
\end{equation}
converge to (note that $\tilde a(s_j)\to\infty$)
$$\hat g_\infty (\rho) = g_{\mathbb{C}^{n} }+ f_\infty(\rho)g_{\mathbb{CP}^m}+f_\infty'(\rho) g_{cyl},~~ \rho \in (-1,1).$$
Now the same argument of Case II of Lemma \ref{RmboundP0} shows that $\hat g_\infty (\rho)$ is flat on $\rho \in (-1,1)$, which contradicts to the fact that $|Rm(\tilde g_j)|(z_j) \geq \epsilon$.

\medskip

\item Suppose $\tilde \phi_j'(\rho_{s_j} + \eta_j)$ is uniformly bounded above and below away from $0$ for all sufficiently large $j$. Then we can apply the argument of the latter part of (1) in Case I (with $z_j$ been replaced by $x_j$) and show that $\tilde g_j$ converges to a flat metric near $x_\infty$. Contradiction.

\end{enumerate}

\medskip

\noindent {\bf Case II:} $\lim_{j\rightarrow \infty} \eta_j = \infty$.

For this case, we should note that, for all $\rho\geq 0$, $ \tilde \phi_j'(\rho_{s_j}+\rho,s_j) \rightarrow \infty$ since $\tilde \phi'(\rho_{s_j}+\rho,s_j)\geq\tilde \phi'(\rho_{s_j},s_j)\geq \tilde b(s)/2$. We will discuss in the following three cases.

\begin{enumerate}

\item Suppose $\tilde\phi_j''(\rho_{s_j}+\eta_j, s_j) \rightarrow \infty$ as $j\to\infty$. By the scalar curvature formula (\ref{scurv}) and the fact that the scalar curvature of $\tilde g_j$ tends to $0$, we have
$$
\frac{\tilde \phi^{(4)}_j(\rho_{s_j}+\eta_j)}{\left( \tilde \phi''_j(\rho_{s_j}+\eta_j) \right)^2}   \rightarrow 0.
$$
Hence by the curvature formulas (\ref{R1bar1}), (\ref{Rijkl}), the full curvature of $\tilde g_j$ at $x_j$ must tend to $0$. Contradiction.

\medskip

\item Suppose $ \tilde\phi_j''(\rho_{s_j}+\eta_j) \rightarrow 0$ as $j\to\infty$.  We choose $\delta>0$ such that there exists $\epsilon>0$ such that for any point $q \in X$ with $d_{\tilde g_j}(x_j, q) <\delta$, we have
$$|Rm(\tilde g_j)|(q) \geq \epsilon.$$
We then let $y_{j} \in \Psi^{-1}(\rho_{y_{j}})$ be the point on the radial geodesic of $x_j$ with $d_{\tilde g_j}(x_j, y_{j})=\delta$ and $\rho_{y_{j}} > \rho_{s_j}+ \eta_j$.

We claim that there exists $c>0$ such that for each $j>0$, there exists  $\eta'_j \in (\eta_j, \rho_{y_{j}} -\rho_{s_j})$ with
$$\tilde \phi_j''(\rho_{s_j} +\eta'_j) \geq c.$$
Otherwise $ \tilde \phi_j''(\rho)$ will tend to $0$ uniformly on the annulus defined by
$$\{ \rho \in (\rho_{s_j}+\eta_j, \rho_{y_{j}})\} $$
with respect to the warp product metric
\begin{equation}\label{wme2}
\left(K_j \tilde a(s_j) +  \tilde \phi'_j(\rho_{s_j}+\eta_j+\rho)\right) g_Z + \tilde\phi'_j(\rho_{s_j}+\eta_j+\rho)g_{\mathbb{CP}^m} + \tilde \phi''_j(\rho_{s_j}+\eta_j+\rho) g_{cyl},
\end{equation}
since here $\tilde\phi_j'''$ is uniformly bounded. Then this annulus will collapse to the limit $\mathbb{C}^{m+n}\times (0, \delta)$. This leads to contradiction due Perelman's $\kappa$-noncollapsing theorem.

We can now assume that $\tilde\phi_j''(\rho_{s_j}+\eta_j')=c>0$ for all $j>0$ with $\eta_j' \in (\eta_j, \rho_{y_{j}}- \rho_{s_j})$ by the above claim and the continuity of $\tilde\phi_j''$. By Proposition \ref{2ndder1}, there exist $C>0$ such that for all $\eta \in (\eta_j'-1, \eta_j'+1)$,
$$C^{-1} \leq \tilde\phi_j''(\rho_{s_j} + \eta) \leq C, ~|\tilde \phi_j'''(\rho_{s_j}+\eta)| \leq C. $$
Then after passing to a subsequence $f_j(\rho)= \tilde\phi_j''(\rho_{s_j} + \eta'_j+\rho) $ converges in $C^\alpha$-topology to some Lipschitz function $f_\infty(\rho)$ for $\rho \in (-1,1)$ and so the warp product metrics (\ref{wme2}) converge to
$$\tilde g_\infty = g_{\mathbb{C}^{m+n} }+ f_\infty(\rho) g_{cyl}$$
for $\rho\in (-1, 1)$.  Since $\tilde g_\infty$ is Ricci flat, $f_\infty(\rho) g_{cyl}$ must be flat and so $\tilde g_\infty$ is flat. By the choice of $\delta$, $\tilde g_\infty$ cannot be flat and it leads to contradiction.

\medskip

\item Suppose $\tilde \phi_j''(\rho_{s_j} + \eta_j)$ is uniformly bounded above and below away from $0$ for all sufficiently large $j$. Then we can apply the argument of the latter part of (2) in Case II and show that $\tilde g_j$ converges to a flat metric near $x_\infty$. Contradiction.

\end{enumerate}

Combining the above the two cases, we have proved the uniform curvature bound for $\tilde g$.  From the curvature estimates and (\ref{wme1}), $(X, \tilde g(s))$ converges smoothly to a soliton metric on $\mathbb{C}^{n}\times \mathbb{CP}^{m+1}$ with a splitting metric $\tilde g_\infty = g_{\mathbb{C}^n}+ h$, where $h$ is a smooth K\"ahler Ricci soliton metric on $\mathbb{CP}^{m+1}$. Since the only K\"ahler-Ricci soltion metric on $\mathbb{CP}^{m+1}$ is the Fubini-Study metric, we can conclude that the limiting space $(X_\infty, \tilde g_\infty)$ is $(\mathbb{C}^{n}\times \mathbb{CP}^m, g_{\mathbb{C}^n}\times g_{\mathbb{CP}^{m+1}})$.

We have now completed the proof of the theorem.
\end{proof}

\section{Further discussions on Ricci vertices}\label{FDonRV}
In this section, we will propose some conjectures and questions concerning the Ricci vertices based on the results obtained in this paper. They can be seen as the intermediate steps toward Conjecture \ref{mainconj}.

In Theorem \ref{cor1}, we have obtained the local Type I scalar curvature estimate in Conjecture \ref{mainconj} around the Ricci vertex. A natural question is how to control the location of  Ricci vertices.

\begin{conjecture}\label{maincon3}
Let $g(t)$ be the maximal solution of the K\"ahler-Ricci flow (\ref{unkrflow1'1}) on $X\times [0, 1)$. For any $q\in Y$, there exist smooth closed $(1,1)$-form $\theta_Y\in \vartheta$ on $Y$ and $C=C(n, g_0, \theta_Y)>0$, such that for all $t\in [0, 1)$, if $p_t$ is a Ricci vertex associated to $\theta_Y$ at $t$, then  we have
$$d_{g(t)}(p_t, \mathcal{F}_q) \leq C(1-t)^{\frac{1}{2}}, $$
where $\mathcal{F}_q= \Phi^{-1}(q)$.
\end{conjecture}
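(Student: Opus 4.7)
The plan is to combine Lemma \ref{a'1}, which already forces the Ricci vertex into a prescribed base-neighborhood of $q$, with a blow-up contradiction argument that upgrades this fixed bound to the Type I bound. First, I would fix a local embedding $Y \hookrightarrow \mathbb{C}^M$ with $q$ at the origin in coordinates $w = (w_1, \ldots, w_M)$, and select $\rho_q \in C^\infty(Y)$ with $\rho_q = A|w|^2$ near $q$ for a sufficiently large $A > 0$, and $\rho_q \geq c_0 > 0$ outside a small neighborhood of $q$, extended smoothly. Setting $\theta_Y = \omega_Y - \ddbar \rho_q$, Lemma \ref{a'1} forces $\Phi(p_t) \in B_{\omega_B}(q, r_0)$ for all $t \in [0,1)$, which by the parabolic Schwarz lemma already yields the preliminary (non-Type-I) bound $d_{g(t)}(p_t, \mathcal{F}_q) \leq C_0$ for a fixed constant $C_0$.

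The core step is to improve this to the Type I estimate by contradiction. Suppose there exist $t_i \nearrow 1$ and Ricci vertices $p_i = p_{t_i}$ with $\lambda_i := (1-t_i)^{-1/2} d_{g(t_i)}(p_i, \mathcal{F}_q) \to \infty$. Consider the Type I rescaling $g_{i,t} = (1-t_i)^{-1} g((1-t_i) t + t_i)$ based at $p_i$. The weighted potentials $v_i$ satisfy the uniform bounds of Lemma \ref{viprop}, so by Theorem \ref{mainforRF2} and Theorem \ref{2main1}, after passing to a subsequence we obtain $\bF$-convergence and pointed Gromov-Hausdorff convergence of each time slice to an ancient metric flow $\cX$ on analytic normal varieties, with a limiting Ricci vertex $p_\infty \in \cX_0$ where the limiting potential attains its minimum $1$. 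In the regular-value situation covered by Corollary \ref{mainbund}, the base projection $\Phi$ simultaneously converges to a surjective holomorphic map $\Phi_\infty: \cX_t \to \mathbb{C}^m$, and the rescaled fibre $\mathcal{F}_q$ drifts to infinity in the base direction at rate $\lambda_i \to \infty$.

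The contradiction should then arise from minimality of $u = u_0 + (1-t)^{-1} \rho_q$ at $p_i$: in the rescaled picture, the $u_0$-contribution has oscillation controlled by Theorem \ref{main1} on any Type I ball, while the rescaled weight $(1-t_i)^{-1} \rho_q$ on that ball behaves like a strictly convex quadratic whose minimizing fibre lies at rescaled base distance $\lambda_i$. Comparing the value of $u$ at $p_i$ with its value at a competitor point whose base projection is moved by $O(1)$ in the rescaled metric toward the image of $q$ would force a decrease of order $A\lambda_i \to \infty$ in the $\rho_q$-contribution, which cannot be compensated by the controlled $u_0$-oscillation. Alternatively, one could attempt a purely PDE-based sharpening of Lemma \ref{a'1}, combining the Li-Yau estimate of Theorem \ref{main1} with the critical point equation $\nabla u_0(p_t) = -(1-t)^{-1} \nabla \rho_q(p_t)$ to extract the quantitative balance $\rho_q(p_t) = O(1-t)$.

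The main obstacle is that Corollary \ref{mainbund}, which supplies the limiting surjective map $\Phi_\infty$ and hence a well-defined ``base direction'' in the limit, is established only in the Fano bundle setting and for regular values of $\Phi$. When $q$ is a non-regular value, the fibre $\mathcal{F}_q$ may be singular with multiplicities, and the structure of the limiting metric flow along the base direction is far more delicate---indeed this is precisely what Conjecture \ref{mainconj} concerns. A complete proof of Conjecture \ref{maincon3} therefore seems to require either a substantial refinement of the compactness theory of Section \ref{cgc} that tracks the pair $(g(t), \Phi)$ through the singular locus of $\Phi$, or a direct maximum-principle argument that exploits the minimality of $u$ at $p_t$ against the rapid oscillation of $u_0$ on Type I scales without invoking any limiting process.
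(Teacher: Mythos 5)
This statement is labeled \emph{Conjecture} \ref{maincon3} in the paper and is presented explicitly as an open problem, not a theorem. Section \ref{FDonRV} describes it as an ``intermediate step toward Conjecture \ref{mainconj},'' and the introduction states that Lemma \ref{a'1}, which only forces $\Phi(p_t) \in B_{\omega_B}(q, r_0)$ for a \emph{fixed} radius $r_0$ rather than a Type I radius, ``is certainly not optimal, see Conjecture \ref{maincon3}.'' The paper proves the conjectured Type I bound only under Calabi symmetry: this is Lemma \ref{uup} together with Corollary \ref{dist} in Section \ref{calsym}, which is the key step of Theorem \ref{calmain1}. So there is no general ``paper's own proof'' to compare against, and you should not expect your blind attempt to close what the authors themselves leave open.

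That said, your sketch is a reasonable blueprint, and it is in the same spirit as the paper's Calabi-symmetry argument: both are blow-up-contradiction arguments that show a Ricci vertex drifting away from $\mathcal{F}_q$ at Type I rate would force a limiting metric to be degenerate or split in a forbidden way. In the Calabi case the paper avoids your ``competitor point'' step entirely by observing that if the fibre potential $\tilde\phi'(\rho_s,s)$ blows up, the rescaled limit around the Ricci vertex is flat $\mathbb{C}^{m+n+1}$, and then the soliton equation $\mathrm{Ric}(\tilde g_\infty)=\tilde g_\infty - \ddbar\tilde v_\infty$ restricted to the collapsed $\mathbb{C}^m$-fibre direction forces $\tilde g_\infty|_{\mathbb{C}^m}=0$, an immediate contradiction. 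This is considerably cleaner than what your competitor-point argument would need to deliver.

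Two concrete gaps in your heuristic: (i) Theorem \ref{main1} controls $|\nabla v|^2/v$ and $|\Delta v|/v$ for the \emph{combined} weighted potential $v=u-a+1$ with $u=u_0+(1-t)^{-1}\rho_q$; it does not give you a separate bound on the oscillation of $u_0$ alone, so the ``controlled $u_0$-oscillation'' you are trying to outpace is not actually controlled by the estimates you cite. (ii) The parabolic Schwarz lemma gives $\Phi^*\omega_Y \leq C\,g(t)$ only in one direction, so displacing $p_i$ by $O(1)$ in the rescaled flow metric toward $\mathcal{F}_q$ does \emph{not} force a comparable decrease of $d_{\omega_Y}(\Phi(\cdot),q)$, precisely because near a singular fibre the differential of $\Phi$ degenerates; the claimed drop of order $A\lambda_i$ in the $(1-t_i)^{-1}\rho_q$-term therefore need not occur. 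Your closing paragraph correctly diagnoses that these difficulties are most severe at non-regular values of $\Phi$, which is exactly where the conjecture is hardest and where the paper leaves it open.
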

Conjecture \ref{maincon3} is confirmed in Theorem \ref{calmain1} for the K\"ahler-Ricci flow with large symmetry, which further leads to the Type I curvature bounds. We would also like to ask if the Ricci vertices $p_t$ in Conjecture \ref{maincon3} has Type I distance to the $H_{2n}$-center of any point in $\mathcal{F}_q$.

Theorem \ref{cor2} establishes the Type I scalar curvature and diameter bound for the fibre containing the Ricci vertex, provided a Type I volume bound of a suitable tubular neighborhood of  the same fibre (see (\ref{cor2a})). Naturally, one would like to remove such a volume bound assumption as in the following conjecture.
\begin{conjecture}\label{maincon2}
Let $g(t)$ be the maximal solution of the K\"ahler-Ricci flow (\ref{unkrflow1'1}) on $X\times [0, 1)$. For any smooth closed $(1,1)$-form $\theta_Y\in \vartheta$ on $Y$, for all $t\in [0, 1)$, if $p_t$ is a Ricci vertex associated to $\theta_Y$ at $t$, then for any $q\in B(p_t, t , D(1-t)^{1/2})$, we have
$$\textnormal{Diam}(\mathcal{F}_{q}, g(t)) \leq C(1-t)^{\frac{1}{2}}, $$
$$\sup_{\mathcal{F}_{q} }|\rr(\cdot, t)| \leq \frac{C}{1-t} , $$
where $\mathcal{F}_{q} = \Phi^{-1}(\Phi(q))$, $\rr(t)$ is the scalar curvature of $g(t)$ and $\textnormal{Diam}(\mathcal{F}_{q}, g(t))$ is the diameter of $\mathcal{F}_{q}$ in $(X, g(t))$, and $C=C(n, g_0, \theta_Y, D)>0$.
\end{conjecture}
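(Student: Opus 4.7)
The plan is to reduce Conjecture \ref{maincon2} to the Type-I volume bound
$$\vol\bigl(\mathcal{B}_{\mathrm{g}_Y}(q, 10(1-t)^{1/2}), g(t)\bigr) \leq A(1-t)^n$$
for all $q \in B(p_t, t, D(1-t)^{1/2})$, after which Theorem \ref{main2} applied to the Ricci potential $v$ associated with the original $\theta_Y$ yields a uniform Harnack ratio on $\mathcal{F}_q$. Indeed the gradient estimate of Theorem \ref{main1} gives $v(q, t) \leq C(1+D^2)$ directly, so Harnack would force $v$ bounded on all of $\mathcal{F}_q$; the Laplacian estimate then delivers $|\rr(\cdot, t)| \leq C/(1-t)$, and a Bishop-Gromov-type packing argument as in the proof of Theorem \ref{scanddearv} converts the scalar curvature and volume bounds into the diameter estimate $\diam(\mathcal{F}_q, g(t)) \leq C(1-t)^{1/2}$. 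The hypotheses of Theorem \ref{main2} can be verified from existing data: condition (2) from Theorem \ref{main1}, condition (3) from the volume bound itself, and condition (4) from the parabolic Schwarz lemma, which pins $H_n$-centers of points of $\mathcal{F}_q$ close to $\Phi(q) \in Y$; when $\mathcal{F}_q$ has several components one handles each connected component separately.

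The principal obstacle is thus the Type-I volume bound. On the regular locus $\Phi^{-1}(Y^\circ)$ this follows by combining the parabolic Schwarz lemma with the algebraic inequality (\ref{algdet}) from Lemma \ref{vbonFanobundle}, together with the fibrewise cohomological identity $[\omega(t)]^{n-m} \cdot [\mathcal{F}_y] = (1-t)^{n-m}[\omega_0]^{n-m} \cdot [\mathcal{F}_y]$ valid for smooth fibres, exactly as in the proof of Theorem \ref{scanddeonFanobundle}. Hence when $\Phi(q) \in Y^\circ$, the conjecture follows as a direct extension of Theorem \ref{app3}. The genuine content of the conjecture is the case $\Phi(q) \in Y_{\textnormal{sing}}$: the horizontal Schwarz bound degenerates, the singular fibre can have higher dimension or multiple components, and the fibrewise cohomological argument breaks down.

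To handle the singular-fibre case, I would combine two ingredients. First, approximate $\Phi(q)$ by regular values $y_j \to \Phi(q)$ in $Y^\circ$ and use the regular-locus volume bound together with a quantitative upper semi-continuity of the map $y \mapsto \vol(\Phi^{-1}(B_{\mathrm{g}_Y}(y, r)), g(t))$. Second, invoke the complex-geometric compactness of Section \ref{cgc} (Theorem \ref{2main1}) to identify the Type-I blow-up limit $(\cX_{-1}, d_{-1})$ as a normal analytic variety; its algebraic structure, together with the uniform scalar curvature bound from Theorem \ref{cor1}, should force a volume density bound on the limit that transfers back to the pre-limit flow. The hardest part, which I expect to require genuinely new input, is making this upper semi-continuity quantitative and uniform in $t$: one likely needs an Ohsawa--Takegoshi-type extension argument, or a refined partial $C^0$-estimate along the lines of Section \ref{cgc}, to prevent volume mass from accumulating invisibly on the singular fibre as $y_j \to \Phi(q)$.
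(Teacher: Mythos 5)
This statement is Conjecture \ref{maincon2}, which the paper does not prove: it is explicitly proposed as an open conjecture in Section \ref{FDonRV} and described as an intermediate step toward Conjecture \ref{mainconj}, with confirmation only in the special case of collapsing solutions on Fano fibre bundles (Theorem \ref{app3}, Corollary \ref{mainbund}), where $Y^\circ = Y$ and the singular-fibre problem does not arise. There is therefore no paper proof to compare against; what can be evaluated is whether your reduction and obstacle analysis are faithful to the machinery the paper develops, and they are.

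Your reduction to the Type-I tubular-neighborhood volume bound is exactly the content of Theorem \ref{cor2} and Theorem \ref{scanddearv}: once such a bound holds near $q$, the Harnack estimate of Theorem \ref{main2} applied to the weighted Ricci potential $v$ gives a bounded oscillation ratio, and the anchor $v(q,t) \leq C(1+D^2)$ from the quadratic-growth estimate (\ref{C0boundofv1}) of Theorem \ref{main1} converts this into an absolute bound, after which the Laplacian estimate and a Perelman $\kappa$-noncollapsing packing argument deliver the scalar curvature and extrinsic diameter bounds. Your verification of hypotheses (2), (3), (4) tracks Lemmas \ref{zball'3} and \ref{zball'6}; one small imprecision is that the fibre $\mathcal{F}_q$ is always connected as a subvariety (as the paper notes), so the component issue concerns the tubular neighborhood $\mathcal{B}_{\mathrm{g}_Y}(q, r)$ near singular points of $Y$, which the paper handles by requiring $r$ small enough that $B_{\mathrm{g}_Y}(\Phi(q),r)\cap Y$ has one component. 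Your regular-locus argument via Lemma \ref{vbonFanobundle}, the parabolic Schwarz lemma, and the fibrewise cohomological identity reproduces Theorem \ref{scanddeonFanobundle}.

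The part you correctly flag as the genuine content, the Type-I volume bound for $\Phi(q) \in Y_{\textnormal{sing}}$, is precisely what is missing and what the paper leaves open. The horizontal Schwarz estimate of Lemma \ref{hor} carries the factor $|\sigma_q|^{-2k}_{h_q}$ that degenerates on the singular locus, and the fibrewise cohomological volume identity no longer controls a tubular neighborhood since the dimension and multiplicity of the fibre can jump. Your proposed routes (quantitative upper semi-continuity of tubular volume as $y_j\to\Phi(q)$, density bounds extracted from the compactness of Section \ref{cgc}, Ohsawa--Takegoshi extension or a refined partial $C^0$-estimate) are reasonable and compatible with the paper's Section \ref{cgc} toolkit, but none of them are carried out in the paper. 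Your closing acknowledgment that this step requires genuinely new input is the accurate conclusion: the proposal is a correct reduction of the conjecture and a correct identification of its obstruction, not a proof.
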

 Conjecture \ref{maincon2} is confirmed by Theorem \ref{app3} in the case of collapsing solutions on Fano bundles.


\bigskip

\noindent{\bf Acknowledgements.} The authors would like to thank Richard Bamler, Bin Guo, Maximilien Hallgren, Yalong Shi and Zhenlei Zhang for many inspiring discussions. The first named author thanks Xiaochun Rong, Zhenlei Zhang and Kewei Zhang for hospitality and providing an excellent environment during his visits to Capital Normal University and Beijing Normal University where part of this work was carried out.

\bigskip


\end{document}